\newcommand{\defstyle}[1]{\textbf{#1}}
\newcommand{\myprob}[1]{\mathbb P \left[ #1 \right]}
\newcommand{\probCond}[2]{\mathbb P \left[ #1 \left| #2 \right. \right]}
\newcommand{\omid}[1]{\mathbb E \left[ #1 \right]}
\newcommand{\omidCond}[2]{\mathbb E \left[ #1 \left| #2 \right. \right]}
\newcommand{\norm}[1]{\left| #1 \right|}
\newcommand{\nei}[2]{N_{#1} \left(#2\right)}
\newcommand{\identity}[1]{1_{#1}}
\newcommand{\bs}[1]{\boldsymbol{#1}}
\newcommand{\card}[1]{\# #1}
\newcommand{\formir}[1]{#1}
\newcommand{\del}[1]{}
\newcommand{\dels}[1]{}
\newcommand{\rewritten}[1]{}
\newcommand{\gw}{\texttt{GW}}
\newcommand{\egw}{\texttt{EGW}}
\newcommand{\dimMu}[1]{\overline{\mathrm{{udim}}}_M(#1)}
\newcommand{\dimMl}[1]{\underline{\mathrm{{udim}}}_M(#1)}
\newcommand{\dimM}[1]{{\mathrm{{udim}}}_M(#1)}
\newcommand{\dimH}[1]{\mathrm{{udim}}_H(#1)}
\newcommand{\contentH}[2]{\mathcal H^{#1}_{#2}}
\newcommand{\measH}[1]{\mathcal M^{#1}}
\newcommand{\floor}[1]{\left\lfloor #1 \right\rfloor}
\newcommand{\ceil}[1]{\left\lceil #1 \right\rceil}
\newcommand{\growthl}[1]{\underline{\mathrm{growth}}\left(#1 \right)}
\newcommand{\growthu}[1]{\overline{\mathrm{growth}}\left(#1\right)}
\newcommand{\growth}[1]{{\mathrm{growth}}\left(#1\right)}
\newcommand{\decayl}[1]{\underline{\mathrm{decay}}\left(#1\right)}
\newcommand{\decayu}[1]{\overline{\mathrm{decay}}\left(#1\right)}
\newcommand{\decay}[1]{{\mathrm{decay}}\left(#1\right)}
\newcommand{\essinf}{\mathrm{ess\: inf \: }}
\newcommand{\dstar}{{\mathcal D}_*}
\newcommand{\dstarr}{{\mathcal D}_{**}}
\newcommand{\dstarhat}{\widehat{\mathcal D}_{*}}
\newcommand{\oball}[2]{N_{#1}^{\circ}({#2})}
\newcommand{\intensity}[2]{\rho_{#1}(#2)}
\newcommand{\diam}{\mathrm{diam}}
\newcommand{\deltaone}{}
\newcommand{\rooot}{origin}
\newcommand{\rooted}{pointed}
\newcommand{\Rooted}{Pointed}
\newcommand{\densityU}{\overline{d}}
\newcommand{\densityL}{\underline{d}}
\newcommand{\xeq}[1]{{\normalfont (#1)}}
\numberwithin{equation}{section}
\theoremstyle{theorem}
\newtheorem{theorem}{Theorem}[section]
\newtheorem{lemma}[theorem]{Lemma}
\newtheorem{proposition}[theorem]{Proposition}
\newtheorem{corollary}[theorem]{Corollary}
\newtheorem{conjecture}[theorem]{Conjecture}
\newtheorem{problem}[theorem]{Problem}
\theoremstyle{definition}
\newtheorem{definition}[theorem]{Definition}
\newtheorem{example}[theorem]{Example}
\theoremstyle{definition}
\newtheorem{remark}[theorem]{Remark}
\newtheorem{convention}[theorem]{Convention}
\theoremstyle{theorem}
\numberwithin{equation}{section}
\let\orgdescriptionlabel\descriptionlabel
\renewcommand*{\descriptionlabel}[1]{%
	\let\orglabel\label
	\let\label\@gobble
	\phantomsection
	\edef\@currentlabel{#1}%
	\let\label\orglabel
	\orgdescriptionlabel{#1}%
}
\begin{document}
\title{Unimodular Hausdorff and Minkowski Dimensions}
\author{Fran\c{c}ois Baccelli\footnote{The University of Texas at Austin \formir{and INRIA Paris},
baccelli@math.utexas.edu}, Mir-Omid Haji-Mirsadeghi\footnote{Sharif University of Technology, mirsadeghi@sharif.ir}, and Ali Khezeli\footnote{Tarbiat Modares University, khezeli@modares.ac.ir, 
current address: INRIA Paris, ali.khezeli@inria.fr}}

\maketitle

\begin{abstract}
This work introduces two new notions of dimension, namely the
\textit{unimodular Minkowski and Hausdorff dimensions},
which are inspired from the classical analogous notions.
These dimensions are defined for \textit{unimodular discrete spaces},
introduced in this work, which provide a common generalization to stationary
point processes under their Palm version and unimodular random rooted graphs. 
The use of unimodularity in the definitions of dimension is novel.
Also, a toolbox of results is presented for the analysis of these dimensions.
In particular, analogues of Billingsley's lemma and Frostman's lemma are presented.
These last lemmas are instrumental in deriving upper bounds on dimensions,
whereas lower bounds are obtained from specific coverings. 
The notions of unimodular Hausdorff {\formir{size, which is a discrete analogue
of the Hausdorff measure,}} and unimodular dimension
function are also introduced. This toolbox \formir{allows one}
to connect the unimodular dimensions to other notions such as \formir{volume} growth rate, 
\formir{discrete dimension and scaling limits}.
It is also used to analyze the dimensions of a set of examples pertaining to point processes,
branching processes, random graphs, random walks, and self-similar discrete random spaces. 
Further results of independent interest are also presented, like a version of the max-flow min-cut 
theorem for unimodular one-ended trees and a weak form of pointwise ergodic theorems
for all unimodular discrete spaces.
\end{abstract}



\section{Introduction}


Infinite discrete random structures are ubiquitous: random graphs, branching processes,
point processes, graphs or zeros of discrete random walks, {\formir{discrete or continuum}} percolation,
to name a few. 
\formir{The large scale {and macroscopic} properties of such spaces have been thoroughly discussed
in the literature. In particular, various notions of dimension
{have been} proposed; e.g., the \textit{mass dimension} and the \textit{discrete (Hausdorff) dimension}
defined by Barlow and Taylor~{\cite{BaTa89}} for subsets of $\mathbb Z^d$.}

The main novelty of the present paper is the definition of new notions of
dimension for a class of \formir{discrete} structures that, heuristically, enjoy a form of statistical homogeneity.
The mathematical framework proposed to handle such structures is that of 
\textit{unimodular (random) discrete spaces}, where unimodularity is defined here by
a version of the mass transport principle.
This framework unifies unimodular random graphs and networks, stationary point processes 
(under their Palm version) and point-stationary point processes.
It does not require more than a metric; for instance, no edges or no underlying
Euclidean spaces are needed. The statistical homogeneity of such spaces
has been used to define localized versions of global notions such as intensity.
The main novelty of the present paper is the use of this homogeneity to define
the notions of \textit{unimodular Minkowski and Hausdorff dimensions}, which are
inspired by the analogous classical notions.
\formir{The definitions are obtained naturally from the classical setting by
replacing the infinite sums pertaining to infinite coverings by the expectation
of certain random variables at the origin (which is a distinguished point),
and also by considering large balls instead of small balls.} These definitions are local but
capture \formir{macroscopic (large scale)} properties of the space. 

The definitions are complemented by a toolbox for the analysis of unimodular dimensions.
Several analogues of the important results known about
the classical Hausdorff and Minkowski dimensions are established, like for instance
the comparison of the unimodular Minkowski and Hausdorff dimensions as well as
unimodular versions of Billingsley's lemma and Frostman's lemma. These lemmas allow one to
connect the dimension to the (polynomial) \formir{volume} growth rate of the space,
\formir{which is also called \textit{mass dimension} or \textit{fractal dimension} in the literature}.
While many ideas in this toolbox are imported from the continuum setting, their adaptation is 
nontrivial and there is no automatic way to import results from the continuum to the discrete setting.
For some results, the statements fundamentally differ from their continuum analog; 
e.g., the statement of Billingsley's lemma.

\formir{These notions of dimension are complemented by further definitions which can be used for a finer study of dimension. An analogue of the Hausdorff measure is defined, which is called the unimodular Hausdorff size
	here.}
This can be used to compare sets with the same dimension.
The notion of unimodular dimension function is also defined for a finer quantification of the dimension.
\formir{Such notions are new for discrete spaces to the best of the authors' knowledge.
Another new notion introduced in the present paper is that of regularity for unimodular spaces,
which is the equality of the unimodular Minkowski and Hausdorff dimensions.
Similar notions of regularity exist in the continuum setting
(\formir{see e.g., the definition of fractals in~\cite{bookBiPe17}}) and for subsets
of $\mathbb Z^d$ \cite{BaTa92}.}


The paper also contains new mathematical results of independent interest. 
A weak version of Birkhoff's pointwise ergodic theorem is stated for all unimodular discrete spaces.
A unimodular version of the max-flow min-cut theorem is also proved
for unimodular one-ended trees, which is used in the proof of the
unimodular Frostman lemma. Also, for unimodular one-ended trees, a relation between
the \formir{volume} growth rate and the height of the root is established as explained below.

The framework is used to derive concrete results on the dimension of 
several instances of unimodular random discrete metric spaces.
This is done for the zeros and the graph of discrete random walks, sets defined by digit restriction,
trees obtained from branching processes and drainage network models,
etc. Some general results are obtained
for all unimodular trees. For instance, a general relation is established between the
unimodular dimensions of a unimodular one-ended tree and the tail of the distribution
of the height of the root. The dimensions of some unimodular discrete random \formir{self-similar} sets  
are also discussed. The latter are defined in this paper as
unimodular discrete analogues of self similar sets
such as the Koch snowflake, the Sierpinski triangle, etc.

\formir{This framework opens several further research directions. Firstly,
it might be useful for the study of some discrete examples which are of interest
in mathematical physics. Many examples in this domain enjoy some kind of
homogeneity and give rise to unimodular spaces, directly or indirectly; e.g., percolation
clusters and self-avoiding random walks.
A few such examples are studied in detail in this work and in the preprint~\cite{III}.
{One might expect that} in these examples, the values of unimodular dimensions match the
conjectures or results pertaining to other notions of dimension that are applicable.
Secondly, the definitions and many of the results are valid for exponential (or other) gauge functions.
The proposed framework might hence have applications in group theory (or other areas), where most interesting
examples have super-polynomial growth.
A third important line of thoughts is the connections of unimodular dimensions to other notions of dimension.
Some first connections are discussed in Subsection~\ref{subsec:connections}.
The preprint \cite{III} discusses ongoing research
on these questions as well as further developments of these notions of dimensions.}

\subsection{Summary of the Main Definitions and Results}
\label{subsec:intro-intro}

Recall that the ordinary Minkowski dimension of a compact metric space $X$
is defined using the minimum number of balls of radius $\epsilon$ needed to cover $X$.
Now, consider a (unimodular) discrete space $\bs D$ (it is useful to have in mind the example 
$\bs D=\mathbb Z^k$ to see how the definitions work).
It is convenient to consider coverings of $\bs D$ by balls of equal but large radius. 
Of course, if $\bs D$ is unbounded, then an infinite number of balls is needed to cover $\bs D$. 
So one needs another measure to assess how many balls are used in a covering. 
Let $S\subseteq \bs D$ be the set of centers of the balls in the covering.
The idea pursued in this paper is that if $\bs D$ is unimodular, then the \textit{intensity} of $S$
is a measure of \textit{the average number of points of $S$ per points of $\bs D$}
($S$ should be \textit{equivariant} for the intensity to be defined, as discussed later).
This gives rise to the definition of the unimodular Minkowski dimension naturally.

The idea behind the definition of the unimodular Hausdorff dimension is similar.
Recall that the $\alpha$-dimensional \textit{Hausdorff content} of a compact metric space $X$ is
defined by considering the infimum of $\sum_i R_i^{\alpha}$, where the $R_i$'s are the radii of a
sequence of balls that cover $X$. Also, it is convenient to enforce an upper bound on the radii.
Now, consider a unimodular discrete space $\bs D$ and a covering of $\bs D$ by balls which
may have different radii. Let $R(v)$ be the radius of the ball centered at $v$.
It is convenient to consider a lower bound on the radii, say $R(\cdot)\geq 1$.
Again, if $\bs D$ is unbounded, then $\sum_v R(v)^{\alpha}$ is always infinite.
The idea is to leverage the unimodularity of $\bs D$ and to consider
\textit{the average of the values $R(\cdot)^{\alpha}$ per point}
as a replacement of the sum. Under the unimodularity assumption, this can be defined
by $\omid{R(\bs o)^{\alpha}}$, where $\bs o$ stands for the distinguished point of $\bs D$
(called the origin) and where, by convention, $R(\bs o)$ is zero if there is no ball centered at $\bs o$.
This is used to define the \emph{unimodular Hausdorff dimension} of $\bs D$ in a natural way.

The \formir{volume} growth rate of the space is the polynomial growth rate of $\card{N_r(\bs o)}$,
where $N_r(\bs o)$ represents the closed ball of radius $r$ centered at the \rooot{}
and $\card{N_r(\bs o)}$ is the number of points in this ball.
It is shown that the upper and lower \formir{volume} growth rates of $\card{N_r(\bs o)}$ (i.e., limsup and liminf
of $ {\log(\card{N_r(\bs o)})}/{\log r}$ as $r\rightarrow \infty$) provide upper and lower bound
for the unimodular Hausdorff dimension, respectively. This is a discrete analogue of Billingsley's
lemma (see e.g., \cite{bookBiPe17}). 
A discrete analogue of the \textit{mass distribution principle} is also provided, which is
useful to derive upper bounds on the unimodular Hausdorff dimension. In the Euclidean case 
(i.e., for point-stationary point processes equipped with the Euclidean metric), it is shown
that the unimodular Minkowski dimension is bounded from above by the polynomial decay rate
of $\omid{1/ \card{N_n(\bs o)}}$. Weighted versions of these inequalities, where a weight
is assigned to each point, are also presented. As a corollary, a weak form of Birkhoff's
pointwise ergodic theorem is established for all unimodular discrete spaces.
These results are very useful for calculating the
unimodular dimensions in many examples. {\formir{An important result is an
analogue of Frostman's lemma}}. Roughly speaking, this lemma states that the mass distribution
principle is sharp if the weights are chosen appropriately.
This lemma is a powerful tool to study the unimodular Hausdorff dimension.
In the Euclidean case, another proof of Frostman's lemma is provided using a version
of the max-flow min-cut theorem for unimodular one-ended trees, which is of independent interest.

Depending on whether one defines the unimodular Minkowski dimension as the
decay rate or the growth rate of the optimal intensity of the coverings by balls of
radius $r$, one gets positive or negative dimensions. The present paper adopts the
convention of positive dimensions for the definitions of both the unimodular Minkowski
and Hausdorff dimensions, despite some mathematical arguments in favor of negative
dimensions. Further discussion on the matter is provided in Subsection~\ref{subsec:negative}.

\subsection{Organization of the Material}
\label{subsec:summary}

Section~\ref{sec:unimodular} defines unimodular discrete spaces and \textit{equivariant processes},
which are needed throughout. Section~\ref{sec:dimension} presents the definitions of the
unimodular Minkowski and Hausdorff dimensions and the unimodular Hausdorff
{\formir{size}}. It also provides some basic properties of these unimodular dimensions as part of the toolbox for
the analysis of unimodular dimensions. Various examples are discussed in Section~\ref{sec:examples}.
These examples are used throughout the paper. 
Section \ref{sec:volumeGrowth} is focused on the connections with \formir{volume} growth rates and contains
the statements and proofs of the unimodular Billingsey lemma and of the mass distribution principle.
The unimodulat Frostman lemma is discussed in Section \ref{sec:frostman}.
Section \ref{sec:examples2} completes the analysis of the examples discussed in Section \ref{sec:examples}
and also discusses new examples for further illustration of the results.
\formir{Section~\ref{s:MiTo} discusses further topics on the matter. 
This includes a discussion of the connections to earlier notions of dimensions for discrete sets,
in particular those proposed by Barlow and Taylor in \cite{BaTa89,BaTa92}, as well as a discussion
on negative dimensions. A collection of conjectures and open problems are also listed in this section.}


\section{Unimodular Discrete Spaces}
\label{sec:unimodular}

The main objective of this section is the definition of unimodular discrete spaces
as a common generalization of unimodular graphs, {Palm probabilities} and point-stationary point processes. 
{If the reader is familiar with unimodular random graphs, he or she can restrict attention to the case of unimodular graphs and jump to Subsection~\ref{subsec:process} at first reading.}

\subsection{Notation \formir{and Definitions}}
\label{subsec:notations}
The following notation will be used throughout. The set of nonnegative real (resp. integer) numbers 
is denoted by $\mathbb R^{\geq 0}$ (resp. $\mathbb Z^{\geq 0}$). The minimum and maximum binary operators 
are denoted by $\wedge$ and $\vee$ respectively. The number of elements in a set $A$ is denoted by $\card{A}$,
which is a number in $[0,\infty]$. 
If $P(x)$ is a property about $x$, the indicator $\identity{\{P(x)\}}$ is equal to 1 if $P(x)$ is true and 0 otherwise.

Discrete metric spaces (discussed in details in Subsection~\ref{subsec:D_*}) are denoted by $D$, $D'$, etc. 
Graphs are an important class of discrete metric spaces. So the symbols and notations are mostly borrowed
from graph theory.

For $r>0$, ${N_r(v):=}N_r(D,v)$ denotes the closed $r$-neighborhood of $v\in D$; i.e.,
the set of points of $D$ with distance less than or equal to $r$ from $v$.
An exception is made for $r=0$ (Subsection~\ref{subsec:HausdorffDim}),
where $N_0(v):=\emptyset$.  The diameter of a subset $A$ is denoted by $\diam(A)$.
For a function $f:[1,\infty)\rightarrow \mathbb R^{\geq 0}$, 
the \emph{polynomial growth rates} and \emph{polynomial decay rates} are defined by the following formulas: 
\begin{eqnarray*}
	\growthl{f} &:=& -\decayu{f}:= \liminf_{r\rightarrow\infty} {\log f(r)}/{\log r},\\
	\growthu{f} &:=& -\decayl{f}:= \limsup_{r\rightarrow\infty} {\log f(r)}/{\log r},\\
	\growth{f} &:=& -\decay{f}:= \lim_{r\rightarrow\infty} {\log f(r)}/{\log r}.\\
\end{eqnarray*}
  
\formir{
\begin{definition}
        \label{def:bias}
        Let $\mu$ be a probability measure on a measurable space $X$ and $w:X\rightarrow\mathbb R^{\geq 0}$
        be a measurable function. Assume $0<c:=\int_{X}w(x) d\mu(x)<\infty$. By \defstyle{biasing $\mu$ by $w$}
        we mean the probability measure $\nu$ on $X$ defined by
        \[
        \nu(A):= \frac 1 c \int_A w(x)d\mu(x).
        \]
\end{definition}
}

\subsection{The Space of Pointed Discrete Spaces}
\label{subsec:D_*}

Throughout the  paper, the metric on any metric space is  denoted by $d$, 
except when explicitly mentioned. In this paper, 
it is always assumed that the discrete metric space\formir{s under study are}
\defstyle{boundedly finite}; i.e., every  set included in a ball of finite radius in $D$
is finite \formir{(note that this is stronger than being locally-finite)}.
\formir{This implies that the metric space is indeed discrete; i.e., every point is isolated.}
The term \defstyle{discrete space} will always refer to boundedly finite discrete metric space.
A \defstyle{\rooted{} set} (or a \textit{rooted set}) is a pair $(D,o)$, where $D$ is a set and $o$ 
a distinguished point of $D$ called the \defstyle{origin} (or the \textit{root}) of $D$. 
Similarly, a \defstyle{doubly-\rooted{}} set is a triple $(D,o_1,o_2)$,
where $o_1$ and $o_2$ are two distinguished points of $D$.

Let $\Xi$ be a complete separable metric space called the \defstyle{mark space}.
A \defstyle{marked pointed discrete space} is a tuple $(D,o;m)$,
where $(D,o)$ is a pointed discrete space and $m$ is a function $m :D\times D\rightarrow\Xi$.
The mark of a single point $x$ may also be defined by $m(x):=m(x,x)$,
where the same symbol $m$ is used for simplicity.
An \defstyle{isomorphism} {(or \textit{rooted isomorphism})} between two such spaces
$(D,o;m)$ and $(D',o';m')$ is an isometry $\rho:D\rightarrow D'$
such that $\rho(o)=o'$ and $m'(\rho(u),\rho(v))=m(u,v)$ for all $u,v\in D$.
An isomorphism between doubly-\rooted{} marked discrete spaces is defined similarly.
An isomorphism from a space to itself is called an \defstyle{automorphism}.

Most of the examples of discrete spaces in this work are graphs or discrete subsets
of the Euclidean space. More precisely, connected and locally-finite simple graphs
equipped with the graph-distance metric \cite{processes} are instances of discrete spaces.
Similarly, \textit{networks}; i.e., graphs equipped with marks on the edges \cite{processes},
are instances of marked discrete spaces.

Let $\mathcal D_*$ (resp. $\mathcal D_{**}$) be the set of equivalence classes of 
pointed (resp. doubly-pointed) discrete spaces under isomorphism.
Let $\mathcal D'_*$ and $\mathcal D'_{**}$ be
defined similarly for marked discrete spaces with mark space $\Xi$ (which is usually {given}).
The equivalence class containing $(D,o)$, $(D,o;m)$ etc., is denoted by brackets $[D,o]$, $[D,o;m]$, etc. 

Every element of $\mathcal D_*$ can be regarded as a \textit{boundedly-compact}
measured metric space (where the measure is the counting measure). Therefore, the generalization
of the \textit{Gromov-Hausdorff-Prokhorov} metric in~\cite{Kh19ghp} defines a metric on $\dstar$.
By using the results of~\cite{Kh19ghp}, one can show that $\dstar$ is a Borel subset of some
complete separable metric space. The proof of this result is {given in Appendix~\ref{ap:metric}}.
Similarly, one can show that $\dstarr, \dstar'$ and $\dstarr'$ are Borel subsets of
some complete separable metric spaces.
This enables one to define \textit{random pointed discrete spaces}, etc.,
which are discussed in the next subsection.

\subsection{Random \Rooted{} Discrete Spaces}
\label{subsec:random}

\begin{definition}
\label{def:randomRooted}
A \defstyle{random \rooted{} discrete space} is a random element in $\dstar$ and
is denoted by bold symbols $[\bs D,\bs o]$. Here, $\bs D$ and $\bs o$ represent
the discrete space and the \rooot{} respectively.
\end{definition}

\formir{In this paper, the probability space is not referred to
explicitly\footnote{\formir{Indeed, one may regard $\dstar$, equipped with a probability measure, as the canonical probability space. The last paragraph of Subsection~\ref{subsec:D_*} ensures that this is a standard probability space, and hence, the classical tools of probability theory are available. One may also define a random pointed discrete space as a measurable function from some standard probability space to $\dstar$.}}. The main reason is that the notions of dimension, to be defined, depend only on the distribution of the random object. Also, extra randomness will be considered frequently and it is easier to forget about the probability space. By an abuse of notation,
the symbols $\mathbb P$ and $\mathbb E$ are used for all random objects, possibly
living in different spaces. They refer to probability and expectation with respect
to the random object under consideration.}


Note that the whole symbol $[\bs D, \bs o]$ represents one random object, which is a random equivalence
class of \rooted{} discrete spaces. Therefore, any formula using $\bs D$ and $\bs o$ should be well
defined for equivalence classes; i.e., it should be invariant under \rooted{} isomorphisms. 

The following convention is helpful throughout.
\begin{convention}
In this paper, bold symbols are usually used in the random case or when extra randomness is used.
For example, $[D,o]$ refers to a deterministic element of $\dstar$ and $[\bs D, \bs o]$ refers to a random \rooted{} discrete space. 
\end{convention}

Note 
that the distribution of a random \rooted{} network $[\bs D, \bs o]$ is a probability
measure on $\dstar$ defined by $\mu(A):=\myprob{[\bs D, \bs o]\in A}$ for events $A\subseteq \dstar$.

\begin{definition}
	\label{def:randomSpaceMarked}
	A \defstyle{random \rooted{} marked discrete space} is a random element in $\mathcal D'_*$ and 
	is denoted by bold symbols $[\bs D,\bs o; \bs m]$. Here, $\bs D$, $\bs o$ and $\bs m$ represent the
        discrete space, the \rooot{} and the mark function respectively.
\end{definition}

{Most examples in this work are {either} random rooted graphs {(or networks)} \cite{processes} {or} point processes (i.e., random discrete subset
of $\mathbb R^k$) and marked point processes that contain 0, where 0 is considered as the \rooot{}. Other examples are also studied by considering different metrics on such objects. 
}

\subsection{Unimodular Discrete Spaces}
\label{subsec:unimodular}

{Once the notion of random pointed discrete space is defined, the definition of unimodularity is a straight variant of~\cite{processes}. In what follows, the notation is similarly to~\cite{eft}.}
Here, the symbol $g[D,o,v]$ is used as a short form of $g([D,o,v])$.
Similarly, brackets $[\cdot]$ are used as a short form of $([\cdot])$.
\begin{definition} 
	\label{def:unimodular}
	A \defstyle{unimodular discrete space} is a random \rooted{} discrete space, namely $[\bs D, \bs o]$, such that for 
	all measurable functions $g:\dstarr\rightarrow \mathbb R^{\geq 0}$,
	\begin{equation} 
		\label{eq:unimodular}
		\omid{\sum_{v\in \bs D}g[\bs D, \bs o, v]}=\omid{\sum_{v\in \bs D}g[\bs D, v, \bs o]}.
	\end{equation}
	Similarly, a \defstyle{unimodular marked discrete space} is a random \rooted{} marked discrete space
        $[\bs D, \bs o; \bs m]$ such that for all measurable functions $g:\mathcal D'_{**}\to\mathbb R^{\geq 0}$,
	\begin{equation} 
		\label{eq:unimodularMarked}
		\omid{\sum_{v\in \bs D}g[\bs D, \bs o, v;\bs m]}=\omid{\sum_{v\in \bs D}g[\bs D, v, \bs o; \bs m]}.
	\end{equation}
	Note that the expectations may be finite or infinite. 
	
\end{definition}

When there is no ambiguity, the term $g[D,o,v]$ is also denoted by $g_D(o,v)$ {or simply $g(o,v)$}.
The sum in the left (respectively right) side of~\eqref{eq:unimodular} is called the 
\defstyle{outgoing mass from $\bs o$} (respectively 
\defstyle{incoming mass into $\bs o$}) and is denoted by 
$g^+(\bs o)$ (respectively $g^-(\bs o)$). The same notation can be used for the
terms in~\eqref{eq:unimodularMarked}.
So \eqref{eq:unimodular} and~\eqref{eq:unimodularMarked} can be summarized by
\[
	\omid{g^+(\bs o)}=\omid{g^-(\bs o)}.
\]
These equations are called the \defstyle{mass transport principle} in the literature.
The reader will find further discussion on the mass transport principle and unimodularity
in~\cite{processes} and the examples therein.

As a basic example, every finite metric space $D$, equipped with a random root
$\bs o\in D$ chosen uniformly, is unimodular. Also, the lattices of the Euclidean space
rooted at 0; e.g., $[\mathbb Z^k,0]$ and $[\delta\mathbb Z^k,0]$, are unimodular.
In addition, unimodularity is preserved under weak convergence, as observed in~\cite{BeSc01}
for unimodular graphs.

The following two examples show that unimodular discrete spaces unify unimodular graphs and point-stationary point processes. Most of the examples in this work are of these types.

\begin{example}[Unimodular Random Graphs]
In the case of random rooted graphs and networks, the concept of unimodularity
in Definition~\ref{def:unimodular} coincides with that of~\cite{processes} 
(see also {Remark~\ref{rem:topology}} regarding the topologies). Therefore, unimodular
random graphs and networks are special cases of unimodular (marked) discrete spaces.
\end{example}

\begin{example}[Point-Stationary Point Processes]
\label{ex:point-stationary}
\defstyle{Point-stationarity} is defined for point processes $\Phi$ in $\mathbb R^k$ such
that $0\in\Phi$ a.s. (see e.g., \cite{ThLa09}). This definition is equivalent to~\eqref{eq:unimodular},
except that $g$ is required to be invariant under translations only (and not under all isometries).
This implies that $[\Phi,0]$ is unimodular. In addition, by considering the mark $\bs m(x,y):=y-x$
on pairs of points of $\Phi$, point-stationarity of $\Phi$ will be equivalent to the unimodularity
of $[\Phi,0;\bs m]$ (see also {Remark~\ref{rem:topology}} regarding the topologies).
Note also that $\Phi$ can be recovered from $[\Phi,0;\bs m]$.
\\
For example, if $\Phi$ is a stationary point process in $\mathbb R^k$
(i.e., its distribution is invariant under all translations), with finite intensity
(i.e., a finite expected number of points in the unit cube),
then the \textit{Palm version} of $\Phi$ is a point-stationary point process,
where the latter is heuristically obtained by conditioning $\Phi$
to contain the origin (see {e.g., Section~13 of~\cite{bookDaVe03II}} for the precise definition).
Also, if $(X_n)_{n\in\mathbb Z}$ is a stochastic process in $\mathbb R^k$ with stationary
increments such that $X_0=0$ and $X_i\neq X_j$ a.s. for every $i\neq j$,
then the image of this random walk is a point-stationary point process.
\end{example}

\subsection{Equivariant Process on a Unimodular Discrete Space}
\label{subsec:process}

In many cases in this paper, an unmarked unimodular discrete space $[\bs D, \bs o]$
is given and various ways of assigning marks to $\bs D$ are considered.
Intuitively, an \textit{equivariant process} on {$\bs D$} is an assignment
of (random) marks to $\bs D$ such that the new marked space is unimodular. 
Formally, it is

\begin{center}
	\begin{minipage}{.85\textwidth}
\emph{a unimodular marked discrete space $[\bs D', \bs o'; \bs m]$ such that the space
$[\bs D', \bs o']$, obtained by forgetting the marks, has the same distribution as $[\bs D, \bs o]$.}
	\end{minipage}
\end{center}
In this paper, it is more convenient to work with a disintegrated form of this heuristic, defined below.
It can be proved that the two notions are equivalent, but the proof is skipped for brevity
(this claim is similar to \textit{invariant disintegration} for group actions). \formir{The easy part of the claim is Lemma~\ref{lem:equivProcess} below.
	For the other direction, see} {Proposition~\ref{prop:equivProcessConverse}}.

In the following, the mark space $\Xi$ is fixed as in Subsection~\ref{subsec:D_*}.
	
\begin{definition}
\label{def:marking}
Let $D$ be a deterministic discrete space which is boundedly-finite. A \defstyle{marking} of $D$
is a function from $D\times D$ to $\Xi$; i.e., an element of $\Xi^{D\times D}$.
A \defstyle{random marking} of $D$ is a random element of $\Xi^{D\times D}$.
\end{definition}

\begin{definition} 
\label{def:equivProcess}
An \defstyle{equivariant process} $\bs Z$ with values in $\Xi$ is 
a map that assigns to every deterministic discrete space $D$ a random marking $\bs Z_D$
of $D$ satisfying the following properties:
\begin{enumerate}[(i)]
\item \label{def:equivProcess:1} $\bs Z$ is compatible with {isometries} in the sense that
for every {isometry} $\rho:D_1\rightarrow D_2$, the random marking
$\bs Z_{D_1} \circ \rho^{-1}$ of ${D_2}$ has the same distribution {as} $\bs Z_{D_2}$.
\item \label{def:equivProcess:2}
For every measurable subset $A\subseteq D'_{*}$, the following function on $\mathcal D_*$ is measurable:
\[ [D,o]\mapsto \myprob{[D,o;\bs Z_D] \in A}. \]
\end{enumerate}
In addition, given a unimodular discrete space  $[\bs D, \bs o]$, such a map is also
called an \defstyle{equivariant process {on $\bs D$}}. In this case, one can also let
$\bs Z_{(\cdot)}$ be undefined for a class of discrete spaces, as long as it is defined
for almost all realizations of $\bs D$. It is important that extra randomness be allowed here.
\end{definition}

\begin{convention}
If $D$ is clear from the context, $\bs Z_{D}(\cdot)$ is also denoted by $\bs Z(\cdot)$ for simplicity.
\end{convention}

Note that in the above definition, $D$ is deterministic and is not an equivalence 
class of discrete spaces. However, for an equivariant process on $[\bs D, \bs o]$, 
one can define $[\bs D, \bs o; \bs Z_{\bs D}]$ as a random \rooted{} marked discrete
space with distribution $\mathcal Q$ \formir{(on $\mathcal D'_*$)} defined by
\begin{equation}
\label{eq:equivProcessDistribution}
\mathcal Q(A):= \int \int \identity{A}[D,o;m]{\mathrm d}\mathcal P_D(m) d\mu([D,o]),\quad \formir{\forall A\subseteq \mathcal D'_*,}
\end{equation}
where $\mathcal P_D$ is the distribution of $\bs Z_D$ (for every $D$) and $\mu$ is the
distribution of $[\bs D, \bs o]$ (note that only the distribution of $\bs Z_D$ is important
\formir{here and it doesn't matter which probability space is used for $\bs Z_D$}). {It can be seen that $\mathcal Q(A)$ is indeed well defined and is a probability measure on $\mathcal D'_*$.} \formir{As mentioned before, the probabilities and expectations to be used for $\bs Z_D$ and $[\bs D, \bs o; \bs Z_{\bs D}]$ will be denoted by the same symbols $\mathbb P$ and $\mathbb E$; e.g., $\myprob{\bs Z_D\in B}$, $\myprob{[D,o;\bs Z_D]\in A}$ and $\omid{f[\bs D, \bs o; \bs Z_{\bs D}]}$. The symbols $\mathcal P_D$ and $\mathcal Q$ will not be used in what follows.}

The following basic examples help to illustrate the definition.
\begin{example}
\label{ex:equiv-basic}
By choosing the marks of points (or pairs of points) in an i.i.d. manner, one obtains an equivariant process.
Also, the following periodic marking of $\mathbb Z$ is an equivariant process on $\mathbb Z$: 
Choose $\bs U\in\{0,1,\ldots,n-1\}$ uniformly at random and let $\bs Z_{\mathbb Z}(x):=1$
if $x\in n\mathbb Z+\bs U$ and $\bs Z_{\mathbb Z}(x):=0$ otherwise. Moreover, given a measurable
function $z:\dstarr\to \Xi$, one can define $\bs Z_D(u,v):=z[D,u,v]$, which will be called a
\textit{deterministic process} here.
\end{example}

\begin{lemma}
\label{lem:equivProcess}
Let $[\bs D, \bs o]$ be a unimodular discrete space. If $\bs Z$ is an
equivariant process on $\bs D$, then $[\bs D, \bs o; \bs Z_{\bs D}]$ is also unimodular.
\end{lemma}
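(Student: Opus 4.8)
The plan is to verify the marked mass transport principle~\eqref{eq:unimodularMarked} for $[\bs D, \bs o; \bs Z_{\bs D}]$ by conditioning on the un-marked space and integrating out the marks, thereby reducing to the mass transport principle~\eqref{eq:unimodular}, which is available for $[\bs D, \bs o]$. Fix a measurable $g:\mathcal D'_{**}\to\mathbb R^{\geq 0}$ and, for a deterministic discrete space $D$ and $u,v\in D$, set
\[
	h[D,u,v]:=\int g[D,u,v;m]\,d\mathcal P_D(m),
\]
where $\mathcal P_D$ is the distribution of $\bs Z_D$ on the set of markings of $D$. The first step is to check that $h$ descends to a well-defined function on $\mathcal D_{**}$: if $\rho:D_1\to D_2$ is an isomorphism, then by Condition~\eqref{def:equivProcess:1} of Definition~\ref{def:equivProcess} the law of $\bs Z_{D_1}\circ\rho^{-1}$ equals $\mathcal P_{D_2}$, and since $\rho$ is a marked isomorphism from $(D_1,u,v;m)$ onto $(D_2,\rho(u),\rho(v);m\circ\rho^{-1})$, the substitution $m\mapsto m\circ\rho^{-1}$ in the integral gives $h[D_1,u,v]=h[D_2,\rho(u),\rho(v)]$, as required.

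Granting for the moment that $\mathcal Q$ as defined in~\eqref{eq:equivProcessDistribution} is a probability measure and that $h$ is measurable on $\mathcal D_{**}$, the main computation runs as follows. Writing the outgoing mass as a function of $[\bs D,\bs o;\bs Z_{\bs D}]$, whose law is $\mathcal Q$, and then applying~\eqref{eq:equivProcessDistribution} together with Tonelli's theorem (all integrands are nonnegative),
\begin{align*}
	\omid{\sum_{v\in\bs D}g[\bs D,\bs o,v;\bs Z_{\bs D}]}
	&=\int\!\!\int\sum_{v\in D}g[D,o,v;m]\,d\mathcal P_D(m)\,d\mu([D,o])\\
	&=\int\sum_{v\in D}h[D,o,v]\,d\mu([D,o])
	=\omid{\sum_{v\in\bs D}h[\bs D,\bs o,v]},
\end{align*}
and, reading the first two arguments of $g$ in the opposite order and using that $h[D,v,o]=\int g[D,v,o;m]\,d\mathcal P_D(m)$ by the very definition of $h$ (the measure $\mathcal P_D$ not depending on the roots),
\[
	\omid{\sum_{v\in\bs D}g[\bs D,v,\bs o;\bs Z_{\bs D}]}
	=\int\sum_{v\in D}h[D,v,o]\,d\mu([D,o])
	=\omid{\sum_{v\in\bs D}h[\bs D,v,\bs o]}.
\]
Since $h$ is nonnegative and measurable on $\mathcal D_{**}$, the mass transport principle~\eqref{eq:unimodular} applied to $[\bs D,\bs o]$ gives $\omid{\sum_v h[\bs D,\bs o,v]}=\omid{\sum_v h[\bs D,v,\bs o]}$, and combining the three displays yields~\eqref{eq:unimodularMarked}.

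It remains to dispose of the two deferred measurability points. That $\mathcal Q$ is a probability measure is routine: the inner integral $\int\identity{A}[D,o;m]\,d\mathcal P_D(m)$ makes sense because $m\mapsto[D,o;m]$ is measurable into $\mathcal D'_*$, it is a measurable function of $[D,o]$ by Condition~\eqref{def:equivProcess:2}, countable additivity follows from monotone convergence, and taking $A=\mathcal D'_*$ gives $\mathcal Q(\mathcal D'_*)=\mu(\dstar)=1$. The measurability of $h$ on $\mathcal D_{**}$ is the one step that requires genuine care, and it is the expected obstacle. By approximating $g$ from below by simple functions and using monotone convergence, it suffices to treat $g=\identity{A}$ with $A\subseteq\mathcal D'_{**}$ measurable, i.e.\ to show that $[D,o,v]\mapsto\myprob{[D,o,v;\bs Z_D]\in A}$ is measurable on $\mathcal D_{**}$; this is a doubly-\rooted{} analogue of Condition~\eqref{def:equivProcess:2}. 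I would derive it from the singly-\rooted{} statement by unwinding the definitions: the possible choices of the second root $v$ lie in the finite sets $N_r(o)$, $r\in\mathbb N$, which can be enumerated in a way measurable in $[D,o]$ (up to the harmless ambiguity created by automorphisms fixing $o$), after which a monotone class argument on $A$ — equivalently, a direct check on the generating balls of the $\kappa$-metric on $\mathcal D'_{**}$ — finishes the proof. This is purely a bookkeeping matter with no probabilistic content, but it is the part of the argument that must be written carefully.
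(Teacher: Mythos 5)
Your proof follows the same route as the paper's: define the averaged transport function $h[D,u,v]=\int g[D,u,v;m]\,d\mathcal P_D(m)$ (the paper writes $g$ for your $h$ and $f$ for your $g$), swap sum and integral by Tonelli, and then invoke the unmarked mass transport principle~\eqref{eq:unimodular} for $h$. The one place where you go further than the paper is in spelling out the measurability requirements: the paper asserts without elaboration that the relevant quantities are measurable, whereas you correctly observe that Condition~\eqref{def:equivProcess:2} of Definition~\ref{def:equivProcess} only guarantees measurability of $[D,o]\mapsto\myprob{[D,o;\bs Z_D]\in A}$ on $\mathcal D_*$, while the mass transport principle needs $h$ to be measurable on $\mathcal D_{**}$. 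Your sketch of how to upgrade this (enumerate the second root over the finite balls $N_r(o)$, then a monotone class argument) is sound in outline, though as you say it is a bookkeeping step that would need careful writing. This is a legitimate refinement rather than a different proof, and it flags a point the paper treats as obvious.
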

The proof is straightforward and {is given in Appendix~\ref{ap:lemmas}}. The converse of this claim also holds
({Proposition~\ref{prop:equivProcessConverse}}). It is important here to assume that the distribution of $\bs Z_D$ does not depend on the \rooot{}
(as in Definition~\ref{def:equivProcess}).

\begin{remark}
\label{rem:equivProcessExtraMarks}
One can easily extend the definition of equivariant processes to allow the base space to be marked. 
Therefore, for point-stationary point processes, one can replace condition~\eqref{def:equivProcess:1}
by invariance under translations only (see Example~\ref{ex:point-stationary}).
In particular, every stationary stochastic process on $\mathbb Z^k$ defines an
equivariant process on $\mathbb Z^k$.  
\end{remark}

\begin{definition}
\label{def:subset}
An \defstyle{equivariant subset} $\bs S$ is the set of points with mark 1 in some
$\{0,1\}$-valued equivariant process. In addition, if $[\bs D, \bs o]$ is a unimodular discrete space,
then the \defstyle{intensity} of $\bs S$ in
$\bs D$ is defined by $\intensity{\bs D}{\bs S}:=\myprob{\bs o\in \bs S_{\bs D}}$. 
\end{definition} 
For example, $\bs S_D:=\{v\in D: \card{N_1(v)}=4\}$ defines an equivariant subset. 
Also, let $D=\mathbb Z$ and $\bs S_D$ be the set of even numbers with probability $p$ and 
the set of odd numbers with probability $1-p$. Then, $\bs S$ is an equivariant subset
of $\mathbb Z$ if and only if $p=\frac 1 2$ 
(notice Condition~\eqref{def:equivProcess:1} of Definition~\ref{def:equivProcess}).

\begin{lemma}
\label{lem:happensAtRoot}
Let $[\bs D, \bs o]$ be a unimodular discrete space and $\bs S$ an equivariant subset. Then
$\bs S_{\bs D}\neq \emptyset$ with positive probability if and only if it has positive intensity.
Equivalently, $\bs S_{\bs D}=\bs D$ a.s. if and only if $\intensity{\bs D}{\bs S}=1$.
\end{lemma}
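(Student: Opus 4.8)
The plan is to deduce everything from the mass transport principle~\eqref{eq:unimodularMarked} applied to the marked unimodular space $[\bs D, \bs o; \bs Z_{\bs D}]$, where $\bs Z_{D}(u) = \identity{\bs S_D}(u)$; by Lemma~\ref{lem:equivProcess} this is indeed unimodular. The one nontrivial direction is: if $\bs S_{\bs D} \neq \emptyset$ with positive probability, then $\probPalm{}{} := \intensity{\bs D}{\bs S} = \myprob{\bs o \in \bs S_{\bs D}} > 0$; the converse is trivial since $\{\bs o \in \bs S_{\bs D}\} \subseteq \{\bs S_{\bs D} \neq \emptyset\}$. For the second (equivalent) formulation, apply the first statement to the complementary equivariant subset $\bs S^c$, whose indicator is $1 - \bs Z_{(\cdot)}$ and hence also an equivariant process: $\bs S_{\bs D} = \bs D$ a.s.\ iff $\bs S^c_{\bs D} = \emptyset$ a.s.\ iff $\bs S^c$ has zero intensity iff $\intensity{\bs D}{\bs S} = 1$.

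For the main direction, the idea is a standard ``nearest-point'' mass transport. Each point $v \in \bs D$ wants to send a unit of mass to a point of $\bs S_{\bs D}$; by unimodularity the total mass out equals the total mass in, forcing $\bs S_{\bs D}$ to receive mass, hence to contain the origin with positive probability. Concretely, on the event $\bs S_{\bs D} \neq \emptyset$, for each $v$ choose a point $\sigma(v) \in \bs S_{\bs D}$ that is closest to $v$ (ties broken in some measurable, isomorphism-equivariant fashion — e.g.\ using extra i.i.d.\ marks as in Example~\ref{ex:iid}, or simply noting that the set of closest points is finite and one may transport mass $1/k$ to each of the $k$ closest points, which avoids any tie-breaking issue). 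Define
\[
g[\bs D, \bs o, v; \bs Z_{\bs D}] := \indic{\bs S_{\bs D} \neq \emptyset}\,\frac{\indic{v \text{ is among the closest points of } \bs S_{\bs D} \text{ to } \bs o}}{\card{\{\text{closest points of } \bs S_{\bs D} \text{ to } \bs o\}}}.
\]
This $g$ is measurable and isomorphism-invariant, depends on $\bs D$ only through the metric and the mark $\bs Z_{\bs D}$ encoding $\bs S_{\bs D}$, so~\eqref{eq:unimodularMarked} applies. The outgoing mass $g^+(\bs o)$ equals $\indic{\bs S_{\bs D}\neq\emptyset}$, which is a $\{0,1\}$-valued random variable with $\omid{g^+(\bs o)} = \myprob{\bs S_{\bs D} \neq \emptyset} > 0$. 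Therefore $\omid{g^-(\bs o)} > 0$, so with positive probability $g^-(\bs o) > 0$; but $g^-(\bs o) > 0$ forces $\bs o$ to be a closest point of $\bs S_{\bs D}$ to some $v$, which in particular requires $\bs o \in \bs S_{\bs D}$. Hence $\myprob{\bs o \in \bs S_{\bs D}} > 0$, as desired.

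The main obstacle — really the only point requiring care — is the measurability and equivariance of the transport map, i.e.\ producing a legitimate nonnegative measurable $g$ on $\mathcal D'_{**}$. The ``$1/k$ to each closest point'' device sidesteps arbitrary choices, and boundedly-finiteness of $\bs D$ guarantees $k < \infty$ and that the infimum distance from $\bs o$ to $\bs S_{\bs D}$ is attained whenever $\bs S_{\bs D} \neq \emptyset$ (a ball of large enough radius around $\bs o$ meets $\bs S_{\bs D}$ and is finite). One should also note the degenerate possibility $\bs S_{\bs D}$ nonempty but $\bs D$ itself a single point; there $\bs o \in \bs S_{\bs D}$ trivially, consistent with the argument. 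A small remark: if $\omid{g^+(\bs o)}$ were somehow infinite the argument via expectations would still be fine since here it is bounded by $1$; and the generalization claimed over Lemma~2.6 of~\cite{eft} and Lemma~2.3 of~\cite{processes} is exactly that we no longer assume $\bs D$ is a graph, only a boundedly-finite discrete space, which the above proof respects since it uses only the metric structure.
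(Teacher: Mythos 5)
Your proof is correct, and it genuinely differs from the paper's. The paper's proof (given as a one-liner with details left to the reader) uses the far simpler transport $g[D,u,v;S]:=\identity{\{v\in S\}}$, for which $g^+(\bs o)=\card{\bs S_{\bs D}}$ and $g^-(\bs o)=\card{\bs D}\cdot\identity{\{\bs o\in\bs S_{\bs D}\}}$; the mass transport principle then reads $\omid{\card{\bs S_{\bs D}}}=\omid{\card{\bs D}\,\identity{\{\bs o\in\bs S_{\bs D}\}}}$, and positivity of the left side forces $\myprob{\bs o\in\bs S_{\bs D}}>0$. Your nearest-point transport (with the $1/k$ split over ties) gets to the same place but with $g^+(\bs o)=\identity{\{\bs S_{\bs D}\neq\emptyset\}}$ and $g^->0\Rightarrow\bs o\in\bs S_{\bs D}$. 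Both are valid; the paper's choice is leaner because it needs no nearest-point search, no attainment-of-infimum argument, and no tie-splitting device, whereas yours requires bounded finiteness explicitly to guarantee the infimum is attained and the set of minimizers is finite (you do address this). What your version buys is the cleaner identity $\omid{g^+(\bs o)}=\myprob{\bs S_{\bs D}\neq\emptyset}$ directly (useful if one wants a quantitative bound), and it is a reusable device (the same Voronoi-type transport appears later in the paper, e.g.\ in the proof of Proposition~\ref{prop:finite-HausMeas}). Your handling of the ``equivalently'' clause via the complementary equivariant subset is exactly what the paper intends.
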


\begin{proof}
	The claim is implied by the mass transport principle~\eqref{eq:unimodularMarked}
	for the function  $g[D,u,v;S]:=\identity{\{v\in S\}}$. The details are left to the reader.
\end{proof}

{The above lemma is a generalization of similar results in~\cite{eft} and~\cite{processes}.}

\subsection{Notes and Bibliographical Comments}


The mass transport principle was introduced in~\cite{Ha97}. The concept of unimodular graphs was
first defined for deterministic transitive graphs in~\cite{BLPS} and generalized to random rooted
graphs and networks in~\cite{processes}. 

Unimodular graphs have many analogies and connections to (Palm versions of) stationary
point processes and point-stationary point processes, as discussed in
Example~9.5 of~\cite{processes} and also in~\cite{eft} and~\cite{shift-coupling}. 
As already explained, the framework of unimodular discrete spaces introduced in this section
can be regarded as a common generalization of these concepts.

Special cases of the notion of equivariant processes have been considered in the literature.
The first formulation in Subsection~\ref{subsec:process} is considered in~\cite{processes}
for unimodular graphs. \textit{Factors of IID}~\cite{Ly17} are special cases of equivariant 
processes where the marks of the points are obtained from i.i.d. marks
(Example~\ref{ex:equiv-basic}) in an equivariant way. \textit{Covariant subsets}
and \textit{covariant partitions} of unimodular graphs are defined similarly in~\cite{eft}, 
but no extra randomness is allowed therein. In the case of stationary (marked) point processes,
the first formulation of Subsection~\ref{subsec:process} is used in the literature.
However, the authors believe that the general formulation of Definition~\ref{def:equivProcess}
is new even in those special cases.

\section{The Unimodular Minkowski and Hausdorff Dimensions}
\label{sec:dimension} 
This section presents {the} new notions of dimension for unimodular discrete spaces.
As mentioned in the introduction, the statistical homogeneity of unimodular discrete spaces
is used to define discrete \formir{analogues} of the Minkowski and Hausdorff dimensions.
Also, basic properties of these definitions are discussed.

\subsection{The Unimodular Minkowski Dimension}
\label{subsec:Miknowski}

\begin{definition}
\label{def:r-covering}
Let $[\bs D, \bs o]$ be a unimodular discrete space and {$r\geq 0$}. An \defstyle{equivariant $r$-covering}
$\bs R$ of $\bs D$ is an equivariant subset of $\bs D$ 
such that the set of balls $\{N_r(v):v\in \bs R_{\bs D}\}$ cover $\bs D$ almost surely.
Here, the same symbol $\bs R$ is used for the following equivariant process (Definition~\ref{def:equivProcess}): 
\[
	\bs R(v):= {\bs R_{\bs D}(v):=} \left\{
		\begin{array}{ll}
			r, & \text{there is a ball centered at $v$ in the covering},\\
			0, & \text{otherwise},
		\end{array}
	\right.
\]
for $v\in \bs D$. \formir{Note that an equivariant covering may use extra
randomness and is not necessarily a function of $\bs D$. This is essential in the following definition.}

Let $\mathcal C_r$ be the set of all equivariant $r$-coverings. Define 
\begin{equation}
\label{eq:I_r}
\lambda_r:=\lambda_r(\bs D):= \inf\{\text{intensity of } \bs R \text{ in } \bs D: {\bs R\in \mathcal C_r} \},
\end{equation}
where 
the intensity is defined in Definition~\ref{def:subset}.
\end{definition}
Note that $\lambda_r$ is non-increasing in terms of $r$. A smaller $\lambda_r$ heuristically
means that \textit{a smaller number of balls per point} is needed to cover $\bs D$. So define

\begin{definition}
\label{def:minkowski}
The \defstyle{upper and lower unimodular Minkowski dimensions} of $\bs D$ are defined by
\begin{eqnarray*}
	\dimMu{\bs D}&:=& {\decayu{\lambda_r}}, \\ 
	\dimMl{\bs D}&:=& {\decayl{\lambda_r}},  
\end{eqnarray*}
as $r\rightarrow \infty$.
If the decay rate of $\lambda_r$ exists, define the \defstyle{unimodular Minkowski dimension} of $\bs D$ by
\[ \dimM{\bs D}:=\decay{\lambda_r}.  \]
\end{definition}

Here are some first illustrations of the definition.

\begin{example}
\label{ex:lattice-lowerbound}
The randomly shifted lattice $\bs S_n:=(2n+1)\deltaone\mathbb Z^k-\deltaone \bs U_n$, 
where $\bs U_n\in\{-n,\ldots,n\}^k$ is chosen uniformly, is an equivariant $n$-covering of
$\mathbb Z^k$ equipped with the $l_{\infty}$ metric (other metrics can be treated similarly).
This implies that $\lambda_{n\deltaone}\leq \myprob{0\in \bs S_n} = (2n+1)^{-k}$, and hence,
$\dimMl{\deltaone\mathbb Z^k}\geq k$.
\end{example}

\begin{example}
\label{ex:finite-Minkowski}
If $\bs D$ is finite with positive probability, then it can be seen that any non-empty
equivariant subset has intensity at least $\omid{{1}/{\card{\bs D}}}$ (use the mass transport
principle when sending mass $1/{\card{\bs D}}$ from every point of the subset to every point of $\bs D$).
This implies that $\dimM{\bs D}= 0$.
\end{example}

\begin{remark}[Bounding the Minkowski Dimension]
\label{rem:boundingMinkowski}
In all examples in this work, lower bounds on the unimodular Minkowski dimension are obtained by constructing 
explicit examples of $r$-coverings. 
Upper bounds can be obtained by constructing {\textit{disjoint} or \textit{bounded}} coverings, as discussed in
Subsection~\ref{subsec:optimal} below, or by comparison with the unimodular Hausdorff dimension
defined in Subsection~\ref{subsec:HausdorffDim} below (see Theorem~\ref{thm:comparison}).
\end{remark}

\subsection{{Optimal Coverings for the Minkowski Dimension}}
\label{subsec:optimal}

\begin{definition}
\label{def:optimalCovering}
Let $[\bs D, \bs o]$ be a unimodular discrete space and {$r\geq  0$}. If the infimum in the 
definition of $\lambda_r$ \eqref{eq:I_r} is attained by an equivariant $r$-covering $\bs S$; i.e.,
$\myprob{\bs o \in \bs S_{\bs D}}=\lambda_r$, then $\bs S$ is called an \defstyle{optimal $r$-covering} for $\bs D$.
\end{definition}

\begin{theorem}
\label{thm:optimalCovering}
Every unimodular discrete space has an optimal $r$-covering for every {$r\geq 0$}.
\end{theorem}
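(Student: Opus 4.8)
The plan is to realize the infimum defining $\lambda_r$ as a limit along a sequence of equivariant $r$-coverings and then extract a limit covering using a compactness argument in the space of equivariant processes. Concretely, let $\bs R^{(n)}$ be a sequence of equivariant $r$-coverings of $\bs D$ whose intensities $\myprob{\bs o\in \bs R^{(n)}_{\bs D}}$ decrease to $\lambda_r$. Each $\bs R^{(n)}$ is an equivariant process with values in the two-point mark space $\{0,r\}$ (or equivalently $\{0,1\}$), which is compact. By Remark~\ref{rem:equivProcessEquivalence}, an equivariant process on $\bs D$ is the same data as a unimodular marked discrete space $[\bs D',\bs o';\bs m']$ whose underlying space has the distribution of $[\bs D,\bs o]$; and by Lemma~\ref{lem:equivariant-tight} (cited in that remark), since the mark space is compact, the family $\{\bs R^{(n)}\}_n$ is tight as a family of probability measures on $\mathcal D'_*$. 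Hence some subsequence converges weakly to a probability measure $\mathcal Q_\infty$ on $\mathcal D'_*$.

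The next step is to check that $\mathcal Q_\infty$ is the distribution of an equivariant process $\bs R^{(\infty)}$ on $\bs D$ which is again an $r$-covering, and that its intensity is exactly $\lambda_r$. Unimodularity passes to the weak limit (Example~\ref{ex:limit}), and by the portmanteau theorem, measurability of the map forgetting the marks shows the underlying space of $\mathcal Q_\infty$ still has the distribution of $[\bs D,\bs o]$; so $\mathcal Q_\infty$ is an equivariant process on $\bs D$. For the covering property: the event ``$\{N_r(v):v\in\bs R_{\bs D}\}$ covers $\bs D$'' should be shown to be closed (or at least a continuity set) for the relevant topology on $\mathcal D'_*$ — the key local observation is that, since $\bs D$ is boundedly finite, whether a fixed point $u$ lies in some ball $N_r(v)$ with $\bs R(v)=r$ depends only on the marks in the finite set $N_r(u)$, so on the event of $r$-similarity of large neighborhoods this is an ``open'' condition that is preserved in the limit. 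Finally, for the intensity: $\myprob{\bs o\in\bs R^{(\infty)}_{\bs D}} = \mathcal Q_\infty(\{\text{root has mark } r\})$. Since $\{0,r\}$ is discrete, the indicator of ``root has mark $r$'' is a bounded continuous function on $\mathcal D'_*$, so the intensity is the limit of the intensities of $\bs R^{(n)}$, which is $\lambda_r$. Together with the trivial inequality $\myprob{\bs o\in\bs R^{(\infty)}_{\bs D}}\geq\lambda_r$ this gives equality, so $\bs R^{(\infty)}$ is an optimal $r$-covering. The case $r=0$ is degenerate: $N_0(v)=\emptyset$, so no equivariant $0$-covering exists and $\lambda_0=\inf\emptyset$ should be handled by the convention in force (or one simply notes there is nothing to prove since the statement is about $r\geq 0$ with the understanding that $\lambda_0$ follows the convention $\inf\emptyset=+\infty$ or is excluded); I would state this edge case explicitly.

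The main obstacle I anticipate is the second step, specifically showing that the covering property is preserved under the weak limit. Tightness and weak convergence are routine once Lemma~\ref{lem:equivariant-tight} is invoked, and the intensity convergence is immediate from compactness of the mark space; but the covering condition is a \emph{global} constraint on the configuration, and one must argue carefully that it localizes. The right approach is: for each $k$, the event $A_k=\{$every point of $N_k(\bs o)$ is within distance $r$ of some $v$ with $\bs R(v)=r\}$ depends only on marks inside $N_{k+r+1/r}(\bs o)$ (using the distortion bound from Definition~\ref{def:r-embedding}), so $\mathbb P[A_k]$ is a continuity functional on $\mathcal D'_*$, and $\liminf_n \mathcal Q_n(A_k)=1$ for all $n$ large; hence $\mathcal Q_\infty(A_k)=1$ for every $k$, and intersecting over $k$ gives that $\bs R^{(\infty)}$ is a.s. a covering. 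One must be slightly cautious that ``being within distance $r$'' is a closed rather than open condition, which is why passing through the $r$-similarity structure of the metric (points at distance exactly $r$) requires either an approximation by $N_{r-\delta}(v)$ or a direct argument that the boundary case has the right measure; since radii are quantized (all equal to $r$) and the spaces are boundedly finite, this is a finite combinatorial check rather than a real difficulty, but it is the point where the proof needs genuine care.
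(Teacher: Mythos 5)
Your proposal follows the same overall strategy as the paper — take a minimizing sequence of equivariant $r$-coverings, use tightness (Lemma~\ref{lem:equivariant-tight}) to extract a weak limit, and argue that the limit is still an $r$-covering with intensity exactly $\lambda_r$ — but the crucial step of transferring the covering property to the limit is left open, as you yourself flag (``the point where the proof needs genuine care''). The paper closes this step with two devices that are absent from your argument. First, it invokes Lemma~\ref{lem:happensAtRoot}: for a unimodular space, an equivariant subset equals all of $\bs D$ a.s.\ if and only if its intensity is $1$, so it suffices to check that the \emph{root alone} is covered a.s.\ under the limit. This eliminates the entire family $A_k$ and the intersection over $k$, and with it most of the bookkeeping you anticipate. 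Second, rather than asserting that the covering event is a ``continuity set'' (which is not automatic — it depends on whether the limiting law charges the boundary), the paper sidesteps the boundary issue quantitatively: for $\epsilon>0$, take $h$ continuous, equal to $1$ on $[0,r]$ and $0$ on $[r+\epsilon,\infty)$, and set $h'(D,o;S):=1\wedge\sum_{v\in S}h(d(o,v))$; this is bounded continuous on $\mathcal D'_*$, equals $1$ a.s.\ under each $\mathcal Q_n$, hence $\omid{h'}=1$ for the limit, which gives $\bs S\cap N_{r+\epsilon}(\bs o)\neq\emptyset$ a.s.; since $\bs D$ is boundedly finite, $N_{r+\epsilon}(\bs o)\downarrow N_r(\bs o)$ as $\epsilon\downarrow 0$, giving the result.

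A further concrete issue: your suggested fix of ``approximation by $N_{r-\delta}(v)$'' goes the wrong way. The minimizing sequence consists of $r$-coverings, not $(r-\delta)$-coverings, so the $\mathcal Q_n$-measure of the \emph{strengthened} event need not be $1$, and portmanteau gives nothing. The correct approximation is from the outside by $N_{r+\epsilon}$, as above. (Alternatively, one can show that with a discrete two-point mark space the event ``root is covered'' is \emph{closed} in $\mathcal D'_*$ and apply the closed-set half of portmanteau, since $\mathcal Q_n$ gives it full measure; this works, but it is precisely the boundary verification that the $h'$ trick is designed to avoid.) In short: the architecture is right, but the proof as written has a gap at its central step, and the two simplifications the paper relies on — Lemma~\ref{lem:happensAtRoot} and the continuous approximant $h'$ — are both missing.
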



{This theorem is proved in Appendix~\ref{ap:tightness} by a tightness argument.}
In general, finding an optimal covering is difficult. In some examples, the following is easier to study.

\begin{definition}
\label{def:nearlyDisjoint}
Let $K<\infty$ {and $r\geq 0$}. An $r$-covering of $\bs D$ is {\defstyle{$K$-bounded}}
if each point of $\bs D$ is covered at most $K$ times a.s.
A sequence $(\bs R_n)_n$ of equivariant coverings of $\bs D$ is called {\defstyle{uniformly bounded}} 
if there is $K<\infty$ such that each $\bs R_n$ is $K$-bounded.
\end{definition}

\begin{lemma}
\label{lem:nearlyDisjoint}
If $\bs R$ is a $K$-bounded equivariant $r$-covering of $\bs D$, then
\begin{equation}
\label{eq:Kcover}
\frac 1K \myprob{\bs R(\bs o)\neq 0} \leq \lambda_r \leq \myprob{\bs R(\bs o)\neq 0}.
\end{equation}
So if $(\bs R_n)_n$ is a sequence of equivariant coverings which is uniformly bounded,
with $\bs R_n$ an $n$-covering for each $n\geq 1$, then
	\begin{eqnarray*}
		\dimMu{\bs D} &=& {\decayu{\myprob{\bs R_n(\bs o)\neq 0}}}, \\
		\dimMl{\bs D} &=& {\decayl{\myprob{\bs R_n(\bs o)\neq 0}}}.
	\end{eqnarray*}
\end{lemma}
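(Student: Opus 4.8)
The plan is to first establish the two-sided bound \eqref{eq:Kcover}, and then deduce the statement about decay rates by applying the definition of $\dimMu{\bs D}$ and $\dimMl{\bs D}$ to the sandwiched sequence. The upper bound $\lambda_r \leq \myprob{\bs R(\bs o)\neq 0}$ is immediate from the definition \eqref{eq:I_r} of $\lambda_r$ as an infimum over equivariant $r$-coverings, since $\bs R$ is itself such a covering and its intensity is exactly $\myprob{\bs R(\bs o)\neq 0}$ by Definition~\ref{def:subset}. The content is in the lower bound. For this I would use the mass transport principle with a transport that spreads the mass of each covering ball uniformly over the points it covers. Concretely, for a discrete space $D$ with a covering realization, define
\[
	g(D,u,v):= \frac{\identic}{\card{N_r(u)}}\,\identity{\{\bs R(u)\neq 0\}}\,\identity{\{v\in N_r(u)\}},
\]
where I would write $\identic$ as $1$ — so $g(u,v)$ is $1/\card{N_r(u)}$ when there is a ball centered at $u$ and $v$ lies in that ball, and $0$ otherwise. (Here $\card{N_r(u)}\geq 1$ always, and if $\card{N_r(u)}=\infty$ the space is not boundedly finite, so this is fine on the relevant event.) This is an equivariant process built from $\bs R$, so Lemma~\ref{lem:equivProcess} applies and $[\bs D,\bs o;(\bs R,g)]$ is unimodular; the mass transport principle \eqref{eq:unimodularMarked} gives $\omid{g^+(\bs o)}=\omid{g^-(\bs o)}$.

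Now I would compute the two sides. The outgoing mass is $g^+(\bs o)=\identity{\{\bs R(\bs o)\neq 0\}}\sum_{v\in N_r(\bs o)} 1/\card{N_r(\bs o)} = \identity{\{\bs R(\bs o)\neq 0\}}$, so $\omid{g^+(\bs o)}=\myprob{\bs R(\bs o)\neq 0}$. The incoming mass is $g^-(\bs o)=\sum_{u:\, \bs o\in N_r(u),\ \bs R(u)\neq 0} 1/\card{N_r(u)}$; each term is at most $1$ and the number of terms is the number of balls in the covering containing $\bs o$, which is at most $K$ by the $K$-boundedness assumption. Hence $g^-(\bs o)\leq K$ deterministically — but that is the wrong direction; what I actually want is an \emph{upper} bound on $g^-(\bs o)$ that is weaker, namely I want to bound $\omid{g^-(\bs o)}$ from above by $K$ times the intensity of $\bs R$. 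Indeed, $g^-(\bs o)\leq K\cdot\identity{\{\bs o \text{ is covered}\}} = K$ a.s. (the space is covered a.s.), so $\omid{g^-(\bs o)}\leq K$; but more usefully, since each summand is at most $1$ and there are at most $K$ nonzero summands, and the summands are nonzero only when $\bs o\in N_r(u)$ for some center $u$, one gets $\omid{g^-(\bs o)}\le K$. To get $\lambda_r$ into the picture I instead apply the principle to $\bs S$ an \emph{arbitrary} equivariant $r$-covering played against $\bs R$: redefine $g(D,u,v):=\identity{\{v\in \bs S_D\}}/\card{\{w\in \bs S_D: u\in N_r(w)\}}$ — the mass at $u$ distributed equally among the $\bs S$-centers covering $u$. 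Then $g^+(\bs o)=\identity{\{\bs o\in \bs S_{\bs D}\}}\cdot(\#\text{points }\bs S\text{-covers})$, whose expectation is bounded above by $K$ times $\myprob{\bs o\in\bs S_{\bs D}}$ using that $\bs S$ is $K$-bounded, while $g^-(\bs o)$ sums to exactly $1$ a.s. since $\bs S$ is a covering. This yields $1=\omid{g^-(\bs o)}=\omid{g^+(\bs o)}\le K\,\myprob{\bs o\in\bs S_{\bs D}}$, i.e. every $K$-bounded equivariant $r$-covering has intensity $\ge 1/K$; combined with the trivial upper bound this pins the intensity of $\bs R$ between $1/K$ and $1$ and, taking infimum, $\tfrac1K\myprob{\bs R(\bs o)\neq 0}\le\lambda_r$ follows after noticing $\lambda_r\le\myprob{\bs R(\bs o)\neq0}$ and rearranging — more precisely, $\lambda_r\ge 1/K\ge \tfrac1K\myprob{\bs R(\bs o)\neq 0}$ since $\myprob{\bs R(\bs o)\neq 0}\le 1$. (I will need to be slightly careful with the convention $1/\infty:=0$, exactly as in the proof of Proposition~\ref{prop:finite-Minkowski}.)

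For the second assertion, let $p_n:=\myprob{\bs R_n(\bs o)\neq 0}$. Applying \eqref{eq:Kcover} to $\bs R_n$ (an $n$-covering, all $K$-bounded for a common $K$) gives $\tfrac1K p_n\le \lambda_n\le p_n$ for every $n\ge 1$. Taking logarithms, dividing by $\log n$, and letting $n\to\infty$, the factor $\tfrac1K$ disappears in the limit, so $\decay{\lambda_n}$ and $\decay{p_n}$ have the same $\limsup$ and $\liminf$ along the integers. Finally, by Remark~\ref{rem:decay-subsequence} (with $r_n=n$, so $\log r_{n+1}/\log r_n\to 1$), the decay rates of $\lambda_r$ along the integers coincide with $\decayu{\lambda_r}=\dimMu{\bs D}$ and $\decayl{\lambda_r}=\dimMl{\bs D}$, giving the claimed formulas. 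The only mild obstacle is bookkeeping the two flavors of mass transport (one showing intensity $\ge 1/K$ for every $K$-bounded covering, one identifying the intensity of $\bs R$ itself) and handling the $1/\infty$ convention; the decay-rate deduction is routine.
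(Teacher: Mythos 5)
Your rightmost inequality in \eqref{eq:Kcover} and the passage from \eqref{eq:Kcover} to the decay-rate statements (via Remark~\ref{rem:decay-subsequence}) are both fine. The leftmost inequality, however, is not established, and the route you chose cannot be repaired.

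The concrete error is in the reduction to the claim ``every $K$-bounded equivariant $r$-covering has intensity $\ge 1/K$,'' from which you conclude $\lambda_r\ge 1/K$. That conclusion is false: for $\bs D=\mathbb Z$ with the Euclidean metric, $\lambda_r\approx\tfrac1{2r+1}\to 0$, so $\lambda_r\ge 1/K$ fails for every fixed $K$ once $r$ is large. Indeed, the lemma's content is precisely that a $K$-bounded $\bs R$ has intensity within a factor $K$ of $\lambda_r$, not that $\lambda_r$ is bounded below by a constant; $\lambda_r$ is an infimum over \emph{all} equivariant $r$-coverings, so a bound proved only for $K$-bounded ones would not control $\lambda_r$ even if it were correct. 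The step where you bound $\omid{g^+(\bs o)}$ by $K\myprob{\bs o\in\bs S_{\bs D}}$ misapplies the hypothesis twice over: first, the $K$-boundedness hypothesis concerns $\bs R$, not the arbitrary competitor $\bs S$; second, $K$-boundedness means each \emph{point} is covered by at most $K$ balls, which bounds $\card{\{w\in\bs S_D:\,\bs o\in N_r(w)\}}$, whereas your sum $\sum_{u\in N_r(\bs o)}1/\text{mult}(u)$ runs over all $u\in N_r(\bs o)$ and is of order $\card{N_r(\bs o)}/K$, typically large rather than $\le K$.

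The paper's transport is different in a way that matters: it plays $\bs R$ against an arbitrary competitor $\bs R'$ and sends unit mass from $u$ to $v$ whenever $u$ is an $\bs R'$-center, $v$ is an $\bs R$-center, and $d(u,v)\le r$. Then $g^+(\bs o)\le K\identity{\{\bs R'(\bs o)\neq0\}}$ because at most $K$ balls of $\bs R$ cover the $\bs R'$-center $\bs o$ (this is where $K$-boundedness of $\bs R$ enters, in the correct direction), and $g^-(\bs o)\ge\identity{\{\bs R(\bs o)\neq0\}}$ because the $\bs R$-center $\bs o$ is covered by at least one ball of $\bs R'$ (since $\bs R'$ is a covering). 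The mass transport principle then gives $K\myprob{\bs R'(\bs o)\neq0}\ge\myprob{\bs R(\bs o)\neq0}$, and infimizing over $\bs R'$ yields the left inequality of \eqref{eq:Kcover}. Your uniformizing transport ($1/\card{N_r(u)}$ per covered point) is a reasonable device in other unimodular arguments, but here it loses the pairing between the two coverings, which is the whole point.
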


\begin{proof}
The rightmost inequality in (\ref{eq:Kcover}) is immediate from the definition of $\lambda_r$.
Let $\bs R'$ be another equivariant $r$-covering. Let $g(u,v)=1$ if 
$\bs R'(u)=\bs R(v)=r$ and $d(u,v)\leq r$.
Then $g^+(\bs o)\leq K \identity{\{\bs R'(\bs o)\neq 0 \}}$ and $g^-(\bs o)\geq \identity{\{\bs R(\bs o)\neq 0 \}}$.
Hence by the mass transport principle~\eqref{eq:unimodularMarked},
$\frac 1K \myprob{\bs R(\bs o)\neq 0} \leq \myprob{\bs R'(\bs o)\neq 0}$ and the leftmost
inequality in (\ref{eq:Kcover}) then follows from the definition of $\lambda_r$.
The last two equalities follow immediately from~\eqref{eq:Kcover}. 
\end{proof}

\begin{corollary}
\label{cor:disjoint}
If $\bs R$ is an equivariant \defstyle{disjoint $r$-covering} of $\bs D$ 
(i.e., the balls used in the covering are pairwise disjoint a.s.), then it is an optimal $r$-covering for $\bs D$.
\end{corollary}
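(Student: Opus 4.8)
The plan is to deduce this directly from the first assertion~\eqref{eq:Kcover} of Lemma~\ref{lem:nearlyDisjoint}, applied with the constant $K=1$. The only thing that needs checking is that a disjoint $r$-covering is automatically $1$-bounded in the sense of Definition~\ref{def:nearlyDisjoint}: since $\bs R$ is a covering, every point of $\bs D$ lies in at least one of the balls $\{N_r(v):v\in\bs R_{\bs D}\}$, and since these balls are pairwise disjoint almost surely, every point lies in exactly one of them; in particular no point is covered more than once. Hence $\bs R$ is a $1$-bounded equivariant $r$-covering and Lemma~\ref{lem:nearlyDisjoint} applies with $K=1$.

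Next I would invoke~\eqref{eq:Kcover} with $K=1$, which gives
\[
\myprob{\bs R(\bs o)\neq 0}\leq \lambda_r\leq \myprob{\bs R(\bs o)\neq 0},
\]
so both inequalities are equalities and $\lambda_r=\myprob{\bs R(\bs o)\neq 0}$. Reading this through the correspondence in Definition~\ref{def:r-covering} between the covering $\bs R$, its set of centres (an equivariant subset $\bs S$ of $\bs D$), and the associated process $\bs R(\cdot)$, one has $\myprob{\bs R(\bs o)\neq 0}=\myprob{\bs o\in\bs S_{\bs D}}=\intensity{\bs D}{\bs S}$. Thus the intensity of $\bs R$ equals $\lambda_r$, which is exactly the condition in Definition~\ref{def:optimalCovering} for $\bs R$ to be an optimal $r$-covering, and the claim follows.

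There is essentially no obstacle here: all the work is done by the mass transport argument already carried out in the proof of Lemma~\ref{lem:nearlyDisjoint}, and the remaining step—that disjointness together with the covering property forces covering multiplicity exactly one, hence admits $K=1$—is elementary. The argument is valid for every $r\geq 0$; for $r=0$ the statement is vacuous, since $N_0(v)=\emptyset$ and no $0$-covering of the (nonempty) space $\bs D$ exists.
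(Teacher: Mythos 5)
Your proof is correct and takes essentially the same route as the paper: the paper also derives this corollary directly from inequality~\eqref{eq:Kcover} of Lemma~\ref{lem:nearlyDisjoint}, noting that a disjoint covering is $1$-bounded so both inequalities collapse to the equality $\lambda_r=\myprob{\bs R(\bs o)\neq 0}$. Your extra remark about $r=0$ being vacuous is a harmless observation not present in the paper.
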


\begin{example}
\label{ex:lattice-Minkowski}
{The covering of $\mathbb Z^k$ (equipped with the $l_{\infty}$ metric) constructed in Example~\ref{ex:lattice-lowerbound} is a disjoint covering. So it is optimal and hence $\dimM{\mathbb Z^k}=k$. For $\mathbb Z^k$ equipped with the Euclidean metric, one can construct a $3^k$-bounded covering similarly and deduce the same result.}
\end{example}

\begin{example}
\label{ex:regtree-Minkowski}
Let $T_k$ be the $k$-regular tree. \formir{For $r\geq 1$,} consider a deterministic covering of $T_k$
by disjoint balls of radius $r$. By choosing $\bs o$ in one of these balls uniformly at random, 
it can be seen that an equivariant disjoint $r$-covering of $[T_k,\bs o]$ is obtained
(the proof is left to the reader). So Corollary~\ref{cor:disjoint} implies that
$\lambda_r=1/\card{\nei{r}{\bs o}}$ which has exponential decay when $k\geq 3$.
Hence, $\dimM{T_k}=\infty$ for $k\geq 3$.
\end{example}

\begin{proposition}
\label{prop:lowerBoundR}
For any point-stationary point process $\Phi$ on $\mathbb R$ endowed with the Euclidean metric, 
by letting $p(r):=\myprob{\Phi \cap (0,r)=\emptyset}$, one has
\begin{eqnarray*}
\dimMu{\Phi} & = \decayu{\frac 1 r \int_0^r p(s){\mathrm d}s} \leq & 1\wedge \decayu{p(r)},\\
\dimMl{\Phi}
& =  \decayl{\frac 1 r \int_0^r p(s){\mathrm d}s}
= & 1\wedge \decayl{ p(r)}.
\end{eqnarray*}
\end{proposition}

\begin{proof}
Let $r>0$ and $\varphi$ be a discrete subset of $\mathbb R$.
Let $\bs U_r$ be a random number in $[0,r)$ chosen uniformly.
For each $n\in \mathbb Z$, put a ball of radius $r$ centered at the largest element of
$\varphi \cap [nr+\bs U_r, (n+1)r+\bs U_r)$.
Denote this random $r$-covering of $\varphi$ by $\bs R_{\varphi}$.
One can see that $\bs R$
is equivariant under translations (see Remark~\ref{rem:equivProcessExtraMarks}).
This implies that $\bs R$ is an equivariant covering (verifying Condition~\eqref{def:equivProcess:2}
of Definition~I.\ref{def:equivProcess} is skipped here).  One has
\[
\myprob{0\in \bs R_{\Phi}} = \myprob{\Phi\cap (0,\bs U_r) = \emptyset} 
= \frac 1 r \int_0^r \myprob{\Phi\cap (0,s) = \emptyset}{\mathrm d}s \formir{=: q(r)}.
\]
Now, since $\bs R$ is a 3-bounded covering, Lemma~\ref{lem:nearlyDisjoint} implies 
the two left-hand-side equalities.
For all $\beta< \decayl{p(r)}$, 
one has $p(r)< r^{-\beta}$ for large enough $r$. So, if in addition, $\beta < 1$,
then $q(r) < c r^{-\beta}$ for some constant $c$, so that $\decayl{q(r)} \geq  \beta$.
\formir{Therefore $\decayl{q(r)}\geq 1\wedge\decayl{p(r)}$.
Now, the final equality in the claim is deduced from $q(r)\geq p(r)$.
Similarly, if $\decayu{p(r)}<1$, one can deduce $\decayu{q(r)}\leq\decayu{p(r)}$.
Also, $q(r)\geq \frac 1 r \int_0^1 p(s)ds$, and hence $\decayu{q(r)}\leq 1$.
This implies the first inequality and completes the proof.}
%
\end{proof}


\subsection{The Unimodular Hausdorff Dimension}
\label{subsec:HausdorffDim}

The definition of the unimodular Hausdorff dimension is based on coverings of the discrete space
by balls of possibly different radii. Such a covering can be represented by an assignment of marks to the points, 
where the mark of a point $v$ represents the radius of the ball centered at $v$.
\formir{As mentioned earlier, it is convenient to assume that the radii are at least 1
(in fact, this condition is technically necessary in what follows).}
Also, by convention, if there
is no ball centered at $v$, the mark of $v$ is defined to be 0. In relation with this convention,
the following notation is used for all discrete spaces $D$ and points $v\in D$:
\[
	N_r(v):=
	\left\{
		\begin{array}{ll}
			\{u\in D: d(v,u)\leq r \}, & r\geq 1,\\
			\emptyset, & r=0.
		\end{array}
	\right.
\]
In words, $N_r(v)$ is the \textit{closed ball} of radius $r$ centered at $v$, except when $r=0$.

\begin{definition} 
\label{def:convering}
Let $[\bs D, \bs o]$ be a unimodular discrete space. An \defstyle{equivariant (ball-) covering} 
$\bs R$ of $\bs D$ is an equivariant process on $\bs D$ (Definition~\ref{def:equivProcess})
with values in $\Xi:=\{0\}\cup [1,\infty)$, 
such that the family of balls $\{N_{{\bs R}(v)}(v): v\in \bs D \}$ covers the points of $\bs D$ almost surely.
For simplicity, $N_{{\bs R}(v)}(v)$ will also be denoted by $N_{\bs R}(v)$.
Also, for $0\leq \alpha<\infty$ and $1\leq M<\infty$, let
\begin{equation}
\label{eq:hcontent}
\contentH{\alpha}{M}(\bs D):=\inf \left\{\omid{\bs R(\bs o)^{\alpha}}:  \bs R(v) \in \{0\}\cup[M,\infty),\; \forall v,\; \text{a.s.}\right\},
\end{equation}
where the infimum is over all equivariant coverings $\bs R$ such that almost surely, 
$\forall v\in \bs D: \bs R(v)\in \{0\}\cup[M,\infty)$, 
and, by convention, $0^0:=0$.
Note that $\contentH{\alpha}{M}(\bs D)$ is a non-decreasing function of both $\alpha$ and $M$.
\end{definition}

\formir{In the \textit{ergodic} case, $\omid{\bs R(\bs o)^{\alpha}}$ can be interpreted
as the average of $\bs R(\cdot)^{\alpha}$ over the vertices.} 
\formir{Also, $\myprob{\bs R(\bs o)>0}$
(which is used for defining the unimodular Minkowski dimension) can be interpreted as 
\textit{the number of balls per point}. Ergodicity is however a special case, and there
is no need to assume it in what follows; for more on the matter, 
see Example~\ref{ex:nonergodic} and the discussion after it.}

\begin{definition}	
\label{def:HausdorffDim}
Let $[\bs D, \bs o]$ be a unimodular discrete space. The number
$\contentH{\alpha}{1}(\bs D)$, defined in~\eqref{eq:hcontent}, is called the
\defstyle{$\alpha$-dimensional Hausdorff content} of $\bs D$.
The \defstyle{unimodular Hausdorff dimension} of $\bs D$ is defined by 
	\begin{equation}
		\dimH{\bs D}:= \sup\{\alpha\geq 0: \contentH{\alpha}{1}(\bs D)=0 \},
	\end{equation}
with the convention that $\sup \emptyset = 0$.
\end{definition}

The key point of assuming equivariance in the above definition is that by Lemma~\ref{lem:equivProcess},
$[\bs D, \bs o; \bs R]$ is a unimodular marked discrete space. Note also that extra randomness
is allowed in the definition of equivariant coverings.
Note also that
\[
	0\leq \contentH{\alpha}{1}(\bs D)\leq 1,
\]
since for the covering by balls of radius 1, one has $\omid{\bs R(\bs o)^{\alpha}}=1$.

Examples~\ref{ex:hausdorff-basic}
and~\ref{ex:nonergodic} below provide basic
illustrations of the unimodular Hausdorff dimension.

\begin{example}
	\label{ex:hausdorff-basic}
	If $\bs D$ is finite with positive probability, then one can show similarly to Example~\ref{ex:finite-Minkowski} that $\omid{\bs R(\bs o)^\alpha}\geq \omid{1/\card{\bs D}}$ for every $\bs R$, and hence, $\dimH{\bs D}= 0$. Also, for the covering $\bs S_n$ of $\mathbb Z^k$ constructed in Example~\ref{ex:lattice-lowerbound}, one has $\omid{\bs S_n(\bs o)^{\alpha}} = (2n+1)^{\alpha-k}$. If $\alpha<k$, this implies that $\contentH{\alpha}{1}(\mathbb Z^k)=0$, and hence, $\dimH{\mathbb Z^k}\geq k$.
	The upper bound $\dimH{\mathbb Z^k}\leq k$ is implied by Lemma~\ref{lem:mdp-simple} below. So $\dimH{\mathbb Z^k}=k$. 
\end{example}

%
%
%
%

\begin{lemma}
\label{lem:mdp-simple}
Let $[\bs D, \bs o]$ be a unimodular discrete space and $\alpha\geq 0$. If there exists $c\geq 0$ such that
$\forall r\geq 1:\card{N_r(\bs o)\leq cr^{\alpha}}$ a.s., then $\dimH{\bs D}\leq \alpha$.
\end{lemma}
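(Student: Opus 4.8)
The plan is to prove the contrapositive lower bound: for \emph{every} equivariant covering $\bs R$ of $\bs D$ by balls with radii in $\{0\}\cup[1,\infty)$ one has $\omid{\bs R(\bs o)^{\alpha}}\ge 1/c$, whence $\contentH{\alpha}{1}(\bs D)\ge 1/c>0$ and, by monotonicity of $\gamma\mapsto\contentH{\gamma}{1}(\bs D)$ in the first argument, $\dimH{\bs D}\le\alpha$. This is the unimodular counterpart of the mass distribution principle. Before anything else I would upgrade the hypothesis from the origin to all points: the set $\bs S_{\bs D}:=\{v\in\bs D:\ \card{N_r(v)}\le cr^{\alpha}\text{ for all }r\ge 1\}$ is an equivariant subset and contains $\bs o$ almost surely, so $\intensity{\bs D}{\bs S}=1$ and Lemma~\ref{lem:happensAtRoot} forces $\bs S_{\bs D}=\bs D$ a.s.; hence, almost surely, every ball $N_{\bs R}(v)$ with $\bs R(v)\ne 0$ satisfies $\card{N_{\bs R}(v)}\le c\,\bs R(v)^{\alpha}$. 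I would also record that $c\ge\card{N_1(\bs o)}\ge 1$, so $c>0$.

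The core step is a mass transport on the unimodular marked space $[\bs D,\bs o;\bs R]$, which is unimodular by Lemma~\ref{lem:equivProcess}: let each center $x$ with $\bs R(x)\ne 0$ spread a total mass $\bs R(x)^{\alpha}$ evenly over its ball, i.e.\ take $g[\bs D,x,y;\bs R]$ to be $\bs R(x)^{\alpha}/\card{N_{\bs R}(x)}$ if $y\in N_{\bs R}(x)$ and $0$ otherwise; the ball is finite because $\bs D$ is boundedly finite and $\bs R(x)<\infty$, and $x\in N_{\bs R}(x)$, so the denominator is at least $1$ and $g$ is well defined and measurable. Then I would compute the two sides of~\eqref{eq:unimodularMarked}. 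The outgoing mass is $g^{+}(\bs o)=\bs R(\bs o)^{\alpha}$ exactly (the denominator cancels the sum over the ball; when $\bs R(\bs o)=0$ both sides are $0$, consistent with $0^{0}:=0$). For the incoming mass, since $\bs R$ covers $\bs D$ there is at least one $v$ with $\bs o\in N_{\bs R}(v)$, and that single term of $g^{-}(\bs o)$ is $\bs R(v)^{\alpha}/\card{N_{\bs R}(v)}\ge \bs R(v)^{\alpha}/(c\,\bs R(v)^{\alpha})=1/c$ by the all-points bound, so $g^{-}(\bs o)\ge 1/c$ pointwise. The mass transport principle then gives $\omid{\bs R(\bs o)^{\alpha}}=\omid{g^{+}(\bs o)}=\omid{g^{-}(\bs o)}\ge 1/c$, as desired; this remains valid even when $\omid{\bs R(\bs o)^{\alpha}}=\infty$.

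Taking the infimum over all such $\bs R$ yields $\contentH{\alpha}{1}(\bs D)\ge 1/c>0$, and since $\contentH{\gamma}{1}(\bs D)$ is non-decreasing in $\gamma$ it is positive for every $\gamma\ge\alpha$; therefore $\dimH{\bs D}=\sup\{\gamma\ge 0:\contentH{\gamma}{1}(\bs D)=0\}\le\alpha$. I expect the only genuinely delicate point to be the choice of transport: dividing the transported mass by the ball's cardinality is what simultaneously makes $g^{+}(\bs o)$ reproduce $\bs R(\bs o)^{\alpha}$ and lets the covering property alone bound $g^{-}(\bs o)$ from below, while also sidestepping any worry about a point lying in infinitely many balls. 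The passage of the counting bound from $\bs o$ to all vertices is routine, but it must be invoked explicitly since the lower bound on $g^{-}(\bs o)$ uses the estimate at an arbitrary vertex rather than at the origin.
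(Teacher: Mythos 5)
Your argument is correct, and it is the same mass-transport strategy the paper uses, but implemented with the ``dual'' (normalized) transport. The paper's proof sets $g(u,v)=\identity{\{d(u,v)\le\bs R(u)\}}$, so that $g^{+}(\bs o)=\card{N_{\bs R}(\bs o)}$ and $g^{-}(\bs o)\ge 1$; it then applies the growth hypothesis \emph{at the root only, after} the mass transport, via $\omid{\bs R(\bs o)^{\alpha}}\ge\frac1c\omid{\card{N_{\bs R}(\bs o)}}=\frac1c\omid{g^{+}(\bs o)}=\frac1c\omid{g^{-}(\bs o)}\ge\frac1c$. You instead transport $\bs R(u)^{\alpha}/\card{N_{\bs R}(u)}$ from $u$ to each point of its ball, so that $g^{+}(\bs o)=\bs R(\bs o)^{\alpha}$ exactly and the hypothesis enters on the $g^{-}$ side, pointwise, at a \emph{non-root} center. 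This is why you needed the additional upgrade step (via Lemma~\ref{lem:happensAtRoot}) from the bound a.s.\ at the root to the bound a.s.\ at all vertices; the paper's placement of the inequality avoids it. Both routes are sound. Your normalization has the mild advantage of making $g^{+}$ reproduce $\bs R(\bs o)^{\alpha}$ on the nose and of being robust even if one did not assume the radii are bounded below by $1$ (since dividing by $\card{N_{\bs R}(u)}\ge1$ prevents blow-up), while the paper's version is shorter because the hypothesis lands exactly where it is assumed. Your remarks about $c\ge\card{N_1(\bs o)}\ge1$, the finiteness of the balls, and the $0^{0}:=0$ convention are all correct and take care of the edge cases, including $\alpha=0$.
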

\begin{proof}
Let $\bs R$ be an arbitrary equivariant covering. For all discrete spaces $D$ and $u,v\in D$,
let $g_{D}(u,v)$ be 1 if  $d(u,v)\leq \bs R_D(u)$ and 0 otherwise. One has $g^+(u)=\card{\nei{\bs R}{u}}$ 
and $g^-(u)\geq 1$ a.s. (since $\bs R$ is a covering). By the assumption and the mass transport 
principle~\eqref{eq:unimodularMarked}, one gets
	\begin{eqnarray*}
		\omid{\bs R(\bs o)^{\alpha}} \geq \frac 1 c \omid{\card{\nei{\bs R}{\bs o}}} = \frac 1 c \omid{g^+(\bs o)} = \frac 1 c \omid{g^-(\bs o)} \geq \frac 1 c.
	\end{eqnarray*}
Since $\bs R$ is arbitrary, one gets $\contentH{\alpha}{1}(\bs D)\geq \frac 1 c>0$, and hence, $\dimH{\bs D}\leq \alpha$.
\end{proof}

\begin{remark}[Bounding the Hausdorff Dimension]
	\label{rem:boundingHausdorff}
In most examples in this work, a lower bound on the unimodular Hausdorff dimension is provided, 
either by comparison with the Minkowski dimension (see Subsection~\ref{subsec:comparison} below),
or by explicit construction of a sequence of equivariant coverings $\bs R_1,\bs R_2,\ldots$ such
that $\omid{\bs R_n(\bs o)^{\alpha}}\rightarrow 0$ as $n\rightarrow\infty$. Note that this gives
$\contentH{\alpha}{1}(\bs D)=0$, which implies that $\dimH{\bs D}\geq \alpha$. 
Constructing coverings does not help to find upper bounds for the Hausdorff dimension.
The derivation of upper bounds is mainly discussed in {Section~\ref{sec:volumeGrowth}}. 
The main tools are \textit{the mass distribution principle} (Theorem~\ref{thm:mdp-simple}),
which is a stronger form of Lemma~\ref{lem:mdp-simple} above, and the \textit{unimodular Billingsley's lemma}
(Theorem~\ref{thm:billingsley}).
\end{remark}

\begin{example}
\label{ex:nonergodic}
Let $[\bs D, \bs o]$ be $[\mathbb Z,0]$ with probability $\frac 12$ and $[\mathbb Z^2,0]$
with probability $\frac 12$. It is shown below that $\dimM{\bs D} = \dimH{\bs D} = 1$.
\\	
For $n\in\mathbb N$, the equivariant $n$-covering of Example~\ref{ex:lattice-lowerbound} makes sense
for $\bs D$ and is uniformly bounded. One has $\myprob{\bs R(0)>0} = \frac 12 (n^{-1}+n^{-2})$.
This implies that $\dimM{\bs D} = \decay{\frac 12 (n^{-1}+n^{-2})}= 1$. \formir{Also, for $\alpha<1$, one has $\omid{\bs R(\bs o)^{\alpha}}=\frac 12(n^{\alpha-1}+n^{\alpha-2})\to 0$ as $n\to\infty$. This implies that $\contentH{\alpha}{1}(\bs D)=0$ for all $\alpha<1$ and hence} $\dimH{\bs D}\geq 1$.
On the other hand, for any equivariant covering $\bs S$, one has
	\[
		\omid{\bs S(\bs o)}\geq \omidCond{\bs S(\bs o)}{\bs D=\mathbb Z}\myprob{\bs D = \mathbb Z} = \frac 12 \omidCond{\bs S(\bs o)}{\bs D=\mathbb Z}.
	\]
Let $c>2$. The proof of Lemma~\ref{lem:mdp-simple} for $[\mathbb Z,0]$ implies that
$\omidCond{\bs S(\bs o)}{\bs D=\mathbb Z}\geq \frac 1 c$.
This implies that $\contentH{1}{1}(\bs D)\geq \frac 1{2c}>0$. So $\dimH{\bs D}\leq 1$.
\end{example}

\begin{remark}
\label{rem:nonergodic-justification}
\formir{The result of this example might seem counterintuitive
at first glance as the union of a filled square and a segment is two dimensional.}
The number of balls of radius $\epsilon$ required to cover the square dominates
the number of balls required to cover the segment, 
but in Example~\ref{ex:nonergodic}, the situation is reversed: 
a larger \textit{fraction} of points is needed to cover $\mathbb Z$ than $\mathbb Z^2$. 
This is a consequence of considering large balls and also counting the number of balls \textit{per point}. \formir{See also Subsection~\ref{subsec:negative}.} 
\\
In fact, the following example justifies more clearly why Example~\ref{ex:nonergodic} is one dimensional: 
Let $G_n$ be the union of a $n\times n$ square grid (regarded as a graph) and a path of length
$n^2$ sharing a vertex with the grid. To cover $G_n$ by balls of radius $r$, a fraction of order
$1/r$ of the vertices of $G_n$ are needed (as $r$ is fixed and $n\to \infty$).
So it is not counterintuitive to say that $G_n$ is one dimensional asymptotically.
Indeed, $G_n$ tends to the random graph of Example~\ref{ex:nonergodic} in the local
weak convergence \cite{processes} as $n\to\infty$ (if one chooses the root of $G_n$ randomly and uniformly).
\end{remark}

\begin{remark}
\label{rem:nonergodic}
In Example~\ref{ex:nonergodic} above, different samples of $\bs D$ have different natures heuristically.
This is formalized by saying that $[\bs D, \bs o]$ is \textit{non-ergodic}; i.e., there is an event 
$A\subseteq \dstar$ such that the proposition $[D,o]\in A$ does not depend on the \rooot{} of $D$ and
$0<\myprob{[\bs D, \bs o]\in A}<1$. 
\formir{In such cases, it is desirable to assign a dimension to every sample of $\bs D$. In easy examples like Example~\ref{ex:nonergodic}, this might be achieved by conditioning. For instance, in some examples, it is convenient to condition on having infinite cardinality (which is common, e.g., in branching processes). 
	However, in general, it doesn't seem easier to define the dimension of samples separately in a way that is compatible with the definitions of this paper.
	In the future work~\cite{III}, the notion of \textit{sample dimension} is defined by combining the definitions in this paper with either \textit{ergodic decomposition} or conditional expectation.}
In this work, \formir{the reader may focus mainly} on the ergodic case, \formir{but it should be noted that} {the} definitions and results do not require ergodicity.
%
\end{remark}

\subsection{Comparison of Hausdorff and Minkowski Dimensions}
\label{subsec:comparison}

\begin{theorem}[Minkowski vs. Hausdorff]
\label{thm:comparison}
One has
	\[
	\dimMl{\bs D}\leq \dimMu{\bs D}\leq \dimH{\bs D}.
	\]
\end{theorem}
\begin{proof}
The first inequality holds by the definition. For the second one, the definition of
$\lambda_r$ \eqref{eq:I_r} implies that for every $\alpha\geq 0$ and $r\geq 1$,
	\[
	\inf \{\omid{\bs R(\bs o)^{\alpha}}: \bs R \text{ is an equivariant } r\text{-covering}\} = r^\alpha \lambda_r.
	\]
This readily implies that 
	$\contentH{\alpha}{1}(\bs D) \leq r^\alpha \lambda_r$ for every $r\geq 1$. \formir{So, if $\alpha<\decayu{\lambda_r}$,}
one gets
 $\contentH{\alpha}{1}(\bs D)=0$, and hence, $\dimH{\bs D}\geq\alpha$. This implies the claim.
\end{proof}

\begin{remark}
\label{rem:regular}
There exist examples in which the inequalities in Theorem~\ref{thm:comparison}
are strict (see e.g., Subsections~\ref{subsec:canopyGeneralized}
and~\ref{subsec:subspace-minkowski}). \formir{However, equality holds in most examples. In what follows,
the equality $\dimM{\bs D}=\dimH{\bs D}$ will be referred to as \textit{regularity} 
for the unimodular discrete space $\bs D$, regarded as a fractal object.}
\end{remark}

\subsection{The Unimodular \formir{Hausdorff Size}}
\label{ss:hausmeas}
Consider the setting of Subsection~\ref{subsec:HausdorffDim}.
For $0\leq\alpha<\infty$, let
	\begin{equation}\label{eq:hcont-infinity}
		\contentH{\alpha}{\infty}(\bs D):=\lim_{M\rightarrow\infty} \contentH{\alpha}{M}(\bs D)\in [0,\infty],
	\end{equation} 
{where $\contentH{\alpha}{M}(\bs D)$ is defined in (\ref{eq:hcontent}).}
Note that the limit exists because of monotonicity.
\begin{definition}
\label{def:haus-meas}
The \formir{\defstyle{unimodular $\alpha$-dimensional Hausdorff size} of $\bs D$ (in short, unimodular $\alpha$-dim H-size of $\bs D$)} is 
	\begin{equation}\label{eq:hmeas}
	\measH{\alpha}(\bs D):= \left(\contentH{\alpha}{\infty}(\bs D)\right)^{-1}.
	\end{equation}
\end{definition}

\formir{This definition resembles the Hausdorff measure of compact sets. But since $\measH{\alpha}$ is
not a measure, the term \textit{size} is used instead.} It can be used to compare unimodular spaces 
with equal dimension.
The following results gather some elementary properties of the function
$\contentH{\alpha}{M}$ and the Hausdorff \formir{size}.

\begin{lemma}
	\label{lem:Hmeas-elementary}
One has
	\begin{enumerate}[(i)]
\item \label{lem:hausMeas:2} $\contentH{\alpha}{1}(\bs D)\leq \contentH{\alpha}{M}(\bs D)\leq M^{\alpha}\contentH{\alpha}{1}(\bs D)$.
\item \label{lem:hausMeas:1} \formir{$\contentH{\alpha}{1}(\bs D)=0 \Leftrightarrow \contentH{\alpha}{\infty}(\bs D)=0 \Leftrightarrow \measH{\alpha}(\bs D)=\infty$.}
\item \label{lem:hausMeas:3} If $\alpha\geq \beta$, then $\contentH{\alpha}{M}(\bs D)\geq M^{\alpha-\beta}\contentH{\beta}M{(\bs D)}$.
	\end{enumerate}
\end{lemma}

\begin{proof}
\eqref{lem:hausMeas:2}. If $\bs R$ is an equivariant covering, them $M\bs R$ is also an equivariant
covering and satisfies $\forall v\in \bs D: MR(v)\in \{0\}\cup[M,\infty)$ a.s.
	
\eqref{lem:hausMeas:1}. The claim is implied by part~\eqref{lem:hausMeas:2}.
	
\eqref{lem:hausMeas:3}. If $\bs R$ is an equivariant covering such that $\forall v\in \bs D: \bs R(v)\in \{0\}\cup[M,\infty)$ 
a.s., then $\bs R(\bs o)^{\alpha}\geq M^{\alpha-\beta} \bs R(\bs o)^{\beta}$ a.s.
\end{proof}

\begin{lemma}
	\label{lem:Hmeas}
		\formir{If $\alpha<\dimH{\bs D}$, then $\contentH{\alpha}{\infty}(\bs D)=0$ and $\measH{\alpha}(\bs D)=\infty$.
		Also, if $\alpha>\dimH{\bs D}$, then $\contentH{\alpha}{\infty}(\bs D)=\infty$ and $\measH{\alpha}(\bs D)=0$.}
\end{lemma}
\begin{proof}
For $\alpha<\dimH{\bs D}$, one has $\contentH{\alpha}{1}(\bs D)=0$. So part~\eqref{lem:hausMeas:1}
of Lemma~\ref{lem:Hmeas-elementary} implies that $\measH{\alpha}(\bs D)=\infty$. 
For $\alpha>\dimH{\bs D}$, there exists $\beta$ such that $\alpha>\beta>\dimH{\bs D}$.
For this $\beta$, one has $\contentH{\beta}{1}(\bs D)>0$ and part~\eqref{lem:hausMeas:3} of the same lemma implies that 
$\contentH{\alpha}{M}(\bs D)\geq M^{\alpha-\beta}\contentH{\beta}{M}(\bs D) \geq M^{\alpha-\beta}\contentH{\beta}{1}(\bs D)$.
This implies that $\contentH{\alpha}{\infty}(\bs D)=\infty$, which proves the claim.
\end{proof}

\begin{remark}
	For $\alpha:=\dimH{\bs D}$, the \formir{$\alpha$-dim H-size} of $\bs D$ can be zero, finite or infinite.
	The lattice $\mathbb Z^k$ provides a case where $\measH{\alpha}(\bs D)$ is positive and
        finite (Proposition~\ref{prop:lattice-Hmeasure} below).  
	Examples~\ref{ex:infiniteMeasure} and~\ref{ex:zeroMeasure} 
	provide examples of the infinite and zero cases respectively.
\end{remark}

The following propositions provide {basic examples} {of the computation of the \formir{Hausdorff size}}. 

\begin{proposition}[\formir{0-dim H-size}]
	\label{prop:finite-HausMeas}
	One has
	$
		\measH{0}(\bs D)= \left(\omid{ 1 /{\card{\bs D}}}\right)^{-1}.
	$
\end{proposition}
\begin{proof}
	As in {Example~\ref{ex:hausdorff-basic}}, one gets $\contentH{0}{M}(\bs D)\geq \omid{1/\card{\bs D}}$. It is enough to show that equality holds.
	{If $\bs D$ is finite a.s., this can be proved by putting a single ball of radius $M\vee\diam(\bs D)$ centered at a point of
		$\bs D$ chosen uniformly at random.}
%
Second, assume $\bs D$ is infinite a.s. It is enough to construct an equivariant covering $\bs R$	 
such that $\myprob{\bs R(\bs o)>0}$ is arbitrarily small. Let $p>0$ be arbitrary and $\bs S$ be the
\textit{Bernoulli equivariant subset} obtained by selecting each point with probability $p$ in an i.i.d. manner. 
For all infinite discrete spaces $D$ and $v\in D$, let $\bs \tau_D(v)$ be the closest point of $\bs S_D$ to $v$
(if there is a tie, choose one of them uniformly at random independently). It can be seen that
$\bs \tau_D^{-1}(u)$ is finite almost surely (use the mass transport principle for
$\formir{g(v,u)}:=\identity{\{u=\bs \tau_D(v)\}}$).
For $u\in \bs S_D$, let $\bs R(u):={1\vee}\diam (\bs \tau^{-1}(u))$ be the diameter of the \textit{Voronoi cell} of $u$. {For $u\in D\setminus\bs S_{D}$, let $\bs R(u):=0$.}
It is clear that $\bs R$ is a covering, and in fact, an equivariant covering.
One has $\myprob{\bs R(\bs o)>0} = \myprob{\bs o\in \bs S_{\bs D}} = p$, which is arbitrarily small. So the claim is proved in this case.
	
Finally, assume $\bs D$ is finite with probability $q$.
For all deterministic discrete spaces $D$, let $\bs R_D$ be one of the above
coverings depending on whether $D$ is finite or infinite.
It satisfies $\myprob{\bs R(\bs o)>0} = \omid{1/\card{\bs D}}+{p(1-q)}$.
Since $p$ is arbitrary, the claim is proved.
\end{proof}

\begin{proposition}
\label{prop:lattice-Hmeasure}
\formir{For all $\delta>0$,} the $k$-dim H-size
of the scaled lattice $[\delta\mathbb Z^k,0]$, equipped with the $l_{\infty}$ metric, is equal to
$ \left(2/{\delta}\right)^{k}.$
\end{proposition}
\begin{proof}
Let $\bs S_n$ be the covering in Example~\ref{ex:lattice-lowerbound} scaled by factor $\delta$. One has
$\omid{\bs S_n(\bs o)^k} = (n\delta)^k/(2n+1)^k$.
This easily implies that $\contentH{k}{\infty}(\delta\mathbb Z^k)\leq (\delta/2)^k$. 
On the other hand, the proof of Lemma~\ref{lem:mdp-simple} 
shows that $\contentH{k}{\infty}(\delta\mathbb Z^k)\geq c\delta^k$, where $c$ is any constant such that
$r^k\geq c\card{N_r(0)}$ for large enough $r$. It follows that
$\contentH{k}{\infty}(\delta\mathbb Z^k)\geq (\delta/2)^k$, and the claim is proved.
\end{proof}

\subsection{The Effect of a Change of Metric}
\label{subsec:metricChange}

To avoid confusion {when considering two} metrics, a \rooted{} discrete space is denoted by $((D,d),o)$ here,
where $d$ is the metric on $D$ and $o$ is the \rooot{}. Note that if $d'$ is another metric on $D$,
then $d'\in \mathbb R^{D\times D}$. So $d'$ can be considered as a marking of $D$ in the sense of
Definition~\ref{def:marking} and $((D,d),o;d')$ is a \rooted{} marked discrete space. 

\begin{definition}
\label{def:equivMetric}
An \defstyle{equivariant (boundedly finite) metric} is an $\mathbb R$-valued equivariant process
$\bs d'$ such that, for all discrete spaces $\formir{(D,d)}$, $\bs d'_{(D,d)}$ is almost surely
(w.r.t. the extra randomness) a metric on $D$ and $(D,\bs d'_{(D,d)})$ is a boundedly finite metric space.
\end{definition}

If in addition, $[(\bs D, \bs d), \bs o]$ is a unimodular discrete space,
then $[(\bs D, \bs d), \bs o; \bs d']$ is a unimodular marked discrete space by Lemma~\ref{lem:equivProcess}.
It can be seen that $[(\bs D, \bs d'), \bs o; \bs d]$,
obtained by swapping the metrics, makes sense as a random \rooted{} marked discrete space 
(see {Lemma~\ref{lem:metricChange-measurability}} for the measurability requirements). 
By verifying the mass transport principle~\eqref{eq:unimodularMarked} directly,
it is easy to show that $[(\bs D, \bs d'), \bs o; \bs d]$ is unimodular.

The following result is valid for both the Hausdorff and the (upper and lower) Minkowski dimensions.

\begin{theorem}[{Change of Metric}]
\label{thm:metricChange}
Let $[(\bs D, \bs d), \bs o]$ be a unimodular discrete space and  $\bs d'$ be an equivariant metric. 
If {$\bs d'\leq c \bs d+a$} a.s., {with $c$ and $a$ constants},
then the dimension of $(\bs D, \bs d')$ is larger 
than or equal to that of $(\bs D,\bs d)$. 
Moreover, for every $\alpha\geq 0$, 
$\measH{\alpha}(\bs D, \bs d')\geq c^{-\alpha} \measH{\alpha}(\bs D, \bs d).$
\end{theorem}

	\begin{proof}
The claim is implied by the fact that the ball $N_{cr+a}((\bs D, \bs d'), v)$ contains the ball $N_{r}((\bs D, \bs d), v)$ \formir{and is left to the reader.}
\end{proof}

{As a corollary, if $\frac 1 c\bs d -a\leq \bs d'\leq c \bs d+a$ {a.s.}, then $(\bs D, \bs d')$ has the same unimodular dimensions as $(\bs D,\bs d)$. Also, $c\bs D$ has the same dimension as $\bs D$ and
$
\measH{\alpha}(c\bs D) = c^{-\alpha} \measH{\alpha}(\bs D).
$}

For instance, this result can be applied to Cayley graphs, which are
an important class of unimodular graphs~\cite{processes}. It follows that the unimodular dimensions
of a Cayley graph do not depend on the generating set.
In fact, it will be proved in Subsection~\ref{subsec:cayley} that these dimensions
are equal to the \textit{polynomial growth {degree}} of $H$.

\begin{example}
Let $[\bs G, \bs o]$ be a unimodular graph. Examples of equivariant metrics on $\bs G$ are
the graph-distance metric corresponding to an equivariant spanning subgraph 
(e.g., the drainage network model of Subsection~\ref{subsec:drainage} below) and
metrics \textit{generated by equivariant edge lengths}. More precisely, if $\bs l$ is an 
equivariant process which assigns a positive \textit{weight} to the edges of every deterministic
graph, then one can let $\bs d'(u,v)$ be the minimum weight of the paths that connect $u$ to $v$.
If $\bs d'$ is a metric for almost every realization of $\bs G$ and is boundedly-finite a.s.,
then it is an equivariant metric. 
\end{example}

\subsection{Dimension of Subspaces}
\label{subsec:equiv-subspace}

Let $[\bs D, \bs o]$ be a unimodular discrete space and $\bs S$ be an equivariant subset which is almost surely nonempty.
Lemma~\ref{lem:happensAtRoot} implies that $\myprob{\bs o\in \bs S_{\bs D}}>0$. So one can consider
$[\bs S_{\bs D}, \bs o]$ conditioned on $\bs o\in \bs S_{\bs D}$. By directly verifying the mass transport
principle~\eqref{eq:unimodular}, it is easy to see that  $[\bs S_{\bs D}, \bs o]$ conditioned
on $\bs o\in \bs S_{\bs D}$ is unimodular (see the similar claim for unimodular graphs in~\cite{eft}). 

\begin{convention}
\label{conv:subsetDim}
For an equivariant subset $\bs S$ as above, the unimodular Hausdorff dimension of $[\bs S_{\bs D}, \bs o]$
(conditioned on $\bs o\in S_{\bs D}$) is denoted by $\dimH{\bs S_{\bs D}}$. The same convention is used for the Minkowski dimension, the \formir{Hausdorff size}, etc.
\end{convention}	
	
\begin{theorem}
\label{thm:subsetDimension}
Let $[\bs D, \bs o]$ be a unimodular discrete space and $\bs S$ an equivariant subset
such that $\bs S_{\bs D}$ is nonempty a.s. Then,
\begin{enumerate}[(i)]
\item \label{thm:subsetDimension:dim} {One has 
\begin{eqnarray*}
\dimH{\bs S_{\bs D}} &=& \dimH{\bs D},\\
\dimMu{\bs S_{\bs D}} &\geq& \dimMu{\bs D},\\
\dimMl{\bs S_{\bs D}} &\geq& \dimMl{\bs D}.
\end{eqnarray*}}
\item \label{thm:subsetDimension:measure} If $\rho$ is the intensity of $\bs S$
in $\bs D$, then for every $\alpha\geq 0$, the \formir{$\alpha$-dim H-size} of $\bs S_{\bs D}$ satisfies
\begin{equation*}
{2^{-\alpha}\rho\: \measH{\alpha}(\bs D)\leq \measH{\alpha}(\bs S_{\bs D}) \leq \rho\: \measH{\alpha}(\bs D).}
\end{equation*}
\end{enumerate}
\end{theorem}

\formir{
		Theorem~\ref{thm:subsetDimension} is proved below by using the fact that every covering of the
		larger set induces a covering of the subset by deleting some balls and then re-centering
		and enlarging the remaining balls. This matches the analogous idea in the continuum setting. 
		The apparently surprising direction of the inequalities is due to the definition of dimension
		which implies that having less balls means having larger or equal dimension.
		For more on the matter, see {the discussion on negative dimension in} Subsection~\ref{subsec:negative}.
}

\begin{remark}
\label{rem:subset}
Subsection~\ref{subsec:otherSets} below defines a modification $\mathcal M'_{\alpha}(\bs D)$
of the unimodular \formir{Hausdorff size} by considering coverings by arbitrary sets.
With this definition, {one has} $\mathcal M'_{\alpha}(\bs S_d)=\rho\:\mathcal M'_{\alpha}(\bs D)$.
This can be proved similarly to Theorem~\ref{thm:subsetDimension}. with the modification
that there is no need to double the radii. 
\end{remark}

\begin{remark}
In the setting of Theorem~\ref{thm:subsetDimension}, $\dimM{\bs S_{\bs D}}$ can
be strictly larger than $\dimM{\bs D}$ (see, e.g., Subsection~\ref{subsec:subspace-minkowski}).
\formir{However, equality holds when $\bf D$ is regular (see Remark~\ref{rem:regular}), which
immediately follows from Theorems \ref{thm:comparison} and \ref{thm:subsetDimension}.}
Also, equality is guaranteed if $\bs S_{\bs D}$ is a $r$-covering of $\bs D$ for some constant $r$.
\formir{In other words, roughly speaking, the unimodular dimensions are \textit{quasi-isometry invariant} (see e.g., \cite{bookGr91}) and do not depend on the fine details of the discrete space.}
\end{remark}

\begin{proof}[Proof of Theorem~\ref{thm:subsetDimension}]
The first claim of~\eqref{thm:subsetDimension:dim} is implied by
\eqref{thm:subsetDimension:measure} and Lemma~\ref{lem:Hmeas}, \formir{and hence, is skipped.}
Let $\bs R$ be an arbitrary equivariant $r$-covering of $\bs D$.
\formir{For every $v\in\bs R$}, let $\bs{\tau}(v)$ be an element picked uniformly at random in
$N_r(v)\cap {\bs S_{\bs D}}$, \formir{which is defined only when $N_r(v)\cap {\bs S_{\bs D}}\neq\emptyset$}.
Let $\bs R':=\{\bs\tau(v): \formir{v\in \bs R}, N_r(v)\cap \bs S_{\bs D}\neq\emptyset \}$
and note that $\bs R'$ is a $2r$-covering of $\bs S_{\bs D}$.
	%
\formir{One has
\begin{eqnarray*}
\myprob{\bs o\in\bs R'} &\leq& \omid{\sum_{v} \identity{\{v\in\bs R\}} \identity{\{\bs \tau(v)=\bs o\}}}\\
&=& \omid{\sum_{v} \identity{\{\bs o\in \bs R\}} \identity{\{\bs \tau(\bs o)=v\}}} \leq \myprob{\bs o\in \bs R},
\end{eqnarray*}
where the equality is by the mass transport principle. This gives}
\formir{$\rho\lambda_{2r}(\bs S_{\bs D})\leq \lambda_r(\bs D)$,
which implies the claims regarding the Minkowski dimension.}
	
\formir{Now, part~\eqref{thm:subsetDimension:measure} is proved.}
The definition of $\contentH{\alpha}{\infty}(\bs S_{\bs D})$ implies that there
exists a sequence $\bs R_n$ of equivariant coverings of $\bs S_{\bs D}$ such that
$\bs R_n(\cdot)\in\{0\}\cup[n,\infty)$ for all $n=1,2,\ldots$ and
$\omidCond{\bs R_n(\bs o)^{\alpha}}{\bs o\in \bs S_{\bs D}}\rightarrow \contentH{\alpha}{\infty}(\bs S_{\bs D})$.
One may extend $\bs R_n$ to be defined on $\bs D$ by letting $\bs R_n(v):=0$
for $v\in \bs D\setminus \bs S_{\bs D}$.
Let $\epsilon>0$ be arbitrary and $\bs B_n\subseteq \bs D$ be the union of 
$N_{(1+\epsilon)\bs R_n}(v)$ for all $v\in \bs D$. Define
$\bs R'_n(u):= (1+\epsilon)\bs R_n(u)$ for $u\in \bs B_n$
and $\bs R'_n(u):=1/\epsilon$ for $u\not\in \bs B_n$.
It is clear that $\bs R'_n$ is an equivariant covering of $\bs D$. Also, 
\begin{eqnarray}
\nonumber
\omid{\bs R'_n(\bs o)^{\alpha}} &=& 
(1+\epsilon)^{\alpha}\omid{\bs R_n(\bs o)^{\alpha}} +
{\frac{1}{{\epsilon^{\alpha}}}} \myprob{\bs o \not \in \bs B_n}\\
\label{eq:thm:subsetDimension:1}
&=& {\rho(1+\epsilon)^{\alpha}\omidCond{\bs R_n(\bs o)^{\alpha}}{\bs o\in \bs S_{\bs D}} +
{\frac{1}{{\epsilon^{\alpha}}}} \myprob{\bs o \not \in \bs B_n}.}
\end{eqnarray}
Since the radii of the balls in $\bs R_n$ are at least $n$,
one gets that $\bs B_n$ includes the $\epsilon n$-neighborhood of $\bs S_{\bs D}$. Therefore, 	
	$\myprob{\bs o \not \in \bs B_n} \leq 
        \myprob{N_{\epsilon n}(\bs o)\cap \bs S_{\bs D}=\emptyset}$.
	Since $\bs S_{\bs D}$ is nonempty a.s., this in turn implies that 
        $\myprob{\bs o \not \in \bs B_n}\rightarrow 0$ as $n\to\infty$
	(note that the events $N_{\epsilon n}(\bs o)\cap \bs S_{\bs D}=\emptyset$
        are nested and converge to the event $\bs S_{\bs D}=\emptyset$).
	So~\eqref{eq:thm:subsetDimension:1} implies that 
	\[
	\liminf_{n\rightarrow\infty} \omid{\bs R'_n(\bs o)^{\alpha}} =
        {\rho}(1+\epsilon)^{\alpha} \liminf_{n\rightarrow\infty}
        {\omidCond{\bs R_n(\bs o)^{\alpha}}{\bs o\in\bs S_{\bs D}}}
        =\rho (1+\epsilon)^{\alpha} \contentH{\alpha}{\infty}(\bs S_{\bs D}).
	\]
	Note that the radii of the balls in $\bs R'_n$ are at least $n\wedge (1/\epsilon)$.
	Therefore, one obtains
	$\contentH{\alpha}{{1/\epsilon}}(\bs D)\leq \rho(1+\epsilon)^{\alpha} 
        \contentH{\alpha}{\infty}(\bs S_{\bs D})$. By letting $\epsilon\to 0$,
        one gets $\contentH{\alpha}{\infty}(\bs D)\leq \rho \contentH{\alpha}{\infty}(\bs S_{\bs D})$;
	i.e., $\measH{\alpha}(\bs S_{\bs D})\leq \rho \measH{\alpha}(\bs D)$.
	
	Conversely, let $\bs R_n$ be a sequence of equivariant coverings of ${\bs D}$ for
	$n=1,2,\ldots$ such that $\bs R_n(\cdot)\in \{0\}\cup [n,\infty)$ a.s. 
	and $\omid{\bs R_n(\bs o)^{\alpha}} \rightarrow \contentH{\alpha}{\infty}(\bs D)$.
	Fix $n$ in the following. Let
	$\bs B:=\bs B_{\bs D}:= \{v: N_{\bs R_n}(v)\cap \bs S_{\bs D}\neq \emptyset\}$.
	For each $v\in \bs B$, let $\bs \tau_n(v)$ be {an element chosen uniformly at random in}
	$N_{\bs R_n}(v)\cap \bs S_{\bs D}$. For $v\not\in \bs B$, let $\bs \tau_n(v)$ be undefined.
	For $w\in \bs S_{\bs D}$, let
	$
		\bs R'_n(w):= 2 \max \{\bs R_n(v): v\in \bs \tau_n^{-1}(w) \}.
	$
	It can be seen that $\bs R'_n$ is an equivariant covering of $\bs S_{\bs D}$. One has
	\begin{eqnarray*}
		\nonumber
		\omid{\bs R'_n(\bs o)^{\alpha}}
		&\leq & 2^{\alpha} \omid{\sum_{v} \bs R_n(v)^{\alpha}\identity{\{v \in \bs \tau_n^{-1}(\bs o)\}} }\\
		\nonumber &=& 2^{\alpha}\omid{\sum_{v} \bs R_n(\bs o)^{\alpha}\identity{\{\bs o \in \bs \tau_n^{-1}(v)\}} }
		\label{eq:subset} \leq  2^{\alpha} \omid{\bs R_n(\bs o)^{\alpha} },
	\end{eqnarray*}
	where the equality is by the mass transport principle.
	It follows that 
	\[
	\rho \liminf_{n\rightarrow\infty} \omidCond{\bs R'_n(\bs o)^{\alpha}}{\bs o\in \bs S_{\bs D}}
	\leq 2^{\alpha} \contentH{\alpha}{\infty}(\bs D).
	\]
	So $\rho\contentH{\alpha}{\infty}(\bs S_{\bs D}) \leq 2^{\alpha}  \contentH{\alpha}{\infty}(\bs D)$. Hence,
	$\measH{\alpha}(\bs S_{\bs D}) \geq 2^{-\alpha}\rho \: \measH{\alpha}(\bs D)$ and the claim is proved.
\end{proof}


\subsection{Covering By Arbitrary Sets}
\label{subsec:otherSets}

According to Remark~\ref{rem:subset}, it is more natural to redefine the \formir{Hausdorff size}
by considering coverings by \formir{finite} subsets which are not necessarily balls 
(as in the continuum setting). A technical challenge is to define such coverings in
an equivariant way. This will be done at the end of this subsection using the notion of
equivariant processes of Subsection~\ref{subsec:process}. Once an equivariant covering 
$\bs C$ is defined \formir{(which is an equivariant collection of finite subsets)},
one can define the \textit{average diameter of sets $U\in \bs C$ per point}
by $$\omid{\sum_{U\in\bs C} \frac 1{\card{U}}\identity{\{\bs o\in U\}}\diam(U)}.$$
The same idea is used to redefine $\contentH{\alpha}{M}(\bs D)$ as follows:
\begin{equation*}
\mathcal H'_{\alpha,M}(\bs D):= {\inf_{\bs C}} \left\{\omid{
\sum_{U\in\bs C} \frac 1{\card{U}}\identity{\{\bs o\in U\}}\big(M\vee \frac 1 2 \diam(U)\big)^{\alpha} } \right\},
\end{equation*}
where the infimum is over all equivariant coverings $\bs C$. Here, taking the maximum with $M$
is \textit{similar} to the condition that the subsets have diameter at least $2M$ (note that
a ball of radius $M$ might have diameter strictly less than $2M$). 
Finally, define the \defstyle{modified unimodular \formir{Hausdorff size}} 
$\mathcal M'_{\alpha}(\bs D)$ similarly to~\eqref{eq:hmeas}.
Remark~\ref{rem:subset} shows an advantage of this definition. Also,the reader can verify that
${2^{-\alpha}}\contentH{\alpha}{2M}(\bs D) \leq \mathcal H'_{\alpha, M}(\bs D) \leq \contentH{\alpha}{M}(\bs D).$
Therefore, 
\[
\measH{\alpha}(\bs D) \leq \mathcal M'_{\alpha}(\bs D) \leq 2^{\alpha}\measH{\alpha}(\bs D).
\]
This implies that the notion of unimodular Hausdorff dimension is not changed by this modification.
One can also obtain a similar equivalent form of the unimodular Minkowski dimension.
This is done by redefining $\lambda_r$ by considering equivariant coverings by sets of
diameter at most $2r$. The details are left to the reader. 
A similar idea will be used in Subsection~\ref{subsec:one-ended} 
to calculate the Minkowski dimension of one-ended trees.

Finally, here is the promised representation of the above coverings as equivariant processes 
(it should be noted that 
\formir{it is not always possible to number the subsets in an equivariant way}
and the collection should be necessarily unordered). 
To show the idea, consider a covering  $C=\{U_1,U_2,\ldots\}$ of a deterministic discrete space $D$,
where each $U_i$ is bounded. For each $U_i$, assign the mark $(\bs X_i, \diam(\formir{U_i}))$
to every point of $U_i$, where $\bs X_i\in[0,1]$ is chosen i.i.d. and uniformly. Note that
multiple marks are assigned to every point and the covering can be reconstructed from the marks.
With this idea, let the mark space $\Xi$ be the set of discrete subsets of $\mathbb R^2$
(regard every discrete set as a counting measure and equip $\Xi$ with a metrization of the vague topology).
This mark space can be used to represent equivariant coverings by equivariant processes
(for having a complete mark space, one can extend $\Xi$ to the set of discrete multi-sets in $\mathbb R^2$).


\subsection{Notes and Bibliographical Comments}

Several definitions and basic results of this section have analogues in the continuum setting. 
A list of such analogies is given below. Note however that there is no systematic way of
translating the results in the continuum setting to that of unimodular discrete spaces.
In particular, inequalities are most often, but not always, in the other direction.
The comparison of the unimodular Minkowski and Hausdorff dimensions (Theorem~\ref{thm:comparison})
is analogous to the similar comparison in the continuum setting ({see e.g., (1.2.3) of~\cite{bookBiPe17}}),
but in the reverse direction. Theorem~\ref{thm:metricChange}, regarding changing the metric,
is analogous to the fact that the ordinary Minkowski and Hausdorff dimensions
are not increased by applying a Lipschitz function. Theorem~\ref{thm:subsetDimension}
regarding the dimension of subsets is analogous to the fact that the ordinary dimensions
do not increase by passing to subsets. Note however that equality holds
in Theorem~\ref{thm:subsetDimension} for the unimodular Hausdorff dimension 
(and also for the unimodular Minkowski dimension in most usual examples), in contrast to the continuum setting. 

\formir{For point processes (Example~\ref{ex:point-stationary}), one can redefine the
unimodular Hausdorff dimension by using \textit{dyadic cubes} instead of balls. This changes
the value of the Hausdorff size up to a constant factor, and hence, the value of Hausdorff
dimension is not changed. Since dyadic cubes are nested, this simplifies some of the arguments.
This approach will be used in Subsection~\ref{subsec:frostman-euclidean}.}

\section{{Examples}}
\label{sec:examples}

This section presents a set of examples of unimodular discrete spaces together with
discussions about their dimensions. 
Recall that the tools for bounding the dimensions are summarized
in Remarks~\ref{rem:boundingMinkowski} and~\ref{rem:boundingHausdorff}. 
As mentioned in Remark~\ref{rem:boundingHausdorff}, bounding the Hausdorff dimension
from above usually requires the unimodular mass transport principle or the unimodular 
Billingsley lemma, which will be stated in \formir{Section~\ref{sec:volumeGrowth}}.
So the upper bounds for \formir{some} of the following examples are completed
\formir{later in Subsection~\ref{subsec:remainingproofs}}.


\subsection{{General Unimodular Trees}}
\label{subsec:trees}
	
In this subsection, general results are presented regarding the dimension of unimodular 
trees with the graph-distance metric. Specific instances are presented later in the section.
It turns out that the number of \textit{ends} of the tree plays a key role 
(an \defstyle{end} in a tree is an equivalence class of simple paths in the tree, where two such paths
are equivalent if their symmetric difference is finite). 

It is well known that the number of ends in a unimodular 
tree belongs to $\{0,1,2,\infty\}$ \cite{processes}.  Unimodular trees without end are finite, 
and hence, are zero dimensional (Example~\ref{ex:hausdorff-basic}). The only thing to mention 
is that there exists an algorithm to construct an optimal $n$-covering for such trees.
This algorithm is similar to the algorithm for one-ended trees, discussed below,
and is skipped for brevity. In addition, It will be shown in
\formir{Subsection~\ref{subsec:trees2}} that unimodular trees with infinitely many
ends have exponential \formir{volume} growth, and hence, have infinite Hausdorff dimension. 
The remaining two cases are discussed below.

\subsubsection{Unimodular Two-Ended Trees}
\label{subsec:two-ended}
If $T$ is a tree with two ends, then there is a unique bi-infinite path in $T$ called its \defstyle{trunk}.
Moreover, each connected component of the complement of the trunk is finite.

\begin{theorem}
\label{thm:twoEnded}
For all unimodular two-ended trees $[\bs T, \bs o]$ endowed with the graph-distance metric, one has 
$\dimM{\bs T}=\dimH{\bs T}=1.$
{Moreover, if $\rho$ is the intensity of the trunk of $\bs T$, then the modified 
\formir{1-dim H-size} of $\bs T$ is $\mathcal M'_1(\bs T)= 2\rho^{-1}$.}
\end{theorem}
\begin{proof}
For all two-ended trees $T$, let $\bs S_T$ be the trunk of $T$. Then, $\bs S$ is an equivariant subset.
Therefore, Theorem~\ref{thm:subsetDimension}
implies that  $\dimH{\bs T} = \dimH{\bs S_{\bs T}}$. 
Since the trunk is isometric to $\mathbb Z$ as a metric space,
Example~\ref{ex:hausdorff-basic} implies that $\dimH{\bs T}=1$. In addition,
Remark~\ref{rem:subset} and Proposition~\ref{prop:lattice-Hmeasure}
imply that {$\mathcal M'_1(\bs T)=\rho^{-1}\mathcal M'_1(\mathbb Z)= 2\rho^{-1}$}.
			
The claim concerning the unimodular Minkowski dimension is implied by
Corollary~\ref{cor:graphs-lowerbound} \formir{of the next section}, 
which shows that any unimodular infinite graph satisfies $\dimMl{\bs G}\geq 1$
(this theorem will not be used throughout).
\end{proof}

\subsubsection{Unimodular One-Ended Trees}
\label{subsec:one-ended}

Unimodular one-ended trees 
arise naturally in many examples (see~\cite{processes}). In particular, the (local weak)
limit of many interesting sequences of finite trees/graphs are one-ended (\cite{objective, processes}).
In terms of unimodular dimensions, it will be shown that unimodular one-ended trees are the richest class of unimodular trees.
	
First, the following notation is borrowed from~\cite{eft}. Every one-ended tree $T$ can be regarded as a family tree as follows.
For every vertex $v\in T$, there is a unique infinite simple path starting from $v$.
Denote by $F(v)$ the next vertex in this path and call it the \defstyle{parent} of $v$.
By deleting $F(v)$, the connected component containing $v$ is finite.
This set is denoted by $D(v)$ and its elements are called the \defstyle{descendants} of $v$.
The maximum distance of $v$ to its descendants 
\formir{will be called the \defstyle{height} of $v$ and be denoted by $h(v)$}.

\begin{theorem}
\label{thm:one-ended}
If $[\bs T, \bs o]$ is a unimodular one-ended tree endowed with the graph-distance metric, then
		\begin{eqnarray}
		\label{eq:thm:eftMinkowski:mu}
		\dimMu{\bs T} &=& 1+ \decayu{\myprob{h(\bs o)\geq n}}, \\
		\label{eq:thm:eftMinkowski:ml}
		\dimMl{\bs T} &=& 1+ \decayl{\myprob{h(\bs o)\geq n}}. 
		\end{eqnarray}
In addition, 
			\begin{equation}
			\label{eq:thm:eftMinkowski:h}
			\dimH{\bs T} \geq\decayu{\myprob{h(\bs o)=n}}\geq \dimMu{\bs T}.
			\end{equation}
\end{theorem}
	
It should be noted that $\decayu{\myprob{h(\bs o)=n}}$ can be strictly
larger than $1+\decayu{\myprob{h(\bs o)\geq n}}$
(see e.g., Subsection~\ref{subsec:canopyGeneralized}), however,
they are equal in most \textit{usual} examples.
It is not known \formir{whether the first inequality in~\eqref{eq:thm:eftMinkowski:h} is always an equality.}

\formir{
The proof of Theorem~\ref{thm:one-ended} is based on a recursive construction of an optimal
covering by \textit{cones}, defined below, rather than balls. It is shown below that considering
cones instead of balls does not change the Minkowski dimension.
}

The \defstyle{cone} with height $n$ at $v\in \bs T$ is defined by $C_n(v):= N_n(v)\cap D(v)$; i.e.,
the first $n$ generations of the descendants of $v$, including $v$ itself. Let $\lambda'_n$ be the
infimum intensity of equivariant coverings by cones of height $n$. The claim is that 
	\begin{equation}
	\label{eq:eft:I vs I'}
	\lambda'_{2n}\leq \lambda_n\leq \lambda'_n.
	\end{equation}
This immediately implies that
	\begin{equation}
	\label{eq:eft:dim vs I'}
	\dimMl{\bs T} = \decayl{\lambda'_n}, \quad	\dimMu{\bs T} = \decayu{\lambda'_n}.
	\end{equation}
To prove~\eqref{eq:eft:I vs I'}, note that any covering by cones of height $n$ is also a
covering by balls of radius $n$. This implies that $\lambda_n\leq \lambda'_n$.
Also, if $\bs S$ is a covering by balls
of radius $n$, then $\{F^n(v): v\in \bs S \}$ is a covering by cones of height $2n$.
By the mass transport principle~\eqref{eq:unimodularMarked}, one can show that the intensity of the
latter is not greater than the intensity of $\bs S$. 
This implies that $\lambda'_{2n}\leq \lambda_n$. So \eqref{eq:eft:I vs I'} is proved.

\begin{lemma}
\label{lem:eft:optimalcone}
For every unimodular one-ended tree $[\bs T, \bs o]$,
the output $\bs S$ of 
the following greedy algorithm is an optimal equivariant covering of $\bs T$ by cones of height $n$. 

\begin{algorithm}[H]
	$\bs S:=\emptyset$\;
	\While{true}{
		Add {all vertices of height $n$ in $\bs T$ to $\bs S$\;}
		
		$\bs T:=\bs T\setminus \bigcup_{v\in \bs S} D(v)$\;
	}
\end{algorithm}
\end{lemma}
\formir{Note that the algorithm does not finish in finite time, but for each vertex $v$ of $\bs T$, it is determined in finite time whether a cone is put at $v$ or not. So the output of the algorithm is well defined.}
\begin{proof}
Let $\bs A$ be any equivariant covering of $\bs T$ by cones of height $n$. Consider a realization $(T;A)$ of $[\bs T; \bs A]$.
Let $v$ be a vertex such that $h(v)=n$. Since $A$ is a covering by cones of height $n$,
$A$ should have at least one vertex in $D(v)$ (to see this, consider the farthest leaf from $v$ in $D(v)$).
Now, for all such vertices $v$, delete the vertices in $A\cap D(v)$ from $A$ and then add $v$ to $A$. 
Let $A_1$ be the subset of $T$ obtained by doing this operation for all vertices $v$ of height $n$.
So $A_1$ is also a covering of $T$ by cones of height $n$.
Now, remove all vertices $\{v: h(v)=n\}$ and their descendants from $T$ to obtain a new one-ended tree.
Consider the same procedure for the remaining tree and its intersection with $A$. Inductively, one obtains a sequence of
subsets $A=A_0, A_1, \ldots$ of $T$ such that, for each $i$, 
$A_i$ is a covering of $T$ by cones of height $n$ which agrees with {$\bs S_T$} on
the set of vertices that are removed from the tree up to step $i$.
		
By letting $[\bs T; \bs A]$ be random, the above induction gives a sequence of equivariant subsets
$\bs A=\bs A_0, \bs A_1,\ldots$ on $\bs T$. It can be seen that the intensity of $\bs A_1$ is at most that
of $\bs A$ (this can be verified by the mass transport principle~\eqref{eq:unimodular})
\formir{and more generally, the intensity of $\bs A_{i+1}$ is at most that of $\bs A_i$; i.e.,}
$\myprob{\bs o \in \bs A_{i+1}}\leq \myprob{\bs o\in \bs A_i}$.
Also, $\lim_{i\rightarrow \infty} \bs A_i=\bs S$ as equivariant subsets of $\bs T$. 
This implies that $\myprob{\bs o\in \bs A}\geq \myprob{\bs o\in \bs S}$, hence, $\bs S$ is an optimal covering by cones of height $n$.
\end{proof}

The above algorithm can be modified to obtain an optimal ball-covering as well, which is mentioned in the following lemma. However, since it will not be used in this paper, the proof is skipped.

\begin{lemma}
	\label{lem:alg:one-ended}
	For every unimodular one-ended tree $[\bs T, \bs o]$,
	the output of {the following greedy algorithm} is an optimal equivariant $n$-covering  of $\bs T$.
	
	\begin{algorithm}[H]
		\del{\KwData{A unimodular one-ended tree $[\bs T, \bs o]$ and $n\in\mathbb N$\;}	
			\KwResult{An optimal $n$-covering of $\bs T$\;}}
		\del{$\bs S:=\emptyset$\;}
		\While{true}{
			{Let $\bs A$ be the set of vertices which are not yet covered (which might be a disconnected set)\;}
			{Let $\bs T'$ be the subtree spanned by $A$ and the shortest paths connecting the vertices in $A$\;}
			{Put balls of radii $n$ at all vertices of height $n$ in $\bs T'$\;}
			\del{{Let $\bs T'$ be the subgraph of $\bs T$ obtained by deleting the balls of radius $n$ centered at the points of $\bs S$\;}
				\For{each connected component $C$ of {$\bs T'$},}{
					\eIf{$C$ has some vertices of height $n$}{
						Add the vertices of height $n$ in $C$ to $\bs S$\;
					}{
						Add {the vertex of $C$ with the largest height} to $\bs S$\;
					}
			}}
		}
		\del{\caption{Greedy algorithm for optimal coverings of unimodular one-ended trees.}}
	\end{algorithm}
\end{lemma}

\begin{lemma}
\label{lem:eft:boundI'}
Under the above setting, one has 
		\begin{equation}
		\label{eq:lem:eft:boundI'}
		\myprob{h(\bs o) \bmod (n+1) =-1}\leq \lambda'_n \leq \myprob{h(\bs o) \bmod \floor{\frac n 2{+1}} =-1}. 
		\end{equation}
\end{lemma}
\begin{proof}
	\formir{The proof of the second inequality in~\eqref{eq:lem:eft:boundI'} is based on the construction of the following equivariant covering.}
Let $B_n:= \{v\in \bs T: h(v) \bmod n =-1 \}$ and $B'_n:=\{F^{n-1}(v): v\in B_{n}\}$. 
		The claim is that $B'_n$ is a covering of $\bs T$ by cones of height $2n-2$. 
		Let $v\in \bs T$ be an arbitrary vertex. Let $k$ be the unique integer such that $(k-1)n-1<h(v)\leq kn-1$. Let $j$ be the first
		nonnegative integer such that $h(F^j(v))\geq kn-1$ and let $w:=F^j(v)$.
		One has $0\leq j\leq n-1$. By considering the longest path in $D(w)$ from $w$ to the leaves,
		one finds $z\in D(w)$ such that {$h(z)\bmod n = -1$} and $0\leq d(w,z)\leq n-1$.
		Therefore $w$ (and hence $v$) is a descendant of $F^{n-1}(z)$. Also, $d(w, F^{n-1}(z))\leq n-1$.
		It follows that $d(v,F^{n-1}(z))\leq 2n-2$. So $v$ is covered by the cone of
		height $2n-2$ at $F^{n-1}(z)$. Since $F^{n-1}(z)\in B'_n$, it is proved that $B'_n$ gives a $(2n-2)$-covering by cones.
		It follows that $\lambda'_{2n-2}\leq \myprob{\bs o \in B'_n} \leq \myprob{\bs o\in B_n}$ 
                (where the last inequality can be verified by the mass transport principle~\eqref{eq:unimodular}).
		This implies the second inequality in~\eqref{eq:lem:eft:boundI'}.

		To prove the first inequality in~\eqref{eq:lem:eft:boundI'},
		let $\bs S$ be the optimal covering by cones of height $n$ given by {the algorithm of Lemma~\ref{lem:eft:optimalcone}}. 
		Send unit mass from each vertex $v\in \bs S$ to the first vertex in $v,F(v),\ldots, F^{n}(v)$
		which belongs to $B_{n+1}$ (if there is any). So the outgoing mass from $v$ is
		at most $\identity{\{v \in \bs S \}}$. In the next paragraph, it is proved that
		the incoming mass to each $w\in B_{n+1}$ is at least 1. {This in turn} (by the mass transport principle)
		implies that $\myprob{\bs o \in \bs S}\geq \myprob{\bs o \in B_{n+1}}$,
		which proves the first inequality in~\eqref{eq:lem:eft:boundI'}.
		
		The final step consists in proving that the incoming mass to each $w\in B_{n+1}$ is at least 1.
		If $h(w)=n$, then $w\in \bs S$ and the claim is proved. So assume $h(w)>n$. By considering the longest
		path from $w$ in $D(w)$, one can find a vertex $z$ such that $w=F^{n+1}(z)$
		and $h(z)=h(w)-(n+1)$. This implies that no vertex in $\{F(z), \ldots, F^n(z) \}$
		is in $B_{n+1}$. So to prove the claim, it suffices to show that at least one of 
		these vertices or $w$ itself lies in $\bs S$. Note that in the algorithm in
		Lemma~\ref{lem:eft:optimalcone}, at each step, the height of $w$ decreases
		by a value at least 1 and at most $n+1$ until $w$ is removed from the tree. So in
		the last step before $w$ is removed, the height of $w$ is in $\{0,1,\ldots,n\}$.
		This is possible only if in the same step of the algorithm, an element of
		$\{F(z), \ldots, F^n(z), w\}$ is added to $\bs S$. This implies the claim and the lemma is proved.
	\end{proof}
	
	Now, the tools needed to prove the main results are all available.
	
	\begin{proof}[Proof of Theorem~\ref{thm:one-ended}]
		Lemma~\ref{lem:eft:boundI'} and~\eqref{eq:eft:dim vs I'} imply that the upper and lower Minkowski
		dimensions of $\bs T$ are exactly the upper and lower decay rates of
		$\myprob{h(\bs o) \bmod n = -1}$ respectively. So one should prove that these rates
		are equal to the upper and lower decay rates of $\myprob{h(\bs o)\geq n}$ plus 1.
		
		The first step consists in showing that $\myprob{h(\bs o)=n}$ is non-increasing in $n$.
                To see this, send unit mass from each vertex $v$ to $F(v)$ if $h(v)=n$ and $h(F(v))=n+1$.
		Then the outgoing mass is at most $\identity{\{h(v)=n\}}$ and the
		incoming mass is at least $\identity{\{h(v)=n+1\}}$. The result then follows by the mass transport principle.
		This {monotonicity} implies that
		$
		n\cdot\myprob{h(\bs o) \bmod n = -1} \geq \myprob{h(\bs o)\geq n-1}.
		$
		Similarly, by monotonicity,
		\begin{eqnarray*}
			\frac n 2 \myprob{h(\bs o) \bmod n = -1} &\leq& \myprob{h(\bs o) \bmod n \in \{-1,-2,\ldots, -\ceil{\frac n 2}\}}\\
			&\leq&  \myprob{h(\bs o)\geq \floor{\frac n 2}}.
		\end{eqnarray*}
		These inequalities conclude the proof of {\eqref{eq:thm:eftMinkowski:mu} and~\eqref{eq:thm:eftMinkowski:ml}}.
		
        It remains to prove~\eqref{eq:thm:eftMinkowski:h}.
                The second inequality follows from~\eqref{eq:thm:eftMinkowski:mu} and the fact
		that $\decayu{\myprob{h(\bs o)=n}}\geq \decayu{\myprob{h(\bs o)\geq n}}+1$,
		which is not hard to see. We now prove the first inequality. 
		Fix $0<\epsilon<\alpha<\decayu{\myprob{h(\bs o)=n}}$. So
		there is a sequence $n_0<n_2<\cdots$ such that $\myprob{h(\bs o)=n_i}<n_i^{-\alpha}$ for each $i$.
		One may assume the sequence is such that $n_{i}\geq 2^i$  for each $i$.
		Now, for each $k\in \mathbb{N}$, consider the following covering of $\bs T$:
		\begin{equation*}
			\bs R_k(v):= \left\{ 
			\begin{array}{ll}
				2(n_i-n_{i-1}), & \text{if } h(v)=n_i \text{ and } i>k,\\
				2n_k, & \text{if } h(v)=n_k,\\
				0, & \text{otherwise.}
			\end{array}
			\right.
			.
		\end{equation*}
		By arguments similar to {Lemma~\ref{lem:eft:boundI'}}, it can
		be seen that $\bs R_k$ is indeed a covering.
		It is claimed that $\omid{\bs R_k(\bs o)^{\alpha-\epsilon}}\rightarrow 0$ as $k\rightarrow \infty$
		If the claim is proved, then $\dimH{\bs T}\geq \alpha-\epsilon$ and the proof
                of~\eqref{eq:thm:eftMinkowski:h} is concluded.
		Let $c:=2^{\alpha-\epsilon}$. One has
		\begin{eqnarray*}
			\omid{\bs R_k(\bs o)^{\alpha-\epsilon}} &=& cn_k^{\alpha-\epsilon}\myprob{h(\bs o)=n_k} + c\sum_{i=k+1}^{\infty} (n_i-n_{i-1})^{\alpha-\epsilon}\myprob{h(\bs o)=n_i}\\
			&\leq & cn_k^{-\epsilon} + c\sum_{i=k+1}^{\infty} (n_i-n_{i-1})^{\alpha-\epsilon} n_i^{-\alpha}.
		\end{eqnarray*}
		Therefore, it is enough to prove that 
		\begin{equation}
		\label{eq:lem:eftHausdorff}
		\sum_{i=1}^{\infty} (n_i-n_{i-1})^{\alpha-\epsilon} n_i^{-\alpha}<\infty.	
		\end{equation}
		
		It is easy to see that the maximum of the function 
		$(x-n_{i-1})^{\alpha-\epsilon}x^{-\alpha}$ over $x\geq n_{i-1}$ happens at
		$\frac{\alpha}{\epsilon}n_{i-1}$ and the maximum value is $c' n_{i-1}^{-\epsilon}$,
		where $c'=(\frac{\alpha}{\epsilon}-1)^{\alpha-\epsilon}$ is a constant. So
		the left hand side of~\eqref{eq:lem:eftHausdorff} is at most $c'\sum_{i=0}^{\infty} n_i^{-\epsilon}$,
		which is finite by the assumption $n_i\geq 2^i$. So \eqref{eq:lem:eftHausdorff} is proved and the proof is completed.
	\end{proof}

\subsection{Instances of Unimodular Trees}

This subsection discusses the dimension of some explicit unimodular trees.
More examples are given in Subsection~\ref{subsec:drainage}, in \formir{Section~\ref{sec:examples2}},
and also in \formir{the ongoing work} \cite{III} (e.g., uniform spanning forests).

\subsubsection{The Canopy Tree}
\label{subsec:canopy}

The canopy tree $C_k$ with offspring {cardinality} $k$~\cite{canopy} is constructed as follows.
Its vertex set is partitioned in levels $L_0,L_1,\ldots$. Each vertex in level $n$ is connected to $k$ vertices
in level $n-1$ (if $n\neq 0$) and one vertex (its parent) in level $n+1$.
Let $\bs o$ be a random vertex of $C_k$ such that $\myprob{\bs o\in L_n}$ is proportional to $k^{-n}$.
Then, $[C_k,\bs o]$ is a unimodular random tree.

Below, three types of metrics are considered on $C_k$.
First, consider the graph-distance metric. Given $n\in\mathbb N$, let $S:=\{v\in C_k: h(v)\geq n \}$,
where $h(v)$ is the height of $v$ defined in Subsection~\ref{subsec:one-ended}.
The set  $S$ gives an equivariant $n$-covering and $\myprob{\bs o\in S}$ is exponentially small
as $n\to \infty$. So $\dimM{C_k}=\dimH{C_k}=\infty$.

Second, for each $n$, let the length of each edge between $L_n$ and $L_{n+1}$ be $a^n$, where $a>1$ is constant. 
Let $d_1$ be the resulting metric on $C_k$. Given $r>0$, let $S_1$ be the set of vertices having distance at 
least $r/a$ to $L_0$ (under $d_1$). One can show that $S_1$ is an $r$-covering of $(C_k,d_1)$ and $\decay{\myprob{\bs o\in S_1}}=\log k/\log a$. Therefore, $\dimMl{C_k,d_1}\geq \log k/\log a$.
On the other hand, one can see that  the ball of radius $a^n$ centered at $\bs o$ (under $d_1$)
has cardinality of order $k^n$. One can then use Lemma~\ref{lem:mdp-simple} to show that
$\dimH{C_k,d_1}\leq \log k/\log a$. So $\dimM{C_k,d_1}=\dimH{C_k,d_1} = \log k/\log a$.

Third, replace $a^n$ by $n!$ in the second case and let $d_2$ be the resulting metric.
Then, the cardinality of the ball of radius $r$ centered at $\bs o$ has order less than
$r^{\alpha}$ for every $\alpha>0$. One can use Lemma~\ref{lem:mdp-simple} again to show that
$\dimH{C_k,d_2}\leq \alpha$. This implies that $\dimM{C_k,d_2} = \dimH{C_k,d_2}=0$.

\subsubsection{The Generalized Canopy Tree}
\label{subsec:canopyGeneralized}

This example generalizes the canopy tree of Subsection~\ref{subsec:canopy}.
The goal is to provide an example where the lower Minkowski dimension, the upper Minkowski dimension
and the Hausdorff dimension are all different when suitable parameters are chosen.

Fix  $p_0,p_1,\ldots> 0$ such that $\sum p_i=1$.
Let $\bs U_0,\bs U_1,\ldots$ be an i.i.d. sequence of random number in $[0,1]$ with the uniform distribution. For each $n\geq 0$, 
let $\Phi_n:=\big(\frac 1{p_n} (\mathbb Z+\bs U_n)\big)\times \{n\}$, which is a point process on the horizontal line $y=n$ in the plane.
Let $\bs o_n:=(\frac 1{p_n}\bs U_n, n)\in \Phi_n$ and $\Phi:=\cup_i \Phi_i$. 
Then, $\Phi$ is a point process in the plane which is stationary under horizontal translations.
Choose $\bs m$ independent of the sequence $(\bs U_i)_i$ such that $\myprob{\bs m=n}=p_n$ for each $n$. 
Then, let $\bs o:= \bs o_{\bs m}$.

Construct a graph $\bs T$ on $\Phi$ as follows: For each $n$, connect each $x\in \Phi_n$
to its closest point (or closest point on its right) in $\Phi_{n+1}$. Note that $\bs T$ is a forest by definition.
However, the next lemma shows that $[\bs T, \bs o]$ is a unimodular tree.

\begin{definition}
The \defstyle{generalized canopy tree} with parameters $p_0,p_1,\ldots$ is the unimodular tree $[\bs T, \bs o]$ constructed above.
\end{definition}

Note that in the case where $p_n$ is proportional to $k^{-n}$ for $k$ fixed, $[\bs T, \bs o]$
is just the ordinary canopy tree $C_k$ of Subsection~\ref{subsec:canopy}. Also, one can generalize
the above construction by letting $\Phi_n$ be a sequence of point processes
which are (jointly) stationary under horizontal translations.

\begin{lemma}
\label{lem:canopy}
One has
	\begin{enumerate}[(i)]
\item \label{lem:canopy:2} $[\Phi,\bs o]$, endowed with the Euclidean metric, is a unimodular discrete space.
\item \label{lem:canopy:1} $\bs T$ is a tree a.s. and $[\bs T, \bs o]$ is unimodular.
	\end{enumerate}
\end{lemma}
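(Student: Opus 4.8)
For part~(i), the plan is to recognise $[\Phi,\bs o]$ as the Palm version of $\Phi$ --- a locally finite point process in $\mathbb R^2$, supported on the horizontal lines $y=n$ and invariant under the horizontal translation group --- with respect to that group. Concretely, the construction matches the usual description of such a Palm version: conditioning on $\bs m=n$ amounts to size-biasing by the intensity $p_n$ of line $n$ (normalised since $\sum_j p_j=1$), and given $\bs m=n$, recentring horizontally at $\bs o_n$ turns line $n$ into the deterministic grid $\tfrac1{p_n}\mathbb Z$ through the origin (the Palm version of a uniform-phase grid of spacing $1/p_n$) while leaving the other lines as independent uniform-phase grids --- here one uses that $\bs U_j$ minus an independent quantity is again uniform modulo~$1$. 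Since Palm versions satisfy the mass transport principle (exactly as the Palm versions of $\mathbb R^k$-stationary processes are point-stationary in Example~\ref{ex:point-stationary}), $[\Phi,\bs o;\bs m]$ is unimodular, and hence so is $[\Phi,\bs o]$ with the Euclidean metric. This can also be checked by a direct (routine) verification of~\eqref{eq:unimodular} from the explicit description above, using the refined Campbell formula for the grids $\Phi_j$ and the mass transport principle for $[\tfrac1{p_n}\mathbb Z,0]$ of Example~\ref{ex:lattice-unimodular}.

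For part~(ii), first note that $\bs T$ is a forest: every vertex has exactly one parent and edges join consecutive levels, so along a simple cycle the level changes by $\pm1$ at each step, while a vertex of a simple cycle cannot be a local minimum of the level (its two cycle-neighbours would both have to equal its unique parent); since a cyclic $\pm1$ integer sequence must attain a local minimum, there is no simple cycle. The same observation makes every simple path unimodal in level, so each component of $\bs T$ is a one-ended tree. The substance is that $\bs T$ is a.s.\ connected, and by transitivity of ``having merging ancestral lines'' and a countable union bound it suffices to fix a level $n$ and two consecutive points $a<b$ of $\Phi_n$ and show that the ancestral lines $a,F(a),F^2(a),\ldots$ and $b,F(b),F^2(b),\ldots$ eventually meet a.s. Set $\Delta_k$ to be the horizontal distance between their level-$(n+k)$ ancestors, with $\Delta_k:=0$ once the lines have met, and $\mathcal F_k:=\sigma(\Phi_n,\ldots,\Phi_{n+k})$. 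I would establish: (a) conditionally on $\mathcal F_{k-1}$ with $\Delta_{k-1}>0$, the probability that the lines are still apart at level $n+k$ equals $\min(1,p_{n+k}\Delta_{k-1})$, since they remain apart exactly when the independent uniform-phase grid $\Phi_{n+k}$ (spacing $1/p_{n+k}$) has a point in a specific interval of length $\Delta_{k-1}$; and (b) $(\Delta_k)_k$ is a non-negative integrable $\mathcal F_k$-martingale --- a short computation with the uniform phase of $\Phi_{n+k}$ shows that the under- and over-shoots of the two ancestors relative to the new grid cancel, so $\omidCond{\Delta_k}{\mathcal F_{k-1}}=\Delta_{k-1}$. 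In particular $\Delta_k$ converges a.s.\ to a finite limit.

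Now suppose the lines failed to meet with positive probability. On that event $\sum_k\bigl(1-\min(1,p_{n+k}\Delta_{k-1})\bigr)<\infty$, since otherwise L\'evy's conditional Borel--Cantelli lemma together with~(a) would force a meeting; hence $\min(1,p_{n+k}\Delta_{k-1})\to1$, so $\Delta_{k-1}\geq(1-o(1))/p_{n+k}$, and since $\sum_j p_j=1$ forces $p_{n+k}\to0$ this gives $\Delta_k\to\infty$, contradicting the a.s.\ finiteness of $\lim_k\Delta_k$. So $\bs T$ is a.s.\ connected, hence a.s.\ a tree. Finally, the edge set of $\bs T$ --- and so its graph-distance metric --- is a deterministic, translation-equivariant function of $\Phi$, and $\bs T$ is a.s.\ locally finite (each vertex has one parent and finitely many children), so feeding this equivariant marking into Lemma~\ref{lem:equivProcess} (see also Remark~\ref{rem:equivProcessExtraMarks}) together with part~(i) shows that $[\bs T,\bs o]$ is a unimodular discrete space. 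I expect the coalescence claim to be the only real obstacle --- in particular identifying $\Delta_k$ as the right non-negative martingale and pairing it with the Borel--Cantelli estimate; part~(i) and the reduction of $\bs T$ to an equivariant subgraph of $\Phi$ are routine.
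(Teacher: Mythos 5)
Both halves of the proposal are correct. For part~(i) your approach matches the paper's: the paper simply carries out the direct verification of the mass transport principle using horizontal stationarity, reducing it to the symmetric identity $h(0,k)=h(-k,0)$ for an auxiliary function $h$ on $\mathbb Z^2$, which is exactly the ``routine verification'' you sketch; the Palm-version intuition is the same underlying idea.

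Part~(ii) is where you genuinely diverge from the paper, and both routes are valid. The paper's proof of connectivity is short but leans on structural machinery: it observes (via unimodularity) that the connected component $\bs T'$ of the root is itself a one-ended unimodular tree, invokes Theorem~3.9 of~\cite{eft} to conclude that each foil $\bs T'\cap\Phi_i$ is a.s.\ infinite, and then uses planarity (edges of $\bs T$, viewed as segments in the plane, do not cross) to conclude that $\bs T'\cap\Phi_i$ must exhaust $\Phi_i$, hence $\bs T'=\bs T$. Your argument is more elementary and self-contained: the forest claim and the reduction to coalescence of ancestral lines of a consecutive pair in a fixed level $\Phi_n$ are sound; the horizontal gap $\Delta_k$ between the two ancestral lines is indeed a non-negative $\mathcal F_k$-martingale (the displacement of each ancestor to its chosen point on the next line has conditional mean zero, since the next line's phase $\bs U_{n+k}$ is independent of $\mathcal F_{k-1}$ and uniform), hence converges a.s.\ to a finite limit; the per-step non-merging probability $\min(1,p_{n+k}\Delta_{k-1})$ is correct; and L\'evy's conditional Borel--Cantelli together with $p_{n+k}\to 0$ (forced by $\sum_j p_j=1$) delivers the contradiction $\Delta_k\to\infty$ on the non-merging event. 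In effect you trade the citation to~\cite{eft} and the planarity observation for a martingale computation and a Borel--Cantelli estimate; the paper's route is shorter but opaque without the cited theorem, while yours makes the coalescence mechanism explicit. Your closing appeal to Lemma~\ref{lem:equivProcess} should also mention Theorem~\ref{thm:metricChange} for passing from the Euclidean metric on $\Phi$ to the graph-distance metric on $\bs T$, as the paper does, but this is a cosmetic omission rather than a gap.
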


\begin{proof}
For part~\eqref{lem:canopy:2}, it is enough to show that $\Phi-\bs o$ is a point-stationary
point process in the plane {(see Example~\ref{ex:point-stationary})}. This is {proved in Lemma~\ref{lem:canopy-point-stationary}}. The main ingredients are using stationarity of $\Phi$ under horizontal
translations and the fact that $\Phi_n-\bs o_n$ is point-stationary (the proof is similar
to that of the formula for the Palm version of the superposition of stationary
point processes, e.g., in~\cite{bookScWe08}.)

To prove \eqref{lem:canopy:1}, note that $\bs T$ can be realized as an equivariant process on
$\Phi$ (see Definition~\ref{def:equivProcess} and Remark~\ref{rem:equivProcessExtraMarks}).
Therefore, by Lemma~\ref{lem:equivProcess} and Theorem~\ref{thm:metricChange},
it is enough to prove that $\bs T$ is connected a.s. Nevertheless, the same lemma 
implies that the connected component $\bs T'$ of $\bs T$ containing
$\bs o$ is a unimodular tree. Since it is one-ended, Theorem~3.9 of~\cite{eft} 
implies that the \textit{foils} $\bs T'\cap \Phi_i$ are infinite a.s. 
By noting that the edges do not cross (as segments in the plane),
one obtains that $\bs T'\cap \Phi_i$ should be the whole $\Phi_i$; hence, $\bs T'=\bs T$.
Therefore, $\bs T$ is connected a.s. and the claim is proved.
\end{proof}

\begin{proposition}
The sequence $(p_n)_n$ can be chosen such that
	\[
		\dimMl{\bs T}<\dimMu{\bs T}<\dimH{\bs T},
	\]
where $\bs T$ is endowed with the graph-distance metric.
Moreover, for any $0\leq \alpha\leq \beta \leq \gamma\leq \infty$, the sequence $(p_n)_n$ can be chosen such that 
	\[
		\dimMl{\bs T}\leq \alpha, \quad \dimMu{\bs T}=\beta, \quad \dimH{\bs T}\geq \gamma.
	\]
\end{proposition}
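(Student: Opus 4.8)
The plan is to view $[\bs T,\bs o]$ as a unimodular one-ended tree, which Lemma~\ref{lem:canopy} guarantees, and to apply Theorem~\ref{thm:one-ended}. Since the sequence $(p_n)_n$ is ours to choose, I would restrict to \emph{non-increasing} sequences with $\sum_n p_n=1$. For such a choice, the layer $\Phi_{n}$ is a shifted lattice of spacing $1/p_{n}$, and the set of children of any vertex at level $n+1$ is the intersection of $\Phi_n$ with an interval of length $1/p_{n+1}\geq 1/p_n$, hence is nonempty; in fact it has at least $\floor{p_n/p_{n+1}}\geq 1$ elements. Iterating, every vertex at level $m$ has a descendant at each lower level, so its height in the sense of Subsection~\ref{subsec:one-ended} equals exactly $m$. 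As $\bs o=\bs o_{\bs m}$ with $\myprob{\bs m=n}=p_n$, this gives $\myprob{h(\bs o)\geq n}=\overline p_n$ and $\myprob{h(\bs o)=n}=p_n$ for all $n$, where $\overline p_n:=\sum_{i\geq n}p_i$.

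Feeding this into Theorem~\ref{thm:one-ended} (equations~\eqref{eq:thm:eftMinkowski:mu}--\eqref{eq:thm:eftMinkowski:h}) gives
\[
\dimMl{\bs T}=1+\decayl{\overline p_n},\qquad \dimMu{\bs T}=1+\decayu{\overline p_n},\qquad \dimMu{\bs T}\leq\decayu{p_n}\leq\dimH{\bs T},
\]
where the decay rates are of the displayed functions of $n$. Since $\bs T$ is infinite, $\dimMl{\bs T}\geq 1$ always (Proposition~II.2.14), so in the ``moreover'' part one may assume $1\leq\alpha\leq\beta\leq\gamma\leq\infty$. Both assertions then reduce to a purely arithmetic construction: given such $\alpha,\beta,\gamma$, produce a non-increasing sequence of positive reals with $\sum_n p_n=1$ for which $\decayl{\overline p_n}\leq\alpha-1$, $\decayu{\overline p_n}=\beta-1$, and $\decayu{p_n}\geq\gamma$. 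The first assertion of the proposition is then the special case $1\leq\alpha<\beta<\gamma$, where one gets $\dimMl{\bs T}\leq\alpha<\beta=\dimMu{\bs T}<\gamma\leq\dimH{\bs T}$ (and $\dimH{\bs T}\geq\dimMu{\bs T}$ also holds by Theorem~\ref{thm:comparison}).

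For the construction I would split $\mathbb N$ into consecutive blocks with rapidly growing right endpoints, of three types. On ``$\beta$-blocks'' let $\overline p_n$ decrease like $c\,n^{-(\beta-1)}$, with $c$ re-anchored at each block's left endpoint; placing such blocks arbitrarily late and making them very long (say with $\log(\text{right end})/\log(\text{left end})\to\infty$) forces $\decayu{\overline p_n}=\beta-1$, while never letting $\overline p_n$ decay faster than rate $\beta-1$ keeps this $\limsup$ from being exceeded. On ``$\alpha$-blocks'' let $\overline p_n$ decrease like $c\,n^{-(\alpha-1)}$ (sub-polynomially, e.g.\ like $1/\log n$, when $\alpha=1$); these pull $\liminf$ of $\log(1/\overline p_n)/\log n$ down to at most $\alpha-1$. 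Finally, insert ``deep plateaus'': blocks on which $p_n$ is held constant at a value $v_k\leq N_k^{-\gamma}$ ($N_k$ the left endpoint) for a stretch long enough to spend a fixed positive fraction of the remaining mass, after which the power laws resume. At the left endpoint $\log(1/p_n)/\log n=\log(1/v_k)/\log N_k\geq\gamma$, so $\decayu{p_n}\geq\gamma$; for $\gamma=\infty$ one takes $v_k$ super-polynomially small, so the plateaus become doubly-exponentially long, and for $\gamma=\beta$ no plateau is needed, as the two-regime oscillation already gives $\decayu{p_n}=\beta$. A deep plateau is essentially flat, so it can only push $\liminf\log(1/\overline p_n)/\log n$ further down (harmless, since only $\decayl{\overline p_n}\leq\alpha-1$ is required), and because $\gamma\geq\beta$ its height $v_k$ lies below the ambient value of $p$, which, with a short transitional stretch on exit if needed, keeps $(p_n)_n$ non-increasing. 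One normalizes at the end so that $\sum_n p_n=1$.

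I expect the main obstacle to be the simultaneous bookkeeping rather than any single inequality. Each deep plateau dilates the position axis, and since the relevant quantities are $\limsup$ and $\liminf$ of $\log(1/\overline p_n)/\log n$ and $\log(1/p_n)/\log n$, this dilation must be reconciled with all three target exponents at once: the $\beta$-blocks must be late and long enough that $\limsup\log(1/\overline p_n)/\log n$ is exactly $\beta-1$ and is not diluted below it; each deep plateau must be long enough to realize $\decayu{p_n}\geq\gamma$ yet carry little enough mass, and occur late enough where $\overline p_{N_k}$ is already small, so that it disturbs neither monotonicity of $(p_n)_n$ nor the invariant that $\overline p_n$ never decays faster than rate $\beta-1$. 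The only other point requiring care is the identity ``$h(v)=$ level of $v$'' of the first paragraph; measurability, equivariance, unimodularity and one-endedness are already supplied by Lemma~\ref{lem:canopy}.
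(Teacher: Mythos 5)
Your proposal follows essentially the same path as the paper: identify the height of a level-$m$ vertex as exactly $m$ (using that $(p_n)_n$ non-increasing forces at least one child per vertex at every positive level), feed $\myprob{h(\bs o)\geq n}=\overline p_n$ and $\myprob{h(\bs o)=n}=p_n$ into Theorem~\ref{thm:one-ended}, and reduce to a purely arithmetic construction of a non-increasing probability sequence with prescribed upper/lower decay exponents for $\overline p_n$ and a prescribed lower bound on $\decayu{p_n}$. The only substantive difference is in the arithmetic realization. The paper's proof uses a single recursive block rule: pin $q_{n_i}:=n_i^{-\beta}$, interpolate linearly, and choose each $n_{i+1}$ so large that the chord both rises through the graph of $x^{-\alpha}$ (driving $\decayl{q_n}$ down) \emph{and} has absolute slope below $n_i^{-\gamma}$ (driving $\decayu{p_n}$ up); one deep stretch per block serves both purposes, and convexity of $x^{-\beta}$ immediately pins $\decayu{q_n}=\beta$ from above. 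You instead use three dedicated block types ($\alpha$-blocks, $\beta$-blocks, deep plateaus), which makes the role of each target exponent more legible but at the cost of more bookkeeping --- in particular you correctly flag the monotonicity repair on plateau exit, which the paper's construction gets for free from the convexity of the interpolant. Both constructions are correct modulo details left to the reader, and at roughly the same level of terseness. Two further remarks: (1) you correctly note that the statement as written cannot hold for $\alpha<1$ (since $\dimMl{\bs T}\geq 1$ for any infinite unimodular graph by Proposition~II.2.14) and accordingly work with $1\leq\alpha\leq\beta\leq\gamma$; the paper's proof glosses over this and in fact carries a small notational mismatch ($\decayu{q_n}=\beta$ yields $\dimMu{\bs T}=1+\beta$, not $\beta$), so your explicit shift to $\alpha-1,\beta-1,\gamma$ is the honest fix. (2) For $\alpha=\beta$, the paper's ``chord intersects $x^{-\alpha}$'' condition is actually unsatisfiable by convexity, but also unnecessary since $\decayl{q_n}\leq\decayu{q_n}=\beta$ automatically; your $\alpha$-blocks degenerate harmlessly into $\beta$-blocks in that case.
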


For example, it is possible to have $\dimM{\bs T}=0$ and $\dimH{\bs T}=\infty$ simultaneously.

\begin{proof}
$\bs T$ is a one-ended tree (see Subsection~\ref{subsec:one-ended}).
Assume the sequence $(p_n)_n$ is non-increasing. So the construction implies that there is no leaf of the tree
in $\Phi_n$ for all $n>0$. Therefore, for all $n\geq 0$, the height of every vertex in $\Phi_n$ is precisely $n$.
So by letting $q_n:=\sum_{i\geq n} p_i$, Theorem~\ref{thm:one-ended} implies that
\begin{eqnarray*}
		\dimH{\bs T} &\geq & \decayu{p_n},\\
		\dimMu{\bs T} &=& 1+\decayu{q_n},\\
		\dimMl{\bs T} &=& 1+\decayl{q_n}.
\end{eqnarray*}
For simplicity, assume $0<\alpha$ and $\gamma<\infty$ (the other cases can be treated similarly).
Define $n_0,n_1,\ldots$ recursively as follows. Let $n_0:=0$. Given that $n_i$ is defined,
let $n_{i+1}$ be large enough such that the line connecting points $(n_i,n_i^{-\beta})$ and
$(n_{i+1},n_{i+1}^{-\beta})$ intersects the graph of the function $x^{-\alpha}$ and has slope 
larger than $-n^{-\gamma}$. Now, let $q_{n_i}:=n_i^{-\beta}$ for each $i$ and define $q_n$
linearly in the interval $[n_i,n_{i+1}]$. Let $p_n:=q_{n}-q_{n+1}$. It can be seen that $p_n$
is non-increasing, $\decayl{q_n}\leq \alpha$, $\decayu{q_n}=\beta$ and $\decayu{p_n}\geq \gamma$.
\end{proof}

\subsubsection{Unimodular Eternal Galton-Watson Trees}
\label{subsec:egw}
Eternal Galton-Watson (\egw{}) trees  are defined in~\cite{eft}. Unimodular \egw{} trees
(in the nontrivial case) can be characterized as unimodular one-ended trees in which the descendants
of the root constitute a Galton-Watson tree. Also, \formir{unimodularity implies that} the latter Galton-Watson tree is necessarily critical \formir{(use the mass transport principle when sending a unit mass from each vertex to its \textit{parent})}.
Here, the trivial case that each vertex has exactly one offspring is excluded (where the corresponding \egw{} 
tree is a bi-infinite path). In particular, the \textit{Poisson skeleton tree}~\cite{objective} is an eternal Galton-Watson tree.

Recall that the offspring distribution of a Galton-Watson tree is the probability measure
$(p_0,p_1,\ldots)$ on $\mathbb Z^{\geq 0}$ where $p_n$ is the probability that the root has $n$ offsprings.

\begin{proposition}
\label{prop:EGWdimension}
Let $[\bs T, \bs o]$ be a unimodular eternal Galton-Watson tree.
If the offspring distribution has finite variance, then 
	$
	\dimM{\bs T} = \formir{\dimH{\bs T} =} 2.
	$
\end{proposition}

\begin{proof}[Proof (first part)]
	\formir{Here, it is only proved that $\dimM{\bs T}=2$. The other equality will be proved in Subsection~\ref{subsec:remainingproofs}.}
By Kesten's theorem~\cite{kesten} for the Galton-Watson tree formed by the descendants of the root,
$\lim_n n\myprob{h(\bs o)\geq n}$ exists and is positive. It follows that $\decay{\myprob{h(\bs o)\geq n}}=1$.
So the claim is implied by Theorem~\ref{thm:one-ended}.
\end{proof}

\subsection{Examples Associated with Random Walks}
\label{subsec:randomwalk}

Let $\mu$ be a probability measure on $\mathbb R^k$. Consider the \formir{(double-sided)}
simple random walk $(S_n)_{n\in \mathbb Z}$ \formir{in $\mathbb R^k$ starting from $S_0:=0$ such that}
and the jumps $S_n-S_{n-1}$ are i.i.d. with distribution $\mu$. 
In this subsection, unimodular discrete spaces are constructed based on the image and 
the zero set of this random walk and their dimensions are studied in some special cases.
The graph of the simple random walk will be studied in Subsection~\ref{subsec:srw-graph}.

\subsubsection{{The Image of the Simple Random Walk}}
\label{subsec:image}
Assume the random walk is transient; i.e., it visits every given ball only finitely many times.
It follows that the image $\Phi=\{S_n\}_{n\in\mathbb Z}$ is a random discrete subset of $\mathbb R^k$.
If no point of $\mathbb R^k$ is visited more than once \formir{(e.g., when $S_n$ is in the positive cone a.s.)},
then it can be seen that $\Phi$ is a point-stationary point process, hence, $[\Phi,0]$ is a
unimodular discrete space.  Hence, $[\Phi,0]$ is a unimodular discrete space.
In the general case, by similar arguments, one should bias the distribution of $[\Phi,0]$
by the inverse of the \textit{multiplicity} of the origin;
i.e., by $1/\card{\{n: S_n=0 \}}$, to obtain a unimodular discrete space. 
This claim can be proved by direct verification of the mass transport principle.

\formir{Below, the focus is on the case where the jumps are real-valued and strictly positive.
In this case, $\Phi$ is actually a point stationary {\em renewal process} \cite{bookFe66}.}

\begin{proposition}
\label{prop:image}
Let $\Phi:=\{S_n\}_{n\in\mathbb Z}$ be the image of a simple random walk $S$
in $\mathbb R$ \formir{starting from $S_0:=0$}. Assume the jumps $S_n-S_{n-1}$ are positive a.s. Then
\begin{enumerate}[(i)]
\item \label{part:thm:image:2} 
$\dimMl{\Phi} =\decayl{\formir{\frac 1 r \omid{S_1\wedge r}}} \formir{=} 1\wedge \decayl{\myprob{S_1>r}}$.
\item \label{part:thm:image:3} 
$\dimMu{\Phi} =\decayu{\formir{\frac 1 r \omid{S_1\wedge r}}} \leq 1\wedge \decayu{\myprob{S_1>r}}$.
\item \label{part:thm:image:h}
\formir{$\dimH{\Phi}\leq 1\wedge \decayu{\myprob{S_1>r}}$.}		
\item \label{part:thm:image:1} If $\beta:=\decay{\myprob{S_1>r}}$ exists, then
$\dimM{\Phi} = \formir{\dimH{\Phi}=} 1\wedge \beta.$
\end{enumerate}
\end{proposition}

\begin{proof}[Proof (first part)]
For every $r>0$, one has $\myprob{\Phi\cap (0,r)=\emptyset} = \myprob{S_1\geq r}$.
So the claims \formir{regarding the Minkowski dimension} are direct consequences
of Proposition~\ref{prop:lowerBoundR} \formir{and do not require the i.i.d. assumption. 
The proofs of the last two claims, 
will be given in Subsection~\ref{subsec:remainingproofs}.}
\end{proof}

The image of the nearest-neighbor simple random walk in $\mathbb Z^k$ will be studied in~\cite{III}.
It will be shown that it has dimension 2 when $k\geq 2$.
Furthermore, a \textit{doubling property} will be proved in this case. 
	
As another example, if $[\bs T, \bs o]$ is any unimodular tree such that the simple
random walk on $\bs T$ is transient a.s., then the image of the (two sided) 
simple random walk on $\bs T$ is another unimodular tree
(after biasing by the inverse of the multiplicity of the root).
The new tree is two-ended a.s., and hence, is 1-dimensional by Theorem~\ref{thm:twoEnded}.

\subsubsection{Zeros of the Simple Random Walk}
\label{subsec:zerosSRW}
\begin{proposition}
\label{prop:zeros}
Let $\Psi$ be the zero set of the symmetric simple random walk on
$\mathbb Z$ with uniform jumps in $\{\pm 1\}$. Then, $ \dimM{\Psi} = \formir{\dimH{\Psi}=} \frac 12.$
\end{proposition}

\begin{proof}
Represent $\Psi$ uniquely as $\Psi:=\{S_n: n\in \mathbb Z\}$ such that $S_0:=0$ and $S_n<S_{n+1}$ for each $n$.
Then, $(S_n)_n$ is another simple random walk and $\Psi$ is its image. The distribution of the jump $S_1$ is
explicitly computed in the classical literature on random walks (using the reflection principle).
In particular, there exist $c_1,c_2>0$ such that $c_1r^{-\frac 12}<\myprob{S_1>r}<c_2r^{-\frac 12}$
for every $r\geq 1$. Therefore, the claim is implied
\formir{by part~\eqref{part:thm:image:1} of} Proposition~\ref{prop:image}
\formir{(recall that this part of Proposition~\ref{prop:image} will be proved later)}.
\end{proof}

\subsection{A Subspace with Larger Minkowski Dimension}
\label{subsec:subspace-minkowski}

\formir{Let $\Phi\subseteq\mathbb R$ be an arbitrary point-stationary point process.
Let $S_1$ be the first point of $\Phi$ on the right of the origin.
Assume $\beta:=\decay{\myprob{S_1>r}}$ exists with $\beta<1$. 
Then Proposition~\ref{prop:image} gives that $\dimM{\Phi}=\beta$.

Let $\alpha<\beta<1$. Consider the intervals defined by consecutive points of $\Phi$.
In each such interval, say $(a,b)$, add $\ceil{(b-a)^{\alpha}}-1$ points so as to split
the interval into $\ceil{(b-a)^{\alpha}}$ equal parts.
Let $\Phi'$ denote the resulting point process (with the points of $\Phi$ and
the additional points). The assumption $\alpha<\beta$ implies
that $\omid{S_1^{\alpha}}<\infty$. Now, by biasing the distribution of $\Phi'$
by $\ceil{S_1^{\alpha}}$ and changing the origin to
a point of $\Phi'\cap [0,S_1)$ chosen uniformly at random, one obtains a point-stationary
point process $\Psi$ (see Theorem~5 in \cite{shift-coupling} and also the examples in~\cite{processes}),
(it is not a renewal process). The distribution of $\Psi$ is determined by the following equation,
where $h$ is any measurable nonnegative function:}
\begin{equation}
	\label{eq:regenerative:1}
	\omid{h(\Psi)} = \frac 1{\omid{\ceil{S_1^{\alpha}}}} \omid{\sum_{x\in \Phi'\cap [0,S_1)} h(\Phi'-x) }.
\end{equation}

\begin{proposition}
\label{prop:subset-larger}
Let $\Phi$ and $\Psi$ be as above. Then, $\Phi$ has the same distribution as
an equivariant subspace of $\Psi$ (conditioned on having the root) and 
	\[
	\dimM{\Phi}=\beta > \frac{\beta-\alpha}{1-\alpha} = \dimM{\Psi}.
	\]
\end{proposition}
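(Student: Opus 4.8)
The plan is to prove the two assertions in Proposition~\ref{prop:subset-larger} separately. For the first assertion, that $\Phi$ has the same distribution as an equivariant subspace of $\Psi$ (conditioned on having the root), I would exhibit the equivariant subset explicitly: given a realization of $\Psi$, mark a point $x\in\Psi$ by $1$ if $x$ is one of the ``original'' points (i.e., it was a point of $\Phi'$ coming from $\Phi$, as opposed to one of the interpolated points) and by $0$ otherwise. The subtlety is that this marking must be reconstructible from the metric structure of $\Psi$ alone in an equivariant (translation-covariant) way, so that it defines a genuine equivariant subset in the sense of Definition~\ref{def:subset}. I would argue that from the inter-point distances of $\Psi$ one can recover which points are original: in each $\Phi$-gap of length $\ell:=b-a$ there are exactly $\lceil \ell^{\alpha}\rceil$ equal subintervals each of length $\ell/\lceil\ell^{\alpha}\rceil$, and the original points are precisely the left endpoints of these maximal runs of equal spacings; one needs to check this reconstruction is unambiguous a.s.\ (ties between spacings of different gaps have probability zero if $\Phi$ has a continuous gap distribution, and in the degenerate case one adds extra randomness as allowed by Definition~\ref{def:equivProcess}). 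Then by~\eqref{eq:regenerative:1} the law of $\Psi$ biased and re-rooted to a uniformly chosen original point in $[0,S_1)$ is exactly the law of $\Phi'$ re-rooted at $0$, and further restricting to original points recovers $\Phi$; this identifies the conditional law of the equivariant subspace with that of $\Phi$. Here I would invoke Theorem~5 of~\cite{shift-coupling} (already cited in the construction) to justify that the biasing-and-re-rooting operation indeed produces the point-stationary $\Psi$ and that the inverse operation is the claimed conditioning.

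For the equality $\dimM{\Phi}=\beta$ I simply quote Proposition~\ref{prop:lowerBoundR}, as already observed before the statement. The substantive part is $\dimM{\Psi}=\frac{\beta-\alpha}{1-\alpha}$. For this I would again use Proposition~\ref{prop:lowerBoundR} applied to $\Psi$: the Minkowski dimension of $\Psi$ equals $\decay{\tfrac1r\int_0^r \myprob{\Psi\cap(0,s)=\emptyset}\,ds}$, and by the squeeze in that proposition it suffices to compute $\decay{\myprob{\Psi\cap(0,r)=\emptyset}}$, provided this quantity is $\le 1$ (which it will be, since $\frac{\beta-\alpha}{1-\alpha}<1$). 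So the whole task reduces to estimating the tail of the ``first point to the right of the root'' in $\Psi$, which I will call $T_1$. Using~\eqref{eq:regenerative:1}, $\myprob{\Psi\cap(0,r)=\emptyset}=\myprob{T_1>r}$ is, up to the normalizing constant $\omid{\lceil S_1^{\alpha}\rceil}$, equal to $\omid{\sum_{x\in\Phi'\cap[0,S_1)}\identity{(\Phi'-x)\cap(0,r)=\emptyset}}$. The event $(\Phi'-x)\cap(0,r)=\emptyset$ says that the next point of $\Phi'$ after $x$ is at distance $>r$; since $\Phi'$ subdivides the $\Phi$-gap $[0,S_1)$ into pieces of length $S_1/\lceil S_1^{\alpha}\rceil\asymp S_1^{1-\alpha}$, this forces roughly $S_1^{1-\alpha}>r$, i.e.\ $S_1 \gtrsim r^{1/(1-\alpha)}$, and then among the $\lceil S_1^{\alpha}\rceil$ subpoints of that gap only a bounded number of the rightmost ones can satisfy it. So heuristically $\myprob{T_1>r}\asymp \myprob{S_1 > r^{1/(1-\alpha)}}\asymp r^{-\beta/(1-\alpha)}$, and dividing the exponent by... wait — more carefully, since each qualifying gap contributes $O(1)$ points rather than being weighted, and the normalization is a constant, one gets $\myprob{T_1>r}\asymp \myprob{S_1>r^{1/(1-\alpha)}}$. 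That gives $\decay{\myprob{T_1>r}} = \beta/(1-\alpha)$, which is \emph{not} $\frac{\beta-\alpha}{1-\alpha}$ — so the contribution of the \emph{number} of qualifying subpoints in a long gap must be re-examined: in a gap of length $\ell$ with $\ell^{1-\alpha}$ slightly exceeding $r$, essentially \emph{all} $\approx \ell^{\alpha}$ subpoints have their successor gap of length $\approx \ell^{1-\alpha}> r$, so the contribution is $\ell^{\alpha}$, giving $\myprob{T_1>r}\asymp \omid{S_1^{\alpha}\identity{S_1> r^{1/(1-\alpha)}}} \asymp r^{(\alpha-\beta)/(1-\alpha)}$ by the tail computation $\omid{S_1^{\alpha}\identity{S_1>t}}\asymp t^{\alpha-\beta}$ (valid since $\alpha<\beta$), and then $\decay{\myprob{T_1>r}} = \frac{\beta-\alpha}{1-\alpha}$ as desired.

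The main obstacle, and the step I would spend the most care on, is making the last chain of $\asymp$ estimates rigorous: one must handle the ceiling function $\lceil S_1^{\alpha}\rceil$ (negligible but needs a lower bound away from $0$), control the ``boundary'' subpoints near $S_1$ whose successor gap is not inside the same $\Phi$-gap but jumps to the next one, and justify the interchange of expectation and the indicator manipulations, using only that $\beta:=\decay{\myprob{S_1>r}}$ exists (so one has $r^{-\beta-\epsilon}\le \myprob{S_1>r}\le r^{-\beta+\epsilon}$ for large $r$ and all $\epsilon>0$, not an exact power law) — this forces working with $\liminf$/$\limsup$ throughout and only taking $\epsilon\to 0$ at the end. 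A secondary obstacle is the measurability/equivariance check in the first assertion, i.e.\ verifying Condition~\eqref{def:equivProcess:2} of Definition~\ref{def:equivProcess} for the ``original points'' marking; this is routine but must be stated. I would then conclude by combining: $\dimM{\Psi}=\frac{\beta-\alpha}{1-\alpha}$, and since $0<\alpha<\beta<1$ one has $\frac{\beta-\alpha}{1-\alpha}<\beta$ (equivalent to $\beta-\alpha<\beta-\alpha\beta$, i.e.\ $0<\alpha\beta$), giving the strict inequality $\dimM{\Phi}=\beta>\frac{\beta-\alpha}{1-\alpha}=\dimM{\Psi}$ claimed in the proposition, consistent with part~\eqref{thm:subsetDimension:H} of Theorem~\ref{thm:subsetDimension} only because that theorem asserts equality for the \emph{Hausdorff} dimension, not the Minkowski one.
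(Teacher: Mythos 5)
Your proposal follows essentially the same route as the paper (apply~\eqref{eq:regenerative:1} to express the gap distribution of $\Psi$ in terms of $S_1$, then use Proposition~\ref{prop:lowerBoundR}), and it reaches the right exponent after self-correcting. Two remarks, though. First, for the ``equivariant subspace'' claim you impose an unnecessary constraint: you try to reconstruct the set of original points from the metric structure of $\Psi$ alone. That is not required. By Remark~\ref{rem:equivProcessEquivalence} and Proposition~\ref{prop:equivProcessConverse}, an equivariant subset of $\Psi$ is equivalent to a unimodular marked version $[\Psi,0;\bs m]$ whose unmarked law is that of $\Psi$; so it suffices to keep, from the construction itself, a mark on every point of $\Phi'$ saying whether it is original or added, and to check translation-covariance (which is built in). The paper does exactly this, and it dodges all the edge cases you were worried about (ties between subdivision lengths, degenerate gap distributions, etc.). Second, you overlook the clean structural fact that makes the computation exact rather than an $\asymp$-chain: in the gap $[0,S_1)$ the $\ceil{S_1^{\alpha}}$ points of $\Phi'\cap[0,S_1)$ subdivide the interval into \emph{equal} parts, so every one of them has right-gap exactly $S_1/\ceil{S_1^{\alpha}}$ (including the last one, whose right-neighbor is $S_1$ itself). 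Hence the ``boundary subpoints near $S_1$'' concern is vacuous, and the sum $\sum_{x\in\Phi'\cap[0,S_1)}\identity{\{(\Phi'-x)\cap(0,r)=\emptyset\}}$ equals exactly $\ceil{S_1^{\alpha}}\,\identity{\{S_1/\ceil{S_1^{\alpha}}>r\}}$, which the paper uses to read off the decay rate directly. Your final step (dealing with the ceiling and the fact that $\decay{\myprob{S_1>r}}=\beta$ is only a decay rate, not an exact power) needs the $\liminf/\limsup$ bookkeeping you describe, and the paper leaves that informal too, so there is no disagreement there.
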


Note that Theorem~\ref{thm:subsetDimension} implies that $\dimH{\Phi}=\dimH{\Psi}$.
Therefore, the proposition implies $\dimM{\Psi}<\dimH{\Psi}$.

\begin{proof}
Let $\bs A$ be the set of newly-added points in $\Psi$, which can be defined
by adding marks from the beginning and is an equivariant subset of $\Psi$.
By~\eqref{eq:regenerative:1}, one can verify that $\Psi\setminus \bs A$
conditioned on $0\not\in \bs A$ has the same distribution as $\Phi$
(see also Proposition~6 in~\cite{shift-coupling}). 
Also, by letting $c:={\omid{\ceil{S_1^{\alpha}}}}$, \eqref{eq:regenerative:1} gives
\begin{eqnarray*}
		\myprob{\Psi\cap (0,r)=0} &=& \frac 1 c \omid{\sum_{x\in \Phi'\cap [0,S_1)} 
                    \identity{\{(\Phi'-x)\cap (0,r)=\emptyset \}}}\\
		&=& \frac 1 c \omid{\ceil{S_1^{\alpha}}  \identity{\{\Phi'\cap (0,r)=\emptyset \}}}\\
		&=& \frac 1 c \omid{\ceil{S_1^{\alpha}} \identity{\{S_1/\ceil{S_1^{\alpha}}>r \}} }.
	\end{eqnarray*}
\formir{Now, by the assumption $\decay{\myprob{S_1>r}}=\beta$ and integration by parts,
it is straightforward to deduce} that
$
\decay{\myprob{\Psi\cap (0,r)=0}} = {(\beta-\alpha)}/{(1-\alpha)}.
$
Therefore, Proposition~\ref{prop:lowerBoundR} gives the claim.
\end{proof}

\formir{
\begin{remark}
The fact that $\Psi$ has a smaller Minkowski dimension than $\Phi$
means that the tail of the distribution of the jumps (or inter-arrivals) of $\Psi$
is heavier than that of the inter-arrivals of $\Phi$.
This may look surprising as the inter-arrival times of $\Psi$ are obtained
by subdividing those of $\Phi$ into smaller sub-intervals.
The explanation of this apparent contradiction is of the same nature as that of
Feller's paradox (Section I.4 of~\cite{bookFe66}). It comes from the renormalization
of {\textit{size-biased sampling}}: the typical inter-arrival of $\Psi$ has more
chance to be found in a larger inter-arrival of $\Phi$, and
this length-biasing dominates the effect of the subdivision.
\end{remark}
}

\subsection{A Drainage Network Model}
\label{subsec:drainage}

Practical observations show that large river basins have a fractal structure. For example, \cite{hack} discovered a power law relating the area and the height of river basins.
There are various ways to model river basins and their fractal properties in the literature. In particular, \cite{RoSaSa16} formalizes and proves a power law with exponent $3/2$ for a specific model called \textit{Howard's model}.
Below, the simpler model of~\cite{Ng90} is studied. {One can ask similar questions for Howard's model or other drainage network models.}

Connect each $(x,y)$ in the even lattice $\{(x,y)\in \mathbb Z^2: x+y \bmod 2 = 0\}$ to either $(x-1,y-1)$ or $(x+1,y-1)$ with equal probability in an i.i.d. manner to obtain a directed graph $\bs T$. {Note that the downward path starting at a given vertex is the rotated graph of a simple random walk. It is known that} $\bs T$ is connected and is a one-ended tree (see e.g., \cite{RoSaSa16}). Also, by Lemma~\ref{lem:equivProcess}, $[\bs T, 0]$ is unimodular. 

Note that by considering the Euclidean metric on $\bs T$, the Hausdorff dimension of $\bs T$ is 2. In the following, the graph-distance metric is considered on $\bs T$. 

\begin{proposition}
	\label{prop:drainage}
	One has $\dimM{\bs T} = \formir{\dimH{\bs T} =} \frac 3 2$ under the graph-distance metric.
\end{proposition}

\begin{proof}[Proof (first part)]
	\formir{Here, it will be proved that $\dimM{\bs T}=\frac 3 2$. The rest of the proof is postponed to Subsection~\ref{subsec:remainingproofs}.} 
	The idea is to use Theorem~\ref{thm:one-ended}. Following~\cite{RoSaSa16},
        there are two \textit{backward paths} (going upward) in the odd lattice 
        that surround the descendants $D(\bs o)$ of the origin. These two paths
        have exactly the same distribution as (rotated) graphs of independent simple
        random walks starting at $(-1,0)$ and $(1,0)$, respectively, until they hit for the first time.
        In this setting, $h(\bs o)$ is exactly the hitting time of these random walks.
        So classical results on random walks imply that $\myprob{h(\bs o)\geq n}$
        is bounded between two constant multiples of $n^{-\frac 1 2}$ for all $n$. 
	So Theorem~\ref{thm:one-ended} implies that $\dimM{\bs T} = \frac 3 2$.
\end{proof}

\subsection{Self Similar Unimodular Discrete Spaces}
\label{subsec:selfsimilar}
This section provides a class of examples {of unimodular discrete spaces} obtained by discretizing self-similar sets. 
Let $l\geq 1$ and $f_1,\ldots,f_l$ be similitudes of $\mathbb R^k$ with similarity
ratios $r_1,\ldots,r_l$ respectively (i.e., $\forall x,y\in\mathbb R^k: \norm{f_i(x)-f_i(y)}= r_i\norm{x-y}$). For every $n\geq 0$ and every string {$\sigma=(j_1,\ldots,j_n)\in\{1,\ldots,l\}^n$},
let $f_{\sigma}:=f_{j_1}\cdots f_{j_n}$. Also let $\norm{\sigma}:=n$.
Fix a point $o\in \mathbb R^k$ {(one can similarly start with a finite subset of $\mathbb R^k$ instead of a single point)}. Let $K_0:=\{o\}$ and $K_{n+1}:=\bigcup_{j} f_j(K_n)$ for each $n\geq 0$. Equivalently,
\begin{equation}
\label{eq:defKn}
K_n=\{f_{\sigma}(o): \norm{\sigma}=n\}.
\end{equation}
Recall that if $r_i<1$ for all $i$, then by contraction arguments, $K_n$ converges in the Hausdorff metric to \textit{the attractor} of $f_1,\ldots,f_l$ (see e.g., Section~2.1 of~\cite{bookBiPe17}). The attractor is the unique compact set $K\subseteq\mathbb R^k$ such that $K=\bigcup_i f_i(K)$. In addition, if the $f_i$'s satisfy the \textit{open set condition}; i.e.,
there is a bounded open set $V\subseteq \mathbb R^k$ such that
$f_i(V)\subseteq V$ and $f_i(V)\cap f_j(V)=\emptyset$ for each $i,j$,
then the Minkowski and Hausdorff dimensions of $K$ are equal to the \textit{similarity dimension},
which is the unique $\alpha\geq 0$ such that $\sum r_i^{\alpha}$=1.

{The following is the main result of this section. It introduces a discrete analogue of self-similar sets by scaling the sets $K_n$ and taking local weak limits.}

\begin{theorem}
	\label{thm:selfSimilar}
	Let $\bs o_n$ be a point of $K_n$ chosen uniformly at random, where $K_n$ is defined in (\ref{eq:defKn}). Assume that $r_i=r<1$ for all $i$ and the open set condition is satisfied. Then,
	\begin{enumerate}[(i)]
		\item 
			 $[r^{-n} K_n,\bs o_n]$ converges {weakly} to some unimodular discrete space.
		\item 
		
		The unimodular Minkowski and Hausdorff dimension of the limiting space are equal to $\alpha: = {\log l}/{\norm{\log r}}$. 
		Moreover, it has positive and finite \formir{$\alpha$-dim H-size}.
	\end{enumerate}
\end{theorem}

The proof is given at the end of this subsection.
In fact, a point process  $\Psi$ in $\mathbb R^k$ will be constructed such that $[r^{-n} K_n,\bs o_n]$ converges weakly to $[\Psi,o]$. In addition, $\Psi-o$ is point-stationary. It can also be constructed directly by the algorithm in Remark~\ref{rem:alg:selfSimilar} below. 

\begin{definition}
	{The unimodular discrete space in Theorem~\ref{thm:selfSimilar}} is called a {\textbf{self similar unimodular discrete space}}.
\end{definition}

{It should be noted that self similar unimodular discrete spaces depend on the choice of the initial point $o$ in general.}

The following are examples of unimodular self~similar discrete spaces. The reader is also invited to construct a unimodular discrete version of the Sierpinski carpet similarly.

\begin{example}
	{If $f_1(x):=x/2$ and $f_2(x):=(1+x)/2$, then the limiting space is just $\mathbb Z$. Similarly,} the lattice $\mathbb Z^k$ and the triangular lattice in the plane are self~similar unimodular discrete spaces.
\end{example}

\begin{example}[Unimodular Discrete Cantor Set]
	\label{ex:cantor}
	{{Start with two points $K_0:=\{0,1\}$}. Let $f_1(x):=x/3$ and $f_2(x):=(2+x)/3$. Then, $K_n$ is the set of {the interval ends}} in the $n$-th step of the definition of the Cantor set. Here, it is easy to see that the random set $\Psi_n:=3^n(K_n-\bs o_n)\subseteq\mathbb Z$ converges weakly to the random set $\Psi\subseteq\mathbb Z$ defined as follows: $\Psi:=\cup_n \bs T_n$,
	where $\bs T_n$ is defined by letting $\bs T_0:=\{0,{\pm 1}\}$ and $\bs T_{n+1}:=\bs T_n\cup (\bs T_n\pm 2\times 3^n)$, 
	where the sign is chosen i.i.d., each sign with probability $1/2$. 
	Note that each $\bs T_n$ has the same distribution as $\Psi_n$, but the sequence $\bs T_n$ is nested. In addition, since $\bs o_n$ is chosen uniformly, $\Psi_n$ and $\Psi$ are point-stationary point processes, and hence $[\Psi,0]$ is unimodular {(a deterministic discrete Cantor set exists in the literature which is not unimodular)}. Theorem~\ref{thm:selfSimilar} implies that $\dimM{\Psi}=\dimH{\Psi}={\log 2}/{\log 3}$.
\end{example}

\begin{example}[Unimodular Discrete Koch Snowflake]
	\label{ex:koch}
	Let $C_n$ be the set of points in the $n$-th step of the construction of the Koch snowflake. Let $\bs x_n$ be a random point
	of $C_n$ chosen uniformly and $\Phi_n:= 3^n(C_n-\bs x_n)$. It can be seen that $\Phi_n$ tends weakly to a random discrete subset
	$\Phi$ of the triangular lattice which is almost surely a bi-infinite path (note that the cycle disappears in the limit). It can be seen that $\Phi$ can be obtained by Theorem~\ref{thm:selfSimilar}. In this paper, $\Phi$ is called the \defstyle{unimodular discrete Koch snowflake}. Also, Theorem~\ref{thm:selfSimilar} implies that $\dimM{\Phi}=\dimH{\Phi}={\log 4}/{\log 3}$.
	\\
	In addition, $\Phi$ can be constructed explicitly as
	$\Phi:=\cup_n \bs T_n$, where $\bs T_n$ is a random finite path in
	the triangular lattice with distinguished end points $\bs A_n$ and $\bs B_n$ defined inductively as follows:
	Let $\bs T_1:=\{\bs A_1, \bs B_1\}$, where $\bs A_1$ is the origin and $\bs B_1$ is a neighbor of the origin
	in the triangular lattice chosen uniformly at random. For each $n\geq 1$, given $(\bs T_n, \bs A_n, \bs B_n)$,
	let $(\bs T_{n+1}, \bs A_{n+1}, \bs B_{n+1})$ be obtained by attaching to $\bs T_n$ three isometric copies of
	itself as shown in Figure~\ref{fig:koch}. There are 4 ways to attach the copies and one of them should be chosen
	at random  with equal probability (the copies should be attached to $\bs T_n$ relative to the position of
	$\bs A_n$ and $\bs B_n$). It can be seen that no points overlap.

	\begin{figure}
		\begin{center}
			\includegraphics[width=.45\textwidth]{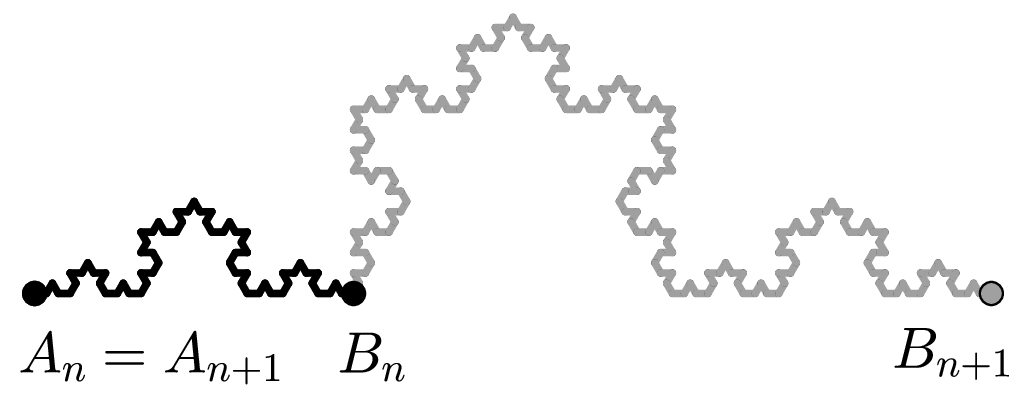}
			\includegraphics[width=.45\textwidth]{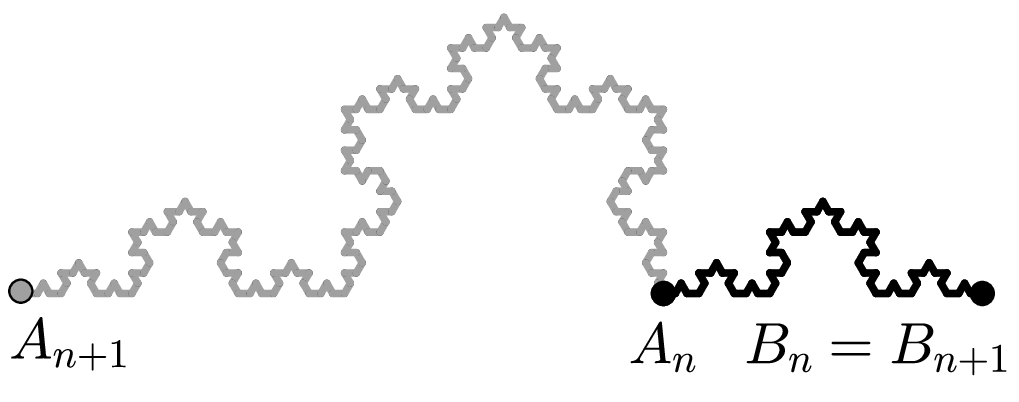}
			\includegraphics[width=.45\textwidth]{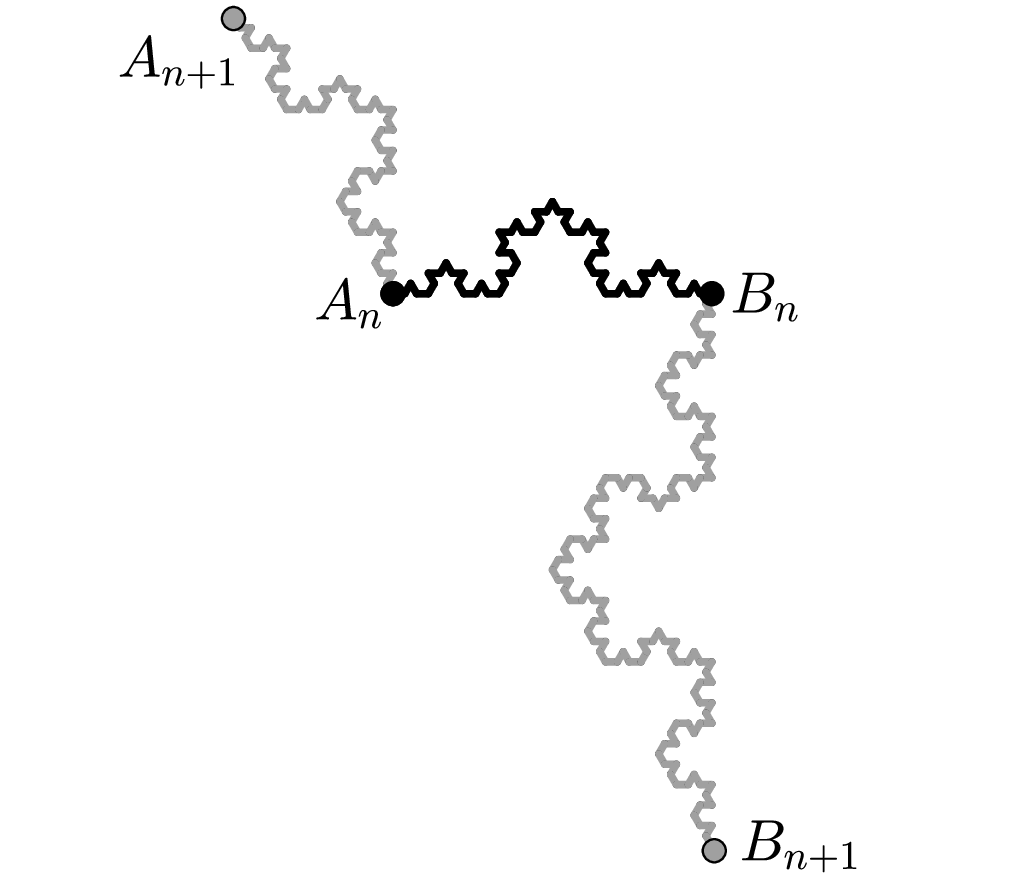}
			\includegraphics[width=.45\textwidth]{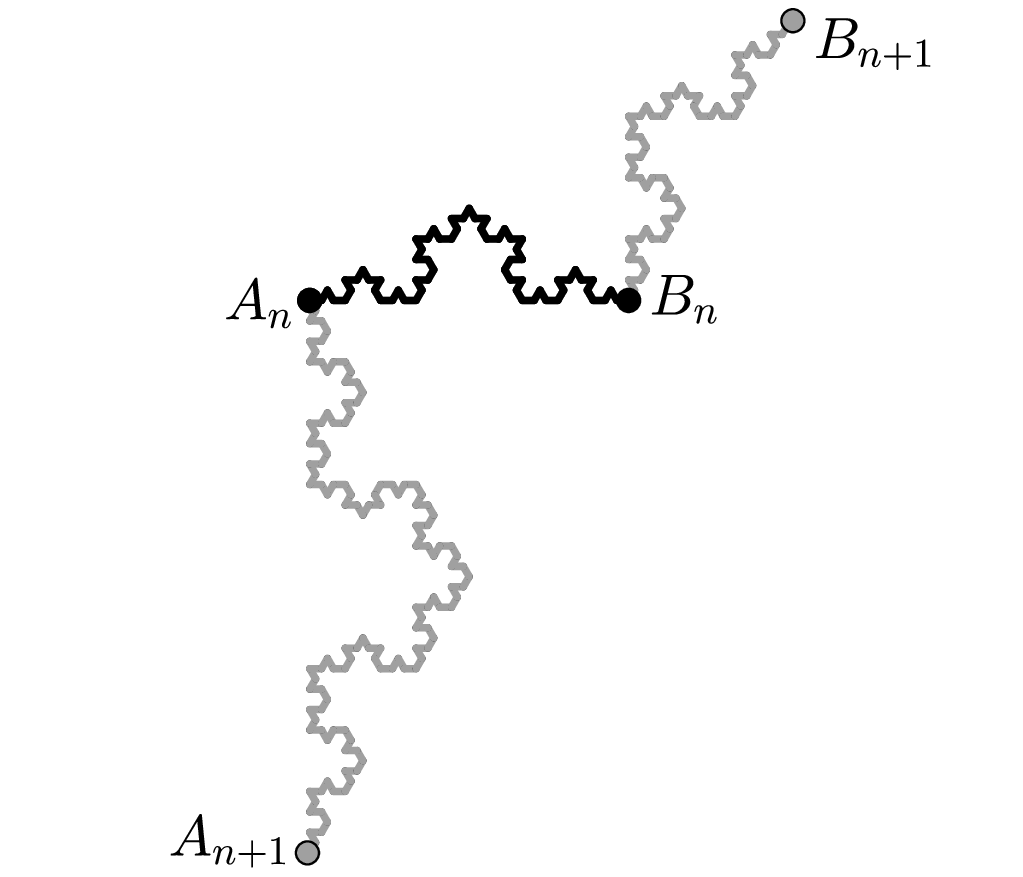}
			\caption{Four ways to attach 3 isometric copies to $\bs T_n$ in the construction of the unimodular discrete Koch snowflake, where each copy is a rotated/translated version of $\bs T_n$ (relative to $\bs A_n$ and $\bs B_n$). {Here, $\bs T_n$ is shown in black.}}
			\label{fig:koch}
		\end{center}
	\end{figure}

\end{example}

\begin{remark}
	\label{rem:selfsimilar-not-equal}
	If the $r_i$'s are not all equal, the guess is that there is no scaling of the sequence
	$[K_n, \bs o_n]$ that converges to a nontrivial unimodular discrete space (which is not a single point).
	This {has been} {verified by the authors} in the case $o\in V$. In this case, by letting $\bs a_n$ be the
	distance of $\bs o_n$ to its closest point in $K_n$, it is shown that for any $\epsilon>0$,
	$\myprob{\bs a_n/(\bar r)^n<\epsilon}\rightarrow\frac 1 2$ and
	$\myprob{\bs a_n /(\bar r)^n>\frac 1{\epsilon}}\rightarrow\frac 1 2$, where $\bar r$
	is the geometric mean of $r_1,\ldots, r_l$. This implies the claim {(note that the counting measure matters for convergence; e.g., $\{0,\frac 1 n\}$ does not converge to $\{0\}$)}.
\end{remark}

{To prove Theorem~\ref{thm:selfSimilar}, it is useful to consider the following nested version of the sets $K_n$} (note that $K_n$ is not necessarily contained in $K_{n+1}$, unless $o$ is a fixed point of some $f_i$).
Let $\bs u_1,\bs u_2,\ldots$ be i.i.d. uniform random numbers in
$\{1,\ldots,l\}$ and $\bs \delta_n:=(\bs u_n,\ldots, \bs u_1)$. Let
$
\bs o'_n:=f_{\bs \delta_n}(o).
$
Let $\hat{\bs K}_n:=f_{\bs \delta_n}^{-1} K_n ={f_{\bs u_1}^{-1}\cdots f_{\bs u_n}^{-1}} K_n$.
The chosen order of the indices in $\bs \delta_n$ ensures that 
$
\hat{\bs K}_n\subseteq \hat{\bs K}_{n+1}
$
for all $n$.
It is easy to see that $[\hat{\bs K}_n, o]$ has the same distribution as $[r^{-n} K_n, \bs o'_n]$.
For $v\in \hat{\bs K}_n$, let
\[
\bs w_n(v):=\card{\{\sigma: \norm{\sigma}=n,  f_{\sigma}(o)=f_{\bs \delta_n}(v) \}}.
\]
One has $\bs w_n(v)\leq \bs w_{n+1}(v)$. Note that in the case $o\in V$, $\bs w_n(\cdot)=1$ and the arguments are much simpler. The reader can assume this at first reading.

In the following,
for $x\in\mathbb R^k$, $B_r(x)$ represents the closed ball of radius $r$ centered at $x$ in $\mathbb R^k$.

\begin{lemma}
	\label{lem:selfSimilarConstruction}
	Let $\hat{\bs K}:=\cup_n \hat{\bs K}_n$ and $\bs w(v):=\lim_n \bs w_n(v)$ for $v\in \hat{\bs K}$.
	\begin{enumerate}[(i)]
		\item \label{lem:selfSimilarConstruction:1} $\bs w(\cdot)$ is uniformly bounded.
		\item Almost surely, $\hat{\bs K}$ is a discrete set.
		\item The distribution of $[\hat{\bs K}, o]$,
		biased by $1/\bs w(o)$, {is the limiting distribution alluded to in Theorem~\ref{thm:selfSimilar}.}
	\end{enumerate}
\end{lemma}

\begin{proof}
	\textit{(i).}	
	Assume $f_{\sigma_1(o)}=\cdots=f_{\sigma_l}(o)$ and $\norm{\sigma_j}=n$ for each $j\leq l$. Let $D$ be a fixed number such that $V$ intersects $B_{D}(o)$. Now, the sets $f_{\sigma_j}(V)$ for $1\leq j\leq l$ are disjoint and intersect a common ball of radius $Dr^n$. Moreover, each of them contains a ball of radius $ar^n$ and each is contained in a ball of radius $br^n$ (for some fixed $a,b>0$). Therefore, Lemma~2.2.5 of~\cite{bookBiPe17} implies that $l\leq (\frac {D+2b}{a})^k=:C$. This implies that $\bs w_n(\cdot)\leq C$ a.s., hence $\bs w(\cdot)\leq C$ a.s.
	
	\textit{(ii).} Let $D$ be arbitrary as in the previous part. Assume $f_{\bs \delta_n}^{-1}f_{\sigma_j(o)}\in B_D(o)$ for $j=1,\ldots,l$. Now, for $j=1,\ldots,l$, the sets $f_{\sigma_j}(V)$ are disjoint and intersect a common ball of radius $2Dr^n$. As in the previous part, one obtains $l\leq (\frac {2D+2b}{a})^k$. Therefore, $\card{N_D(o)\leq (\frac {2D+2b}{a})^k}$ a.s. Since this holds for all large enough $D$, one obtains that $\hat{\bs K}$ is a discrete set a.s.

	\textit{(iii).} Note that the distribution of $\bs o'_n$ is just the distribution of $\bs o_n$ biased by the \textit{multiplicities} of the points in $K_n$. It follows that biasing the distribution of $[\hat{\bs K}_n, o]$ by $1/{\bs w_n(o)}$ gives just the distribution of $[r^{-n}K_n, \bs o_n]$. The latter is unimodular since $\bs o_n$ is uniform in $K_n$. {So} the distribution of $[\hat{\bs K}, o]$ biased by $ 1/{\bs w(o)}$ is also unimodular and satisfies the claim of Theorem~\ref{thm:selfSimilar}. 
\end{proof}

\begin{proof}[Proof of Theorem~\ref{thm:selfSimilar}]
	Convergence is proved in Lemma~\ref{lem:selfSimilarConstruction}.
	The rest of the proof is base on the construction of a sequence of equivariant coverings of $\hat{\bs K}$. In this proof, with an abuse of notation, the dimension of ${\hat{\bs K}}$ means the dimension of the unimodular space obtained by biasing the distribution of $\hat{\bs K}$ by $1/\bs w(o)$ (see Lemma~\ref{lem:selfSimilarConstruction}).
	Let $D>\mathrm{diam}(K)$ be given, where $K$ is the attractor of $f_1,\ldots, f_l$.
	Let $m>0$ be large enough so that $\mathrm{diam}(K_m)<{D}$.
	Note that each element in $\hat{\bs K}$ can be  written as $f_{\bs \delta_n}^{-1}f_{\sigma}(o)$
	for some $n$ and some string $\sigma$ of length $n$. 
	Let $\bs \gamma_m$ be a string of length $m$ chosen uniformly at random and independently of other variables. 
	For an arbitrary $n$ and a string $\sigma$ of length $n$, let
	\begin{eqnarray*}
		\bs U_{\sigma} &:=& f_{\bs \delta_{n+m}}^{-1}f_{\sigma}(K_{m}),\\	
		\bs z_{\sigma} &:=& f_{\bs \delta_{n+m}}^{-1}f_{\sigma} {f_{\bs \gamma_m}(o)}. 
	\end{eqnarray*}
	Note that $\bs U_{\sigma}\subseteq\hat{\bs K}$ is always a scaling of $K_m$
	with ratio $r^{-m}$ and $\bs z_{\sigma}\in \bs U_{\sigma}$.
	Now, define the following covering of $\hat{\bs K}$:
	\[
	\bs R_m(v):=\left\{
	\begin{array}{ll}
	Dr^{-m}, & \text{if } v=\bs z_{\sigma} \text{ for some } \sigma,\\
	0, & \text{otherwise}.
	\end{array}
	\right.
	\]
	It can be seen that $\bs R_m$ gives an equivariant covering. 
	Also, note that $\bs R_m(o)>0$ if and only if $f_{\sigma}f_{\bs \gamma_m}(o) = f_{\bs \delta_{n+m}}(o)$
	for some $n$ and some string $\sigma$ of length $n$. 
	Let $A_{n,m}(o)$ be the set of possible outcomes for $\bs \gamma_m$ such that there exists
	a string $\sigma$ of length $n$ such that the last equation holds. One can see that this
	set is increasing with $n$ and deduce that $\bs w_m(o)\leq \card{A_{n,m}(o)}\leq \bs w_{n+m}(o)$.
	By letting $\bs w'_m(o):=\card{\cup_n A_{n,m}(o)}$, it follows that 
	$\bs w_m(o)\leq \bs w'_m(o)\leq \bs w(o)$.
	According to the above discussion, $\bs R_m(o)>0$ if and only if $\bs \gamma_m \in  \cup_n A_{n,m}(o)$. So
	\[
	\probCond{\bs R_m(o)>0}{\bs u_0, \bs u_1,\ldots} = \bs w'_m(o) r^{m\alpha}. 
	\]
	Therefore, by considering the biasing that makes $\hat{\bs K}$ unimodular, one gets   
	\begin{equation}
	\label{eq:thm:selfSimilar:1}
	\omid{\frac 1{\bs w(o)}\identity{\{ \bs R_m(o)>0 \}} } {= \omid{\frac{\bs w'_m(o)r^{m\alpha}}{\bs w(o)}}} \leq r^{m\alpha}.
	\end{equation}
	Since the balls in the covering have radius $Dr^{-m}$, one gets $\dimMl{\hat{\bs K}}\geq\alpha$. 
	
	On the other hand, by~\eqref{eq:thm:selfSimilar:1} and monotone convergence, one finds that
	\[
	\omid{\frac 1{\bs w(o)}\identity{\{ \bs R_m(o)>0 \}} } \geq \frac 12 r^{m\alpha},
	\]
	for large enough $m$. 
	Similar to the proof of part~\eqref{lem:selfSimilarConstruction:1} of Lemma~\ref{lem:selfSimilarConstruction},
	one can show that the sequence of coverings $\bs R_m$  (for $m=1,2,\ldots$) is uniformly bounded. 
	Therefore, Lemma~\ref{lem:nearlyDisjoint} implies that $\dimM{\hat{\bs K}}=\alpha$. 
	Moreover, since $\omid{\bs R_m(o)^{\alpha}/\bs w(o)}$ is bounded (by $D^{\alpha}$),
	one can get that $\measH{\alpha}(\hat{\bs K})>0$.
	
	Lemma~\ref{lem:mdp-simple} will be used to bound the Hausdorff dimension.
	Let $D>1$ be arbitrary. Choose $m$ such that $r^{-m}\leq D < r^{-m-1}$.
	By Lemma~\ref{lem:selfSimilarConstruction}, there are finitely many points 
	in $\hat{\bs K}\cap B_D(o)$. Therefore, one finds $n$ such that $\hat{\bs K}\cap B_D(o)=\hat{\bs K}_{n+m}\cap B_D(o)$. 
	It follows that the sets $\{\bs U_{\sigma}:\norm{\sigma}=n\}$ cover $\hat{\bs K}_{n+m}$.
	Now, assume $\sigma_1,\ldots,\sigma_k$ are strings of length $n$ such that
	$\bs U_{\sigma_i}$ are distinct and intersects $B_D(o)$. One obtains that
	\begin{equation}
	\label{eq:thm:selfSimilarDim:1}
	\card{B_D(o)\cap\hat{\bs K}}\leq \sum_{j=1}^k\card{B_D(o)\cap \bs U_{\sigma_j}} \leq k l^m = k r^{-\alpha m} \leq k D^{\alpha}. 
	\end{equation}
	Consider the sets $\bs V_{\sigma_j}:=f_{\bs \delta_{n+m}}^{-1}f_{\sigma_j}(V)$ which are disjoint
	(since $\sigma_j$'s have the same length). Note that if $\epsilon>\mathrm{diam}(V\cup\{o\})$ is fixed,
	then the $\epsilon$-neighborhood of $V$ contains $K_m$. Therefore, all $\bs V_{\sigma_j}$'s
	intersect a common ball of radius $D+\epsilon r^{-m} \leq (1+\epsilon) D$. Moreover, each of them contains
	a ball of radius $ar^{-m}\geq a r D$ and is contained in a ball of radius $br^{-m} \leq b D$
	(for some $a,b>0$ not depending on  $D$). Therefore, Lemma~2.2.5 of~\cite{bookBiPe17}
	implies that $k\leq(\frac {(1+\epsilon) + 2b}{ar})^k$. Therefore, \eqref{eq:thm:selfSimilarDim:1} implies that
	\[
	\card{B_D(o)\cap\hat{\bs K}}\leq C D^{\alpha}, \quad \text{a.s.}
	\]
	Therefore, Lemma~\ref{lem:mdp-simple} implies that $\dimH{\hat{\bs K}}\leq \alpha$. 
	Moreover, the proof of the lemma shows that $\measH{\alpha}(\hat{\bs K})<\infty$. This completes the proof. 
\end{proof}

\begin{remark}
	\label{rem:alg:selfSimilar}
	{Motivated by Examples~\ref{ex:cantor} and~\ref{ex:koch}, it can be seen that every unimodular self similar discrete space can be constructed by successively attaching copies of a set to itself. This is expressed in the following algorithm.}
	\begin{algorithm}
		$\hat{\bs K}_0:=\{o\}$\;
		Let $g_0$ be the identity map\;
		Choose i.i.d. random numbers $i_1,i_2,\ldots$ uniformly in $\{1,\ldots,l\}$\;
		\For{$n=1,2,\ldots$}{
			let $\hat{\bs K}_n$ consist of $l$ isometric copies of $\hat{\bs K}_{n-1}$ as follows
			\[\hat{\bs K}_{n}:= \bigcup_{j=1}^{l} g_{n-1} f_{i_n}^{-1} f_j g_{n-1}^{-1} (\hat{\bs K}_{n-1});\]
			Let $g_{n}:=g_{n-1} f_{i_n}^{-1}$\;
		}
	\end{algorithm}
\end{remark}

\subsection{Notes and Bibliographical Comments}
Some of the examples in this section, listed below, are motivated by analogous examples in the continuum setting. In fact, the unimodular dimensions of these examples are equal to the ordinary dimensions of the analogous continuum examples. This connection will be discussed further in \cite{III} via scaling limits.

Proposition~\ref{prop:EGWdimension} is inspired by the dimension of the Brownian continuum random tree (see\formir{~\cite{HaMi04} or }Theorem~5.5 of~\cite{DuLega05}),
which is the scaling limit of Galton-Watson trees conditioned to be large.
The zero set of the simple random walk (Proposition~\ref{prop:zeros}) is analogous to
the zero set of Brownian motion. Self-similar unimodular discrete spaces are inspired by
continuum self-similar sets (see e.g., Section~2.1 of~\cite{bookBiPe17}) as discussed
in Subsection~\ref{subsec:selfsimilar}. 

\section{\formir{The Unimodular Mass Distribution Principle and Billingsley Lemma}}
\label{sec:volumeGrowth}

Let 
$D$ be a discrete space and $o\in D$. The \defstyle{upper and lower (polynomial) \formir{volume} growth rates} of $D$ are 
\begin{eqnarray*}
	\growthu{\card{N_r(o)}} &=& \limsup_{r\rightarrow\infty} {\log \card{N_r(o)}}/{\log r},\\
	\growthl{\card{N_r(o)}} &=& \liminf_{r\rightarrow\infty} {\log \card{N_r(o)}}/{\log r}.
\end{eqnarray*}
$D$ has \defstyle{polynomial growth} if $\growthu{\card{N_r(o)}}<\infty$.
\formir{These definitions have various other names in the literature (e.g.,
mass dimension~\cite{BaTa89}, fractal dimension, or growth degree);
volume growth will be used in the present paper since it is common
in the context of graphs and discrete groups.}

If the upper and lower \formir{volume} growth rates are equal, the common value is
called the \defstyle{\formir{volume} growth rate} of $D$. 
Note that for $v\in D$, one has $N_r(o)\subseteq N_{r+c}(v)$ and $N_r(v)\subseteq N_{r+c}(o)$, where $c:=d(o,v)$.
This implies that $\growthu{\card{N_r(o)}}$ and $\growthl{\card{N_r(o)}}$ do not depend on the choice of the point $o$. 

In various situations in this paper, some \textit{weight} in $\mathbb R^{\geq 0}$ will
be assigned to each point of $D$. In these cases, it is natural to redefine the
\formir{volume} growth rate by considering the weights; i.e., by replacing $\card{N_r(o)}$
with the sum of the weights of the points in $N_r(o)$. This will be formalized below using the notion of
\textit{equivariant~processes}.  
Recall that an equivariant process should be defined for all discrete spaces $D$.
However, if a random \rooted{} discrete space $[\bs D, \bs o]$ is considered,
it is enough to define weights in almost every realization
(see Subsection~\ref{subsec:process} for more on the matter).
Also, given $D$, the weights are allowed to be random.

\begin{definition}
\label{def:weight}
An \defstyle{equivariant weight function} $\bs w$ is an equivariant process (Definition \ref{subsec:process})
with values in $\mathbb R^{\geq 0}$. For all discrete spaces $D$ and $v\in D$, the (random)
value $\bs w(v):=\bs w_{D}(v)$ is called the \defstyle{weight} of $v$.
Also, for $S\subseteq D$, let  $\bs w(S):=\bs w_D(S):=\sum_{v \in S}\bs w(v).$
\end{definition}
The last equation shows that one could also call $\bs w$ an \textit{equivariant measure}.

Assume $[\bs D, \bs o]$ is a unimodular discrete space (Subsection~\ref{subsec:unimodular}).
Lemma~\ref{lem:equivProcess} shows that $[\bs D, \bs o; \bs w_{\bs D}]$ is a random \rooted{}
marked discrete space and is unimodular.

%

\subsection{Unimodular Mass Distribution Principle}

\begin{theorem}[Mass Distribution Principle]
	\label{thm:mdp-simple}
	Let $[\bs D, \bs o]$ be a unimodular discrete space. 
	\begin{enumerate}[(i)]
		\item Let $\alpha,c,M>0$ and assume there exists an equivariant weight function $\bs w$ such that
		$\forall r\geq M: \bs w(N_r(\bs o))\leq c r^{\alpha}$, a.s.
		Then, $\contentH{\alpha}{M}(\bs D)$ defined in~\xeq{\ref{eq:hcontent}} satisfies
		$\contentH{\alpha}{M}(\bs D)\geq \frac 1 c \omid{\bs w(\bs o)}.$
		\item If in addition, 
		\formir{$\bs w_{\bs D}(\bs o)>0$ with positive probability,}
		then $\dimH{\bs D}\leq\alpha.$
	\end{enumerate}
\end{theorem}

\begin{proof}
	Let $\bs R$ be an arbitrary equivariant covering such that $\bs R(\cdot)\in \{0\}\cup [M,\infty)$ a.s.
	By the assumption {on $\bs w$}, $\bs R(\bs o)^{\alpha}\geq \frac 1 c {\bs w(N_{\bs R}(\bs o))}$ a.s. Therefore, 	
	\begin{equation}
	\label{eq:thm:mdp-simple}
	\omid{\bs R(\bs o)^{\alpha}}\geq \frac 1{c} \omid{\bs w(N_{\bs R}(\bs o))}.
	\end{equation}
	Consider the \textit{independent coupling} of $\bs w$ and $\bs R$; i.e.,
        for each deterministic discrete space $G$, choose $\bs w_G$ and $\bs R_G$ 
        independently (see Definition~\ref{def:equivProcess}). Then, it can be seen 
        that the pair $(\bs w, \bs R)$ is an equivariant process.
        So by Lemma~\ref{lem:equivProcess}, $[\bs G, \bs o; (\bs w, \bs R)]$ is unimodular.
        Now, the mass transport principle~(\ref{eq:unimodularMarked})
        can be used for  $[\bs G, \bs o; (\bs w, \bs R)]$.
	By letting $g(u,v):= \bs w(v)\identity{\{v\in N_{\bs R}(u)\}}$,
	one gets $g^+(\bs o) = \bs w(N_{\bs R}(\bs o))$. Also, 
	$g^-(\bs o) = \bs w(\bs o) \sum_{u\in\bs D} \identity{\{\bs o\in N_{\bs R}(u)\}} \geq \bs w(\bs o)$ a.s.,
	where the last inequality follows from the fact that $\bs R$ is a covering.
	Therefore, the {mass transport principle} implies that
	$\omid{\bs w(N_{\bs R}(\bs o))} \geq \omid{\bs w(\bs o)}$
	(recall that by convention, $N_{\bs R}(\bs o)$ is the empty set when $\bs R(\bs o)=0$).
	So by~\eqref{eq:thm:mdp-simple}, one gets $\omid{\bs R(\bs o)^{\alpha}} \geq \frac 1 c \omid{\bs w(\bs o)}$.
	Since this holds for any $\bs R$, one gets that 
	$\contentH{\alpha}{M}(\bs D)\geq \frac 1{c} \omid{\bs w(\bs o)}$ and the first claim is proved. 
	If \formir{in addition, $\bs w(\bs o)>0$} with positive probability, 
        then $\omid{\bs w(\bs o)}>0$. Therefore,
	$\contentH{\alpha}{1}(\bs D)>0$ and the second claim is proved.
\end{proof}

\subsection{Unimodular Billingsley Lemma}
\label{sec:bounds}

The main result of this subsection is Theorem~\ref{thm:billingsley}.
It is based on Lemmas~\ref{lem:MDPstronger2} and~\ref{lem:lowerboundiid} below. Lemma~\ref{lem:MDPstronger2}
is a stronger version of the mass distribution principle (Theorem~\ref{thm:mdp-simple}).

\begin{lemma}[{An Upper Bound}]
\label{lem:MDPstronger2}
Let $[\bs D, \bs o]$ be a unimodular discrete space and $\alpha\geq 0$.
\begin{enumerate}[(i)]
\item If there exist $c\geq 0$ and $\bs w$ is an equivariant weight function such that 
$
{\limsup_{r\rightarrow\infty}{\bs w(N_r(\bs o))}/{r^{\alpha}} \leq c, \quad \mathrm{a.s.},}
$
then
$
\contentH{\alpha}{\infty}(\bs D) \geq {\frac 1{2^{\alpha}c} \omid{\bs w(\bs o)}}.
$
\item \label{lem:MDPstronger2:2}
In addition, if \formir{$\bs w_{\bs D}(\bs o)>0$ with positive probability,}
then $\dimH{\bs D}\leq\alpha$.
\end{enumerate}
\end{lemma}

\begin{proof}
	Let $c'>c$ be arbitrary. The assumption implies that 
	$ \sup \{r\geq 0: \bs w(N_r(\bs o))>c'r^{\alpha} \}<\infty $ a.s.
	For $m\geq 1$, let $A_m:=\{v\in \bs D: \forall r\geq m: \bs w(N_r(v))\leq c' r^{\alpha} \},$
	which is an increasing sequence of equivariant subsets. So
	\begin{equation}
	\label{eq:limeqp2}
	\lim_{m\rightarrow\infty} \myprob{\bs o\in A_m} {= 1}.
	\end{equation}
	
	Let $\bs R$ be an equivariant covering such that $\bs R(\cdot)\in \{0\}\cup [m,\infty)$ a.s. One has
	\begin{equation}
	\label{eq:lem:MDPstronger2:1}
	\omid{\bs R(\bs o)^{\alpha}}\geq \omid{\bs R(\bs o)^{\alpha} \identity{\{N_{\bs R}(\bs o)\cap A_m \neq \emptyset \}}}.
	\end{equation}
	If $N_{\bs R}(\bs o)\cap A_m \neq \emptyset$, then $\bs R(\bs o)\neq 0$ and hence $\bs R(\bs o)\geq m$. 
	In the next step, assume that this is the case.
	Let $v$ be an arbitrary point in $N_{\bs R}(\bs o)\cap A_m$. By the definition of $A_m$,
	one gets that for all  $r\geq m$, $\bs w(N_r(v))\leq c' r^{\alpha}.$
	Since $N_{\bs R(\bs o)}(\bs o)\subseteq N_{2\bs R(\bs o)}(v)$, it follows that
	$\bs w(N_{\bs R}(\bs o))\leq \bs w(N_{2\bs R(\bs o)}(v))\leq 2^{\alpha}c'\bs R(\bs o)^{\alpha}$.
	Therefore, \eqref{eq:lem:MDPstronger2:1} gives
	\begin{equation}
	\label{eq:lem:MDPstronger2:2}
	\omid{\bs R(\bs o)^{\alpha}}\geq \frac 1{2^{\alpha}c'} \omid{\bs w(N_{\bs R}(\bs o))\identity{\{N_{\bs R}(\bs o)\cap A_m \neq \emptyset \}}}.
	\end{equation}
	
	By letting $g(u,v):= \bs w(v)\identity{\{v\in N_{\bs R}(u)\}} \identity{\{N_{\bs R}(u)\cap A_m \neq \emptyset\}}$,
	one gets that $g^+(\bs o) = \bs w(N_{\bs R}(\bs o))\identity{\{N_{\bs R}(\bs o)\cap A_m \neq \emptyset\}}$.
	Also, since there is a ball $N_{\bs R}(u)$ that covers $\bs o$ a.s., one has
	$g^-(\bs o) \geq \bs w(\bs o)\identity{\{\bs o \in A_m\}}$ a.s.
	Therefore, the mass transport principle~(\ref{eq:unimodularMarked})
	and~\eqref{eq:lem:MDPstronger2:2} imply that
	$
	\omid{\bs R(\bs o)^{\alpha}}\geq \frac 1{2^{\alpha}c'} \omid{\bs w(\bs o)\identity{\{\bs o\in A_m\}}}.
	$
	This implies that 
	$\contentH{\alpha}{m}(\bs D)\geq \frac 1{2^{\alpha}c'} \omid{\bs w(\bs o)\identity{\{\bs o\in A_m\}}}$. 
	Using~\eqref{eq:limeqp2} and letting $m$ tend to infinity gives
	$\contentH{\alpha}{\infty}(\bs D) \geq \frac 1{2^{\alpha}c'} \omid{\bs w(\bs o)}$.
	Since $c'>c$ is arbitrary, the first claim is proved.
	
	Part~\eqref{lem:MDPstronger2:2} {\formir{is proved by the same argument as the corresponding statement
			in Theorem~\ref{thm:mdp-simple}. The proof leverages Lemma \ref{lem:Hmeas}.}}
\end{proof}

\begin{lemma}[{Lower Bounds}]
	\label{lem:lowerboundiid}
	Let $[\bs D, \bs o]$ be a unimodular discrete space,  $\alpha\geq 0$ and $c>0$.
	Let $\bs w$ be an arbitrary equivariant weight function such that $\omid{\bs w(\bs o)}<\infty$.
	\begin{enumerate}[(i)]
		\item \label{part:lem:lowerbound:all}
		If $\exists r_0: \forall r\geq r_0: \bs w(N_r(\bs o))\geq c r^{\alpha}$ a.s.,
		then $\dimMl{\bs D}\geq \alpha$. 
		\item \label{part:lem:lowerbound:liminf}
		If {$\growthl{\bs w(N_r(\bs o))}\geq \alpha$}
		a.s., then $\dimH{\bs D}\geq \alpha$.
		\item \label{part:lem:lowerbound:E} 
		If {$\lim_{\delta \downarrow 0} \liminf_{r\rightarrow\infty} \myprob{\bs w(N_r(\bs o))\leq \delta r^{\alpha}}=0$,}
		then $\dimH{\bs D}\geq \alpha$. 
		\item \label{part:lem:lowerbound:exp}
		If $\decayl{\omid{\mathrm{exp}\left(-\frac{\bs w(N_n(\bs o))}{n^{\alpha}}\right)}}\geq \alpha$,
		then $\dimMl{\bs D}\geq \alpha$.
	\end{enumerate}
\end{lemma}

\begin{proof}
	The proofs of the first two parts are very similar. The second part is proved first.
	
	\eqref{part:lem:lowerbound:liminf}.
	Let $\beta$, $\gamma$ and $\kappa$ be such that $\gamma < \beta < \kappa < \alpha$. 
	Fix $n\in\mathbb N$. Let $\bs S= \bs S_{\bs D}$ be the equivariant subset obtained by selecting
	each point $v\in\bs D$ with probability {$1\wedge (n^{-\beta}\bs w(v))$} (the selection variables are assumed to be conditionally independent given $[\bs D, \bs o; \bs w]$). Let $\bs R_n(v)=n$ if $v\in \bs S_{\bs D}$,
	$\bs R_n(v)=1$ if $N_n(v)\cap \bs S_{\bs D}=\emptyset$, and $\bs R_n(v)=0$ otherwise.
	Then $\bs R_n$ is an equivariant covering. It is shown below that 
	$\omid{\bs R_n(\bs o)^{\gamma}}\rightarrow 0$.
	Let $M:={\sup}\{r\geq 0: \bs w(N_r(\bs o))< r^{\kappa}\}$.
	By the assumption, $M<\infty$ a.s.  One has 
	\begin{eqnarray*}
		\omid{\bs R_n(\bs o)^{\gamma}} &=& n^{\gamma}\myprob{\bs o\in \bs S_{\bs D}} + \myprob{N_n(\bs o)\cap \bs S_{\bs D}=\emptyset}\\
		&=&	n^{\gamma}{\omid{1\wedge n^{-\beta}\bs w(\bs o)}} + \omid{\prod_{v\in N_n(\bs o)}\left(1-(1\wedge {n^{-\beta}}{\bs w(v)})\right)}\\
		&\leq & {n^{\gamma-\beta}\omid{\bs w(\bs o)} + \omid{\exp\left(-n^{-\beta}{\bs w(N_n(\bs o))}  \right)}} \\
		&=& n^{\gamma-\beta}\omid{\bs w(\bs o)} + \omidCond{{\exp\left(-n^{-\beta}{\bs w(N_n(\bs o))}  \right)}}{M<n}\myprob{M<n} \\ & & \quad\quad\quad\quad\quad\quad\hspace{0.2mm}  + \, \omidCond{{\exp\left(-n^{-\beta}{\bs w(N_n(\bs o))}  \right)}}{M\geq n}\myprob{M\geq n}\\
		&\leq & n^{\gamma-\beta}\omid{\bs w(\bs o)} + \exp\left(-n^{\kappa-\beta} \right) + \myprob{M\geq n},
	\end{eqnarray*}
	{where the first inequality holds because $1-(1\wedge x)\leq e^{-x}$ for all $x\geq 0$.}
	Therefore, $\omid{\bs R_n(\bs o)^{\gamma}}\rightarrow 0$ when $n\to\infty$. It follows that $\dimH{\bs D}\geq \gamma$.
	Since $\gamma$ is arbitrary, this implies $\dimH{\bs D}\geq \alpha$.
	
	\eqref{part:lem:lowerbound:all}. Only a small change is needed in the above proof.
	For $n\geq r_0$, let $\bs R_n(v)=n$ if either $v\in \bs S_{\bs D}$ or $N_n(v)\cap \bs S_{\bs D}=\emptyset$,
	and let $\bs R_n(v)=0$ otherwise. Note that $\bs R_n$ is a covering by balls of equal radii. 
	By the same computations and the assumption  {$M\leq r_0$}, one gets
	\[
	\myprob{\bs R_n(\bs o)\neq 0} \leq {n^{-\beta}\omid{\bs w(\bs o)} + \exp\left(-n^{\kappa-\beta}\right),}
	\]
	which is of order $n^{-\beta}$ for large $n$.
	This implies that $\dimMl{\bs D}\geq \beta$.
	Since $\beta$ is arbitrary, one gets $\dimMl{\bs D}\geq \alpha$ and the claim is proved.
	
	\eqref{part:lem:lowerbound:E}. 
	Let $\beta<\alpha$. It will be proved below that under the assumption of \eqref{part:lem:lowerbound:E}, there is a sequence
	$r_1,r_2,\ldots$ such that $\omid{\exp\left(-r_n^{-\beta}{\bs w(N_{r_n}(\bs o))}  \right)}\rightarrow 0$.
	If so, by a slight modification of the proof of part~\eqref{part:lem:lowerbound:liminf},
	one can find a sequence of equivariant coverings $\bs R_n$ such that
	$\omid{\bs R_n(\bs o)^{\beta}}<\infty$ and \eqref{part:lem:lowerbound:E} is proved.
	
	Let $\epsilon>0$ be arbitrary. By the assumption, there is $\delta>0$
	and $r\geq 1$ such that $\myprob{\bs w(N_r(\bs o))\leq \delta r^{\alpha}}<\epsilon$. So
	\begin{eqnarray*}
		\omid{\exp\left(-r^{-\beta}{\bs w(N_{r}(\bs o))}  \right)} &\leq& \omidCond{\exp\left(-r^{-\beta}{\bs w(N_{r}(\bs o))}\right)}{\bs w(N_r(\bs o))>\delta r^{\alpha}}\\
		&& + \myprob{\bs w(N_r(\bs o))\leq \delta r^{\alpha}}\\
		&\leq & \mathrm{exp}(-\delta r^{\alpha-\beta}) + \epsilon.
	\end{eqnarray*}
	Note that for fixed $\epsilon$ and $\delta$ as above, $r$ can be arbitrarily large.
	Now, choose $r$ large enough for the right hand side to be at most $2\epsilon$.
	This shows that $\omid{\exp\left(-r^{-\beta}{\bs w(N_{r}(\bs o))}  \right)}$ can be arbitrarily small and the claim is proved.

	\eqref{part:lem:lowerbound:exp}.
	As before, let $\bs R_n(v)=n$ if either $v\in \bs S_{\bs D}$ or $N_n(v)\cap \bs S_{\bs D}=\emptyset$,
	and let $\bs R_n(v)=0$ otherwise. The calculations in the proof of part~\eqref{part:lem:lowerbound:liminf} show that 
	\[
	\myprob{\bs R_n(\bs o)\neq 0} \leq n^{-\beta}\omid{\bs w(\bs o)} + \omid{\exp\left(-n^{-\beta}\bs w(N_n(\bs o))\right)}.
	\]
	Now, the assumption implies the claim.
\end{proof}

\begin{remark}
	The assumption in part~\eqref{part:lem:lowerbound:E} of Lemma~\ref{lem:lowerboundiid} is
	equivalent to the condition that 
	{there exists a sequence $r_n\to \infty$ such that the family of random variables $r_n^{\alpha}/\bs w(N_{r_n}(\bs o))$ is tight.}
	Also, 
	from the proof of the lemma, one can see that this assumption is equivalent to
	\[\liminf_{n\rightarrow\infty} \omid{\mathrm{exp}\left(-\frac{\bs w(N_n(\bs o))}{n^{\alpha}}\right)} =0.\] 
\end{remark}

\begin{theorem}[Unimodular Billingsley Lemma] 
	\label{thm:billingsley}
	Let $[\bs D, \bs o]$ be a unimodular discrete metric space. Then, for all equivariant weight functions $\bs w$ such that $0<\omid{\bs w(\bs o)}<\infty,$ one has
	\begin{eqnarray*}
		\essinf \left(\growthl{\bs w(N_r(\bs o))}\right) 
		&\leq& \dimH{\bs D} \\ 
		&\leq& \essinf \left(\growthu{\bs w(N_r(\bs o))}\right)\\
		&\leq& \growthu{\omid{\bs w(N_r(\bs o))}}.
	\end{eqnarray*}
\end{theorem}

\begin{proof}
	The first inequality
	is implied by part~\eqref{part:lem:lowerbound:liminf} of Lemma~\ref{lem:lowerboundiid}. 
	For the second inequality, assume that ${\growthu{\bs w(N_r(\bs o))}}<\alpha$ with positive probability.
	On this event, one has $\bs w(N_r(\bs o))\leq r^{\alpha}$ {for large $r$}; i.e.,
	$\limsup_r \bs w(N_r(\bs o))/r^{\alpha}\leq 1$. Now, Lemma~\ref{lem:MDPstronger2} implies that
	$\dimH{\bs D}\leq \alpha$. This proves the second inequality. The last claim follows
	{from Lemma~\ref{lem:logbound}.}
\end{proof}

\begin{corollary}
\label{cor:billingsley-ergodic}
Under the assumptions of Theorem~\ref{thm:billingsley}, 
if the upper and lower growth rates of $\bs w(N_r(\bs o))$ are almost surely constant
(e.g., when $[\bs D,\bs o; \bs w]$ is \textit{ergodic}), then, 
\begin{eqnarray}
\label{eq:billingsly-simple}
{\growthl{\bs w(N_r(\bs o))}} \leq &{\dimH{\bs D}} &\leq \growthu{\bs w(N_r(\bs o))} \quad a.s.
\end{eqnarray}
In particular, if $\growth{\bs w(N_r(\bs o))}$ exists and is constant a.s., then 
\[\dimH{\bs D} = \growth{\bs w(N_r(\bs o))}.\]
\end{corollary}

In fact, without the assumption of this corollary, an inequality similar to~\eqref{eq:billingsly-simple}
is valid for the \textit{sample Hausdorff dimension} of $\bs D$, which will be studied in~\cite{III}.

\formir{
\begin{remark}
In many examples, in the unimodular Billingsley lemma, it is enough to take $\bs w$ equal to 
the counting measure, i.e., $\forall v: \bs w(v)=1$ and $\bs w(N_r(\bs o))=\card{N_r(\bs o)}$.
Analogously, for many natural fractals in the continuum setting,
there is a natural \textit{mass measure} that can be used in Billingsley's lemma.
\end{remark}
}

\begin{remark}
\label{rem:bil-nec}
In fact, the assumption $\omid{\bs w(\bs o)}<\infty$ in Theorem~\ref{thm:billingsley}
is only needed for the lower bound while the assumption $\omid{\bs w(\bs o)}>0$ is only needed for the upper bound.
These assumptions are also necessary as shown below.
\\	
For example, assume {$\Phi$} is a point-stationary point process in $\mathbb R$
(see Example~\ref{ex:point-stationary}).
For $v\in \Phi$, let $\bs w(v)$ be the sum of the distances of $v$ to its next and previous points in $\Phi$.
This equivariant weight function satisfies $\bs w(N_r(v))\geq 2r$ for all $r$, and hence
$\growthl{\bs w(N_r(\bs o))}\geq 1$. But $\dimH{\Phi}$ can be strictly less than 1
as shown in Subsection~\ref{subsec:image}.
\\
Also, the condition that \formir{$\omid{\bs w_{\bs D}(\bs o)}>0$} is trivially necessary for the upper bound.
\end{remark}

\begin{corollary}
	\label{cor:graphs-lowerbound}
	Let $[\bs G, \bs o]$ be a unimodular random graph equipped with the graph-distance metric.
	If $\bs G$ is infinite almost surely, then $\dimMl{\bs G}\geq 1$ and else, $\dimM{\bs G}=\dimH{\bs G}=0$.
\end{corollary}

\begin{proof}
	If $\bs G$ is infinite a.s., then for $\bs w_{\bs G}\equiv 1$, one has $\bs w(N_r(\bs o))\geq r$ for all $r$.
	So part~\eqref{part:lem:lowerbound:all} of Lemma~\ref{lem:lowerboundiid} implies the first claim.
	{The second claim is implied by Example~\ref{ex:hausdorff-basic} (this can be deduced from the unimodular Billingsley lemma as well).}
\end{proof}

\begin{corollary}
The unimodular Minkowski and Hausdorff dimensions of any unimodular two-ended tree are equal to one.
\end{corollary}
This result has already been shown in Theorem~\ref{thm:twoEnded}, but can also be deduced
from the unimodular Billingsley lemma directly. For this, let $\bs w(v)$ be 1 if 
$v$ belongs to the trunk of the tree and 0 otherwise.

\begin{problem}
\label{prob:uppergrowth}
In the setting of Corollary~\ref{cor:billingsley-ergodic}, is it always the case
that $\dimH{\bs D}=\growthu{\bs w(N_r(\bs o))}$?
\end{problem}

The claim of this problem holds in all of the examples in which both quantities are computed in this work.
This problem is a corollary of Problem~\ref{prob:growth} and the unimodular Frostman lemma
(Theorem~\ref{thm:frostmanGeneral}) below. Note that there are examples where
$\growthl{\cdot}\neq \growthu{\cdot}$ as shown in 
Subsections~\ref{subsec:canopy-generalized} and~\ref{subsec:digits}.

\subsection{Bounds for Point Processes}
\label{subsec:euclidean}

\formir{Example~\ref{ex:point-stationary}, explains that for point processes containing the origin, unimodularity is, roughly speaking, equivalent to point-stationarity. To study the dimension of such processes, the following covering is used in the next results.}
Let $\varphi$ be a discrete subset of $\mathbb R^k$
equipped with the $l_{\infty}$ metric and $r\geq 1$.
Let $C:=C_r:=[0,r)^k$, $\bs U:=\bs U_r$ be a point chosen uniformly at random in $-C$,
and consider the partition $\{C+\bs U+z: z\in r\mathbb Z^k\}$ of $\mathbb R^k$ {by cubes}.
Then, for each $z \in r\mathbb Z^k$, choose a random element in $(C+\bs U+z)\cap \varphi$ independently
(if the intersection is nonempty). The distribution of this random element should depend on the set 
$(C+\bs U+z)\cap \varphi$ in a translation-invariant way (e.g., choose with the uniform distribution
or choose the least point in the lexicographic order).
Let ${\bs R=}\bs R_{\varphi}$ assign the value  $r$ to the selected points and zero to the other points of $\varphi$.
Then, $\bs R$ is an equivariant covering. 
Also, each point is covered at most $3^k$ times. So $\bs R$ is $3^k$-bounded (Definition~\ref{def:nearlyDisjoint}).

\begin{theorem}[Minkowski Dimension in the Euclidean Case]
	\label{thm:lowerBoundR^d}
	Let $\Phi$ be a point-stationary point process in $\mathbb R^k$
	and assume the metric on $\Phi$ is equivalent to the Euclidean metric.
	Then, for all equivariant weight functions $\bs w$ such that ${\bs w_{\Phi}(0)}>0$ a.s., one has 
	\begin{eqnarray*}
		\dimMu{\Phi}  = \decayu {\omid{\bs w(0)/ \bs w(C_r+\bs U_r)}} &\leq& \decayu{\omid{\bs w(0)/ \bs w(N_r(0))}}\\
		& \leq & \growthu{\omid{\bs w(N_r(0))}},\\
		\dimMl{\Phi}  = \decayl {\omid{\bs w(0)/ \bs w(C_r+\bs U_r)}} &\leq& \decayl{\omid{\bs w(0)/ \bs w(N_r(0))}}\\
		& \leq & \growthl{\omid{\bs w(N_r(0))}},
	\end{eqnarray*}
	where $\bs U_r$ is a uniformly at random point in $-C_r$ independent of $\Phi$ and $\bs w$. 
\end{theorem}

\begin{proof}
	By Theorem~\ref{thm:metricChange}, one may assume the metric on $\Phi$ is the $l_{\infty}$ metric without loss of generality. 
	Given any $r>0$, consider the equivariant covering $\bs R$ described above, but when choosing a random 
	element of $(C_r+\bs U_r+z)\cap \varphi$, choose point $v$ with probability
	$\bs w_{\varphi}(v)/\bs w_{\varphi}(C_r+\bs U_r+z)$ (conditioned on $\bs w_{\varphi}$). One gets
	$
	\myprob{0 \in \bs R} = \omid{ {\bs w(0)}/{\bs w(C_r+\bs U_r)}}.
	$
	As mentioned above,
	$\bs R$ is equivariant and uniformly bounded (for all $r>0$).
	So Lemma~\ref{lem:nearlyDisjoint} implies both equalities in the claim.
	The inequalities are implied by the facts that $\bs w(C_r+\bs U_r)\leq \bs w(N_r(0))$ and
	\[
	\omid{\frac {\bs w(0)}{\bs w(\nei{r}{0})}} \omid{\bs w(\nei{r}{0})} \geq \omid{\sqrt{\bs w(0)}}^2>0,
	\]
	which is implied by the Cauchy-Schwartz inequality.
\end{proof}

\formir{
	\begin{example}
		The right-most inequalities in the above theorem can be strict. For example, let $T>0$ be a random number and let $\Phi:=\frac 1 T \mathbb Z$. Then $\card{N_r(0)}\sim 1+T/r$. So, $\decay{1/\card{N_r(0)}}=1$, but it might be the case that $\omid{\card{N_r(0)}}=\infty$. For an ergodic example, let $1\leq T_i\in\mathbb Z$ be i.i.d. with finite mean but infinite variance (for $i\in\mathbb Z$). In each interval $[i,i+1]$, put $T_i-1$ equidistant points and let $\Phi_0$ be the union of these points together with $\mathbb Z$. Bias the distribution of $\Phi_0$ by $T_0$ (Definition~\ref{def:bias}) and then translate $\Phi_0$ by moving a random point in $\Phi_0\cap [0,1)$ to the origin. Let $\Phi$ be the resulting point process. It can be seen that $\Phi$ is unimodular and point-stationary. Since $\omid{T_0^2}=\infty$, one gets $\omid{N_r(0)}\geq \omid{N_1(0)}=\infty$. But one can show that $\decay{\omid{1/N_r(0)}}=1$.
	\end{example}
}

\begin{proposition}
	\label{prop:upperbound-Rd}
	If $\Phi$ is a point-stationary point process in $\mathbb R^k$ and the metric on $\Phi$ is equivalent 
	to the Euclidean metric, then $\dimH{\Phi}\leq k$.
\end{proposition}
\begin{proof}
	One may assume the metric on $\Phi$ is the $l_{\infty}$ metric without loss of generality.
	Let $C:=[0,1)^k$ and $\bs U$ be a random point in $-C$ chosen uniformly. For all discrete subsets
	$\varphi\subseteq\mathbb R^k$ and $v\in \varphi$, let $\bs C(v)$ be the cube containing $v$ of the
	form $C+\bs U+z$ (for $z\in \mathbb Z^k$) and $\bs w_{\varphi}(v):= 1/\card{(\varphi\cap \bs C(v))}$.
	Now, $\bs w$ is an equivariant weight function. The construction readily 
        implies that $\bs w(N_r(\bs o))\leq (2r+1)^k$.
	Moreover, by $\bs w\leq 1$, one has $\omid{\bs w(0)}<\infty$. Therefore, the unimodular Billingsley
	lemma (Theorem~\ref{thm:billingsley}) implies that $\dimH{\Phi}\leq k$.
\end{proof}


\begin{proposition}
\label{prop:palm}
If $\Psi$ is a stationary point process in $\mathbb R^k$ {with finite intensity} and $\Psi_0$
is its Palm version, then
$\dimM{\Psi_0}=\dimH{\Psi_0}=k.$
Moreover, the modified unimodular \formir{Hausdorff size} of $\Psi_0$, defined in Section~\ref{subsec:otherSets}, satisfies
$
\mathcal M'_k(\Psi_0)= {2^k}{\rho(\Psi)},
$
where $\rho(\Psi)$ is the intensity of $\Psi$. 
\end{proposition}
Notice that if $\Psi_0\subseteq \mathbb Z^k$, {then the claim is directly implied by Theorem~\ref{thm:subsetDimension}.} The general case is treated below. 
\begin{proof}
	For the first claim, by Proposition~\ref{prop:upperbound-Rd} and Theorem~\ref{thm:comparison},
	it is enough to prove that $\dimMl{\Psi_0}\geq k$. 
	Let $\Psi'$ be a shifted square lattice independent of $\Psi$ (i.e., $\Psi'=\mathbb Z^k+\bs U$, 
	where $\bs U\in [0,1)^k$ is chosen uniformly, independently of $\Psi$).
	Let $\Psi'':=\Psi\cup \Psi'$. Since $\Psi''$ is a superposition of two independent stationary point processes,
	it is a stationary point process itself.
	By letting $p:=\rho(\Psi)/(\rho(\Psi)+1)$, the Palm version $\Psi_0''$ of $\Psi''$ is obtained 
	by the superposition of $\Psi_0$ and an independent stationary lattice with probability $p$ (heads),
	and the superposition of $\mathbb Z^k$ and $\Psi$ with probability $1-p$ (tails). So 
	part~\eqref{part:lem:lowerbound:all} of Lemma~\ref{lem:lowerboundiid} implies that $\dimMl{\Psi_0''}\geq k$.
	Note that $\Psi_0''$ has two natural equivariant subsets which, after conditioning to contain the origin,
	have the same distributions as $\Psi_0$ and $\mathbb Z^k$ respectively.
	Therefore, one can use Theorem~\ref{thm:subsetDimension} to deduce that $\dimMl{\Psi_0}\geq \dimMl{\Psi_0''}=k$.
	Therefore, Proposition~\ref{prop:upperbound-Rd} implies that $\dimH{\Psi_0} = \dimM{\Psi_0}=k$.
	
	Also, by using Theorem~\ref{thm:subsetDimension} twice, one gets $\mathcal M'_k(\Psi_0)= p \mathcal M'_k(\Psi_0'')$
	and $\mathcal M'_k(\mathbb Z^k) =  (1-p)\mathcal M'_k(\Psi_0'')$. Therefore, 
	$
	\mathcal M'_k(\Psi_0) = p/{(1-p)} \mathcal M'_k(\mathbb Z^k). 
	$
	By the definition of $\mathcal M'_k$, one can directly show that $\mathcal M'_k(\mathbb Z^k)=2^k$ (see also Proposition~\ref{prop:lattice-Hmeasure}). This implies the claim.
\end{proof}

The last claim of Proposition~\ref{prop:palm} suggests the following, which is verified when $k=1$ in the next proposition.

\begin{conjecture}
	\label{conj:point-stationary}
	If $\Phi$ is a point-stationary point process in $\mathbb R^k$ which is not the Palm version of
	any stationary point process, then $\measH{k}(\Phi)=0$. 
\end{conjecture}

\begin{proposition}
	\label{prop:conj:point-stationary}
	Conjecture~\ref{conj:point-stationary} is true when $k=1$.
\end{proposition}
\begin{proof}
	Denote $\Phi$ as $\Phi=\{S_n: n\in\mathbb Z\}$ such that $S_0=0$ and $S_n<S_{n+1}$ for each $n$.
	Then, the sequence {$T_n:=S_{n+1}-S_n$} is stationary under shifting the indices (see Example~\ref{ex:point-stationary}). 
	The assumption that $\Phi$ is not the Palm version of a stationary point process is equivalent
	to $\omid{S_1}=\infty$ (see~\cite{bookDaVe03II} or Proposition~6 of~\cite{shift-coupling}).
	Indeed, if $\omid{S_1}<\infty$, then one could bias the probability measure by $S_1$ 
	(Definition~\ref{def:bias}) 
	and then shift the whole process by $-\bs U$, where $\bs U\in [0,S_1]$
	is chosen uniformly and independently.
	
	Since $\omid{S_1}=\infty$, Birkhoff's pointwise ergodic theorem~\cite{bookPe89} implies
	that {$\lim_n (T_1+\cdots+T_n)/n=\infty$}. This in turn implies that $\lim_r \card{N_r(0)}/r = 0$.
	Therefore, Lemma~\ref{lem:MDPstronger2} gives that $\contentH{1}{\infty}(\Phi)=\infty$; i.e., $\measH{1}(\Phi)=0$.
\end{proof}

\subsection{Connections to Birkhoff's Pointwise Ergodic Theorem}

The following corollary of the unimodular Billingsley lemma is of independent interest.
Note that the statement does not involve dimension. 

\begin{theorem}
\label{thm:birkhoff}
Let $[\bs D, \bs o]$ be a unimodular discrete space. 
For any two equivariant weight functions $\bs w_1$ and $\bs w_2$
\formir{such that $\myprob{\exists v\in \bs D: \bs w_{2}(v)\neq 0}=1$}
and $\omid{\bs w_1(\bs o)}<\infty$, 
one has
\[
\growthl{\bs w_1(N_r(\bs o))}\leq \growthu{\bs w_2(N_r(\bs o))}, \quad a.s.
\]
In particular, if $\bs w_1(N_r(\bs o))$ and $\bs w_2(N_r(\bs o))$ have well defined growth rates,
then their growth rates are equal.
\end{theorem}

Note that the condition $\omid{\bs w_1(\bs o)}<\infty$ is necessary as shown in Remark~\ref{rem:bil-nec}.

\begin{proof}
	Let $\epsilon>0$ be arbitrary and
	$$A:=\{[D,o]\in\dstar: \growthl{\bs w_1(N_r(o))}> \growthu{\bs w_2(N_r(o))}+\epsilon\}.$$ 
	It can be seen that $A$ is a measurable subset of $\dstar$. Assume $\myprob{[\bs D,\bs o]\in A}>0$. 
	Denote by $[\bs D',\bs o']$ the random \rooted{} discrete space obtained by conditioning $[\bs D,\bs o]$ on $A$.
	Since $A$ does not depend on the root (i.e., if $[D,o]\in A$, then $\forall v\in D: [D,v]\in A$),
	by a direct verification of the mass transport principle~(\ref{eq:unimodular}),
	one can show that $[\bs D',\bs o']$ is unimodular.
	So by using the unimodular Billingsley
	lemma (Theorem~\ref{thm:billingsley}) twice, one gets
	\[
	\essinf \left(\growthl{\bs w_1(N_r(\bs o'))}\right)\leq \dimH{\bs D'}\leq \essinf \left(\growthu{\bs w_2(N_r(\bs o'))}\right).
	\]
	By the definition of $A$, this contradicts the fact that $[\bs D',\bs o']\in A$ a.s.
	So $\myprob{[\bs D,\bs o]\in A}=0$ and the claim is proved.
\end{proof}

\begin{remark}
	Theorem~\ref{thm:birkhoff} is a generalization of a weaker form of Birkhoff's pointwise ergodic
	theorem as explained below. In the cases where $\bs D$ is either $\mathbb Z$, the Palm version
	of a stationary point process in $\mathbb R^k$ or a point-stationary point process in $\mathbb R$,
	Birkhoff's pointwise ergodic theorem (or its generalizations) implies that
	$\lim {\bs w_1(N_r(\bs o))}/{\bs w_2(N_r(\bs o))} = \omid{\bs w_1(0)}/\omid{\bs w_2(0)}$ a.s.
	This is stronger than the claim of Theorem~\ref{thm:birkhoff}.
	Note that Theorem~\ref{thm:birkhoff} implies nothing about 
	$\lim {\bs w_1(N_r(\bs o))}/{\bs w_2(N_r(\bs o))}.$ 
	On the other side, note that \textit{amenability} is not assumed in this Theorem,
	which is a general requirement in the study of ergodic theorems.
	However, it will be proved in~\cite{III} that, roughly speaking, non-amenability implies 
	$\growthu{\bs w_2(N_r(\bs o))}=\infty$, which makes the claim of Theorem~\ref{thm:birkhoff} trivial
	in this case. 
	In this case, using exponential gauge functions seems more interesting.
\end{remark}

{
	\begin{problem}
		\label{prob:growth}
		Is it true that for every unimodular discrete space $[\bs D, \bs o]$, the growth rates 
		$\growthu{\bs w(N_r(\bs o))}$ and $\growthl{\bs w(N_r(\bs o))}$ do not depend on  $\bs w$ as long as $0<\omid{\bs w(\bs o)}<\infty$?
	\end{problem}
}

\subsection{Notes and Bibliographical Comments}

As already mentioned, the unimodular mass distribution principle and the unimodular Billingsley lemma have analogues in the continuum setting (see e.g., \cite{bookBiPe17}) and are named accordingly.  
Note however that there is no direct or systematic reduction to these continuum results. 
For instance, in the continuum setting, one should assume that the space under study is a subset of the Euclidean space, or more generally, satisfies the \textit{bounded subcover property} ({see e.g.,~\cite{bookBiPe17}}). Theorem~\ref{thm:billingsley} does not require such assumptions.
Note also that the term $\growthu{\bs w(N_r(\bs o))}$ in Theorem~\ref{thm:billingsley}
does not depend on the \rooot{} in contrast to the analogous term in the continuum version.
Similar observations can be made on Theorem \ref{thm:mdp-simple}.

\section{Examples \formir{Continued}}
\label{sec:examples2}

This section presents further 
examples for illustrating the results of the previous section.

\subsection{Remaining Proofs from Section~\ref{sec:examples}}
\label{subsec:remainingproofs}

\formir{The unimodular Billingsley lemma can be used to complete the computation of
the unimodular Hausdorff dimension in the examples of Section~\ref{sec:examples}.
These examples include Eternal \gw{} trees, the image of a random walk and the drainage network model.}

\begin{proof}[\formir{Proof of Proposition~\ref{prop:EGWdimension} (second part)}]
	\formir{The equality $\dimM{\bs T}=2$ is proved in Subsection~\ref{subsec:egw}.}
	So it remains to prove $\dimH{\bs T}\leq 2$.
	By the unimodular Billingsley lemma, it is enough to show that 
	$\omid{\card{N_n(\bs o)}}\leq cn^2$ for a constant $c$.
	Recall from Subsection~\ref{subsec:one-ended} that $F(v)$ represents the parent of vertex $v$ and $D(v)$ 
	denotes the subtree of descendants of $v$. Write $N_n(\bs o)=Y_0\cup Y_1\cup \cdots \cup Y_n$,
	where $Y_n:={N_n(\bs o) \cap D(\bs o)}$ and 
	$Y_i:={N_n(\bs o) \cap D(F^{n-i}(\bs o))\setminus D(F^{n-i-1}(\bs o))}$ for $0\leq i<n$. 
	By the explicit construction of \egw{} trees in~\cite{eft}, 
	$Y_n$ is a critical Galton-Watson tree up to generation $n$. Also, for $0\leq i<n$, 
	$Y_i$ has the same structure up to generation $i$, except that the distribution of the
	first generation is \textit{size-biased minus one} (i.e., $(np_{n+1})_n$
        with the notation of Subsection~\ref{subsec:ugw}).
	So the assumption of finite variance implies that the first generation in each
        $Y_i$ has finite mean, namely $m'$.
	Now, one can inductively show that $\omid{\card{Y_n}}= n$ and $\omid{\card{Y_i}}=i m'$,
	for $0\leq i<n$. It follows that $\omid{\card{N_n(\bs o)}}\leq (1+m')n^2$ and the claim is proved.
\end{proof}

\begin{proof}[\formir{Proof of Proposition~\ref{prop:image} (second part)}]
\formir{In Subsection~\ref{subsec:image}, it is proved that $\dimMl{\Phi} \geq 1\wedge \decayl{\myprob{S_1>r}}$.
So part~\eqref{part:thm:image:1} is implied by part~\eqref{part:thm:image:h}, which is proved below.}
Since $\Phi$ is a point-stationary point process in $\mathbb R$ (see Subsection~\ref{subsec:image}),
Proposition~\ref{prop:upperbound-Rd} implies that $\dimH{\Phi}\leq 1$.
Now, assume $\decayu{\myprob{S_1>r}}< \beta$.
Then, there exists $c>0$ such that $\myprob{S_1>r}> c r^{-\beta}$ for all $r\geq 1$.
This implies that there exists $C<\infty$ and a random number $r_0>0$ such that for all $r\geq r_0$,
one has $\card{N_r(\bs o)}\leq Cr^{\beta} \log \log r$ a.s. ({see Lemma~\ref{lem:BaumKatz1} in the appendix} or Theorem~4 of~\cite{FrPr71}).
Therefore, the unimodular Billingsley lemma (Theorem~\ref{thm:billingsley}) implies that
$\dimH{\Phi}\leq \beta+\epsilon$ for every $\epsilon>0$, which in turn implies that $\dimH{\Phi}\leq \beta$.
\end{proof}

\begin{example}[Infinite \formir{H-Size}]
\label{ex:infiniteMeasure}
In Proposition~\ref{prop:image}, assume that
$\myprob{S_1>r}= 1/\log r$ for large enough $r$.
Then, part~\eqref{part:thm:image:h} of the proposition implies that $\dimH{\Phi}=0$.
However, since $\Phi$ is infinite a.s., it has infinite \formir{0-dim H-size}
(Proposition~\ref{prop:finite-HausMeas}).
\end{example}

\begin{example}[Zero \formir{H-Size}]
\label{ex:zeroMeasure}
In Proposition~\ref{prop:image}, assume $\myprob{S_1>r}= 1/ r$ for large enough $r$.
Then, part~\eqref{part:thm:image:h} of the proposition implies that $\dimH{\Phi}=1$.
Since $\omid{S_1}=\infty$, $\Phi$ is not the Palm version of any stationary point process
(see Proposition~\ref{prop:conj:point-stationary}). Therefore, 
Proposition~\ref{prop:conj:point-stationary} implies that $\measH{1}(\Phi)=0$.
\end{example}

\begin{proof}[\formir{Proof of Proposition~\ref{prop:drainage} (second part)}]
	\formir{The equality $\dimM{\bs T}=\frac 3 2$ is proved in Subsection~\ref{subsec:drainage}. So it remains} to prove $\dimH{\bs T}\leq \frac 3 2$.
	To use the unimodular Billingsley lemma, an upper bound on $\omid{\card{N_n(\bs o)}}$ is derived. Let 
	$e_{k,l}:=\card{\left(F^{-k}(F^l(\bs o))\setminus F^{-(k-1)}(F^{l-1}(\bs o))\right)}$
	be the number of descendants of order $k$ of $F^l(\bs o)$ which are not a descendant
	of $F^{l-1}(\bs o)$ (for $l=0$, let it be just $\card{F^{-k}(\bs o)}$).
	One has $\card{N_n(\bs o)} = \sum_{k,l} e_{k,l}\identity{\{k+l\leq n\}}$.
	It can be seen that $\omid{e_{k,l}}$ is equal to the probability that two independent
	paths of length $k$ and $l$ starting both at $\bs o$ do not collide at another point. 
	Therefore, $\omid{e_{k,l}}\leq c (k\wedge l)^{-\frac 12}$ for some $c$ and all $k,l$.
	This implies that (in the following, $c$ is updated at each step to a new constant without changing the notation)
	\begin{eqnarray*}
		\omid{\sum_{k,l\geq 0} e_{k,l} \identity{\{k+l\leq n\}}} &\leq & \sum_{k=0}^{\floor{\frac n 2}} ck^{-\frac 12}(n-k)
		\le  cn\sum_{k=0}^{\floor{\frac n 2}} k^{-\frac 12}
		\le cn^{\frac 3 2}.
	\end{eqnarray*}
	The above inequalities imply that $\omid{\card{N_n(\bs o)}}\leq cn^{\frac 3 2}$ for some $c$ and all $n$.
	Therefore, the unimodular Billingsley lemma (Theorem~\ref{thm:billingsley}) implies that $\dimH{\bs T} \leq \frac 32$.
	So the claim is proved.
\end{proof}

\subsection{General Unimodular Trees \formir{Continued}}
\label{subsec:trees2}
{The following is a direct corollary of Theorem~\ref{thm:one-ended} and the unimodular Billingsley lemma. Since the statement does not involve dimension, it is of independent interest and believed to be new.
	
	\begin{corollary}
		For every unimodular one-ended tree $[\bs T, \bs o]$ and every equivariant weight function $\bs w$, almost surely,
		\[
		\decayu{\myprob{h(\bs o)=n}} \leq \growthu{\bs w(N_r(\bs o))} \leq \growthu{\omid{\bs w(N_r(\bs o))}}.
		\]
	\end{corollary}
	
	The rest of this subsection is focused on unimodular trees with infinitely many ends.}

\begin{proposition}
	\label{prop:tree-expGrowth}
	Let $[\bs T, \bs o]$ be a unimodular tree with infinitely many ends such that $\omid{\mathrm{deg}(\bs o)}<\infty$. Then $\bs T$ has exponential \formir{volume} growth a.s. {and $\dimH{\bs T}=\infty$}.
\end{proposition}

In fact, the graph-distance metric on $\bs T$ can be replaced by an arbitrary equivariant metric.
This will be proved in~\cite{III}.

The following proof uses the definitions and results of~\cite{processes}, but they are not recalled for brevity.
\begin{proof}[Proof of Proposition~\ref{prop:tree-expGrowth}]
By Corollary~8.10 of~\cite{processes}, $[\bs T, \bs o]$ is \textit{non-amenable}
(this will be discussed further in~\cite{III}). 
So Theorem~8.9 of~\cite{processes} implies that the critical probability $p_c$ of percolation on $\bs T$ is
less than one with positive probability. In fact, it can be shown that $p_c<1$ a.s. 
(if not, condition on the event $p_c=1$ to get a contradiction).
For any tree, $p_c$ is equal to the inverse of the \textit{branching number}.
So the branching number is more than one, which implies that the tree has exponential \formir{volume} growth. 
Finally, the unimodular Billingsley lemma (Theorem~\ref{thm:billingsley}) implies that $\dimH{\bs T}=\infty$.
\end{proof}

The following example shows that the Minkowski dimension can be finite.
\begin{example}
Let $T$ be the 3-regular tree. Split each edge $e$ by adding a random number $\bs l_e$ of
new vertices and let $\bs T_0$ be the resulting tree. Let $v_e$ be the middle vertex in
this edge (assuming $\bs l_e$ is always odd) and assign marks by $\bs m_0(v_e):=\bs l_e$. 
Assume that the random variables $\bs l_e$ are i.i.d. If $\omid{\bs l_e}<\infty$, then one
can bias the probability measure and choose a new root to obtain a unimodular marked tree,
namely $[\bs T,\bs o;\bs m]$ (see Example~9.8 of~\cite{processes} or~\cite{shift-coupling}). 
It will be shown below that $\dimMu{\bs T}$ may be finite. 
		\\
Let $\bs R$ be an arbitrary equivariant $r$-covering of $\bs T$. Consider the set of middle
vertices $\bs A_r:=\{v\in\bs T: \bs m(v)\geq r\}$. Since these vertices have pairwise distance at least $r$,
they belong to different balls in the covering. So, by the mass transport principle, one can
show that $\rho(\bs R)\geq \rho(\bs A_r)$, where $\rho(\cdot)=\myprob{\bs o\in \cdot}$ denotes 
the intensity. On the other hand, let $\bs S$ be the equivariant subset of vertices with degree 3.
Send unit mass from every point of $\bs A_r$ to its two closest points in $\bs S$.
Then the mass transport principle implies that $2\rho(\bs A_r)=3\rho(\bs S)\myprob{\bs l_e\geq r}$.
Hence, $\rho(\bs R)\geq \frac 3 2 \rho(\bs S)\myprob{\bs l_e\geq r}$. This gives that
$\dimMu{\bs T}\leq \decayu{\myprob{\bs l_e\geq r}}$, which can be finite.
In fact, if $\decay{\myprob{\bs l_e\geq r}}$ exists, Proposition~\ref{prop:infEndslowerbound}	 
below implies that $\dimM{\bs T}=\decay{\myprob{\bs l_e\geq r}}$.
\end{example}

	The following proposition gives a lower bound on the Minkowski dimension. 
	
	\begin{proposition}
		\label{prop:infEndslowerbound}
		Let $[\bs T, \bs o]$ be a unimodular tree with infinitely many ends and without leaves. Let $\bs S$ be the equivariant subset of vertices of degree at least 3. For every $v\in\bs S$, let $\bs w(v)$ be the sum of the distances of $v$ to its neighbors in $\bs S$. If $\omid{\bs w(\bs o)^{\alpha}}<\infty$, then $\dimMl{\bs T}\geq \alpha$.
	\end{proposition}
	
	The proof is based on the following simpler result. This will be used in Subsection~\ref{subsec:pwit} as well.
	
\begin{proposition}
\label{prop:regree-distorted}
Let $[\bs T,\bs o]$ be a unimodular tree such that the degree of every vertex is at least 3.
Let $\bs d'$ be an equivariant metric on $\bs T$. Let $\bs w(v):=\sum_u \bs d'(v,u)$,
where the sum is over the 3 neighbors of $v$ which are closest to $v$ under the metric $\bs d'$.
If $\omid{\bs w(\bs o)^{\alpha}}<\infty$, then $\dimMl{\bs T,\bs d'}\geq\alpha$.
\end{proposition}

\begin{proof}
Define $\bs w'(v):=\sum_{u} \bs d'(u,v)^{\alpha}$, where the sum is over the three
closest neighbors of $v$. It is enough to assume that $\bs d'$ is generated by equivariant
edge lengths since increasing the edge lengths does not increase the dimension
(by Theorem~\ref{thm:metricChange}). By the same argument, it is enough to assume
$\bs d'(u,v)\geq 1$ for all $u\sim v$. Then,
it can be seen that there exists a constant $c$, that depends only on $\alpha$, 
such that $\bs w'(\nei{r}{v})\geq c r^{\alpha}$ for all $v\in\bs T$ and $r\geq 0$ ({Lemma~\ref{lem:regtree-weight}}). 
Also, the assumption implies that $\omid{\bs w'(\bs o)}<\infty$.
So Lemma~\ref{lem:lowerboundiid} implies that $\dimMl{T_3,\bs d'}\geq \alpha$ and the claim is proved.
\end{proof}

\begin{proof}[Proof of Proposition~\ref{prop:infEndslowerbound}]
For $v\in \bs S$, let $\bs w'(v):=\sum_u d(u,v)^{\alpha}$, where the sum is over
the neighbors of $v$ in $\bs S$. For $v\in\bs T\setminus\bs S$, if $u_1$ and $u_2$
are the two closest points of $\bs S$ to $v$, let $g(v,u_i):=d(u_i,v)^{\alpha-1}$
and $\bs w'(v):=g(v,u_1)+g(v,u_2)$.
The assumption implies that $\omid{\bs w'(\bs o)}<\infty$ (use the mass transport principle for $g$ defined above).
Similarly to Proposition~\ref{prop:regree-distorted}, there exists $c=c(\alpha)$,
such that $\bs w'(\nei{r}{v})\geq c r^{\alpha}$ for all $v\in\bs T$ and $r\geq 0$
({Lemma~\ref{lem:regtree-weight}}) and the claim is proved.
\end{proof}

\subsection{Instances of Unimodular Trees \formir{Continued}}

\subsubsection{A Unimodular Tree With No \formir{Volume} Growth Rate}
\label{subsec:canopy-generalized}
Recall the generalized canopy tree $[\bs T, \bs o]$ from Subsection~\ref{subsec:canopyGeneralized}. Here, it is shown that $\growthl{\bs T}\neq \growthu{\bs T}$ if the parameters are suitably chosen. Similarly, it provides an example where the exponential growth rate does not exist. The existence of unimodular trees without exponential growth rate is already proved in~\cite{Ti14}, but with a more difficult construction.

Choose the sequence $(p_n)_n$ in the definition of $[\bs T, \bs o]$ such that $p_n=c2^{-q_n}$ and $\sum_n p_n=1$, where $c$ is constant and $q_0\leq q_1\leq \cdots$ is a sequence of integers. In this case, $\bs T$ is obtained by splitting the edges of the canopy tree by adding new vertices or concatenating them, depending only on the \textit{level} of the edges. It can be seen that if $v$ is a vertex in the $n$-th level of $\bs T$, then the number of descendants of $v$ is $(p_0+\cdots+p_n)/p_n$. It follows that $\growthl{\bs T}=\decayl{p_n}$ and $\growthu{\bs T}=\decayu{p_n}$. So, by choosing $(p_n)_n$ appropriately, $\bs T$ can have no polynomial (or exponential) \formir{volume} growth rate. This proves the claim. 
Note also that the unimodular Billingsley lemma and  Theorem~\ref{thm:one-ended} imply that $\dimH{\bs T}=\growthu{\bs T}$ here.

\subsubsection{Unimodular Galton-Watson Trees}
\label{subsec:ugw}

Here, it is shown that the unimodular Galton-Watson tree~\cite{processes} is infinite dimensional 
(note that this tree differs from the Eternal Galton-Watson tree of Subsection~\ref{subsec:egw} which is a directed tree). 
Consider an ordinary Galton-Watson tree with offspring distribution $\mu=(p_0,p_1,\ldots)$, where $\mu$ is a probability measure on $\mathbb Z^{\geq 0}$.
The {unimodular Galton-Watson tree} $[\bs T, \bs o]$ has a similar construction with the
difference that the offspring distribution of the \rooot{} is different from that of the other vertices:
It has for distribution {the size-biased version} $\hat{\mu}=(\frac n m p_n)_n$, where $m$ is the mean of $\mu$ (assumed to be finite).

In what follows, the trivial case $p_1=1$ is excluded.
If $m\leq 1$, then $\bs T$ is finite a.s.; i.e., there is extinction a.s.
Therefore, $\dimH{\bs T}=0$.
So assume the \textit{supercritical case}, namely $m>1$.
If $p_0>0$, then $\bs T$ is finite with positive probability.
So $\dimH{\bs T}=0$ for the same reason. Nevertheless, one can condition on non-extinction as follows.

\begin{proposition}
	Let $[\bs T, \bs o]$ be a supercritical unimodular Galton-Watson tree conditioned on non-extinction. Then,
	$
	\dimM{\bs T}=\dimH{\bs T}=\infty.
	$
\end{proposition}
\begin{proof}
	The result for the Hausdorff dimension follows from the unimodular Billingsley lemma
	(Theorem~\ref{thm:billingsley}) and the Kesten-Stigum theorem~\cite{KeSt66},
	which implies that $\lim_n \card{N_n(\bs o)}m^{-n}$ exists and is positive a.s. 
	Computing the Minkowski dimension is more difficult. By part~\eqref{part:lem:lowerbound:exp} of Lemma~\ref{lem:lowerboundiid},
	it is enough to prove that $\omid{(1-n^{-\alpha})^{\card{\nei{n}{\bs o}}}}$ has infinite decay rate for every $\alpha\geq 0$.
	Denote by $[\widetilde{\bs T},\widetilde{\bs o}]$ the Galton-Watson tree with the same parameters.
	Using the fact that $\card{N_n(\bs o)}$ is stochastically larger than $\card{N_{n-1}(\widetilde{\bs o})}$,
	one gets that it is enough to prove the last claim for $[\widetilde{\bs T},\widetilde{\bs o}]$. 
	
	For simplicity, the proof is given for the case $p_0=0$ only. By this assumption, the probability of extinction is zero. The general case can be proved with 
	similar arguments and by using the decomposition theorem of supercritical Galton-Watson trees
	(see e.g.,  Theorem~5.28 of~\cite{bookLyPe16}). In fact, the following proof implies the general claim by the fact that the \textit{trunk}, conditioned on non-extinction, is another supercritical unimodular Galton-Watson tree. The latter can be proved similarly to the decomposition theorem.
	
	Let $f(s):=\sum_n p_n s^n$ be the generating function of $\mu$.
	By classical results of the theory of branching processes, for all $s\leq 1$, 
	$\omid{s^{d_n(\widetilde{\bs o})}}=f^{(n)}(s),$
	where $d_n(\tilde{\bs o}):=\card{N_n(\widetilde{\bs o})}-\card{N_{n-1}(\widetilde{\bs o})}$
	and $f^{(n)}$ is the $n$-fold composition of $f$ with itself. 
	Let $a>0$ be fixed and $g(s):=\frac {as}{-s+a+1}$ (such functions are frequently used in
	the literature on branching processes; see, e.g., \cite{bookAsHe83}).
	One has $f(0)=g(0)=0$, $f(1)=g(1)=1$, $f'(1)=m>1$, $g'(1)=(1+a)/a$, and $f$ is convex.
	Therefore, $a$ can be chosen large enough such that $f(s)\leq g(s)$ for all $s\in[0,1]$. So
	\begin{eqnarray*}
		f^{(n)}(s) \leq g^{(n)}(s) = \frac {a^n s}{a^n + (a+1)^n(1-s)},
	\end{eqnarray*}
	where the last equality can be checked by induction. Therefore, 
	\[
	f^{(n)}(1-n^{-\alpha}) \leq \frac {a^n}{a^n + n^{-\alpha}(a+1)^n }.
	\]
	It follows that $\decay{f^{(n)}(1-n^{-\alpha})}=\infty$. So the above discussion gives
	that $\omid{(1-n^{-\alpha})^{\card{\nei{n}{\bs o}}}}$ has infinite decay rate and the claim is proved.
\end{proof}

\subsubsection{The Poisson Weighted Infinite Tree}
\label{subsec:pwit}
The Poisson Weighted Infinite Tree (\texttt{PWIT}) is defined 
as follows (see e.g., \cite{objective}). It is a rooted tree $[\bs T, \bs o]$ such that the degree
of every vertex is infinite. Regarding $\bs T$ as a family tree with progenitor $\bs o$,
the edge lengths are as follows. For every $u\in \bs T$, the set $\{d(u,v): v\text{ is an offspring of } u\}$
is a Poisson point process on $\mathbb R^{\geq 0}$ with intensity function $x^k$, where $k>0$ is a given integer.
Moreover, for different vertices $u$, the corresponding Poisson point processes are jointly independent.
It is known that the \texttt{PWIT} is unimodular (notice that although each vertex has infinite degree, the \texttt{PWIT} is boundedly-finite as a metric space). See for example~\cite{objective} for more details.

\begin{proposition}
The \texttt{PWIT} satisfies $ \dimM{\texttt{PWIT}} = \dimH{\texttt{PWIT}} = \infty.$
\end{proposition}

\begin{proof}
Denote the neighbors of $\bs o$ by $v_1,v_2,\ldots$ such that $d(\bs o,v_i)$ is increasing in $i$.
It is straightforward that all moments of $d(\bs o,v_3)$ are finite. Therefore,
Proposition~\ref{prop:regree-distorted} implies that $\dimMl{\bs T}=\infty$
(see also {Lemma~\ref{lem:regtree-weight}}). This proves the claim.
\end{proof}

\subsection{The Graph of the Simple Random Walk}
\label{subsec:srw-graph}

\formir{As in Subsection~\ref{subsec:randomwalk}, consider the simple random walk $(S_n)_{n\in \mathbb Z}$
in $\mathbb R^k$, where $S_0=0$ and the increments $S_n-S_{n-1}$ are i.i.d.}
The graph of the random walk $(S_n)_{n\in\mathbb Z}$ is 
$ \Psi:=\{(n,S_n):n\in\mathbb Z\}\subseteq\mathbb R^{k+1}.$
It can be seen that $\Psi$ is a point-stationary point process, and hence,
$[\Psi,0]$ is unimodular (see Subsection~\ref{subsec:image}).

Since $\card{\Psi\cap [-n,n]^{k+1}}\leq 2n+1$, 
the mass distribution principle (Theorem~\ref{thm:mdp-simple}) implies that $\dimH{\Psi}\leq 1$.
In addition, if $S_1$ has finite first moment, then 
the strong law of large numbers implies that $\lim_n \frac 1 n S_n=\omid{S_1}$.
This implies that $\liminf_n \frac 1 n \card{\left(\Psi\cap [-n,n]^{k+1}\right)} >0$.
Therefore, the unimodular Billingsley lemma (Theorem~\ref{thm:billingsley})
implies that $\dimH{\Psi}\geq 1$. Hence, $\dimH{\Psi}=1$. 
\formir{This matches the result of~\cite{KhoXi17}
that the \textit{macroscopic dimension} of the graph of the Brownian motion is 1,
while its \textit{microscopic dimension} is $3/2$ when $k=1$ (see Subsection~\ref{subsec:connections}).}

Below, the focus is on the case $k=1$ and on the following metric:
\begin{equation}
\label{eq:anothermetric}
d((x,y),(x',y')):=\max \{ \sqrt{\norm{x-x'}}, {\norm{y-y'}}\}.
\end{equation}

Theorem~\ref{thm:metricChange} implies that, by considering this metric,
unimodularity is preserved and dimension is not decreased. 
Under this metric, the ball $N_n(0)$ is $\Psi\cap [-n^2,n^2]\times [-n, n]$.
{It is straightforward that $\mathbb Z^2$ has \formir{volume} growth rate 3 and also} Minkowski and Hausdorff dimension 3 under this metric.

\begin{proposition}
	If the jumps are $\pm 1$ uniformly, under the metric~\eqref{eq:anothermetric},
	the graph $\Psi$ of the simple random walk satisfies $\dimM{\Psi}= \dimH{\Psi}=2.$
\end{proposition}

\begin{proof}
	Let $n\in \mathbb N$. The ball $N_n(0)$ has at most $2n^2+1$ elements. So the mass distribution principle
	(Theorem~\ref{thm:mdp-simple}) implies that $\dimH{\Psi}\leq 2$. For the other side, let $\mathcal C$ be
	the equivariant disjoint covering of $\mathbb Z^2$ by translations of the rectangle $[-n^2,n^2]\times [-n,n]$
	(similar to Example~\ref{ex:lattice-Minkowski}). For each rectangle $\sigma\in \mathcal C$,
	select the right-most point in $\sigma\cap \Psi$ and let $\bs S=\bs S_{\Psi}$ be the set of selected points.
	By construction, $\bs S$ gives an $n$-covering of $\Psi$ and it can be seen that it is an equivariant covering.
	Let $\sigma_0$ be the rectangle containing the origin. By construction, $0\in \bs S$ if and only if it is
	either on a right-edge of $\sigma_0$ or on a horizontal edge of $\sigma_0$ and the random walk stays outside $\sigma_0$. 
	The first case happens with probability $1/(2n^2+1)$. By classical results concerning the hitting time of random walks,
	one can obtain that the probability of the second case lies between two constant multiples of $n^{-2}$.
	It follows that $\myprob{0\in \bs S}$ lies between two constant multiples of $n^{-2}$. 
	Therefore, $\dimMl{\Psi}\geq 2$. This proves the claim.
\end{proof}

\subsection{\formir{Other} Self Similar Unimodular Spaces}
\label{subsec:selfsimilar2}

In this subsection, two examples are presented which have some kind of self-similarity heuristically,
but do not fit into the framework of Subsection~\ref{subsec:selfsimilar}.

\subsubsection{Unimodular Discrete Spaces Defined by Digit Restriction}
\label{subsec:digits}
Let $J\subseteq\mathbb Z^{\geq 0}$. For $n\geq 0$, consider the set of natural numbers with expansion
$(a_na_{n-1}\ldots a_0)$ in base 2 such that $a_i=0$ for every $i\not \in J$. {Similarly to the examples in Subsection~\ref{subsec:selfsimilar},} one can shift this set 
randomly and take a limit to obtain a unimodular discrete space.
This can be constructed in the following way as well: Let $\bs T_0:=\{0\}$.
If $n\in J$, let $\bs T_{n+1}:=\bs T_n\cup (\bs T_n\pm 2\times 2^n)$,
where the sign is chosen i.i.d., each sign with probability $1/2$.
If $n\not\in J$, let $\bs T_{n+1}:=\bs T_n$. Finally, let $\Psi:=\cup_n \bs T_n$.

The upper and lower asymptotic densities of $J$ in $\mathbb Z^{\geq 0}$ are
defined by $\densityU{}(J):=\limsup_n \frac 1 nJ_n$ and
$\densityL{}(J):=\liminf_n \frac 1nJ_n$, where $J_n:= \card{J\cap \{0,\ldots,n\}}$.

\begin{proposition}
	\label{prop:digits}
	Almost surely,
	\begin{eqnarray*}
		\dimH{\Psi} = \dimMu{\Psi} = \growthu{\card{N_n(\bs o)}} &=& \densityU{}(J),\\
		\dimMl{\Psi} = \growthl{\card{N_n(\bs o)}} &=& \densityL{}(J).
	\end{eqnarray*}
\end{proposition}
{In particular, this provides another example of a unimodular discrete space where the (polynomial) \formir{volume} growth rate does not exist.}
\begin{proof}
	Let $n\geq 0$ be given. Cover $\bs T_n$ by a ball of radius $2^n$ centered at the minimal element of $\bs T_n$.
	By the same recursive definition, one can cover $\bs T_{n+1}$ by either 1 or 2 balls of the same radius. 
	Continuing the recursion, an equivariant $2^n$-covering $\bs R_n$ is obtained.
	It is straightforward to see that $\myprob{\bs R_n(\bs o)>0} = 2^{-J_n}$.
	Since these coverings are uniformly bounded (Definition~\ref{def:nearlyDisjoint}),
	Lemma~\ref{lem:nearlyDisjoint} implies that $\dimMl{\Psi} = \densityL{}(J)$ and $\dimMu{\Psi} = \densityU{}(J)$. 
	One has 
	\begin{equation}
	\label{eq:prop:digits}
	\card{\bs T_m} = 2^{J_m	}.
	\end{equation}
	This implies that $\card{N_{2^n}(\bs o)}\leq 2^{J_n+1}$. One can deduce that $\growthu{\card{N_n(\bs o)}} \leq \densityU{}(J)$.
	So the unimodular Billingsley lemma (Theorem~\ref{thm:billingsley}) gives $\dimH{\Psi}\leq \densityU{}(J)$. This proves the claim.
\end{proof}

\subsubsection{Randomized Discrete Cantor set}
\label{subsec:cantor-randomized}
This  subsection proposes a unimodular discrete analogue of the \textit{random Cantor set}, recalled below. Let $0\leq p\leq 1$ and $b>1$.  The {random Cantor set} in $\mathbb R^k$~\cite{Ha81} (see also~\cite{bookBiPe17})
is defined by $\Lambda_k(b,p):=\cap_n E_n$, where $E_n$ is defined by the following
random algorithm: Let $E_0:=[0,1]^k$. For each $n\geq 0$ and each \textit{$b$-adic}
cube of edge length $b^{-n}$ in $E_n$, divide it into $b^k$ smaller $b$-adic cubes
of edge length $b^{-n-1}$. Keep each smaller $b$-adic cube with probability $p$
and delete it otherwise independently from the other cubes. Let $E_{n+1}$ be
the union of the kept cubes. It is shown in Section~3.7 of~\cite{bookBiPe17}
that $\Lambda_k(b,p)$ is empty for $p\leq b^{-k}$ and otherwise,
has dimension $k+\log_b p$ conditioned on being non-empty.

For each $n\geq 0$, let $\bs K_n$ be the set of lower left corners of the $b$-adic cubes
forming $E_n$. It is easy to show that $\bs K_n$ tends to $\Lambda_k(b,p)$ a.s. under the Hausdorff metric. 

\begin{proposition}
\label{prop:randomCantor}
Let $\bs K'_n$ denote the random set obtained by biasing the distribution
of $\bs K_n$ by $\card{\bs K_n}$ {\formir{(Definition~\ref{def:bias})}}.
Let $\bs o'_n$ be {a point chosen uniformly at random in $\bs K'_n$.}
\begin{enumerate}[(i)]
\item $[b^n \bs K'_n, \bs o'_n]$ converges {weakly} to some unimodular discrete space $[\hat{\bs K}, \hat{\bs o}]$. 
\item If $p<b^{-k}$, then $\hat{\bs K}$ is finite a.s., hence, $\dimH{\hat{\bs K}}=0$ a.s.
\item If $p\geq b^{-k}$, then $\hat{\bs K}$ is infinite a.s. and 
\[ \dimH{\hat{\bs K}} = \dimM{\hat{\bs K}} =k+\log_b p, \quad a.s.  \]
\end{enumerate}
\end{proposition}

Note that in contrast to the continuum analogue~\cite{Ha81},
for $p=b^{-k}$, the set is non-empty and even infinite,
though still zero dimensional. Also, for $p<b^{-k}$ the set is non-empty as well.

To prove the above proposition, the following construction of $\hat{\bs K}$ will be used.
First, consider the usual nested sequence of partitions $\Pi_n$ of $\mathbb Z^k$ 
by translations of the cube $\{0,\ldots,b^n-1\}^k$, where $n\geq 0$. To make it \textit{stationary}, 
shift each $\Pi_n$ randomly as follows. Let $a_0,a_1,\ldots\in \{0,1,\ldots, b-1\}^k$
be i.i.d. uniform numbers and let $\bs U_n=\sum_{i=0}^n a_ib^i \in \mathbb Z^k$.
Shift the partition $\Pi_n$ by the vector $\bs U_n$ to form a partition denoted by $\Pi'_n$.
It is easy to see that $\Pi'_n$ is a nested sequence of partitions.

\begin{lemma}
	\label{lem:randomCantor}
	Let $(\Pi'_n)_n$ be the stationary nested sequence of partitions of $\mathbb Z^k$ defined above.
	For each $n\geq 0$ and each cube $C\in \Pi'_n$ that does not contain the origin, with probability $1-p$
	(independently for different choices of $C$), mark {all points in} $C\cap \mathbb Z^k$ for deletion. Then,
	the set of the unmarked points of $\mathbb Z^k$, \rooted{} at the origin,
	has the same distribution as $[\hat{\bs K}, \hat{\bs o}]$ defined in Proposition~\ref{prop:randomCantor}.
\end{lemma}

\begin{proof}[Proof of Lemma~\ref{lem:randomCantor}]
	Let $\Phi$ be the set of unmarked points in the algorithm.
	For $n\geq 0$, let $\bs C_n$ be the cube in $\Pi'_n$ that contains the origin.
	It is proved below that $\bs C_n\cap \Phi$ has the same distribution as $b^n(\bs{K}'_n -\bs o_n)$. This implies the claim.
	
	Let $A_n\subseteq [0,1]^k$ be the set of possible outcomes of $\bs o'_n$. One has $\card{A_n} = b^{kn}$.
	For $v\in A_n$, it is easy to see that the distribution of $b^n(\bs{K}'_n -\bs o_n)$, conditioned
	on $\bs o'_n=v$, coincides with the distribution of $\bs C_n\cap \Phi$ conditioned on $\bs C_n = b^n([0,1)^k-v)$.
	So it remains to prove that $\myprob{\bs o'_n=v}= \myprob{\bs C_n = b^n([0,1)^k-v)}$, which is left to the reader.
\end{proof}

Here is another description of $\hat{\bs K}$. The nested structure of $\bigcup_n \Pi'_n$ defines a tree as follows.
The set of vertices is $\bigcup_n \Pi'_n$. For each $n\geq 0$, connect (the vertex corresponding to) every
cube in  $\Pi'_n$ to the unique cube in $\Pi'_{n+1}$ that contains it. This tree is the canopy tree
(Subsection~\ref{subsec:canopy}) with offspring cardinality $N:=b^k$, except that the root
(the cube $\{0\}$) is always a leaf. Now, keep each vertex with probability $p$ and remove it
with probability $1-p$ in an i.i.d. manner. Let $\bs T$ be the connected component of the
remaining graph that contains the root. Conditioned on the event that $\bs T$ is infinite,
$\hat{\bs K}$ corresponds to the set of leaves in the connected component of the root.

\begin{proof}[Proof of Proposition~\ref{prop:randomCantor}]
	The unimodular Billingsley lemma is used to get an upper bound on the Hausdorff dimension.
	For this $\omid{\card{N_{b^n}(\bs o)}}$ is studied. Consider the tree $[\bs T, \bs o]$ defined above and
	obtained by the percolation process on the canopy tree with offspring cardinality $N:=b^k$. Let $C$ be any cube
	in $\Pi'_i$ that does not contain the origin. Note that the subtree of descendants of $C$ in the percolation cluster
	(conditioned on keeping $C$) is a Galton-Watson tree with binomial offspring distribution with parameters $(N,p)$.
	Classical results on branching processes say $\omidCond{\card{C\cap \hat{\bs K}}}{\Pi'_i} = p m^i$, where $m:=p b^k$.
	So the construction implies that
	\[
	\omid{\card{\bs C_n\cap \hat{\bs K}}} = 1+ p(N-1)\left(m^{n-1}+m^{n-2}+\cdots + 1 \right).
	\]
	For $m>1$, the latter is bounded by $l m^n$
	for some constant $l$ not depending on $n$. Note that $N_{b^n}(\bs o)$ is contained 
	in the union of $\bs C_n$ and $3^k-1$ other cubes in $\Pi'_n$. It follows that 
	$\omid{\card{N_{b^n}(\bs o)}}\leq l' m^n$, where $l'=l+(3^k-1)p$. So the unimodular Billingsley lemma
	(Theorem~\ref{thm:billingsley}) implies that $\dimH{\hat{\bs K}}\leq k+\log_b p$. The claim for $m=1$ and $m<1$ are similar.
	
	Consider now the Minkowski dimension. {As above, we assume $m>1$ and the proofs for the other cases are similar.}
	Let $n\geq 0$ be given. By considering the partition $\Pi'_n$ by cubes, one can construct a $b^n$-covering
	$\bs R_n$ as in Theorem~\ref{thm:lowerBoundR^d}. This covering satisfies
	$
	\myprob{\bs R_n(\bs o)\geq 0} = \omid{ 1/{\card{(\bs C_n \cap \hat{\bs K})}}}.
	$
	Let $[\bs T', \bs o']$ be the eternal Galton-Watson tree of Subsection~\ref{subsec:egw} 
	with binomial offspring distribution with parameters $(N,p)$. By regarding $\bs T'$ as a family tree,
	it is straightforward that $[\bs T, \bs o]$ has the same distribution as the part of $[\bs T', \bs o']$,
	up to the generation of the root (see~\cite{eft} for more details on eternal family trees).
	Therefore, Lemma~5.7 of~\cite{eft} implies that 
	$
	\omid{1/{\card{(\bs C_n \cap \hat{\bs K})}}} = m^{-n} \myprob{h(\bs o')\geq n}.
	$
	Since $m>1$, $\myprob{h(\bs o')\geq n}$ tends to the non-extinction probability of the descendants of the root, which is positive.
	By noticing the fact that the radii of the balls are $b^n$ and the covering is uniformly bounded,
	one gets that $\dimM{\hat{\bs K}}= \log_b m = k+\log_b p$.
	
	Finally, it remains to prove that $\hat{\bs K}$ is infinite a.s. when $p=b^{-k}$.
	In this case, consider the eternal Galton-Watson tree $[\bs T',\bs o']$ as above.
	Proposition~6.8 of~\cite{eft} implies that the generation of the root is infinite a.s. This proves the claim.
\end{proof}

\subsection{Cayley Graphs}
\label{subsec:cayley}

As mentioned in Subsection~\ref{subsec:metricChange},  the dimension of a Cayley graph depends
only on the group and not on the generating set. The following result connects it to the \formir{volume} growth
rate of the group. Note that 
Gromov's theorem~\cite{Gr81} implies that the polynomial growth degree exists and is
either an integer or infinity.

\begin{theorem}
\label{thm:Cayley}
For every finitely generated group $H$
with polynomial growth {degree} $\alpha\in [0,\infty]$, one has $ \dimM{H} = \dimH{H} = \alpha $.
\formir{Also, if $\alpha<\infty$, then} $\measH{\alpha}(H)<\infty$.
\end{theorem}
\begin{proof}
	First, assume $\alpha<\infty$. The result of Bass~\cite{Ba72} implies that there are constants $c,C>0$
	such that $\forall r\geq 1: cr^{\alpha}<\card{N_r(o)} \leq Cr^{\alpha}$, where $o$ is an arbitrary element of $H$.
	So the mass distribution principle (Theorem~\ref{thm:mdp-simple}) and part~\eqref{part:lem:lowerbound:all}
	of Lemma~\ref{lem:lowerboundiid} imply that $\dimM{H} = \dimH{H}=\alpha$. 
	{In addition, \eqref{eq:thm:mdp-simple} in the proof of Theorem~\ref{thm:mdp-simple} implies that $\contentH{\alpha}{M}(H)\geq 1/C$ for all $M\geq 1$, which implies that $\measH{\alpha}(H)\leq C<\infty$.}
	
	Second, assume $\alpha=\infty$. The result of \cite{VaWi84} shows that for any $\beta<\infty$,
	$\card{N_r(o)}>r^{\beta}$ for sufficiently large $r$.
	Therefore, part~\eqref{part:lem:lowerbound:all} of Lemma~\ref{lem:lowerboundiid} implies
	that $\dimMl{H}\geq \beta$. Hence, $\dimM{H}=\dimH{H}=\infty$ and the claim is proved.
\end{proof}

{
	It is natural to expect that $\measH{\alpha}(H)>0$ as well, but only a weaker inequality will be proved in Proposition~\ref{prop:Cayley}. 
}


\subsection{Notes and Bibliographical Comments}

{The proof of {Proposition~\ref{prop:tree-expGrowth}} was suggested by R. Lyons.} 
Bibliographical comments on {some of} the examples discussed in this
section can be found at the end of Section \ref{sec:examples}. {The example defined by digit restriction (Subsection~\ref{subsec:digits}) is inspired by an example in the continuum setting (see e.g., Examples~1.3.2 of~\cite{bookBiPe17}). The randomized discrete Cantor set (Subsection~\ref{subsec:cantor-randomized}) is inspired by the \textit{random cantor set} (see e.g., Section~3.7 of~\cite{bookBiPe17}).}

\section{Frostman's Theory}
\label{sec:frostman}

This section provides a unimodular version of Frostman's lemma and some of its applications.
In a sense to be made precise later, this lemma gives converses to the mass distribution principle.
It is a powerful tool {in the theoretical analysis of} the unimodular Hausdorff dimension.
For example, it is used in this section to derive inequalities for the dimension of product spaces 
and \textit{embedded spaces} (Subsections~\ref{subsec:product} and~\ref{subsec:embedded}).
It is also the basis of many of the results in~\cite{III}.


\subsection{Unimodular Frostman Lemma}
\label{subsec:frostman-general}

The statement of the unimodular Frostman lemma requires the definition
of \textit{weighted Hausdorff} content. The latter is based on the notion of
\textit{equivariant weighted collections of balls} as follows.
For this, the following mark space is needed. Let $\Xi$ be the set of functions
$c:\mathbb R^{\geq 0}\to \mathbb R^{\geq 0}$ which are positive in only finitely many points;
i.e., $c^{-1}((0,\infty))$ is a finite set.  
Remark~\ref{rem:frostman-markspace} below defines a metric on $\Xi$, so that
the notion of $\Xi$-valued equivariant processes (Definition~\ref{def:equivProcess}) is well defined.
Such a process $\bs c$ is called an \textbf{equivariant weighted collection of balls}
\footnote{The term `weighted' refers to the weighted sums in Definition~\ref{def:xi^alpha}
and should not be confused with equivariant weight functions of Definition~\ref{def:weight}}.
Consider a unimodular discrete space $[\bs D, \bs o]$ with distribution $\mu$.
For $v\in \bs D$, the reader can think of the value $\bs c_r(v):=\bs c(v)(r)$, if positive, to indicate that
there is a ball in the collection, with radius $r$, centered at $v$,
and with {\textbf{cost} (or weight)} $\bs c_r(v)$.
Note that extra randomness is allowed in the definition.
A ball-covering $\bs R$ can be regarded a special case of this construction by letting
$\bs c_r(v)$ be 1 when $r=\bs R(v)$ and 0 otherwise.

\begin{definition}
\label{def:xi^alpha}
Let $f:\mathcal D_*\rightarrow\mathbb R$ be a measurable function and $M\geq 1$. 
An equivariant weighted collection of balls $\bs c$ is called a \defstyle{$(f,M)$-covering} if 
\begin{equation}
\label{eq:weightedCovering}
\forall v\in \bs D: f(v) \leq \sum_{u\in\bs D}\sum_{r\geq M} \bs c_r(u)\identity{\{v\in N_r(u)\}}, \quad a.s.,
\end{equation}
where $f(v):=f[\bs D,v]$ for $v\in\bs D$. For $\alpha\geq 0$, define 
\begin{eqnarray*}
\xi^{\alpha}_M(f)&:=&\inf\left\{ \omid{\sum_{r} \bs c_r(\bs o)r^{\alpha}}:
\bs c \text{ is a } (f,M)\text{-covering}\right\},\\
\xi^{\alpha}_{\infty}(f)&:=& \lim_{M\rightarrow\infty} \xi^{\alpha}_M(f).
\end{eqnarray*}
\end{definition}

It is straightforward that every equivariant ball-covering of Definition~\ref{def:convering}
gives a $(1,1)$-covering, where \formir{the first $1$} is regarded as the constant function $f\equiv 1$ on $\dstar$.
This gives (see also Conjecture~\ref{conj:frostman} below) \formir{that for all $M\ge 1$},
\begin{equation}
\label{eq:frostman-xi<H}
\xi^{\alpha}_M(1) \leq \contentH{\alpha}{M}(\bs D).
\end{equation}

Also, by considering the case $\bs c_M(v) := f(v)\vee 0$, one can see that if $f\in L_1(\mathcal D_*,\mu)$, then
\[
\xi^{\alpha}_M(f)\leq M^{\alpha} \omid{f(\bs o)\vee 0} <\infty.
\]
In the next theorem, to be consistent with the setting of the paper,
the following notation is used: $w(u):=w([D,u])$ for $u\in D$, and $w(N_r(v))=\sum_{u \in N_r(v)} w(u)$.
Also, recall that a deterministic equivariant weight function is given by a measurable function
$w:\dstar\to\mathbb R^{\geq 0}$ (see Example~\ref{ex:equiv-basic}).  

\begin{theorem}[Unimodular Frostman Lemma]
\label{thm:frostmanGeneral}
Let $[\bs D, \bs o]$ be a unimodular discrete space, $\alpha\geq 0$ and $M\geq 1$.
\begin{enumerate}[(i)]
\item \label{thm:frostmanGeneral:1}
There exists a bounded measurable weight function $w:\mathcal D_*\rightarrow\mathbb R^{\geq 0}$
such that {$\omid{w(\bs o)} = \xi^{\alpha}_M(1)$ and} almost surely,
\begin{equation}
\label{eq:frostman:1}
\forall v\in \bs D,\  \forall r\geq M,  w(N_r(v))\leq r^{\alpha}.
\end{equation}
\item \label{thm:frostmanGeneral:3}
In addition, if either
$\dimH{\bs D}<\alpha$ or $\alpha=\dimH{\bs D}$ and $\measH{\alpha}(\bs D)<\infty$, 
then $w[\bs D,\bs o]\neq 0$ with positive probability.
\end{enumerate}
\end{theorem}

The proof is given later in this subsection.

\begin{remark}
	\label{rem:frostman-ineq}
	One can show that if $\bs w$ is an equivariant weight function satisfying~\eqref{eq:frostman:1},
	then $\omid{\bs w(\bs o)}  \leq  \xi^{\alpha}_M(1)$ and $\omid{\bs w(\bs o)h(\bs o)} \leq  \xi^{\alpha}_M(h)$.
	Therefore, the (deterministic) weight function $w$ given in the unimodular Frostman lemma
	is a maximal equivariant weight function satisfying~\eqref{eq:frostman:1}
	(it should be noted that such maximal functions are not unique in general). 
	The proof is similar to that of the mass distribution principle (Theorem~\ref{thm:mdp-simple})
	and is left to the reader.
\end{remark}

\begin{conjecture}
	\label{conj:frostman}
	One has $\contentH{\alpha}{M}(\bs D)=\xi^{\alpha}_M(1)$.
\end{conjecture}

Here are a few comments on this conjecture. \formir{An analogous equality holds in the continuum setting (see 2.10.24 in~\cite{bookFe69}).}
{Lemma~\ref{lem:frostmanAuxiliary} below proves a weaker inequality.}
{It can be seen that the conjecture holds for $\mathbb Z^k$ (with the $l_{\infty}$ metric) and for the non-ergodic example of  Example~\ref{ex:nonergodic}. In the former case, this is obtained by considering the constant weight function $w(\cdot)\equiv \left(\frac{M}{2M+1}\right)^k$, which satisfies the claim of the unimodular Frostman lemma. The latter case is similar by letting $w[\mathbb Z,0]:=\frac M{2M+1}$ and $w[\mathbb Z^2,0]:=0$.}

\begin{remark}
	\label{rem:frostman-mdp-billingsley}
	{The unimodular Frostman lemma implies that, in theory, the mass distribution principle (Theorem~\ref{thm:mdp-simple}) is enough for bounding the Hausdorff dimension from above. However, there are very few examples in which the function $w$ given by the unimodular Frostman lemma can be explicitly computed (in some of the examples, a function $w$ satisfying {only}~\eqref{eq:frostman:1} can be found; e.g., for two-ended trees). Therefore, in practice, the unimodular Billingsley lemma is more useful than the mass distribution principle.}
\end{remark}

{\formir{The following lemma is needed to prove Theorem~\ref{thm:frostmanGeneral}.}}

\begin{lemma}
	The function $\xi^{\alpha}_M: L_1(\mathcal D_*,\mu)\rightarrow\mathbb R$ is continuous.
	In fact, it is $M^{\alpha}$-Lipschitz; i.e., 
	$
	\norm{\xi^{\alpha}_M(f_1) - \xi^{\alpha}_M(f_2)} \leq M^{\alpha} \omid{\norm{f_1(\bs o)-f_2(\bs o)}}.
	$
\end{lemma}
\begin{proof}
	Let $\bs c$ be an equivariant weighted collection of balls satisfying~\eqref{eq:weightedCovering} for $f_1$.
	Intuitively, add a ball of radius $M$ at each point $v$ with cost $\norm{f_2(v)-f_1(v)}$.
	More precisely, let $\bs c'_r(v):=\bs c_r(v)$ for $r\neq M$ and $\bs c'_M(v):=\bs c_M(v)+\norm{f_2(v)-f_1(v)}$.
	This definition implies that $\bs c'$ satisfies~\eqref{eq:weightedCovering} for $f_2$. Also,
	\[
	\xi^{\alpha}_M(f_2)\leq \omid{\sum_{i} \bs c'_i(\bs o)i^{\alpha}} = \omid{\sum_{r} \bs c_r(\bs o)i^{\alpha}} + M^{\alpha}\omid{\norm{f_2(\bs o)-f_1(\bs o)}}.
	\]
	Since $\bs c$ is arbitrary, one obtains 
	$
	\xi^{\alpha}_M(f_2)\leq \xi^{\alpha}_M(f_1) + M^{\alpha}\omid{\norm{f_2(\bs o)-f_1(\bs o)}},
	$
	which implies the claim.
\end{proof}

\begin{proof}[Proof of Theorem~\ref{thm:frostmanGeneral}]
	The theorem is a special case of Proposition~\ref{prop:frostmangeneral} which is proved below.
\end{proof}

\begin{proposition}
	\label{prop:frostmangeneral}
	In the setting of Theorem~\ref{thm:frostmanGeneral}, let $h\in L_1(\mathcal D_*,\mu)$
	be any given function such that $h> 0$ a.s. Then, one can replace the condition
	$\omid{w(\bs o)} = \xi^{\alpha}_M(1)$ in Theorem~\ref{thm:frostmanGeneral} by 
	$\omid{w(\bs o)h(\bs o)} = \xi^{\alpha}_M(h)$, and the conclusions of the theorem are valid.
	%
	%
\end{proposition}

\begin{proof}
	It is easy to see that $\xi^{\alpha}_M(tf)=t\xi^{\alpha}_M(f)$ for all $f$ and $t\geq 0$ and also 
	$
	\xi^{\alpha}_M(f_1+f_2) \leq \xi^{\alpha}_M(f_1)+\xi^{\alpha}_M(f_2)
	$
	for all $f_1,f_2$. Let $h\in L_1(\mathcal D_*,\mu)$ be given. By the Hahn-Banach theorem
	(see Theorem~3.2 of~\cite{bookRu73}), there is a linear functional $l:L_1(\mathcal D_*,\mu)\rightarrow\mathbb R$ such that 
	$
	l(h)=\xi^{\alpha}_M(h)
	$
	and
	$
	-\xi^{\alpha}_M(-f) \leq l(f) \leq \xi^{\alpha}_M(f),
	$
	{for all $f\in L_1$.}
	Since $l$ is sandwiched between two functions which are continuous  at $0$ and are equal at $0$ 
	(since $\xi^{\alpha}_M(0)=0$),
	one gets that $l$ is continuous at 0. Since $l$ is linear, 
	this implies that $l$ is continuous. Since the dual of $L_1(\mathcal D_*, \mu)$ is $L_{\infty}(\mathcal D_*, \mu)$,
	one obtains that there is a function $w\in L_{\infty}(\mathcal D_*, \mu)$ such that  
	$
	l(f) = \omid{f(\bs o)w(\bs o)}, 
	$
	{for all $f\in L_1$.}
	Note that if $f\geq 0$, then $\xi^{\alpha}_M(-f) = 0$ and so $l(f)\geq 0$.
	This implies that $w(\bs o)\geq 0$ a.s. (otherwise, let $f(\bs o):=\identity{\{w(\bs o)<0\}}$ to get a contradiction). 
	Consider a version of $w$ which is nonnegative everywhere.
	The claim is that $w$ satisfies the requirements. 
	
	Let $r\geq M$ be fixed.
	For all discrete spaces $D$, let $\bs S:=\bs S_D:=\{v\in D: w(N_r(v))>r^{\alpha}\}$.
	By the definition of $\bs S_D$, one has
	\begin{equation}
	\label{eq:thm:frostmanGeneral:1}
	\omid{w(N_r(\bs o))\identity{\{\bs o\in \bs S\}}} \geq r^{\alpha}\myprob{\bs o \in \bs S}.
	\end{equation}	
	Moreover, if $\myprob{\bs o \in \bs S}>0$, then the inequality is strict.
	Let $f_r(v):= \card{N_r(v)\cap \bs S}$. By the mass transport principle for the function
	$(v,u)\mapsto w(u)\identity{\{v\in \bs S\}} \identity{\{u\in N_r(v) \}}$, one gets
	\begin{eqnarray*}
		\omid{w(N_r(\bs o))\identity{\{\bs o\in \bs S\}}} &=& \omid{w(\bs o) \card{N_r(\bs o)\cap \bs S}}\\
		&=& \omid{w(\bs o)f_r(\bs o)}\\
		&=& l(f_r)\\
		&\leq & \xi^{\alpha}_M(f_r)\\
		&\leq & r^{\alpha} \myprob{\bs o \in \bs S},
	\end{eqnarray*}
	where the last inequality is implied by considering the following weighted collection of balls 
	for $f_r$: put balls of radius $r$ with cost 1 centered at the points in $\bs S$. 
	More precisely, let $\bs c_r(v):=\identity{\{v\in \bs S\}}$ and $\bs c_s(v):=0$ for $s\neq r$.
	It is easy to see that this satisfies~\eqref{eq:weightedCovering} for $f_r$,
	which implies the last inequality by the definition of $\xi^{\alpha}_M(\cdot)$.
	Thus, equality holds in~\eqref{eq:thm:frostmanGeneral:1}. Hence, $\myprob{\bs o\in \bs S}=0$;
	i.e., $w(N_r(\bs o))\leq r^{\alpha}$ a.s. Lemma~\ref{lem:happensAtRoot} implies that almost surely,
	$\forall v\in \bs D: w(N_r(v))\leq r^{\alpha}$. So the same holds for all rational $r\geq M$ simultaneously.
	By monotonicity {of $w(N_r(v))$ w.r.t. $r$}, one gets that the latter almost surely holds for all $r\geq M$ as desired.
	Also, one has 
	$
	\omid{w(\bs o)h(\bs o)} = l(h) = \xi^{\alpha}_M(h).
	$
	Thus, $w$ satisfies the desired requirements.
	
	To prove~\eqref{thm:frostmanGeneral:3}, assume $\measH{\alpha}(\bs D)<\infty$.
	By Lemma~\ref{lem:Hmeas-elementary}, one has $\contentH{\alpha}{M}(\bs D)>0$.
	So Lemma~\ref{lem:frostmanAuxiliary} below implies that $0<\xi^{\alpha}_M(h)=\omid{w(\bs o)h(\bs o)}$.
	This implies that $w$ is not identical to zero.
\end{proof}

{\formir{The above proof uses the following lemma.}}

\begin{lemma}
	\label{lem:frostmanAuxiliary}
	
	Let $[\bs D, \bs o]$ be a unimodular discrete metric space.
	\begin{enumerate}[(i)]
		\item \label{lem:frostmanAuxiliary:1}
		By letting $b:=\xi^{\alpha}_1(1)$, one has 
		$
		b\leq \contentH{\alpha}{1}(\bs D)\leq b + {b}{\norm{\log b}}.
		$
		\item \label{lem:frostmanAuxiliary:2}
		Let 
		$h\in L_1(\mathcal D_*,\mu)$ be a non-negative function. 
		For $M\geq 1$, one has
		\begin{equation}
		\label{eq:frostmanAuxiliary}
		\contentH{\alpha}{M}(\bs D)\leq \inf_{a\geq 0}\left\{M^{\alpha}\omid{e^{-a h(\bs o)}}  + a \xi^{\alpha}_M(h)\right\}.
		\end{equation}
		\item \label{lem:frostmanAuxiliary:3}
		In addition, if $h>0$ a.s., then $\xi^{\alpha}_M(h)=0$ if and only if $\contentH{\alpha}{M}(\bs D)=0$.	
	\end{enumerate}
\end{lemma}
\begin{proof}
	\eqref{lem:frostmanAuxiliary:1}. 
	The first inequality is easily obtained from the definition of $\xi^{\alpha}_1(1)$
	by considering the cases where $\bs c(\cdot)\in \{0,1\}$. In particular, this implies that $b\leq 1$.
	The second inequality is implied by part~\eqref{lem:frostmanAuxiliary:2} by letting $h(\cdot):=1$ and $a:=-\log b\geq 0$.
	
	\eqref{lem:frostmanAuxiliary:2}. 
	Let $b'>\xi^{\alpha}_M(h)$ be arbitrary. So there exists an equivariant weighted collection
	of balls $\bs c$ 
	that satisfies~\eqref{eq:weightedCovering} for $h$ and 
	$
	\omid{\sum_{r\geq M} \bs c_r(\bs o)r^{\alpha}} \leq b'.
	$
	Next, given $a\geq 0$, define an equivariant covering $\bs R$ as follows.
	For each $v\in \bs D$ and $r\geq M$ such that $\bs c_r(v)>0$, put a ball of radius $r$ at $v$ with probability
	$a \bs c_r(v) \wedge 1$. Do this independently for all $v$ and $r$
	(one should condition on $\bs D$ first). If more than one ball is put at $v$,
	keep only the one with maximum radius. Let $\bs S$ be the union of the chosen balls.
	For $u\in\bs D\setminus \bs S$, put a ball of radius $M$ at $u$. This gives an equivariant covering,
	namely $\bs R$, by balls of radii at least $M$. Then, one gets
	\begin{equation}
	\label{eq:lem:frostmanGeneral:2}
	\omid{\bs R(\bs o)}^{\alpha} \leq M^{\alpha}\myprob{\bs o\not\in \bs S} + \omid{\sum_{r\geq M} (a\bs c_r(\bs o)\wedge 1)r^{\alpha}} \leq M^{\alpha} \myprob{\bs o\not\in \bs S} + ab'.
	\end{equation}
	
	To bound $\myprob{\bs o\not\in \bs S}$, consider a realization of $[\bs D, \bs o]$.
	First, if for some $v\in \bs D$ and $r\geq M$, one has $a \bs c_r(v)>1$ and $\bs o\in N_r(v)$,
	then $\bs o$ is definitely in $\bs S$. Second, assume this is not the case.
	By~\eqref{eq:weightedCovering}, one has $\sum_{u\in\bs D}\sum_{r\geq M} \bs c_r(u)\identity{\{\bs o\in N_r(u)\}}\geq h(\bs o)$.
	This implies that the probability that $\bs o\not\in \bs S$ in this realization is
	\begin{eqnarray*}
		\prod_{(v,r): \bs o\in N_r(v)} (1-a \bs c_r(v))
		\leq  \mathrm{exp}\left( -\sum_{(v,r): \bs o\in N_r(v)} a\bs c_r(v) \right)
		\leq e^{-a h(\bs o)}.
	\end{eqnarray*}
	In both cases, one gets $\myprob{\bs o\not\in \bs S} \leq \omid{e^{-a h(\bs o)}}$. 
	Thus, \eqref{eq:lem:frostmanGeneral:2} implies 
	that $\omid{R(\bs o)}^{\alpha} \leq M^{\alpha} \omid{e^{-a h(\bs o)}} + ab'$.
	Since $a\geq 0$ and $b'>b$ are arbitrary, the claim follows.
	
	\eqref{lem:frostmanAuxiliary:3}. 
	Assume $\xi^{\alpha}_M(h)=0$. By letting $a\rightarrow\infty$ in~\eqref{eq:frostmanAuxiliary}
	and using dominated convergence,
	one obtains that $\contentH{\alpha}{M}(\bs D)=0$. Conversely, assume $\contentH{\alpha}{M}(\bs D)=0$. 
	The first inequality in~\eqref{lem:frostmanAuxiliary:1} gives that $\xi^{\alpha}_M(a)=0$ for any constant $a$.
	Therefore, $\xi^{\alpha}_M(h)\leq \xi^{\alpha}_M(a)+\xi^{\alpha}_M((h-a)\vee 0))\leq M^{\alpha}\omid{(h-a)\vee 0}$.
	By letting $a$ tend to infinity, one gets $\xi^{\alpha}_M(h)=0$.
\end{proof}

\begin{remark}
	\label{rem:frostman-markspace}
	In this subsection, the following metric is used on the mark space $\Xi$.
        Let $\Xi'$ be the set of finite measures on $\mathbb R^2$.
	By identifying $c\in \Xi$ with the counting measure on the finite set
        $\{(x,c(x)): x\in\mathbb R^{\geq 0}, c(x)>0\}$,
	one can identify $\Xi$ with a Borel subset of $\Xi'$. It is well known that $\Xi'$ is a
        complete separable metric space under the Prokhorov metric (see e.g., \cite{bookDaVe03I}).
	So one can define the notion of $\Xi'$-valued equivariant processes as in
        Definition~\ref{def:equivProcess}. Therefore, $\Xi$-valued equivariant processes also make sense.
\end{remark}

\subsection{Max-Flow Min-Cut Theorem for Unimodular One-Ended Trees}
\label{subsec:maxflow}

The result of this subsection is used in the next subsection for a Euclidean version of the unimodular Frostman lemma, but is of independent interest as well.

The max-flow min-cut theorem is a celebrated result in the field of graph theory (see e.g., \cite{FoFu62}).
In its simple version, it studies the minimum number of edges in a \textit{cut-set} in a finite graph;
i.e., a set of edges the deletion of which disconnects two given subsets of the graph.
A generalization of the theorem in the case of trees is obtained by considering cut-sets separating a given 
finite subset from the set of ends of the tree. This generalization is used to prove a version of
Frostman's lemma for compact sets in the Euclidean space (see e.g., \cite{bookBiPe17}).

This subsection presents an analogous result for unimodular one-ended trees.
It discusses cut-sets separating the set of leaves from the end of the tree. 
Since the tree has infinitely many leaves a.s. (see e.g., \cite{eft}), infinitely many edges
are needed in any such cut-set. Therefore, cardinality cannot be used to study minimum cut-sets.
The idea is to use unimodularity for a quantification of the size of a cut-set. 

Let {$[\bs T, \bs o; \bs c]$} be a unimodular {marked} one-ended tree with mark space $\mathbb R^{\geq 0}$.  
Assume the mark $\bs c(e)$ of each edge $e$ is well defined and call it
the \textbf{conductance} of $e$. Let $\bs L$ be the set of leaves of $\bs T$. As in Subsection~\ref{subsec:one-ended},
let $F(v)$ be the parent of vertex $v$ and $D(v)$ be the descendants subtree of $v$.

\begin{definition}
	A \textbf{legal equivariant flow} on $[\bs T;\bs c]$ is an equivariant way of 
	assigning extra marks $\bs f(\cdot)\in\mathbb R$ to the edges (see Definition~\ref{def:equivProcess}
	and Remark~\ref{rem:equivProcessExtraMarks}), such that almost surely,
	\begin{enumerate}[(i)]
		\item for every edge $e$, one has $0\leq \bs f(e)\leq \bs c(e)$,
		\item for every vertex $v\in \bs T\setminus\bs L$, one has 
		\begin{equation}
		\label{eq:flowcons}
		\bs f(v,F(v))=\sum_{w\in F^{-1}(v)}\bs f(w,v).
		\end{equation}
	\end{enumerate}
	Also, an \textbf{equivariant cut-set} is an equivariant subset $\Pi$ of the edges of $[\bs T; \bs c]$
	that separates the set of leaves $\bs L$ from the end in $\bs T$. 
\end{definition}

Note that extra randomness is allowed in the above definition. 
The reader can think of the value $\bs f(v,F(v))$ as the \textit{flow} from $v$ to $F(v)$.
So~\eqref{eq:flowcons} can be interpreted as \textit{conservation of flow} at the vertices except the leaves.
Also, the leaves are regarded as the \textit{sources} of the flow.

Since the number of leaves is infinite a.s., the sum of the flows exiting the
leaves might be infinite. In fact, it can be seen that unimodularity implies
that the sum is always infinite a.s. The idea is to use unimodularity to quantify how \textit{large} is the flow.
Similarly, in any equivariant cut-set, the sum of the conductances of the edges is infinite a.s.
Unimodularity is also used to quantify the \textit{conductance} of an equivariant cut-set.
These are done in Definition~\ref{def:flowNorm} below.

Below, since each edge of $\bs T$ can be uniquely represented as $(v,F(v))$, the following convention is helpful.

\begin{convention}
	For the vertices $v$ of $\bs T$, the symbols $\bs f(v)$ and $\bs c(v)$ are used as
	abbreviations for $\bs f(v,F(v))$ and $\bs c(v,F(v))$, respectively.
	Also, by $v\in \Pi$, one means that the edge $(v,F(v))$ is in $\Pi$.
\end{convention}

\begin{definition}
	\label{def:flowNorm}
	The \textbf{norm} of the legal equivariant flow $\bs f$ is defined as
	\[
	\norm{\bs f}:=\omid{\bs f(\bs o) \identity{\{\bs o\in \bs L \}}}.
	\]
	Also, for the equivariant cut-set $\Pi$, define
	\[
	\bs c(\Pi):=\omid{\bs c(\bs o)\identity{\{\bs o\in \Pi \}}} = \omid{\sum_{w\in F^{-1}(\bs o)} \bs c(w) \identity{\{w\in \Pi \}} },
	\]
	where the last equality follows from the mass transport principle~(\ref{eq:unimodularMarked}).
\end{definition}

An equivariant cut-set $\Pi$ is called \defstyle{equivariantly minimal} if there is no other
equivariant cut-set which is a subset of $\Pi$ a.s. 
If so, it can be seen that it is \defstyle{almost surely minimal} as well;
i.e., in almost every realization, it is a minimal cut set ({Lemma~\ref{lem:minimalCut}}).

\begin{lemma}
\label{lem:maxFlowMinCut}
If $\bs f$ is a legal equivariant flow and $\Pi$ is an equivariant cut-set, 
then $\norm{\bs f}\leq \bs c(\Pi)$. Moreover, if the pair $(\bs f,\Pi)$ is equivariant, then 
\[
\norm{\bs f} \leq \omid{\bs f(\bs o)\identity{\{\bs o\in \Pi \}}} \leq \bs c(\Pi).
\] 
In addition, if $\Pi$ is minimal, then equality holds in the left inequality.
\end{lemma}
\begin{proof}
One can always consider an \textit{independent coupling} of $\bs f$ and $\Pi$
(as in the proof of Theorem~\ref{thm:mdp-simple}). So assume $(\bs f, \Pi)$ is 
equivariant from the beginning.
Note that the whole construction (with conductances, the flow and the cut-set)
is unimodular (Lemma~\ref{lem:equivProcess}).
For every leaf $v\in \bs L$, let $\bs \tau(v)$ be the first ancestor of $v$
such that $(v,F(v))\in \Pi$. Then, send mass $\bs f(v)$ from each leaf $v$ to $\bs \tau(v)$.
By the mass transport principle~(\ref{eq:unimodularMarked}), one gets 
\begin{eqnarray*}
\omid{\bs f(\bs o) \identity{\{\bs o\in \bs L \}}} &=&
\omid{ \identity{\{\bs o\in \Pi \}} \sum_{v\in \bs \tau^{-1}(\bs o)}\bs f(v) }\\
&\leq & \omid{ \identity{\{\bs o\in \Pi \}} \sum_{v\in D(\bs o)\cap \bs L}\bs f(v) }
= \omid{\bs f(\bs o)\identity{\{\bs o\in \Pi \}}},
\end{eqnarray*}
where the last equality holds because $\bs f$ is a flow. Moreover, if $\Pi$ is minimal,
then the above inequality becomes an equality and the claim follows.
\end{proof}

The main result is the following converse to the above lemma.
\begin{theorem}[Max-Flow Min-Cut for Unimodular One-Ended Trees]
	\label{thm:maxFlowMinCut}
	For every unimodular {marked} one-ended tree $[\bs T, \bs o;\bs c]$ equipped 
        with conductances $\bs c$ as above, if $\bs c$ is bounded on the set of leaves, then 
	\[
	\max_f \norm{\bs f} = \inf_{\Pi} \bs c(\Pi),
	\]
	where the maximum is over all legal equivariant flows $\bs f$ and the
        infimum is over all equivariant cut-sets $\Pi$.
\end{theorem}

\begin{remark}
	\label{rem:maxflow-infinite}
	The claim of Theorem~\ref{thm:maxFlowMinCut} is still valid if the probability
	measure (the distribution of $[\bs T, \bs o; \bs c]$) is replaced by any (possibly infinite)
	measure $\mathcal P$ on $\mathcal D'_*$ supported on one-ended trees,
	such that $\mathcal P(\bs o\in \bs L)<\infty$ and the mass transport principle~\xeq{\ref{eq:unimodularMarked}} holds.
	The same proof works for this case as well. This will be used in Subsection~\ref{subsec:frostman-euclidean}.
\end{remark}

\begin{proof}[Proof of Theorem~\ref{thm:maxFlowMinCut}]
	For $n\geq 1$, let $\bs T_n$ be the sub-forest of 
	$\bs T$ {obtained by keeping only vertices of height at most $n$ in $\bs T$}.
	Each connected component of $\bs T_n$ is a finite tree which contains some leaves of $\bs T$.
	For each such component, namely $T'$, do the following: if $T'$ has more than one vertex,
	consider the maximum flow on $T'$ between the
	leaves and the \textit{top} vertex (i.e., the vertex with maximum height in $T'$).
	If there is more than one maximum flow, choose one of them randomly and uniformly. Also, choose a
	minimum cut-set in $T'$ randomly and uniformly. Similarly, if $T'$ has a single vertex $v$,
	do the same for the subgraph with vertex set $\{v,F(v)\}$ and the single edge adjacent to $v$.
	By doing this for all components of $\bs T_n$, a (random) function $\bs f_n$ on the edges and
	a cut-set $\Pi'_n$ are obtained (by letting $\bs f_n$ be zero on the other edges).
	$\Pi'_n$ is always a cut-set, but $\bs f_n$ is not a flow. However, $\bs f_n$ 
	satisfies (\ref{eq:flowcons}) for vertices of $\bs T_n\setminus\bs L$,
        except the top vertices of the connected components of $\bs T_n$.
	Also, it can be seen that $\bs f_n$ and $\Pi'_n$ are equivariant. 
	
	For each component $T'$ of $\bs T_n$, the set of leaves of $T'$, excluding the top vertex,
	is $\bs L\cap T'$. So the max-flow min-cut theorem of Ford-Fulkerson \cite{FoFu62} 
	(see e.g., Theorem~3.1.5 of~\cite{bookBiPe17}) gives
	that, for each component $T'$ of $\bs T_n$, one has 
	\[
	\sum_{v\in \bs L\cap T'} \bs f_n(v) = \sum_{e\in\Pi'_n\cap T'} \bs c(e).
	\] 
	If $u$ is the top vertex of $T'$, let $h(u)$ be the common value in the above equation. 
	By using the mass transport principle~\xeq{\ref{eq:unimodularMarked}}
	for each of the two representations of $\omid{h(\bs o)}$, one can obtain
	\[
	\omid{\bs f_n(\bs o) \identity{\{\bs o\in \bs L \}}}= {\omid{h(\bs o)}=} \omid{\bs c(\bs o)\identity{\{\bs o\in \Pi'_n \}}} {=\bs c(\Pi'_n)}.
	\]
	Since $0\leq \bs f_n(\cdot)\leq \bs c_n(\cdot)$, one can see that the distributions of $\bs f_n$ 
	are tight (the proof {is similar to Lemma~\ref{lem:equivariant-tight} and is}
	left to the reader).
	Therefore, there is a sequence $n_1,n_2,\ldots$ and an equivariant process $\bs f'$
	such that $\bs f_{n_i}\rightarrow \bs f'$ (weakly).
	It is not hard to deduce that $\bs f'$ is a legal equivariant flow.
	Also, since {$\bs f'(\bs o)$} and $\identity{\{\bs o\in \bs L \}}$ are
	continuous functions of {$[\bs T, \bs o; \bs f']$} and their product is bounded (by the assumption on $\bs c$),
	one gets that 
	\[
	\norm{\bs f'} {=\omid{\bs f'(\bs o)\identity{\{\bs o\in \bs L \}}} = \lim_i \omid{\bs f_{n_i}(\bs o)\identity{\{\bs o\in \bs L \}}} } = \lim_i \bs c(\Pi'_{n_i}).
	\]
	Therefore,
	$
	\max_f \norm{\bs f} \geq \inf_{\Pi} \bs c(\Pi).
	$
	Note that the maximum of $\norm{\bs f}$ is attained by the same tightness argument as above.
	So Lemma~\ref{lem:maxFlowMinCut} implies that equality holds and the claim is proved.
\end{proof}

\subsection{A Unimodular Frostman Lemma for Point Processes}
\label{subsec:frostman-euclidean}

In the Euclidean case, another form of the unimodular Frostman lemma is given below. 
Its proof is based on the max-flow min-cut theorem of Subsection~\ref{subsec:maxflow}.
As will be seen, the claim implies that in this case, Conjecture~\ref{conj:frostman} holds up to
a constant factor (Corollary~\ref{cor:frostman-Euclidean}). However, the weight function obtained
in the theorem needs extra randomness.

\begin{theorem}
	\label{thm:frostman-euclidean}
	Let $\Phi$ be a point-stationary point process in $\mathbb R^k$ endowed with the $l_{\infty}$ metric,
	and let $\alpha\geq 0$. Then, there exists an equivariant weight function $\bs w$
        on $\Phi$ such that, almost surely,
	\begin{equation}
	\label{eq:thm:frostman-euclidean:1}
	\forall v\in \Phi,\ \forall r\geq 1: \bs w(N_r(v))\leq r^{\alpha}
	\end{equation}	
	and
	\begin{equation}
	\label{eq:thm:frostman-euclidean:2}
	{\omid{\bs w(0)} \geq 3^{-k} \contentH{\alpha}{1}(\Phi).}
	\end{equation}
	In particular, if $\contentH{\alpha}{1}(\Phi)>0$, then $\bs w(0)$ is not identical to zero.
\end{theorem}

\formir{A similar result holds for the Euclidean metric or other equivalent metrics by just changing the constant $3^{-k}$ in~\eqref{eq:thm:frostman-euclidean:2}.}

{In the following proof, $\Phi$ is regarded as a counting measures; i.e., for all $A\subseteq \mathbb R^d$, $\Phi(A):=\card{(\Phi\cap A)}$.}

\begin{proof}
	Let $b>1$ be an arbitrary integer (e.g., $b=2$). 
	For every integer {$n\geq 0$,} let $\bs Q_n$ be the stationary partition of $\mathbb R^k$
	by translations of the cube $[0,b^n)^k$ as in Subsection~\ref{subsec:euclidean}.
	Consider the nested coupling of these partitions for $n\geq 0$
	(i.e., every cube of $\bs Q_n$ is contained in some cube of $\bs Q_{n+1}$ for every $n\geq 0$) independent of $\Phi$.
	Let $\bs T_0$ be the tree whose vertices are the cubes in $\cup_n \bs Q_n$ and the edges are between
	all pairs of nested cubes in $\bs Q_n$ and $\bs Q_{n+1}$ for all $n$.
	Let $\bs T\subseteq \bs T_0$ be the subtree consisting of the cubes $\bs q_n(v)$ 
	for all $v\in\Phi$ and $n\geq 0$. The set $\bs L$ of the leaves of $\bs T$ consists of
	the cubes $\bs q_{0}(v)$ for all $v\in \Phi$. Let $\bs \sigma:=\bs q_{0}(0)\in\bs L$. 
	Note that in the correspondence $v\mapsto \bs q_0(v)$, each cube $\sigma\in\bs L$ corresponds to $\Phi(\sigma)\geq 1$ points of $\Phi$. Therefore, by verifying the mass transport principle, it can be seen that the distribution of $[\bs L,\bs \sigma]$, biased by $1/\Phi(\bs \sigma)$, is unimodular; i.e., 
	\[
	\omid{\frac 1{\Phi(\bs \sigma)}\sum_{\sigma'\in\bs L} g(\bs L,\bs \sigma, \sigma')} = 
	\omid{\frac 1{\Phi(\bs \sigma)}\sum_{\sigma'\in\bs L} g(\bs L,\sigma', \bs \sigma)},
	\]
	for every measurable $g\geq 0$.
	In addition, $g$ can be allowed to depend on $\bs T$ in this equation (but the sum is still on $\sigma'\in\bs L$). Therefore, one can assume the metric on $\bs L$ is the graph-distance metric induced from $\bs T$
	(see Theorem~\ref{thm:metricChange}). Moreover, Theorem~5 of~\cite{shift-coupling} implies that by a further biasing and choosing a new root for $\bs T$, one can make $\bs T$ unimodular. More precisely, the following (possibly infinite) 
	measure on $\mathcal D_*$ is unimodular:
	\begin{equation}
	\label{eq:thm:frostman-euclidean:P}
	\mathcal P[ A]:=  \omid{\sum_{{n\geq 0}} \frac 1{e_n} \identity{A}[\bs T, {\bs q_n(0)}]},
	\end{equation}
	where $e_n:=\Phi(\bs q_n(0))$. 
	Let $\mathcal E$ denote the integral operator w.r.t. the measure $\mathcal P$.
	For any equivariant flow $\bs f$ on $\bs T$, the norm of $\bs f$ w.r.t. the measure $\mathcal P$
	(see Remark~\ref{rem:maxflow-infinite}) satisfies
	\begin{eqnarray*}
		\norm{\bs f} = \mathcal E[\bs f \cdot \identity{\bs L}]
		= \omid{\sum_{{n\geq 0}} \frac 1{e_n} \bs f(\bs q_n(0)) \identity{\{\bs q_n(0)\in \bs L \}} }
		= \omid{\frac 1{\Phi(\bs \sigma)}\bs f(\bs \sigma)},
	\end{eqnarray*} 
	where the second equality is by~\eqref{eq:thm:frostman-euclidean:P}.
	Consider the conductance function $\bs c(\tau):=b^{n\alpha}$ for all cubes $\tau$
	of edge length $b^n$ in $\bs T$ and all $n$. Therefore, Theorem~\ref{thm:maxFlowMinCut}
	and Remark~\ref{rem:maxflow-infinite} imply that 
	the maximum of $\omid{\bs f(\bs{\sigma})}$ over all equivariant legal flows $\bs f$
        on $[\bs T, \bs{\sigma}]$ is attained (note that $[\bs T, \bs \sigma]$ is not unimodular,
        but the theorem can be used for $\mathcal P$).
	Denote by $\bs f_0$ the maximum flow.
	Let $\bs w$ be the weight function on $\Phi$ defined by
        $\bs w(v)=\delta \bs f_0(\bs q_{0}(v)){/\Phi(\bs q_0(v))}$,
	for all $v\in \Phi$, where $\delta:=(b+1)^{-k}$.
	The claim is that $\bs w$ satisfies the requirements~\eqref{eq:thm:frostman-euclidean:1}
        and~\eqref{eq:thm:frostman-euclidean:2}.
	Since $\bs f_0$ is a legal flow, it follows that for every cube $\sigma\in\bs T$, one has
	\[
	\bs w(\sigma) = \delta \bs f_0(\sigma) \leq \delta \bs c(\sigma) = {\delta b^{n\alpha}}.
	\]
	
	Each cube $\sigma$ of edge length $r\in [b^n,b^{n+1})$ in $\mathbb R^k$ can be covered with at most
	$(b+1)^k$ cubes of edge length $b^n$ in $\bs T_0$. If $n\geq 0$, the latter are either
        in $\bs T$ or do not intersect $\Phi$.
	So the above inequality implies	that $\bs w(\sigma)\leq r^{\alpha}$. 
        So \eqref{eq:thm:frostman-euclidean:1} is proved for $\bs w$.
	
	To prove \eqref{eq:thm:frostman-euclidean:2}, given any equivariant cut-set $\Pi$ of $\bs T$,
	a covering of $\Phi$ can be constructed as follows:
	For each cube $\sigma\in \Pi$ of edge length say \formir{$b^n$}, let $\bs \tau(\sigma)$ be one of the points
	in $\sigma\cap\Phi$ chosen uniformly at random and put a ball of radius $b^n$ 
        centered at $\bs \tau(\sigma)$. 
	Note that this ball contains $\sigma$. Do this independently for all cubes in $\bs T$.
	If a point in $\Phi$ is chosen more than once, consider only the largest radius assigned to it.
        It can be seen that this gives an equivariant covering of $\Phi$, namely $\bs R$. One has
	\begin{eqnarray*}
		\omid{\bs R(0)^{\alpha}} &\leq& \omid{\sum_{n\geq 0} b^{n\alpha} \identity{\{\bs q_n(0) \in\Pi \}} \identity{\{0 = \bs \tau(\bs q_n(0)) \}} }\\
		&=&  \omid{\sum_{n\geq 0} \frac{b^{n\alpha}}{e_n}  \identity{\{\bs q_n(0) \in\Pi \}}}.
	\end{eqnarray*}

	On the other hand, by~\eqref{eq:thm:frostman-euclidean:P}, one can see that 
	\[
	\bs c(\Pi) = \omid{\sum_{n\geq 0} \frac 1{e_n} \bs c(\bs q_n(0)) \identity{\{\bs q_n(0) \in \Pi\}}} = \omid{\sum_{n\geq 0} \frac {b^{n\alpha}}{e_n} \identity{\{\bs q_n(0) \in \Pi\}}}.
	\]
	Therefore, $\omid{\bs R(0)^{\alpha}} \leq \bs c(\Pi)$.
	So $\contentH{\alpha}{1}(\Phi) \leq \bs c(\Pi)$. Since $\Pi$ is an arbitrary equivariant cut-set,
	by the unimodular max-flow min-cut theorem established above (Theorem~\ref{thm:maxFlowMinCut})
	and the maximality of the flow $\bs f_0$, one gets that 
	$
	\contentH{\alpha}{1}(\Phi) \leq \norm{\bs f_0} = \omid{\bs f_0(\bs \sigma)/\Phi(\bs \sigma)} = \delta^{-1} \omid{\bs w(0)}.
	$
	So the claim is proved.
\end{proof}

The following corollary shows that in the setting of Theorem~\ref{thm:frostman-euclidean},
the claim of Conjecture~\ref{conj:frostman} holds up to a constant factor (compare this with Lemma~\ref{lem:frostmanAuxiliary}).

\begin{corollary}
	\label{cor:frostman-Euclidean}
	For all point-stationary point processes $\Phi$ in $\mathbb R^k$ endowed with the $l_{\infty}$ metric and all $\alpha\geq 0$, 
	$
	3^{-k} \contentH{\alpha}{1}(\Phi)\leq \xi^{\alpha}_1(\Phi)\leq \contentH{\alpha}{1}(\Phi).
	$
\end{corollary}
\begin{proof}
	The claim is directly implied by \eqref{eq:frostman-xi<H}, Theorem~\ref{thm:frostman-euclidean} and Remark~\ref{rem:frostman-ineq}.
\end{proof}

\subsection{Applications}
The following subsections give some basic applications of the unimodular Frostman lemma.
This lemma is also the basis of many results of~\cite{III}.

\subsubsection{Cayley Graphs}

\begin{proposition}
\label{prop:Cayley}
For every finitely generated group $H$ with polynomial growth {degree} $\alpha\in [0,\infty]$, one has 
$\xi^{\alpha}_{\infty}(H)<\infty$.
\end{proposition}
Note that if Conjecture~\ref{conj:frostman} holds, then this result implies
$\measH{\alpha}(H)>0$, as conjectured in Subsection~\ref{subsec:cayley}.
\begin{proof}
	By Theorem~\ref{thm:Cayley}, $\measH{\alpha}(H)<\infty$. So the unimodular Frostman lemma (Theorem~\ref{thm:frostmanGeneral}) implies that for every $M\geq 1$, there exists $w:\dstar\to\mathbb R^{\geq 0}$ such that $w(N_M(e))\leq M^{\alpha}$ and $\omid{w(e)}=\xi^{\alpha}_M(H)$, where $e$ is the neutral element of $H$. Since the Cayley graph of $H$ is transitive and $w$ is defined up to rooted isomorphisms, $w(H,\cdot)$ is constant. Hence, $w(H,v)=\xi^{\alpha}_M(H)$ for all $v\in H$. Therefore, $\xi^{\alpha}_M(H)\card{N_M(e)} \leq M^{\alpha}$. Thus, $\xi^{\alpha}_M(H)\leq 1/c$, where $c$ is as in the proof of Theorem~\ref{thm:Cayley}. By letting $M\to \infty$, one gets $\xi^{\alpha}_{\infty}(H)\leq 1/c<\infty$.
\end{proof}

\subsubsection{Dimension of Product Spaces}
\label{subsec:product}

Let $[\bs D_1,\bs o_1]$ and $[\bs D_2,\bs o_2]$ be independent unimodular discrete metric spaces.
By considering any of the usual product metrics; e.g., the sup metric or the $p$ product metric,
the \defstyle{independent product} $[\bs D_1\times \bs D_2, (\bs o_1,\bs o_2)]$ makes sense as
a random \rooted{} discrete space. It is not hard to see that the latter is also unimodular
(see also Proposition~4.11 of~\cite{processes}). 

\begin{proposition}
	\label{prop:product}
	Let $[\bs D_1\times \bs D_2, (\bs o_1,\bs o_2)]$ represent the independent product of
	$[\bs D_1,\bs o_1]$ and $[\bs D_2,\bs o_2]$ defined above.
	Then,
	\begin{equation}
	\label{eq:product}
	\dimH{\bs D_1}+\dimMl{\bs D_2} \leq \dimH{\bs D_1\times \bs D_2} \leq \dimH{\bs D_1}+\dimH{\bs D_2}.
	\end{equation}
\end{proposition}
\begin{proof}
	By Theorem~\ref{thm:metricChange}, one can assume the metric on $\bs D_1\times \bs D_2$ is
	the sup metric without loss of generality. So $N_r(v_1,v_2)=N_r(v_1)\times N_r(v_2)$.
	
	The upper bound is proved first. For $i=1,2$, let $\alpha_i>\dimH{\bs D_i}$ be arbitrary.
	By the unimodular Frostman lemma (Theorem~\ref{thm:frostmanGeneral}), there is a nonnegative measurable
	functions $w_i$ on $\dstar$ such that $\forall v \in \bs D_i: \forall r\geq 1: w_i(N_r(v))\leq r^{\alpha} \text{, a.s.}$
	In addition, $w_i(\bs o_i)\neq 0$ with positive probability. Consider the equivariant weight function
	$\bs w$ on $\bs D_1\times \bs D_2$ defined by
	$
	\bs w(v_1,v_2):=w_1[\bs D_1,v_1]\times w_2[\bs D_2,v_2].
	$
	It is left to the reader to show that $\bs w$ is  an equivariant weight function.
	One has $\bs w(N_r(v_1,v_2)) = w_1(N_r(v_1))w_2(N_r(v_2))\leq r^{\alpha_1+\alpha_2}$.
	Also, by the independence assumption, $\bs w(\bs o_1,\bs o_2)\neq 0$ with positive probability.
	Therefore, the mass distribution principle (Theorem~\ref{thm:mdp-simple})
	implies that $\dimH{\bs D_1\times\bs D_2}\leq \alpha_1+\alpha_2$. This proves the upper bound.
	
	For the lower bound in the claim, let $\alpha<\dimH{\bs D_1}$, $\beta<\dimMl{\bs D_2}$
	and $\epsilon>0$ be arbitrary. It is enough to find an equivariant covering $\bs R$ of
	$\bs D_1\times\bs D_2$ such that $\omid{\bs R(\bs o_1,\bs o_2)^{\alpha+\beta}}<\epsilon$. 
	One has $\decayl{\lambda_r(\bs D_2)}>\beta$, where $\lambda_r$ is defined in~(\ref{eq:I_r}).
	So there is $M>0$ such that $\forall r\geq M: \lambda_r(\bs D_2)<r^{-\beta}$. So for every $r\geq M$,
	there is an equivariant $r$-covering of $\bs D_2$ with intensity less than $r^{-\beta}$.
	On the other hand, since $\alpha <\dimH{\bs D_1}$, one has $\contentH{\alpha}{M}(\bs D_1)=0$
	(by Lemma~\ref{lem:Hmeas-elementary}). Therefore there is an equivariant covering $\bs R_1$
	of $\bs D_1$ such that $\omid{\bs R_1(\bs o_1)^{\beta}}<\epsilon$ and $\forall v\in \bs D_1: \bs R_1(v)\in \{0\}\cup [M,\infty)$ a.s. 
	Choose the extra randomness in $\bs R_1$ independently from $[\bs D_2,\bs o_2]$. 
	Given a realization of $[\bs D_1,\bs o_1]$ and $\bs R_1$, do the following: 
	Let $v_1\in \bs D_1$ such that $\bs R_1(v_1)\neq 0$ (and hence, $\bs R_1(v_1)\geq M$).
	One can find an equivariant subset $\bs S_{v_1}$ of $\bs D_2$ that gives a covering of
	$\bs D_2$ by balls of radius $\bs R_1(v_1)$ and has intensity less than $\bs R_1(v_1)^{-\beta}$.
	Do this independently for all $v_1\in \bs D_1$. Now, for all $(v_1,v_2)\in \bs D_1\times \bs D_2$, define
	\[
	\bs R(v_1,v_2):=\begin{cases}
	\bs R_1(v_1)& \text{if } \bs R_1(v_1)\neq 0 \text{ and } v_2\in \bs S_{v_1},\\
	0& \text{otherwise}.
	\end{cases}
	\]
	Now, $\bs R$ is a covering of $\bs D_1\times \bs D_2$ and it can be seen that 
	it is an equivariant covering. Also, given $[\bs D_1,\bs o_1]$ and $\bs R_1$,
	the probability that $\bs o_2\in \bs S_{\bs o_1}$ is less than $\bs R_1(\bs o_1)^{-\beta}$. So one gets 
	\begin{eqnarray*}
		\omid{\bs R(\bs o_1,\bs o_2)^{\alpha+\beta}} 
		&=& \mathbb E\Bigg[\mathbb E\Big[{\bs R(\bs o_1,\bs o_2)^{\alpha+\beta}}|{[\bs D_1,\bs o_1],\bs R_1}\Big]\Bigg]\\ 
		&< & \omid{\bs R_1(\bs o_1)^{\alpha+\beta} \bs R_1(\bs o_1)^{-\beta}}
		= \omid{\bs R_1(\bs o_1)^{\alpha}}
		<\epsilon.
	\end{eqnarray*}
	So the claim is proved.
\end{proof}

The following examples provide instances where the inequalities in~\eqref{eq:product} are strict. 
\begin{example}
	Assume $[\bs D_1,\bs o_1]$ and $[\bs D_2,\bs o_2]$ are unimodular discrete spaces such that
	$\dimMl{\bs G_1}<\dimH{\bs G_1}$ and $\dimMl{\bs G_2}=\dimH{\bs G_2}$. By Proposition~\ref{prop:product}, one gets
	\[
	\dimH{\bs G_1\times \bs G_2} \geq \dimH{\bs G_1}+\dimMl{\bs G_2} > \dimH{\bs G_2}+ \dimMl{\bs G_1}.
	\]
	So by swapping the roles of the two spaces, an example of strict inequality in the left hand side of~\eqref{eq:product} is obtained.
\end{example}

\begin{example}
	Let $J$ be a subset of $\mathbb Z^{\geq 0}$ such that $\densityU{}(J)=1$ and $\densityL{}(J)=0$ simultaneously
	(see Subsection~\ref{subsec:digits} for the definitions). Let $\Psi_1$ and $\Psi_2$ be defined
	as in Subsection~\ref{subsec:digits} corresponding to $J$ and $\mathbb Z^{\geq 0} \setminus J$ respectively.
	Proposition~\ref{prop:digits} implies that $\dimH{\Psi_1}=\dimH{\Psi_2}=1$. On the other hand, \eqref{eq:prop:digits} implies that
	\[
	\card{N_{2^n}(\bs o_1\times \bs o_2)} \leq 2^{J_n+1} \times 2^{(n+1-J_n)+1} = 2^{n+3}.
	\]
	This implies that $\growthu{N_r(\bs o)}\leq 1$. So the unimodular Billingsley lemma
	(Theorem~\ref{thm:billingsley}) implies that $\dimH{\Psi_1\times \Psi_2}\leq 1$
	(in fact, equality holds by Proposition~\ref{prop:embedded} below).
	So the rightmost inequality in~\eqref{eq:product} is strict here.
\end{example}

\subsubsection{Dimension of Embedded Spaces}
\label{subsec:embedded}

It is natural to think of $\mathbb Z$ as a subset of $\mathbb Z^2$. However, $[\mathbb Z,0]$ is
not an equivariant subspace of $[\mathbb Z^2,0]$.
By the following definition, $[\mathbb Z,0]$ is called \textit{embeddable in $[\mathbb Z^2,0]$}.
The dimension of embedded subspaces is studied in this subsection.
\formir{The analysis requires the unimodular Frostman lemma.}

\begin{definition}
	\label{def:embedded}
	Let $[\bs D_0, \bs o_0]$ and $[\bs D, \bs o]$ be random \rooted{} discrete spaces. 
	An \defstyle{embedding} of $[\bs D_0,\bs o_0]$ in $[\bs D, \bs o]$ is 
	a (not necessarily unimodular) random \rooted{} marked discrete space $[\bs D', \bs o'; \bs m]$
	with mark space $\{0,1\}$ such that
	\begin{enumerate}[(i)]
		\item $[\bs D', \bs o']$ has the same distribution as $[\bs D, \bs o]$.
		
		\item $\bs m(\bs o')=1$ a.s. and by letting $\bs S:=\{v\in \bs D': \bs m(v)=1 \}$ equipped with the
		induced metric from $\bs D'$, $[\bs S, \bs o']$ has the same distribution
		as $[\bs D_0,\bs o_0]$.
	\end{enumerate}
	If in addition, $[\bs D_0, \bs o_0]$ is unimodular, then
	$[\bs D',\bs o'; \bs m]$ is called an \defstyle{equivariant embedding} if
	\begin{enumerate}[(i)]
		\setcounter{enumi}{2}
		\item \label{def:embedding:4} The mass transport principle holds on $\bs S$; i.e.,
		\xeq{\ref{eq:unimodularMarked}} holds for functions $g(u,v):=g(\bs D',u,v; \bs m)$
		such that $g(u,v)$ is zero when $\bs m(u)=0$ or $\bs m(v)=0$.
	\end{enumerate}
	If an embedding (resp. an equivariant embedding) exists, $[\bs D_0, \bs o_0]$ 
	is called \defstyle{embeddable} (resp. \defstyle{equivariantly embeddable}) in $[\bs D, \bs o]$.
\end{definition}

It should be noted that $[\bs D', \bs o'; \bs m]$ is not an equivariant process on $\bs D$ {except in the trivial case where $\bs m(\cdot)=1$ a.s.}

\begin{example}
	\formir{Here are instances of Definition~\ref{def:embedded}.}
	\begin{enumerate}[(i)]

		\item Let $[\bs D_0,\bs o_0]:=[\mathbb Z,0]$ and $[\bs D, \bs o]:=[\mathbb Z^2,0]$
                equipped with the sup metric.
		Consider $m:\mathbb Z^2\rightarrow\{0,1\}$ which is equal to one
                on the boundary of the positive cone. 
		Then, $[\mathbb Z^2, 0; m]$ is an embedding of $[\mathbb Z,0]$ in $[\mathbb Z^2,0]$,
		but is not an equivariant embedding since it does not satisfy~\eqref{def:embedding:4}.

		\item A point-stationary point process in $\mathbb Z^k$ (\rooted{} at 0)
                is equivariantly embeddable in $[\mathbb Z^k,0]$.
		
		\item Let $H$ be a finitely generated group equipped with the graph-distance metric of an arbitrary
		Cayley graph over $H$. Then, any subgroup of $H$ {(equipped with the induced metric)}
		is equivariantly embeddable in $H$.
	\end{enumerate}
\end{example}

\begin{proposition}
\label{prop:embedded}
If $[\bs D_0, \bs o_0]$ and $[\bs D, \bs o]$ are unimodular discrete spaces and the
former is equivariantly embeddable in the latter, then
\begin{eqnarray}
\label{eq:thm:embedded:1}
\dimH{\bs D}&\geq& \dimH{\bs D_0},
\end{eqnarray}
\formir{and for all $\alpha\geq 0$ and $M\geq 1$, with $\xi^{\alpha}_M$ defined in Definition~\ref{def:xi^alpha},
\begin{eqnarray}
\label{eq:thm:embedded:2}
\xi^{\alpha}_M(\bs D, 1) &\leq & \contentH{\alpha}{M}(\bs D_0).
\end{eqnarray}}
\end{proposition}

\begin{proof}
	First, assume~\eqref{eq:thm:embedded:2} holds.
	For $\alpha>\dimH{\bs D}$, one has $\contentH{\alpha}{M}(\bs D)>0$ (Lemma~\ref{lem:Hmeas-elementary}).
	Therefore, Lemma~\ref{lem:frostmanAuxiliary} implies that $\xi^{\alpha}_M(\bs D, 1)>0$.
	Hence, \eqref{eq:thm:embedded:2} implies that $\contentH{\alpha}{M}(\bs D_0)>0$,
	which implies that $\dimH{\bs D_0}\leq\alpha$. So it is enough to prove~\eqref{eq:thm:embedded:2}.
	
	By the unimodular Frostman lemma (Theorem~\ref{thm:frostmanGeneral}), there is a bounded function
	$w:\mathcal D_*\rightarrow\mathbb R^{\geq 0}$ such that $\omid{w(\bs o)} = \xi^{\alpha}_M(\bs D, 1)$,
	and almost surely, $w(N_r(\bs o))\leq r^{\alpha}$ for all $r\geq M$. 
	Assume $[\bs D', \bs o'; \bs m]$ is an equivariant embedding as in Definition~\ref{def:embedded}.
	For $x\in \bs D'$, let $w'(x):=w'_{\bs D'}(x):=w[\bs D', x]$. 
	Consider the random \rooted{} marked discrete space $[\bs S, \bs o'; w']$
        obtained by restricting $w'$ to $\bs S$.
	By the definition of equivariant embeddings and by directly verifying the mass
        transport principle, the reader can obtain that $[\bs S, \bs o'; w']$ is unimodular.
	Since $[\bs S, \bs o']$ has the same distribution as
	$[\bs D_0, \bs o_0]$, there exists  
	an equivariant process
	$\bs w_0$ on $\bs D_0$ such that $[\bs S ,\bs o'; w']$ has the same distribution as
        $[\bs D_0, \bs o_0; \bs w_0]$ \formir{(see the converse of Lemma~\ref{lem:equivProcess} in Subsection~\ref{subsec:process})}. 
	According to the above discussion, one has 
	$$\forall r\geq M: w'(N_r(\bs S, \bs o'))\leq w'(N_r(\bs D',\bs o')) \leq r^{\alpha}, \quad a.s.$$
	This implies that $\bs w_0(N_r(\bs o_0))\leq r^{\alpha}$ a.s. Therefore, the mass distribution
        principle (Theorem~\ref{thm:mdp-simple})
	implies that $\omid{\bs w_0(\bs o_0)}\leq \contentH{\alpha}{M}(\bs D_0)$. One the other hand, 
        $$\omid{\bs w_0(\bs o_0)}= \omid{w'(\bs o')}=\omid{w(\bs o)} = \xi^{\alpha}_M(\bs D,1),$$ 
	where the last equality is by the assumption on $w$.
	This implies that $\contentH{\alpha}{M}(\bs D_0)\geq \xi^{\alpha}_M(\bs D,1)$ and the claim is proved.
\end{proof}

It is natural to expect that an embedded space has a smaller \formir{Hausdorff size}.
\formir{This is stated in {Conjecture~\ref{conj:embedding}}.}

\begin{remark}
	\label{rem:embedding-counter}
	Another possible way to prove Proposition~\ref{prop:embedded} and Conjecture~\ref{conj:embedding}
	is to consider an arbitrary equivariant covering of $\bs D_0$ and try to extend it to an equivariant 
	covering of $\bs D$ by adding some balls (without adding a ball centered at the root).
	More generally, given an equivariant processes $\bs Z_0$ on $\bs D_0$, one might try to extend
	it to an equivariant process on $\bs D$ without changing the mark of the root.
	But {at least} the latter is not always possible. A counter example is when $[\bs D_0,\bs o_0]$ is $K_2$
	(the complete graph with two vertices),  $[\bs D,\bs o]$ is $K_3$, $\bs Z_0(\bs o_0)=\pm 1$
	chosen uniformly at random, and the mark of the other vertex of $\bs D_0$ is $-\bs Z_0(\bs o_0)$.
\end{remark}

\subsection{Notes and Bibliographical Comments}

The unimodular Frostman lemma (Theorem~\ref{thm:frostmanGeneral}) is analogous to  Frostman's 
lemma in the continuum setting (see e.g., Thm~8.17 of~\cite{bookMa95}). The proof of
Theorem~\ref{thm:frostmanGeneral} is also inspired by that of~\cite{bookMa95}, but there
are substantial differences. For instance, the proof of Lemma~\ref{lem:frostmanAuxiliary} and
also the use of the duality of $L_1$ and $L_{\infty}$ in the proof of Theorem~\ref{thm:frostmanGeneral} are new.
The Euclidean version of the unimodular Frostman lemma (Theorem~\ref{thm:frostman-euclidean})
and its proof are inspired by the continuum analogue (see e.g.,~\cite{bookBiPe17}). 

As already explained, the unimodular max-flow min-cut theorem (Theorem~\ref{thm:maxFlowMinCut})
is inspired by the max-flow min-cut theorem for finite trees. 
Also, the results and examples of Subsection \ref{subsec:product} on product spaces 
are inspired by analogous in the continuum setting; e.g., 
Theorem~3.2.1 of~\cite{bookBiPe17}.

\section{\formir{Miscellaneous Topics}}
\label{s:MiTo}

\subsection{\formir{Connections with Other Notions of Dimension}}
\label{subsec:connections}

Several notions of dimension are already defined in the literature for discrete spaces
\formir{in special cases. A few of them are listed in this subsection together with
their connections to unimodular dimensions.

For subsets of $\mathbb Z^d$, the notions of \textit{upper and lower mass dimension}
are defined in~\cite{BaTa89}, which are just the volume growth rates defined in
Section~\ref{sec:volumeGrowth}. 
The paper~\cite{KhoXi17} extends the upper mass dimension to general subsets
$A\subseteq \mathbb R^d$ and calls it the \textit{macroscopic Minkowski dimension}
of $A$ (one may define lower macroscopic Minkowski dimension similarly). 
This extension is obtained by \textit{pixelizing} $A$ to get a subset of $\mathbb Z^d$. 
The unimodular Billingsley lemma states that for unimodular (i.e., point-stationary)
and ergodic subsets of $\mathbb Z^d$, the unimodular Hausdorff dimension is between 
the upper and lower mass dimension. A similar result holds in the non-integer case as
well:
\begin{corollary}
For ergodic point-stationary point processes in $\mathbb R^d$, the unimodular
Hausdorff dimension is between the upper and lower macroscopic Minkowski dimensions a.s.
\end{corollary}
This is a direct corollary of Billingsley's lemma applied to the pixelization by a randomly-shifted lattice.
It can also be proved by using weights in
Billingsley's lemma similarly to the proof of Proposition~\ref{prop:upperbound-Rd}.
}

Another notion is \formir{that of \textit{discrete (Hausdorff) dimension}}
\cite{BaTa89}, which uses the idea behind the definition of the \formir{classical}
Hausdorff dimension by considering coverings of $\Phi\subseteq\formir{\mathbb Z^d}$
by large balls and considering the cost $(\frac r {r+|x|})^{\alpha}$
for each ball in the covering, where $r$ and $x$ are the radius and the center of
the ball and $\alpha$ is a constant (in fact, this is a modified version of
the definition of~\cite{BaTa89} mentioned in~\cite{bookBiPe17}). 
\formir{In the future work~\cite{III}, it is shown that} the discrete
dimension is an upper bound for the unimodular
Hausdorff dimension, when both notions are defined (i.e., for point-stationary point processes).

The unimodular Hausdorff dimension can be connected to the classical Hausdorff
dimension via scaling limits. Such limits are random continuum metric spaces
and can be defined by weak convergence w.r.t. the Gromov-Hausdorff-Prokhorov metric~\cite{Kh19ghp}.
It is shown in the preprint~\cite{III} that if the unimodular discrete space
admits a scaling limit, then the ordinary Hausdorff dimension of the limit is
an upper bound for the unimodular Hausdorff dimension. 

\formir{The above inequalities are expected to be equalities in most examples.
The preprint~\cite{III} provides more discussion on the matter.} 
Note that these comparison results imply relations
between the \formir{volume} growth rate, scaling limits and discrete dimension,
which are of independent interest and which are new to the best of the authors' knowledge.

\formir{A problem of potential interest is the connection of unimodular dimensions 
to other notions of dimension. This includes
Gromov's notion of \textit{asymptotic dimension}~\cite{bookGr91},
the \textit{spectral dimension} 
of a graph (defined in terms of the return probabilities of the simple random walk), the
\textit{typical displacement exponent} of a graph (see~\cite{CuHuNa17} for both notions),
the \textit{isoperimetric dimension} of a graph~\cite{ChYa95}, the \textit{resistance growth exponent}
of a graph, the \textit{stochastic dimension}
of a partition of $\mathbb Z^d$~\cite{BeKePeSc11}, etc. 
In statistical physics, one also assigns dimension and various exponents to 
finite models. Famous examples are self-avoiding walks and the boundaries of large percolation clusters. }

\subsection{Gauge Functions and the Unimodular Dimension Function}
\label{subsec:gauge}
There 
exist unimodular discrete spaces $\bs D$ in which the $\dimH{\bs D}$-dimensional
\formir{Hausdorff size} is either zero or infinity
\formir{(e.g., Examples~\ref{ex:infiniteMeasure} and~\ref{ex:zeroMeasure})}.
For such spaces, it is convenient to generalize
the unimodular \formir{Hausdorff size} as follows. Consider an increasing function
$\varphi:\{0\}\cup[1,\infty)\to[0,\infty)$; e.g., $\varphi(r)=r^{\alpha}$, called
a \textit{gauge function}. Define $\contentH{\varphi}{M}(\bs D)$ by
$\inf_{\bs R}\{\omid{\varphi(\bs R(\bs o))} \}$ similarly to~\eqref{eq:hcontent}.
Then, define $\measH{\varphi}(\bs D)$ similarly to~\eqref{eq:hmeas}.
If $0<\measH{\varphi}(\bs D)<\infty$, then $\varphi$ is called a \defstyle{unimodular dimension function}
for $\bs D$.




In addition, given a family of gauge functions  $(\varphi_{\alpha})_{\alpha\geq 0}$ that
is increasing in $\alpha$ and such that 
$\forall \alpha>\beta: \lim_{r\rightarrow\infty}\varphi_{\alpha}(r)/\varphi_{\beta}(r)=\infty$, 
one can redefine the unimodular Hausdorff dimension by $\sup\{\alpha:\measH{\varphi_{\alpha}}(\bs D)=0\}$ 
(see e.g., the next paragraph). One can redefine the unimodular Minkowski dimension similarly.
\formir{The authors have verified that the results of the paper} can be extended to this setting
except that Theorem~\ref{thm:subsetDimension} and the results of Subsection~\ref{subsec:otherSets}
require the \textit{doubling condition} $\sup_{r\geq 1} \varphi(2r)/\varphi(r)<\infty$. 
The general result of Subsection~\ref{subsec:one-ended} can also be extended under the doubling condition.
Also, the upper bounds in the unimodular mass distribution principle, the unimodular Billingsley
lemma and the unimodular Frostman lemma hold in this more general setting
(some other results require the doubling condition).
However, for the ease of reading, the results are presented in the original setting of this \formir{paper}.

As an example of the above framework, one can define the \defstyle{exponential dimension}
by considering $\varphi_{\alpha}(r):=e^{\alpha r}$.
It might be useful for studying unimodular spaces with super-polynomial \formir{volume} growth,
which are more interesting in group theory (see Subsection~\ref{subsec:cayley}). 
\formir{Other gauge functions may also be useful for groups of \textit{intermediate growth}.}
Note that exponential gauge functions do not satisfy the doubling condition,
and hence, the reader should be careful about using the results of this work for such gauge functions.

\formir{
\subsection{Negative Dimensions}
\label{subsec:negative}

If a compact metric space $X$ is the union of $k$ disjoint copies of $\frac 1 r X$,
then the \textit{similarity dimension} of $X$ is $\log k/ \log r$ (\formir{see e.g., \cite{bookBiPe17}}).
This definition can also be used for some infinite discrete sets as well.
For instance, $\mathbb Z^d$ is a union of $2^d$ copies of $2\mathbb Z^d$.
So, it can be said that the \textit{similarity dimension} of $\mathbb Z^d$ is negative.
\formir{The (deterministic) discrete Cantor set (see e.g., \cite{bookBiPe17}) is also $(-\log 2/\log 3)$-dimensional}.
There are several further arguments, listed below, suggesting that one should
actually assign negative dimensions to unimodular discrete spaces.

First, this would be natural in terms of definition.
The unimodular Minkowski dimensions should be redefined
by $\dimMu{\bs D}=\growthu{\lambda_r}$ and $\dimMl{\bs D}=\growthl{\lambda_r}$.
Using growth instead of decay would then unify the definition of the ordinary Minkowski
dimension of compact sets and the unimodular Minkowski dimension. The former is microscopic
(i.e., when $r$ tends to 0), whereas the latter is macroscopic ($r\to \infty$).
{One may also replace} the unimodular Hausdorff
by the negative of the definitions given so far.

Secondly, this unification of the definitions would also take
care of the puzzling direction of certain inequalities discussed in the paper:
when adopting these negative unimodular dimensions,
(i) the classical and unimodular Minkowski and Hausdorff dimensions would be ordered in
the same way, i.e., $\dimH{\bs D}\leq \dimMl{\bs D}\leq \dimMu{\bs D}$;
(ii) an equivariant subset of a unimodular set would have a unimodular
Minkowski dimension smaller than or equal to that of the set, and possibly strictly smaller
(see Subsections~\ref{subsec:equiv-subspace} and~\ref{subsec:subspace-minkowski}),
a property that is expected to hold for any notion of \textit{dimension};
(iii) the mass distribution principle and the Billingsley lemma would provide lower bounds
on $\dimH{\bs D}$, while upper bounds would be obtained by constructing explicit coverings;
(iv) the dimension of non-ergodic examples (e.g., Example~\ref{ex:nonergodic}) would also be
the supremum dimension of the components, as one might expect (see also Remark~\ref{rem:nonergodic-justification}).

It should however be noted that by assigning such negative dimension, the dimension of
a non-equivariant subset (see Subsection~\ref{subsec:embedded}) would be larger than or equal to
that of the whole space (just as the similarity dimension of $\mathbb Z$ is larger than that of
$\mathbb Z^2$). One should not expect that non-equivariant subsets behave like equivariant subsets,
since they do not satisfy the main assumption of \textit{statistical homogeneity},
which is the basis of all of the definitions in this work.}

\formir{
\subsection{Problems and Conjectures}
\label{ss:prco}

This subsection gathers some problems and conjectures pertaining to the theory of
unimodular dimensions and to specific examples. Those are already stated
in the paper and are also briefly listed.

\subsubsection{Further conjectures and problems}
\label{subsec:furtherproblems}
	
\paragraph{1. Connections to other notions}.
The following conjectures connect unimodular dimensions to the properties of the simple random walk.

\begin{conjecture}
If $[\bs G, \bs o]$ is a unimodular graph such that $\dimH{\bs G}<2$,
then the simple random walk in $\bs G$ is recurrent a.s.
\end{conjecture}

Note that the converse of this conjecture does not hold. For instance, it
is not hard to show that any one-ended tree (e.g., the canopy tree) is recurrent.

In~\cite{BaTa89}, the discrete dimension of $A\subseteq\mathbb Z^d$ is connected
to whether the simple random walk in $\mathbb Z^d$ hits $A$ infinitely often or not.
Analogously, one can generalized the above conjecture as follows:

\begin{conjecture}
If $[\bs D, \bs o]$ is equivariantly embedded in the unimodular graph $[\bs G, \bs o]$
and $\dimH{\bs D}>\dimH{\bs G}-2$, then the simple random walk in $\bs G$ hits $\bs D$ infinitely often a.s.
\end{conjecture}



\paragraph{2. Dimension functions.}
Does there exist a unimodular discrete space without any dimension function?
The answer is not known yet. \cite{ElKe06} gives a positive answer to the analogous question
in the continuum setting, but the proof ideas don't seem to work in the unimodular discrete setting.
Also, by analogy with stable trees~\cite{Du10},
a possible candidate is unimodular \egw{} trees with infinite offspring variance.

\paragraph{3. Simple random walk.} By analogy with the image of
subordinator processes (see e.g., \cite{FrPr71}), one may guess the exact unimodular dimension
function for the image of the random walk under the assumptions of Proposition~\ref{prop:image}.
For example, consider the zero set $\Psi$ of the simple random walk (Proposition~\ref{prop:zeros}).
By analogy with the zero set of Brownian motion~\cite{TaWe66}, it is natural to guess that
$\measH{1/2}(\Psi)=\infty$ and $\sqrt{r\log\log r}$ is a dimension function for $\Psi$.
To prove this, one should strengthen the bound $Cr^{\beta} \log \log r$ in the proof
of Proposition~\ref{prop:image} and also construct a covering of the set which is better
than that of Proposition~\ref{prop:lowerBoundR}. For the former, one may use
Theorem~4 of~\cite{FrPr71} (it seems that the assumption of~\cite{FrPr71} on the 
tail of the jumps is not necessary for having only an upper bound).
For the latter, one might try to get ideas from~\cite{TaWe66}
(it is necessary to use intervals with different lengths).
\\
\formir{Another guess is that the image of the symmetric nearest-neighbor simple random walk
in $\mathbb Z^d$ is 2-dimensional when $d\geq 3$. More generally, if the jumps are in the domain
of attraction of a symmetric $\alpha$-stable process, then the image is $\alpha$-dimensional.
These might be proved similarly to the analogous results in~\cite{BaTa92}.}
\\
For the graph of the simple random walk equipped with the Euclidean metric
(Subsection~\ref{subsec:srw-graph}), the guess is that if the increments are in the
domain of attraction of an $\alpha$-stable distribution, where $0<\alpha\leq 2$, 
then $\dimM{\Psi}=\dimH{\Psi} = \min\{1, \max(0,2\alpha-1)/\alpha \}$ {(see Theorem~3.13 of~\cite{KhoXi17})}.
Also, the guess is that the zero set of the symmetric nearest-neighbor simple random walk
in $\mathbb Z^2$ is $\frac 1 4$-dimensional.


\paragraph{4. Eternal Galton-Watson trees.} 
For unimodular eternal Galton-Watson trees (Subsection~\ref{subsec:egw}),
a conjecture is that if the offspring distribution
is in the domain of attraction of an $\alpha$-stable distribution, where $\alpha\in [1,2]$,
then $\dimM{\bs T} = \dimH{\bs T} = \frac{\alpha}{\alpha-1}$ (see~\cite{HaMi04} or Theorem~5.5 of~\cite{DuLega05}).
The guess is that there is no \textit{regularly varying} dimension function (see~\cite{Du10}), except
in the finite-variance case ($\alpha=2$), where one may guess that the dimension function
is $r^2\log\log r$ (see \cite{DuLe06}).

\paragraph{5. Drainage networks.} One can ask about the dimension of other drainage network models.
In particular, the simple model of Subsection~\ref{subsec:drainage} can be extended to a model in $\mathbb Z^k$
for $k>2$ and the connected component containing the origin is unimodular.

\paragraph{6. Embedded spaces.} 
\begin{conjecture}
\label{conj:embedding}
Under the setting of Proposition~\ref{prop:embedded}, for all $\alpha>0$,
one has $\measH{\alpha}(\bs D)\geq \measH{\alpha}(\bs D_0)$.
\end{conjecture}

Note that in the case $\alpha=0$, the conjecture is implied by Proposition~\ref{prop:finite-HausMeas}.
Also, in the general case, the conjecture is implied by~\eqref{eq:thm:embedded:2}
and Conjecture~\ref{conj:frostman}. 
%
%
Another problem is the validity of Proposition~\ref{prop:embedded} under the weaker assumption
of being non-equivariantly embeddable. As a partial answer, if $\growth{\card{N_r(\bs o)}}$ exists,
then~\eqref{eq:thm:embedded:1} holds. This is proved as follows: 
\begin{eqnarray*}
	\dimH{\bs D_0} \leq  \essinf \growthu{\card{N_r(\bs o_0)}}
	\leq  \essinf \growthu{\card{N_r(\bs o)}}\:&&\\
	= \essinf \growth{\card{N_r(\bs o)}}
	= \dimH{\bs D},&&
\end{eqnarray*}
where the first inequality and the last equality are implied by 
the unimodular Billingsley lemma.

\subsubsection{List of conjectures and problems mentioned in the previous sections}

It is not known whether the lower bound~\eqref{eq:thm:eftMinkowski:h} for the Hausdorff dimension
of unimodular one-ended trees is always an equality or not. 
Problem~\ref{prob:uppergrowth} asks whether the equality $\dimH{\bs D}=\growthu{\bs w(N_r(\bs o))}$
always holds. This is implied by Problem~\ref{prob:growth},
which states that the upper and lower growth rates of $\bs w(N_r(\bs o))$
(used in Billingsley's lemma) do not depend on $\bs w$.

Conjecture~\ref{conj:point-stationary} states that every point-stationary point process has zero
Hausdorff size unless when it is the Palm version of some stationary point process.
Conjecture~\ref{conj:frostman} states that $\contentH{\alpha}{M}(\bs D)=\xi^{\alpha}_M(1)$, where $\xi^{\alpha}_M(1)$ is define in Subsection~\ref{subsec:frostman-general}. This implies the conjecture that Cayley graphs have positive Hausdorff size (see Subsection~\ref{subsec:cayley}).

It would be interesting to find connections between unimodular dimensions and other notions 
of dimension, some of which are discussed in Subsection~\ref{subsec:connections}. Also,
as mentioned in the introduction and Subsection~\ref{subsec:gauge}, the setting of this paper
might be useful in the study of examples pertaining to statistical physics or group theory.
}

\begin{appendices}
	\section{A Metric on the Space of \Rooted{} Discrete Spaces}
	\label{ap:metric}
	In this appendix, a metric is defined on $\dstar$ and it is shown that $\dstar$ is a Borel subset of some Polish space. The same is done for the sets $\dstarr, \dstar'$ and $\dstarr'$.

	\begin{definition}
		\label{def:r-embedding}
		Let $(D,o)$ and $(D',o')$ be \rooted{} discrete spaces. An \defstyle{$r$-embedding} between $(D,o)$ and $(D',o')$, where $r>0$, is an injective function $f:\nei{r}{o} \to D'$ such that $f(o)=o'$ and has \defstyle{distortion} at most $\frac 1 r$; i.e.,
		\begin{equation}
			\label{eq:distortion}
			\forall x,y\in\nei r o:\quad  |d(x,y)-d(f(x),f(y))|\leq \frac 1r.
		\end{equation}
		If such a function exists, then $(D,o)$ is \defstyle{$r$-embeddable} in $(D',o')$. If each one of $(D,o)$ and $(D',o')$ is $r$-embeddable in the other, then they are called \defstyle{$r$-similar}.
	\end{definition}
	
	Note that the image of $f$ is not necessarily contained in $\nei{r}{o'}$. However, by~\eqref{eq:distortion}, it is contained in $\nei{r+1/r}{o'}$. Note also that if $(D,o)$ is $r$-embeddable in $(D',o')$ and $0<s<r$, then the former is also $s$-embeddable in the latter.
	
	\begin{definition}
		\label{def:kappa}
		For two \rooted{} discrete spaces $(D,o)$ and $(D',o')$, define 
		\begin{equation*}
			\kappa((D,o),(D',o')):= 1 \wedge \inf\{\epsilon>0: (D,o) \text{ and } (D^\prime,o^\prime) \text{ are } \frac 1{\epsilon}\text{-similar}  \}.
		\end{equation*}
	\end{definition}

	The definition clearly depends only on the isomorphism classes of $(D,o)$ and $(D',o')$.
	So $\kappa$ is well defined as a function on $\dstar\times \dstar$. 
	{At the end of this subsection, $\kappa$ is extended to a distance function on the set $\widehat{\mathcal D}_*$, which heuristically is the set of pointed discrete spaces in which every point has some \textit{multiplicity}.}

	\begin{theorem}
		\label{thm:polish}
		{The function $\kappa$ is a metric on $\widehat{\mathcal D}_*$. Moreover, $\widehat{\mathcal D}_*$ is a Polish space and contains $\dstar$ as a Borel subset.}
	\end{theorem}

	{This theorem is proved later in this appendix.}	It should be noted that $\dstar$ is not complete itself. 
	
	Similar definitions and arguments can be proposed for $\dstarr, \mathcal D'_*$ and $\mathcal D'_{**}$.
	The latter is briefly explained below {(see~\cite{Kh19generalization} for a more general framework)}. Let $(D,o,p;m)$ and $(D',o',p';m')$ be two doubly-\rooted{} 
	marked discrete spaces. Define \textit{$r$-embeddings} similarly to Definition~\ref{def:r-embedding}
	by adding the constrains $f(p)=p'$ (which requires $r\geq d(o,p)$) and in addition,
	\[
	\forall x,y\in N_r(o): d\bigg(m(x,y),m'(f(x),f(y))\bigg)\leq \frac 1 r.
	\]
	Being \textit{$r$-similar} and $\kappa$ are also defined in the same way.
	The result analogous to Theorem~\ref{thm:polish} is the following theorem. {It can be proved with the same arguments and its proof is skipped.}
	
	\begin{theorem}
		\label{thm:polish2}
		The sets $\dstarr, \mathcal D'_*$ and $\mathcal D'_{**}$, equipped with the distance function $\kappa$,
		are metric spaces and are Borel subsets of some Polish spaces.
	\end{theorem}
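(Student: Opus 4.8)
The plan is to reduce Theorem~\ref{thm:polish2} to the content of Theorem~\ref{thm:polish}, since the constructions are parallel. First I would verify that $\kappa$ is a metric on each of the three sets $\dstarr$, $\mathcal D'_*$, $\mathcal D'_{**}$. The only non-obvious point is the triangle inequality: if $(D,o)$ is $r$-similar to $(D',o')$ via embeddings $f,g$ and $(D',o')$ is $s$-similar to $(D'',o'')$ via $f',g'$, then composing gives a map $f'\circ f : N_{r\wedge s}(o)\to D''$ (after restricting domains appropriately, using that $r$-embeddability implies $s$-embeddability for $s<r$) whose distortion is at most $\tfrac1r+\tfrac1s$, so $(D,o)$ and $(D'',o'')$ are $t$-similar for $\tfrac1t=\tfrac1r+\tfrac1s$, i.e.\ $t = (\tfrac1r+\tfrac1s)^{-1}$. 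Taking infima over $r,s$ and using the elementary inequality $(\tfrac1r+\tfrac1s)^{-1}\ge$ — rather, tracking that $\epsilon\mapsto\kappa\le\epsilon$ translates to similarity at scale $1/\epsilon$ — yields $\kappa((D,o),(D'',o''))\le \kappa((D,o),(D',o'))+\kappa((D',o'),(D'',o''))$, exactly as in the singly-rooted case. The extra marking constraint $d(m(x,y),m'(f(x),f(y)))\le \tfrac1r$ and the root constraint $f(p)=p'$ are preserved under composition (with the error in the mark condition again adding up), so nothing new happens. Separation ($\kappa=0 \Rightarrow$ isomorphic) follows as in the proof of Theorem~\ref{thm:polish}: $\kappa=0$ means there are $r_n$-similarities with $r_n\to\infty$, and a compactness/diagonal argument over the countable set of points in growing balls extracts a limiting isomorphism respecting roots and marks (here one uses that $\Xi$ is a complete separable metric space so that the mark values converge along a subsequence).

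Next I would establish the Polish embedding. The cleanest route is to exhibit, for each of $\dstarr$, $\mathcal D'_*$, $\mathcal D'_{**}$, an isometric (or at least topological, bi-measurable) embedding into a space already known to be Polish, or to directly build a completion and identify the added points as a Borel (indeed $G_\delta$ or $F_{\sigma\delta}$) set. Following the strategy presumably used for $\dstar$ in Appendix~\ref{ap:metric}, I would take the metric completion $\widehat{\mathcal D}$ of the relevant set under $\kappa$; completions of metric spaces are complete, and separability is inherited because the original set has a countable dense subset (e.g.\ finite rooted marked spaces with rational distances and marks ranging over a fixed countable dense subset of $\Xi$). Then the main work is to show that the original set sits inside its completion as a Borel subset. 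A point of the completion is a Cauchy sequence of (doubly-)rooted marked discrete spaces; such a sequence determines a limiting ``pointed pre-metric structure'' on a countable vertex set, and the condition that this limit is actually a boundedly finite discrete metric space (positive separation at every point, finitely many points in every ball) is a countable intersection over $n$ of conditions of the form ``for all large enough indices, the number of points within distance $n$ of the root is at most $N$ and the minimal gap at each such point exceeds $1/N$ for some $N$.'' Each such condition is open or closed in the completion, so the set of bona fide elements is Borel (a countable Boolean combination of open sets, hence at worst $F_{\sigma\delta}$).

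For $\mathcal D'_*$ and $\mathcal D'_{**}$ I would additionally check that the mark function is recovered consistently in the limit: since marks take values in the Polish space $\Xi$ and the $r$-similarity condition controls $d(m(x,y),m'(f(x),f(y)))$, a Cauchy sequence yields, for each pair of limiting points, a Cauchy sequence in $\Xi$, which converges by completeness of $\Xi$; measurability of the resulting limit mark function is automatic from the construction. The doubly-rooted case carries the harmless extra bookkeeping of a second distinguished point $p$ with $f(p)=p'$, which forces $r\ge d(o,p)$; this only means that $\kappa((D,o,p;m),(D',o',p';m'))$ can be bounded below in terms of $|d(o,p)-d(o',p')|$, and does not affect the Polish embedding argument.

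The main obstacle I anticipate is the Borel (or Polish) identification step rather than the metric axioms: one must be careful that a limit of discrete spaces need not be discrete or boundedly finite (points can coalesce or masses can escape to infinity along the sequence), so the argument must isolate precisely which Cauchy sequences have genuine discrete-space limits and verify that this selection is describable by countably many open/closed conditions. Once the template from the proof of Theorem~\ref{thm:polish} is in hand, adding the mark coordinate (valued in the Polish space $\Xi$) and the second root is routine — the $1/r$ error budget in the mark-distortion inequality behaves exactly like the $1/r$ error budget in the metric-distortion inequality~\eqref{eq:distortion} — so I would present the proof as ``identical to that of Theorem~\ref{thm:polish}, \emph{mutatis mutandis}, with the following additions,'' and then spell out only the composition-of-embeddings estimate for marks and the completeness-of-$\Xi$ step for the limiting mark function.
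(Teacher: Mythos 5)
Your proposal takes essentially the same route as the paper: the paper itself proves only Theorem~\ref{thm:polish} in detail and then states in Appendix~A that Theorem~\ref{thm:polish2} ``is proved with the same arguments and its proof is skipped,'' which is precisely what you flesh out (adapt the triangle inequality, extract the countable dense subset of finite rational spaces, pass to the explicit pseudo-metric completion, and express $\dstar$-type sets as countable Boolean combinations of open/closed conditions). One slip in your triangle-inequality sketch: the composed map $f'\circ f$ is \emph{not} defined on all of $N_{r\wedge s}(o)$ (if $r\geq s$ the image of $f$ restricted to $N_s(o)$ lies in $N_{s+1/r}(o')$, which overshoots the domain $N_s(o')$ of $f'$); one must restrict the domain to $N_t(o)$ with $t=(1/r+1/s)^{-1}$ and verify, as the paper does via the inequality $1/(\epsilon+\delta)+\epsilon<1/\delta$ for $\epsilon+\delta<1$, that the image of $f$ then lands inside the domain of $f'$ — your final value of $t$ is correct, only the intermediate domain claim is off.
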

	
	\del{It should be mentioned that the set $\mathcal D$ of non-\rooted{} discrete spaces is not a
		Polish space (if the natural projection $\pi:\dstar\rightarrow \mathcal D$ is required to be measurable).}
	
	\begin{remark}
		\label{rem:topology}
		{The Gromov-Hausdorff-Prokhorov topology (see e.g., \cite{bookVi10}) is defined on the set of \textit{boundedly-compact} pointed metric spaces equipped with \textit{boundedly-finite} Borel measures. This induces a topology on $\dstar$ (here, one should equip every discrete metric space $D$ with the counting measure on $D$). It can be seen that the metric $\kappa$ is a metrization of this topology.}
		It can {also} be seen that  $\kappa$ extends the metric between rooted graphs used
		in~\cite{processes} and called the Benjamini-Schramm metric~\cite{BeSc01}.  
		Also, on the class of discrete subsets of $\mathbb R^d$, its topology
		extends the classical one in the context of point processes, which is that
		of vague convergence (see e.g., Appendix~A of~\cite{bookDaVe03I}).
	\end{remark}

	{Now, the precise definition of the set $\widehat{\mathcal D}_*$ is provided. Here, multiplicity of points is represented by the notion of pseudo metrics. Recall that} 
	a set $D$ equipped with a function $d:D\times D\to \mathbb R^{\geq 0}$ is called 
	a \defstyle{pseudo metric space}, if $d$ has the properties of a metric except that, $d(x,y)=0$ 
	does not necessarily imply $x=y$.\del{ As before, the pseudo metric is always denoted by $d$ except when explicitly mentioned.}
	The balls $N_r(v)$ are defined in the usual way. In this paper, 
	it is always assumed that the pseudo metric is \defstyle{boundedly finite}; i.e.,
	every subset of $D$
	included in a ball of finite radius is finite. For the sake of simplicity, 
	the term \defstyle{discrete pseudo metric space} (abbreviated as \texttt{DPMS})
	will be used to refer to boundedly finite pseudo metric spaces.
	\del{Note that $\{(u,v)\in D: d(u,v)=0 \}$ is an equivalence relation on $D$.
		By the assumption of boundedly finiteness, each equivalence class is finite.
		One can regard an equivalence class with $n$ elements as a point with multiplicity $n$.}
	\Rooted{} \texttt{DPMS}s and isomorphisms are defined in the usual way\del{ (being injective
		is important for pseudo metric spaces in this definition)}. Let $\dstarhat$ be the set of
	equivalence classes of \rooted{} \texttt{DPMS}s under isomorphism. Note that $\dstar\subseteq\dstarhat$.\del{ Moreover, it is easy to see that $\dstar$ is dense in $\dstarhat$.}
	Now, $r$-embeddability, $r$-similarity and the function $\kappa$ can be defined in exactly the
	same way as in Definitions~\ref{def:r-embedding} and~\ref{def:kappa}. 

{
	Now, the proofs of the above theorems are presented. The proofs are based on the following lemmas.}
	
	\begin{lemma}
		\label{lem:metric}
		The function $\kappa$ is a metric on $\dstarhat$.
	\end{lemma}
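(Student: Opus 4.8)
The plan is to verify the three metric axioms for $\kappa$ on $\dstarhat$, namely non-negativity and symmetry (which are immediate), the identity-of-indiscernibles property, and the triangle inequality. The symmetry is built into the definition via the symmetric notion of $r$-similarity, and $\kappa$ takes values in $[0,1]$ by construction, so the content lies entirely in the remaining two axioms.

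\textbf{Identity of indiscernibles.} First I would show $\kappa([D,o],[D',o'])=0$ if and only if $[D,o]=[D',o']$ in $\dstarhat$. One direction is trivial: the identity map is an $r$-embedding for every $r$, so $\kappa([D,o],[D,o])=0$. For the converse, suppose $\kappa=0$, so for every $n\in\mathbb N$ there are $n$-embeddings $f_n:N_n(o)\to D'$ and $g_n:N_n(o')\to D$ with distortion at most $1/n$. Since the spaces are boundedly finite, each $N_n(o)$ is a finite set; I would extract, by a diagonal argument over the increasing exhaustion $N_1(o)\subseteq N_2(o)\subseteq\cdots$, a subsequence along which the restrictions $f_n|_{N_k(o)}$ stabilize for each fixed $k$ (the target $N_{k+1}(o')$ being finite as well, there are only finitely many injections to choose from). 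The limiting map $f:D\to D'$ then satisfies $|d(x,y)-d(f(x),f(y))|=0$ for all $x,y$, i.e., it is distance-preserving (in the pseudo-metric sense), injective, and sends $o$ to $o'$. Symmetrically one gets $g:D'\to D$. A short argument using boundedly finiteness — comparing cardinalities of balls $N_r(o)$ and $N_r(o')$, which are forced equal because $f$ and $g$ are injective and distance-preserving — shows $f$ is onto, hence an isomorphism of \texttt{DPMS}s. This is exactly where allowing pseudo-metrics (multiplicities) matters: without it, the "limit" map might collapse distinct points, and the spaces would genuinely fail to be isomorphic as metric spaces; the excerpt's motivating example $D_n=\{0,1/n\}$ illustrates precisely this.

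\textbf{Triangle inequality.} Given three \rooted{} \texttt{DPMS}s $X=[D,o]$, $Y=[D',o']$, $Z=[D'',o'']$, I want $\kappa(X,Z)\le\kappa(X,Y)+\kappa(Y,Z)$. Write $a:=\kappa(X,Y)$, $b:=\kappa(Y,Z)$; if $a+b\ge 1$ there is nothing to prove since $\kappa\le 1$, so assume $a+b<1$. Fix $\epsilon>0$ small. Then $X$ and $Y$ are $\frac1{a+\epsilon}$-similar and $Y$ and $Z$ are $\frac1{b+\epsilon}$-similar. The natural move is to compose: if $f:N_r(o)\to D'$ is an $r$-embedding (with $r=\tfrac1{a+\epsilon}$, roughly) and $h:N_s(o')\to D''$ is an $s$-embedding with $s=\tfrac1{b+\epsilon}$, then on the sub-ball of $o$ where everything is defined, $h\circ f$ is injective, fixes the appropriate root, and has distortion at most $\tfrac1r+\tfrac1s$. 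The bookkeeping to watch: the image of $f$ need not lie in $N_s(o')$, but by the remark following Definition~\ref{def:r-embedding} it lies in $N_{s+1/s}(o')$ provided $r\le s$, so one must restrict to a slightly smaller ball $N_{r'}(o)$ with $r'$ chosen so that $f(N_{r'}(o))\subseteq N_s(o')$; a safe choice is $r'$ with $r'+1/r' \le s$, e.g. $r'= \tfrac1{a+b+2\epsilon}$ up to harmless constants. Then $h\circ f$ is an $r'$-embedding of $X$ into $Z$ with distortion $\le \tfrac1r+\tfrac1s \le a+b+2\epsilon \le \tfrac1{r'}$ after adjusting constants, and symmetrically in the other direction, giving $\kappa(X,Z)\le a+b+O(\epsilon)$. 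Letting $\epsilon\to0$ finishes the argument.

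\textbf{Main obstacle.} The routine-looking triangle inequality is actually the delicate part: the distortion bounds and the domain/codomain radii do not compose in a perfectly clean way — one loses a little on the radius each time one composes (the $1/r$ slack in the image), and one must check the arithmetic $\tfrac1r + \tfrac1s$ versus $\tfrac1{r'}$ carefully so that the resulting map genuinely certifies $(a+b+\epsilon)$-similarity rather than something weaker. I would handle this by being slightly generous, proving $\kappa(X,Z)\le \kappa(X,Y)+\kappa(Y,Z)+\epsilon$ for every $\epsilon>0$ with explicit (but crude) radius choices, which suffices. The identity-of-indiscernibles direction is conceptually the most important point — it is the reason the completion must be taken inside $\dstarhat$ rather than $\dstar$ — but technically it is a standard compactness/diagonal extraction once boundedly finiteness is invoked.
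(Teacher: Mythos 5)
Your approach is essentially the same as the paper's: identity of indiscernibles via a compactness/extraction argument over finite balls (the paper organizes it with K\"onig's lemma where you use a diagonal subsequence, but the content is identical), and the triangle inequality via composing restricted $r$-embeddings after checking that the image of the first lands inside the domain of the second.

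There is, however, a concrete slip in the composition arithmetic that would make your write-up fail as stated. The condition you propose, $r'+1/r'\le s$, does \emph{not} hold for the candidate $r'=1/(a+b+2\epsilon)$: take $a=0$, $b$ close to $1$, and $\epsilon$ small, so that $r'\approx 1$, $1/r'\approx 1$, hence $r'+1/r'\approx 2$, while $s=1/(b+\epsilon)\approx 1$. The reason this does not sink the approach is that $1/r'$ is a needlessly pessimistic bound on the distortion of the restriction $f|_{N_{r'}(o)}$. The map $f$ you start from has distortion at most $a+\epsilon$ (which is smaller than $1/r'=a+b+2\epsilon$), and restricting does not worsen the distortion. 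So the image of $N_{r'}(o)$ under $f$ lies in $N_{r'+(a+\epsilon)}(o')$, not merely in $N_{r'+1/r'}(o')$. The correct condition $r'+(a+\epsilon)\le s$ reduces (with $u:=a+b+2\epsilon$, $v:=b+\epsilon$) to $uv\le 1$, which holds since both factors are below $1$. This decoupling of the domain radius from the distortion bound is exactly what the paper's proof exploits; once you track the distortion separately, your choice of $r'$ is fine, the composite has distortion $\le(a+\epsilon)+(b+\epsilon)=1/r'$, and you get $\kappa(X,Z)\le a+b+2\epsilon$ as desired.
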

	
	\begin{proof}
		The definition readily implies that $\kappa$ is well defined on $\dstarhat$ and is symmetric.
		Also, it is clear that for all $[D,o]\in\dstar$, one has $\kappa([D,o],[D,o])=0$.
		
		Conversely, assume $(D,o)$ and $(D',o')$ are \rooted{} \texttt{DPMS}s
		satisfying the condition $\kappa((D,o),(D',o'))=0$. 
		The first claim is that for all $r>0$, there is a \rooted{}-isomorphism between $\nei ro$ and $\nei r {o^\prime}$.
		Let $r>1$ be given. Let $s\geq r$ be large enough, which will be determined later.
		Since $\kappa((D,o), (D',o'))=0<\frac 1 s$, the definition of $\kappa$ implies that
		there is an injective function $f:N_r(o)\rightarrow D'$ such that $f(o)=o'$ and 
		the distortion of $f$ is at most $\frac 1 s\leq \frac  1 r$.  By~\eqref{eq:distortion},
		the image of $f$ is contained in $\nei{r+1/s}{o'}$. Assume $s$ is large enough to ensure 
		that $\nei{r+1/s}{o'}=\nei{r}{o'}$ (which is possible since $\nei{r}{o'}$ is a closed ball). 
		So the range of $f$ is contained in $\nei{r}{o'}$. Being injective implies that
		$\card{\nei{r}{o}}\leq \card{\nei{r}{o'}}$. By switching the roles of the two spaces,
		one similarly proves the other direction of the inequality;
		Hence $\card{\nei{r}{o}} = \card{\nei{r}{o'}}$. This implies that
		$f:\nei{r}{o}\to\nei{r}{o'}$ is a bijection. {Since the distortion is at most $\frac 1 s$ and the sets are discrete, by choosing $s$ large enough, $f$ is necessarily an isometry between $N_r(o)$ and $N_r(o')$. So the above claim is proved.} \del{Now consider the union $A$ of
			the sets $\{d(u,v): u,v\in \nei{r}{o} \}$ and $\{d(u,v): u,v\in \nei{r}{o'} \}$.
			Let $\epsilon$ be the minimum distance of the pairs of distinct numbers in $A$.
			From the beginning, assume $s>\frac 1 \epsilon$. Now, \eqref{eq:distortion} implies
			that $d(x,y)=d(f(x),f(y))$ for all $x,y\in\nei{r}{o}$.
			In other words, $f$ is an isometry between $\nei{r}{o}$ and $\nei{r}{o'}$ and the above claim is proved.}
		
		For each integer $n\geq 1$, let $A_n$ be the {finite} set of \rooted{}-isomorphisms from $N_n(o)$ to $N_n(o')$
		which is already shown to be nonempty.\del{ Since each ball is finite, $A_n$ is also finite.}
		It is clear that for $n\geq 2$ and $f\in A_n$, the restriction of $f$ to $\nei{n-1}{o}$ belongs to $A_{n-1}$.
		Therefore, by K\"onig's infinity lemma, there is a sequence of isomorphisms $f_n\in A_n$
		such that the restriction of $f_n$ to $\nei{n-1}{o}$ is equal to $f_{n-1}$ for each $n$.
		Now, one can safely define $\rho(v):=\lim f_n(v)$ for each $v\in D$.
		It is easy to see that $\rho$ is an isomorphism between $(D,o)$ and $(D',o')$. So $[D,o]=[D',o']$.
		
		It remains to prove the triangle inequality for $\kappa$. Let $(D_i,o_i), 1\leq i\leq 3$ be \rooted{} \texttt{DPMS}s.
		Let $\kappa_{ij}:=\kappa((D_i,o_i),(D_j,o_j))$.
		One has to prove $\kappa_{13}\leq\kappa_{12}+\kappa_{23}$. If $\kappa_{12}+\kappa_{23}\geq 1$, the claim is clear.
		So assume $\kappa_{12}+\kappa_{23}<1$. Let $\epsilon>\kappa_{12}$ and $\delta>\kappa_{23}$
		be arbitrary such that $\epsilon+\delta<1$. Below, it is proved that
		$\kappa_{13}\leq \epsilon+\delta$. Since $\epsilon$ and $\delta$ are arbitrary, the claim follows.
		
		Since $\kappa_{12}<1$, $\kappa_{12}<\epsilon$ and $\epsilon+\delta> \epsilon$, 
		by Definition~\ref{def:kappa}, there is an injective function $f:\nei{1/(\epsilon+\delta)}{o_1}\to D_2$
		with distortion at most $\epsilon$ such that $f(o_1)=o_2$. Similarly, there is an injective 
		function $g:\nei{1/\delta}{o_2}\to D_3$ with distortion at most $\delta$ such that $f(o_2)=o_3$.
		The image of $f$ is contained in $\nei{1/(\epsilon+\delta)+\epsilon}{o_2}$.
		It is straightforward that $\epsilon+\delta<1$ implies  $1/(\epsilon+\delta)+\epsilon< 1/\delta$\del{ 
			(use the mean value theorem for the function $1/x$ in the interval $(0,1)$)}.
		Therefore, $g\circ f:N_{1/(\epsilon+\delta)}(o_1)\to D_3$ is well defined.
		By the definition of distortion in~\eqref{eq:distortion}, one readily gets that the distortion of
		$g\circ f$ is at most $\epsilon+\delta$. This means that $(D_1,o_1)$ is $(\epsilon+\delta)^{-1}$-embeddable
		in $(D_3,o_3)$. By swapping the roles of the two spaces, one gets that they are
		$(\epsilon+\delta)^{-1}$-similar. This means that $\kappa_{13}\leq \epsilon+\delta$ and the claim is proved.
	\end{proof}

	\begin{lemma}
		\label{lem:complete}
		The metric space $\dstarhat$ is complete {and separable}.
	\end{lemma}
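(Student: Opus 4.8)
The plan is to prove that every Cauchy sequence in $\dstarhat$ has a convergent subsequence, which suffices. Let $([D_n,o_n])_n$ be Cauchy; after passing to a subsequence and relabelling, we may assume $\kappa([D_n,o_n],[D_{n+1},o_{n+1}])<2^{-n}$, so that with $r_n:=2^n$ the spaces $(D_n,o_n)$ and $(D_{n+1},o_{n+1})$ are $r_n$-similar (Definition~\ref{def:kappa}, together with the remark after Definition~\ref{def:r-embedding} that $r$-embeddability implies $s$-embeddability for $s<r$). Fix, for each $n$, an injective map $f_n\colon\nei{r_n}{o_n}\to D_{n+1}$ with $f_n(o_n)=o_{n+1}$ and distortion at most $1/r_n$. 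Since $r_n+1/r_n\le 2r_n=r_{n+1}$, the image of $f_n$ lies in $\nei{r_{n+1}}{o_{n+1}}$, so the composites $\phi_n^m:=f_{m-1}\circ\cdots\circ f_n\colon\nei{r_n}{o_n}\to\nei{r_m}{o_m}$ (for $m>n$) are well defined and injective, and by~\eqref{eq:distortion} have distortion at most $\delta_n:=\sum_{k\ge n}1/r_k$, which tends to $0$.

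I would then build the limit as a space of threads. Let $X$ be the set of sequences $x=(x_n)_{n\ge N}$ with $x_n\in\nei{r_n}{o_n}$ and $f_n(x_n)=x_{n+1}$ for all $n\ge N$, two such sequences being identified when they agree from some index on (this is an equivalence relation, since each $f_n$ is injective). For threads $x,y$ the numbers $d(x_n,y_n)$ form a Cauchy sequence, because $|d(x_n,y_n)-d(x_m,y_m)|\le\delta_n$ for $m\ge n$; set $d(x,y):=\lim_n d(x_n,y_n)$. Symmetry and the triangle inequality for $d$ pass to the limit, and bounded finiteness is the main point, argued as follows. Fix $\rho>0$ and put $\rho':=\rho+1$. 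Since the sequence is Cauchy there is $N$ with $\kappa([D_n,o_n],[D_N,o_N])<1/(2\rho')$ for all $n\ge N$, whence $(D_n,o_n)$ is $2\rho'$-similar to $(D_N,o_N)$; composing a witnessing embedding with the triangle inequality gives $|\nei{2\rho'}{o_n}|\le|\nei{3\rho'}{o_N}|<\infty$ for all $n\ge N$. Now if $x^1,\dots,x^K\in\nei\rho o$ are distinct threads, then $x^i_n\in\nei{\rho'}{o_n}$ for $n$ large, and $x^1_n,\dots,x^K_n$ are pairwise distinct (if $x^i_n=x^j_n$ then $x^i$ and $x^j$ agree from index $n$ on); hence $K\le|\nei{3\rho'}{o_N}|$, so $\nei\rho o$ is finite. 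Taking $o:=[(o_n)_n]$, we obtain a boundedly-finite rooted pseudo metric space, i.e. $[X,o]\in\dstarhat$.

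Finally I would check $\kappa([D_n,o_n],[X,o])\to 0$. Fix $\epsilon>0$ and take $n$ large enough that $1/\epsilon\le r_n$ and $\delta_n<\epsilon$. On one hand, $v\mapsto[(\phi_n^k(v))_{k\ge n}]$ is an injective map $\nei{1/\epsilon}{o_n}\to X$ sending $o_n$ to $o$; since $d([(\phi_n^k(v))_k],[(\phi_n^k(w))_k])=\lim_k d(\phi_n^k(v),\phi_n^k(w))$ and $|d(\phi_n^k(v),\phi_n^k(w))-d(v,w)|\le\delta_n$, its distortion is at most $\delta_n<\epsilon$. On the other hand $\nei{1/\epsilon}{o}$ is finite, so for $n$ beyond the (finitely many) starting indices of its elements, $x\mapsto x_n$ is a well-defined injective map $\nei{1/\epsilon}{o}\to D_n$ sending $o$ to $o_n$ with distortion at most $\delta_n<\epsilon$ by the same estimate. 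Hence $(D_n,o_n)$ and $(X,o)$ are $(1/\epsilon)$-similar, i.e. $\kappa([D_n,o_n],[X,o])\le\epsilon$ for all large $n$, so the subsequence converges to $[X,o]$, and therefore the original Cauchy sequence converges.

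The step I expect to be the main obstacle is the bounded finiteness of the limit: a priori the balls $\nei\rho{o_n}$ could grow without bound along the sequence, and ruling this out requires using the Cauchy property in the slightly indirect form above, comparing all late terms simultaneously to one fixed term $(D_N,o_N)$ at a scale we are free to choose. Everything else is bookkeeping with the distortion estimate~\eqref{eq:distortion} and the composition maps $\phi_n^m$, in the spirit of the $\kappa=0$ argument in the proof of Lemma~\ref{lem:metric} (with approximate isometries and genuine limits replacing the exact isomorphisms and the use of König's lemma).
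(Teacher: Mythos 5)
Your proof is correct, but it takes a genuinely different route from the paper's. The paper first shows that the ball cardinalities $\card{N_r(o_n)}$ are bounded in $n$ for each fixed $r$, then extracts a convergent subsequence of each ball $N_r(o_n)$ by a compactness argument on the distance matrices (which lie in a compact cube $[0,2r]^{l\times l}$), then diagonalizes, and finally invokes a pasting lemma showing that the open balls $\oball{m-1}{o'_m}$ and $\oball{m-1}{o'_{m-1}}$ of the successive limit pieces are isomorphic, so that the $(D'_m,o'_m)$ glue into a single \texttt{DPMS}. You instead pass once to a rapidly Cauchy subsequence, fix a compatible family of approximate embeddings $f_n$ with summable distortions, and \emph{define} the limit directly as the set of threads (equivalence classes of forward-compatible sequences) with the limiting pseudo metric $d(x,y)=\lim_n d(x_n,y_n)$. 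This avoids both the distance-matrix compactness step and the pasting lemma: the single fixed family $(f_n)$ replaces the diagonal argument and automatically enforces the consistency that the paper has to re-prove. What the paper's route buys is that it works verbatim without choosing a rapidly Cauchy subsequence and makes the role of compactness of finite balls explicit; what your route buys is an explicit description of the limit and a slicker convergence check (both injections with distortion $<\epsilon$ are read off from the thread maps $\phi_n^k$ and their inverses $x\mapsto x_n$).

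Two small remarks on the write-up. First, the parenthetical ``this is an equivalence relation, since each $f_n$ is injective'' is misplaced: the relation ``agree from some index on'' is an equivalence relation for free (the recursion $x_{n+1}=f_n(x_n)$ gives forward propagation without injectivity). Injectivity of the $f_n$'s is actually needed later, to show that $v\mapsto[(\phi_n^k(v))_k]$ is injective on $\nei{1/\epsilon}{o_n}$: two threads that agree from index $m>n$ on must agree already at index $n$ precisely because $\phi_n^m$ is injective. Second, you correctly identify bounded finiteness of the limit as the key subtlety and handle it the same way the paper does (comparing all late balls to a single $N_{3\rho'}(o_N)$ via the Cauchy property); this step cannot be dispensed with since the thread space is not a priori locally finite.
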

	\begin{proof}
		{It is straightforward to see that every element of $\dstar$ can be approximated by a sequence of finite pointed \texttt{DPMS}s for which the pseudo metrics are rational valued. This implies the {separability of $\dstarhat$}.}
		{To prove completeness,} assume $\{[D_n,o_n]\}_{n=1}^{\infty}$ is a Cauchy sequence in $\dstarhat$ under
		the metric $\kappa$. One needs to show that it {has a convergent subsequence}. 
		
		The first claim is that, for all given $r>0$, $\{\card{\nei{r}{o_n}}\}_{n=1}^{\infty}$ is bounded. Assume this is not the case. So for each $m$, there is $n>m$ such that $\card{\nei{r}{o_n}}>\card{\nei{r+1}{o_m}}$. The reader can verify that this implies that $\kappa((D_n,o_n),(D_m,o_m))\geq \frac 1r$, which contradicts being a Cauchy sequence. So this claim is proved.
		
		The second claim is that for $r>0$ given, the sequence of balls $\{N_r(o_n)\}_{n=1}^{\infty}$ has a convergent subsequence under the metric $\kappa$ (it should be noted that the whole sequence is not necessarily convergent). By the above argument and passing to a subsequence, one may assume $\card{\nei{r}{o_n}}=l$ for each $n$, where $l$ is constant. For each $n$, consider an arbitrary order on the points of $\nei{r}{o_n}$ but let $o_n$ be the first in this order. Let $A^{(n)}$ be the distance matrix of $\nei{r}{o_n}$ (i.e., $A^{(n)}_{i,j}$ is the distance from the $i$-th point to the $j$-th point). Note that the entries of these matrices are in $[0,2r]$. Therefore, one can find a convergent subsequence of these matrices, say converging to matrix $A$. It is easy to see that $(i,j)\mapsto A_{i,j}$ is a pseudo metric on $\{1,\ldots,l\}$. Let $D'_r$ be the resulting \texttt{DPMS} with \rooot{} $o'_r:=1$. Now, it is easy to see that $\kappa(\nei{r}{o_n}, (D'_r,o'_r))\rightarrow 0$ as $n\rightarrow \infty$ and the second claim is proved.
		
		Third, by a diagonal argument using the second claim, one can assume that for each $m\in \mathbb N$, $\lim_n \nei{m}{o_n}$ exists (in $\dstarhat$), say $(D'_m,o'_m)$. Fix $m$ and let $\epsilon>0$ be arbitrary. So for large enough $n$, one has $\kappa(\nei{m}{o_n}, (D'_m,o'_m))<\epsilon$ and $\kappa(\nei{m-1}{o_n}, (D'_{m-1},o'_{m-1}))<\epsilon$. If $\epsilon<1/m$, there exist injective functions $f:D'_m \to N_m(o_n)$ and $g:N_{m-1}(o_n) \to D'_{m-1}$ with distortion less than $\epsilon$ and respecting the \rooot{}s. Therefore, $g \circ f$ is well-defined on $\nei{m-1-\epsilon}{o'_m}$, is injective and has distortion less than $2\epsilon$. By letting $\epsilon$ tend to zero while $m$ is fixed, one finds an isometric embedding of $\oball{m-1}{o'_m}$ into $D'_{m-1}$, where $\oball{r}{o}$ is the \textit{open ball} $\{u\in D: d(o,u)<r\}$.
		By considering $\epsilon$-embeddings in the other side, one also finds an isometric embedding of $\oball{m-1}{o'_{m-1}}$ into $D'_m$. Therefore, $\oball{m-1}{o'_m}$ is isomorphic to $\oball{m-1}{o'_{m-1}}$. It follows that the sequence $(D'_m,o'_m)$ of \rooted{} \texttt{DPMS}s can be paste together to form a pseudo metric space, namely $(D,o)$, which is discrete and boundedly finite. Also, $\oball{m}{o}$ is isometric to $\oball{m}{o'_m}$ for each $m$. It follows easily that $\kappa((D_n,o_n), (D,o))\rightarrow 0$. In other words, $[D_n,o_n]\rightarrow [D,o]$.
		\del{
			Finally, note that a specific subsequence is taken in the beginning of the third step and its convergence is proved. Being a Cauchy sequence implies the convergence of the whole sequence. So the claim is proved.}
	\end{proof}
	
	\begin{proof}[Proof of Theorem~\ref{thm:polish}]
		It is proved in the above lemmas that $\dstarhat$ is a complete separable metric space. So it remains to prove that $\dstar$ is a Borel subset of $\dstarhat$.
		For integers $m,n\geq 1$, let $A_{m,n}$ be the set of elements $[D,o]\in \dstarhat$ such that for all distinct pairs $x,y\in \oball{m}{o}$ (where $\oball{m}{o}$ is the {open ball} {defined above}), one has $d(x,y)\geq \frac 1n$
		It is not hard to show that $A_{m,n}$ is a closed subset of $\dstarhat$ and also
		$\dstar = \cap_m \cup_n A_{m,n}$. This proves the claim.
	\end{proof}

	\section{Tightness for Equivariant Processes}
	\label{ap:tightness}

	The following proposition is a converse to Lemma~\ref{lem:equivProcess}. Its proof is moved to Appendix~\ref{ap:lemmas}.
	
	\begin{proposition}
		\label{prop:equivProcessConverse}
		Let $[\bs D, \bs o]$ be a unimodular discrete space.
		If $[\bs D', \bs o'; \bs m']$ is a unimodular marked discrete space such that 
		$[\bs D', \bs o']$ (obtained by forgetting the marks)
		has the same distribution as $[\bs D, \bs o]$, then there is an equivariant process
		$\bs Z$ such that $[\bs D, \bs o; \bs Z_{\bs D}]$ has the same distribution as $[\bs D', \bs o'; \bs m']$.
	\end{proposition}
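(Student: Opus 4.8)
The plan is to build $\bs Z$ by disintegrating the law of $[\bs D',\bs o';\bs m']$ over the law of its underlying rooted space. Write $\mu$ for the common distribution of $[\bs D,\bs o]$ and $[\bs D',\bs o']$ on $\dstar$, and $\mathcal Q$ for the distribution of $[\bs D',\bs o';\bs m']$ on $\mathcal D'_*$. By Theorems~\ref{thm:polish} and~\ref{thm:polish2}, $\dstar$ and $\mathcal D'_*$ are Borel subsets of Polish spaces, hence standard Borel; the forgetting-the-marks map $\pi:\mathcal D'_*\to\dstar$ is continuous and pushes $\mathcal Q$ forward to $\mu$. First I would apply the disintegration theorem to $\pi$ to obtain a $\mu$-measurable family $\xi\mapsto\mathcal Q_\xi$ of probability measures on $\mathcal D'_*$, with $\mathcal Q_\xi$ concentrated on $\pi^{-1}(\xi)$ for $\mu$-a.e.\ $\xi$ and $\mathcal Q=\int\mathcal Q_\xi\,d\mu(\xi)$. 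Heuristically $\mathcal Q_{[D,o]}$ is the conditional law of the marks given that the rooted base equals $[D,o]$.

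Next I would upgrade $\mathcal Q_{[D,o]}$, a law on isomorphism classes of marked rooted spaces, to an honest random marking of a concrete representative. Fix a deterministic rooted discrete space $(D,o)$. The Borel surjection $q_{(D,o)}:\Xi^{D\times D}\to\pi^{-1}([D,o])$, $m\mapsto[D,o;m]$, has fibres equal to the orbits of the natural action of the rooted automorphism group $\mathrm{Aut}(D,o)$ on $\Xi^{D\times D}$. Since $D$ is boundedly finite, $\mathrm{Aut}(D,o)$ is a closed subgroup of the compact group $\prod_{r\ge 0}\mathrm{Sym}\big(\{v\in D:d(v,o)=r\}\big)$, hence compact with a normalized Haar measure. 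I would then set $\mathcal P_{(D,o)}$ to be the law on $\Xi^{D\times D}$ obtained by lifting $\mathcal Q_{[D,o]}$ along $q_{(D,o)}$ and spreading the mass on each fibre by the pushforward of Haar measure. By construction $\mathcal P_{(D,o)}$ is a probability measure, it is $\mathrm{Aut}(D,o)$-invariant, it satisfies $(q_{(D,o)})_*\mathcal P_{(D,o)}=\mathcal Q_{[D,o]}$, and it is equivariant under rooted isomorphisms in the sense that the map on markings induced by a rooted isomorphism $\rho:(D,o)\to(D',o')$ carries $\mathcal P_{(D,o)}$ to $\mathcal P_{(D',o')}$. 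A routine (but somewhat lengthy) measurability check — using measurability of the disintegration and an auxiliary lemma that the pair $\big(\Xi^{D\times D},\mathrm{Aut}(D,o)\big)$, together with its Haar measure, varies Borel-measurably with $[D,o]$ — shows that $[D,o]\mapsto\mathcal P_{(D,o)}$ satisfies the measurability requirement of Definition~\ref{def:equivProcess}(ii).

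The heart of the argument, and the step I expect to be the main obstacle, is to prove that $\mathcal P_{(D,o)}$ does not depend on the choice of root $o\in D$. This is needed so that $\bs Z_D$, defined as a random marking of $D$ with law $\mathcal P_{(D,o)}$, is well defined on the unrooted space $D$ and, more substantially, is distribution-invariant under all of $\mathrm{Aut}(D)$ (not merely under $\mathrm{Aut}(D,o)$); indeed one checks that this root-independence is exactly what is left to verify in condition (i) of Definition~\ref{def:equivProcess}, the other cases of (i) being the isomorphism-equivariance already noted. This is where unimodularity of $[\bs D',\bs o';\bs m']$ is essential: invoking the mass transport principle~\eqref{eq:unimodularMarked} — or, equivalently, the fact recalled in Remark~\ref{rem:non-rooted} that the law of a unimodular discrete space is determined by its non-rooted version together with the uniform-rerooting property — one shows that the conditional law of $\bs m'$ "seen from the root'' is insensitive to the location of the root. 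In a bare-hands form I would apply \eqref{eq:unimodularMarked} to test functions of the shape $g[D,u,v;m]=\psi[D,v;m]\,\indic{[D,u]\in B}$, expand both sides through the disintegration, and match them, the point being that the incoming-mass side depends on the marks only through the non-rooted space. Delicately combining this identity with the Haar-averaging over the various automorphism groups is the technical crux.

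Once root-independence is established, the rest is bookkeeping. One defines $\bs Z_D$ to be a random marking with law $\mathcal P_{(D,o)}$ (for any $o\in D$; the process may be left undefined on the $\mu$-null set of exceptional $[D]$'s, which is permitted by Definition~\ref{def:equivProcess-onD}). Condition~(i) of Definition~\ref{def:equivProcess} follows from isomorphism-equivariance together with root-independence, and condition~(ii) from the measurability discussed above, so $\bs Z$ is an equivariant process. Finally, since $\myprob{[D,o;\bs Z_D]\in A}=\mathcal P_{(D,o)}\big(q_{(D,o)}^{-1}(A)\big)=\mathcal Q_{[D,o]}(A)$ for every measurable $A\subseteq\mathcal D'_*$, integrating against $\mu$ gives $\myprob{[\bs D,\bs o;\bs Z_{\bs D}]\in A}=\int\mathcal Q_{[D,o]}(A)\,d\mu([D,o])=\mathcal Q(A)$, so $[\bs D,\bs o;\bs Z_{\bs D}]$ has the same distribution as $[\bs D',\bs o';\bs m']$, which completes the proof.
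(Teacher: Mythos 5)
Your proposal has the same skeleton as the paper's proof: disintegrate $\mathcal Q$ over $\mu$ via the projection $\pi$, lift to actual markings of concrete representatives, and then use unimodularity to argue that the resulting conditional law is root-independent, which is identified correctly as the crux. The lift step differs: you average with Haar measure over the compact group $\mathrm{Aut}(D,o)$ to resolve the ambiguity in passing from an isomorphism class $[D,o;m]$ to a marking $m\in\Xi^{D\times D}$, whereas the paper sidesteps automorphisms entirely by attaching i.i.d.\ uniform marks (Example~\ref{ex:iid}) to make $\mathrm{Aut}$ trivial a.s.\ and then randomizing them away at the end. Both devices are plausible, though yours still needs a Borel-selection argument to make the Haar lift into a measurable kernel in $[D,o]$, which you correctly flag as ``routine but lengthy.''

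The genuine gap is at the step you yourself call the technical crux: proving root-independence of $[D,o]\mapsto\mathcal P_{(D,o)}$. The disintegration $\xi\mapsto\mathcal Q_\xi$ is defined only $\mu$-a.e., and the statement you need is that one can choose a \emph{version} of it which is simultaneously consistent across all re-rootings of the same unrooted space. This is exactly an invariant-disintegration problem; the ``bare-hands'' mass-transport computation you sketch (testing against $g[D,u,v;m]=\psi[D,v;m]\,\indic{[D,u]\in B}$) gives an equality of integrals over $B$, hence an almost-everywhere identity, but does not by itself produce the version of the kernel that is equivariant pointwise under re-rooting and under $\mathrm{Aut}(D)$. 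The paper closes this by a two-step route you do not take: first invoke the Feldman--Moore theorem~\cite{FeMo77} to realize the re-rooting equivalence relation on $\mathcal D'_*$ as the orbit relation of a measurable action of a \emph{countable} group $H$, and then check via the MTP (a Mecke-type argument) that the law of $[\bs D',\bs o';\bs m']$ is $H$-invariant, so that the invariant disintegration theorem of~\cite{disintegration} applies and delivers an $H$-invariant kernel. Without naming these tools, ``delicately combining this identity with the Haar-averaging'' leaves the main step asserted rather than proved, so the proposal as written does not establish the proposition.
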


	\begin{remark}
		\label{rem:equivProcessEquivalence}
		Lemma~\ref{lem:equivProcess} and Proposition~\ref{prop:equivProcessConverse} imply that equivariant processes on $\bs D$ are equivalent to unimodular marked discrete spaces
		$[\bs D', \bs o'; \bs m']$ such that $[\bs D', \bs o']$ has the same distribution as $[\bs D, \bs o]$. 
		This enables one to define weak convergence of equivariant processes on {$\bs D$} 
		as probability measures on $\mathcal D'_*$. 
	\end{remark}
	
	\begin{lemma}
		\label{lem:equivariant-tight}
		Let $[\bs D, \bs o]$ be a unimodular discrete space. If $\Xi$ is a compact metric space,
		then the set of $\Xi$-valued equivariant processes on {$\bs D$} is tight and compact under weak convergence
		(see Remark~\ref{rem:equivProcessEquivalence}).
	\end{lemma}
	
	\begin{proof}
		Let $M$ be the set of unimodular marked discrete spaces $[\bs D', \bs o'; \bs m']$ with marks
		in $\Xi$ such that $[\bs D', \bs o']$ has the same distribution as $[\bs D, \bs o]$.
		By Remark~\ref{rem:equivProcessEquivalence}, it is enough to prove that $M$ is tight and compact.
		It is easy to see that $M$ is closed (under weak convergence). So it is enough to show that it is tight.
		Let $\epsilon>0$ and $\pi:\mathcal D'_*\rightarrow\mathcal D_*$ be the projection of forgetting the marks.
		By Prokhorov's theorem, \del{a single probability measure on $\mathcal D_*$ is tight. So }there is a compact
		set $K\subseteq \mathcal D_*$ such that $\myprob{[\bs D, \bs o]\in K}>1-\epsilon$.
		So for any equivariant process $[\bs D', \bs o';\bs m']$ on $[\bs D, \bs o]$ with values in $\Xi$,
		$\myprob{[\bs D', \bs o';\bs m']\in \pi^{-1}(K)}>1-\epsilon$.
		It is shown below that $\pi^{-1}(K)$ is compact. This implies that $M$ is tight and the claim is proved.
		
		To prove compactness of $\pi^{-1}(K)$, let $[D_n,o_n;m_n]\in\pi^{-1}(K)$ be an arbitrary
		sequence. By compactness of $K$,\del{ One has $[D_n,o_n]\in K$. So the latter has a convergent subsequence.
			Thus, from the beginning,} one may assume $[D_n,o_n]$ is convergent. Let $r>0$ be given.
		According to the proof of Lemma~\ref{lem:complete}, the sequence $\card{N_r(o_n)}$ is bounded.
		Now, the proof of the claim that $[D_n,o_n;m_n]$ has a convergent subsequence is similar to that of
		Lemma~\ref{lem:complete} and is left to the reader
		(one should first show that the sequence of balls $[N_r(o_n),o_n;m_n]$ has a convergent subsequence and then,
		deduce the claim by a diagonal argument).
	\end{proof}
	
	{This tightness criterion implies the existence of optimal coverings as follows.}
	
	\begin{proof}[Proof of Theorem~\ref{thm:optimalCovering}]
		Let $\bs S_1,\bs S_2,\ldots$ be a sequence of $r$-coverings of $\bs D$ 
		such that $\myprob{\bs o\in \bs S_n}\rightarrow \lambda_r$. 
		By Lemma~\ref{lem:equivariant-tight} and
		choosing a subsequence if necessary, one may assume from the beginning that the equivariant subsets $\bs S_n$ converge weakly
		to an equivariant subset $\bs S$ of $\bs D$. Since each $\bs S_n$ is an $r$-covering,
		$\myprob{\bs S_n\cap N_r(\bs o)=\emptyset} = 0$. {It is straightforward to deduce} $\myprob{\bs S\cap N_r(\bs o)=\emptyset}=0$\del{By the assumption of weak convergence, one can obtain
			$\myprob{\bs S\cap N_r(\bs o)=\emptyset}=0$ (let $\epsilon>0$ be arbitrary and $h:\mathbb R^{\geq 0}\to [0,1]$
			be a continuous  function that is identical to one on $[0,r]$ and zero on $[r+\epsilon,\infty)$.
			It can be seen that $h'(D,o;S):= 1\wedge \sum_{v\in S_D} h(d(o, v))$ is a continuous bounded function
			on $\mathcal D'_*$. By weak convergence, one gets $\omid{h'(\bs D, \bs o;\bs S)} = \lim_n \omid{h'(\bs D, \bs o;\bs S_n)} = 1$.
			Now, the claim follows by letting $\epsilon$ tend to zero)}.
		So by putting balls of radius $r$ on the points
		of $\bs S$, $\bs o$ is covered a.s. So Lemma~\ref{lem:happensAtRoot} implies that every point is covered a.s.; i.e., $\bs S$
		is an $r$-covering. Also, by weak convergence, $\myprob{\bs o\in \bs S}= \lim_n \myprob{\bs o\in \bs S_n} = \lambda_r$.
		This implies that $\bs S$ is an optimal $r$-covering.
	\end{proof}

	Finally, the proof of Proposition~\ref{prop:equivProcessConverse} is presented.
	\begin{proof}[Proof of Proposition~\ref{prop:equivProcessConverse}]
		\del{The reader can skip the proof at first reading.
			Here is a sketch. The random mark functions are obtained by the disintegration theorem for the natural map $\pi:\mathcal D'_*\rightarrow\mathcal D_*$ (some care needs to be taken since only equivalence classes of discrete spaces are considered). The harder part is to use the crucial assumption of unimodularity to deduce that the distribution of the marks do not depend on the \rooot{}. 
			This is similar to the \textit{invariant disintegration theorem}~\cite{disintegration}. To reduce it to the invariant disintegration theorem, an action of a countable group is needed. The latter is given by a result of Feldman and Moore~\cite{FeMo77} as discussed below. This theorem is in the context of Borel equivalence relations, which we refrain from introducing here. }%
		The claim is proved by \textit{invariant disintegration} applied to the action of a group constructed in~\cite{FeMo77}. Let $\pi:\dstar'\to\dstar$ be defined by $\pi[D,o;m]:=[D,o]$. First, assume that $\bs D$ has no automorphisms other than the identity a.s. Let $\mathfrak D$ be the class of discrete spaces with no nontrivial automorphisms. 
		Consider the equivalence relations $R$ and $R'$ on $\dstar$ and $\dstar'$ respectively, defined by:
		\begin{equation*}
			\label{eq:equivalence}
			\forall D\in\mathfrak D: \forall u,v\in D: {[D,u]\sim_R[D,v] \text{ and }} [D,u;m]\sim_{R'}[D,v;m].
		\end{equation*}
		Here, for all discrete spaces $D\not\in\mathfrak D$, the elements $[D,u]$ and $[D,u;m]$ are not equivalent to any other elements. It can be seen that $R$ and $R'$ are Borel equivalence relations (see~\cite{FeMo77} for the definition).
		By Theorem~1 of~\cite{FeMo77}, there is a countable group $H$ acting measurably on $\dstar$ such that for every $[D,o]\in\dstar$, its $R$-equivalence class is just $\{h\cdot [D,o]: h\in H \}$. 
		Note that when $D\in\mathfrak D$, the $R$-equivalence class of $[D,o]$ (resp. $[D,o;m]$) is countable and has a natural bijection with $D$. If so, the action of every $h\in H$ induces a natural map $h_{D}:D\rightarrow D$ and this map is bijective. Hence, $H$ acts on $\dstar'$ as well by letting $h\cdot[D,o;m]:=[D,h_D(o);m]$ when $D\in\mathfrak D$ and $h\cdot[D,o;m]:=[D,o;m]$ when $D\not\in\mathfrak D$. Note that this action generates the equivalence relation $R'$.
		
		Since $\bs D$ has no nontrivial automorphisms a.s., so does $\bs D'$. So, given $h\in H$, the map $h_{\bs D'}$ is bijective a.s. 
		Therefore, by using the mass transport principle~\eqref{eq:unimodularMarked}, one can show that $h\cdot [\bs D', \bs o';\bs m']$ has the same
		distribution as $[\bs D', \bs o'; \bs m']$ (this is analogous to Mecke's theorem. See also~\cite{ThLa09} 
		or Proposition~3.6 of~\cite{eft}). In addition, the joint distribution of	$([\bs D', \bs o'; \bs m'], [\bs D', \bs o'])$ is invariant under the diagonal action of the group $H$.
		Therefore, the invariant disintegration theorem~\cite{disintegration} gives a kernel $t$ from $\mathcal D_*$
		to $\mathcal D'_*$ that is invariant under the diagonal action of $H$ and such that $t([D,o],\cdot)$ is supported on $\pi^{-1}[D,o]$ and $t$ pushes the distribution of $[\bs D', \bs o']$ to the distribution
		of $[\bs D', \bs o'; \bs m']$. For every deterministic $D\in\mathfrak D$ and $o\in D$, the probability measure $t([D,o],\cdot)$ gives a random
		element $\bs Z_{(D,o)}\in \Xi^{D\times D}$.
		It can be shown from the invariance of $t$ that $\bs Z_{(D,h\cdot o)}$ has the same distribution as
		$\bs Z_{(D,o)}$ for every $h\in H$. Hence, $\bs Z_{(D,v)}$ has the same distribution as
		$\bs Z_{(D,o)}$ for every $v\in D$. So one can finally write $\bs Z_{D}$ instead of
		$\bs Z_{(D,o)}$. It can be seen that $\bs Z$ satisfies the claim.
		
		Now, consider the general case when $\bs D$ may have nontrivial automorphisms. 
		Let $\bs U$ be the equivariant process obtained by adding i.i.d. marks to the points with the uniform distribution on $[0,1]$.
		Then, $[\bs D, \bs o; \bs U]$ is unimodular and has no nontrivial automorphisms a.s.
		Also, by Remark~\ref{rem:equivProcessExtraMarks}, $[\bs D', \bs o'; (\bs U, \bs m')]$
		is an equivariant process on $[\bs D, \bs o; \bs U]$. 
		Now, one can repeat the above arguments line by line, which results in a random element
		in $\Xi^{D\times D}$ for every $D$ that is equipped with distinct marks in $[0,1]$.
		By regarding the latter marks as an additional source of randomness, this gives the desired $\bs Z_{D}$ for non-marked $D$, which satisfies the claim.	
	\end{proof}

	\section{Proofs and Auxiliary Lemmas}
	\label{ap:lemmas}
	
		Some of the proofs and lemmas are gathered in this appendix to prevent distraction from the main thread of the paper.

	\begin{proof}[Proof of Lemma~\ref{lem:equivProcess}]
		In what follows, \eqref{eq:equivProcessDistribution} is written in the forms $\mathcal Q(A) = \myprob{[\bs D, \bs o; \bs Z_{\bs D}]\in A}$ or
		$
		\mathcal Q(A)= \omid{\int \identity{A}[\bs D, \bs o; m] {\mathrm d}\mathcal P_{\bs D}(m)}
		$	
		by keeping in mind that a realization of $[\bs D, \bs o]$ is considered in the term inside the expectation.
		To prove unimodularity, let $f:\mathcal D'_{**}\rightarrow\mathbb R^{\geq 0}$ be a measurable function.
		For all deterministic discrete spaces $D$ and $x,y\in D$, let 
		\[
		g(D,x,y):=\omid{f(D,x,y;\bs Z_{D})} = \int f(D,x,y;m) {\mathrm d}\mathcal P_D(m).
		\]
		One has
		\begin{eqnarray*}
			\omid{\sum_{x\in \bs D} f[\bs D, \bs o, x; \bs Z_{\bs D}] }
			&=& \omid{\int \sum_{x\in \bs D} f[\bs D, \bs o, x; m] {\mathrm d}\mathcal P_{\bs D}(m) }\\
			&=& \omid{\sum_{x\in \bs D} g[\bs D, \bs o, x]}.
		\end{eqnarray*}
		One can similarly obtain that 
		$\omid{\sum_{x\in \bs D} f[\bs D,x, \bs o; \bs Z_{\bs D}] } = \omid{\sum_{x\in \bs D} g[\bs D, x, \bs o]}$. 
		Therefore, the claim follows by~\eqref{eq:unimodular} for $g$. 
	\end{proof}

\begin{lemma}
	\label{lem:metricChange-measurability}
	Let $\mathcal N\subseteq \mathcal D'_*$ be the set of \rooted{} marked discrete spaces $[(D,d),o;d']$ with mark space $\mathbb R$ such that $d'$ is a boundedly-finite metric on $D$. Then, $\mathcal N$ is a Borel subset of $\mathcal D'_*$ and the map $\rho:\mathcal N\rightarrow\mathcal N$ defined by $\rho[(D,d),o;d']:=[(D,d'),o;d]$ is Borel measurable.
\end{lemma}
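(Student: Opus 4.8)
The plan is to treat the two assertions in turn, in each case reducing matters to the basic measurability fact that for every $k\ge 2$ and every nonnegative measurable function $\psi$ on the space of $k$-fold \rooted{} marked discrete spaces, the map sending $[D,o;m]$ to $\sum \psi[D,o,y_2,\dots,y_k;m]$, the sum being over all $(k-1)$-tuples $(y_2,\dots,y_k)$ of distinct points of $D$, is measurable on $\mathcal D'_*$. For $k=2$ this is exactly the measurability of the outgoing mass $g^{+}(\bs o)$ that is implicit in the statement of the mass transport principle~\eqref{eq:unimodularMarked}, and the general case follows by iterating it, the $k$-fold \rooted{} marked spaces forming a standard Borel space by the same construction as for $\mathcal D'_{**}$.

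For the first assertion I would exhibit $\mathcal N$ as a countable intersection of Borel subsets of $\mathcal D'_*$. Since $d$ is a genuine real-valued metric, $D=\bigcup_{R\in\mathbb N}N_R(o)$ with $N_R(o)$ the closed $d$-ball of radius $R$, so each of the defining conditions on $d'=m$ — nonnegativity, vanishing on the diagonal, symmetry, strict positivity off the diagonal, and the triangle inequality — can be written in the form ``for every $R\in\mathbb N$ and all points $x,y,z\in N_R(o)$, $\dots$''. For fixed $R$, the negation of such a condition is the strict positivity of a finite sum over ordered triples of points of $N_R(o)$ of an explicit nonnegative expression in the $d$- and $m$-values; for instance the failure of the triangle inequality is witnessed by $\sum_{x,y,z}\indic{d(o,x)\le R,\ d(o,y)\le R,\ d(o,z)\le R}\big(m(x,z)-m(x,y)-m(y,z)\big)^{+}>0$. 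By the basic fact above each of these is a Borel function of $[D,o;m]$, so the corresponding conditions cut out Borel sets. Bounded finiteness of $(D,d')$ is encoded by the conditions $\card{\{y\in D:m(o,y)\le r\}}<\infty$ for $r\in\mathbb N$, which are Borel because $[D,o;m]\mapsto\card{\{y:m(o,y)\le r\}}$ is a measurable $[0,\infty]$-valued map; once $m$ is known to satisfy the triangle inequality, these force $\{y:m(x,y)\le r\}$ to be finite for every $x$ and $r$. Intersecting all these countably many Borel sets gives $\mathcal N$.

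For the second assertion, observe first that $\rho$ is a well-defined involution of $\mathcal N$ (if $d'$ is a boundedly-finite metric on $D$ then so is the original $d$), so it suffices to show $\rho$ is Borel. As $\mathcal D'_*$ is a separable metric space, its Borel $\sigma$-field is generated by the countable family $[E,p;e]\mapsto\kappa([E,p;e],z_0)$ with $z_0$ ranging over a fixed countable $\kappa$-dense subset of $\mathcal D'_*$; hence it is enough to check that $[(D,d),o;d']\mapsto\kappa(\rho[(D,d),o;d'],z_0)$ is measurable on $\mathcal N$ for each such $z_0=[D_0,o_0;m_0]$. Unwinding Definition~\ref{def:kappa}, for each rational $\epsilon>0$ one must show that $\{[(D,d),o;d']\in\mathcal N:\ \rho[(D,d),o;d']\text{ and }z_0\text{ are }\tfrac1\epsilon\text{-similar}\}$ is Borel, i.e.\ that each of the two one-sided $\tfrac1\epsilon$-embeddability events is Borel. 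An $\tfrac1\epsilon$-embedding of $z_0$ into $\rho[(D,d),o;d']=[(D,d'),o;d]$ is a choice of distinct points of $D$ indexed by the fixed finite set $N_{1/\epsilon}(o_0)$ (balls taken with respect to the metric $d_0$ of $D_0$) subject to finitely many inequalities involving $d'$ and $d$; its existence is therefore an event of the form ``a sum over distinct tuples of points of $D$ is $\ge1$'', hence Borel by the basic fact. An $\tfrac1\epsilon$-embedding of $[(D,d'),o;d]$ into $z_0$ involves the data-dependent ball $\{y\in D:d'(o,y)\le 1/\epsilon\}$; I would first slice $\mathcal N$ along the Borel sets $\{\card{\{y:d'(o,y)\le1/\epsilon\}}=n\}$, and on each slice note that such an embedding amounts to a choice of distinct points $q_1,\dots,q_n$ of the fixed finite set $D_0$ together with an enumeration $y_1,\dots,y_n$ of $\{y:d'(o,y)\le1/\epsilon\}$ satisfying the prescribed inequalities — and ``there is such an enumeration'' is once more a ``sum over distinct tuples of points of $D$ is $\ge1$'' event. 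Taking the finite union over $(q_1,\dots,q_n)\in D_0^{\,n}$ and the countable union over $n$ preserves Borelness, and combining the two directions and unioning over $\epsilon\in\mathbb Q$ shows the required measurability. The only genuinely delicate point is this last one: in contrast to the $d$-balls $N_R(o)$, the $d'$-ball $\{y:d'(o,y)\le1/\epsilon\}$ is not a fixed finite set and must be handled by conditioning on its cardinality and re-expressing its unordered metric-plus-mark type as a sum over ordered tuples of points of $D$; everything else is routine bookkeeping once the measurability of sums over tuples of nearby points is available.
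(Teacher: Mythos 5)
Your proof is correct and takes a genuinely different route from the paper's. For the first claim the paper writes $\mathcal N=\bigcap_n\bigcup_m\mathcal N_{n,1/m}$, where $\mathcal N_{n,\epsilon}$ asks the restriction of $d'$ to the $d$-ball $\oball{n}{o}$ to be a metric with pairwise distances at least $\epsilon$, and shows each $\mathcal N_{n,\epsilon}$ is closed; you instead reduce everything to the measurability of point-sums $[D,o;m]\mapsto\sum\psi[D,o,y_2,\dots,y_k;m]$, the primitive implicit in the mass transport principle. A tangible advantage of your version is that it explicitly enforces bounded finiteness of $(D,d')$, through the countably many conditions $\card{\{y:d'(o,y)\le r\}}<\infty$ together with the observation that the triangle inequality propagates finiteness of $d'$-balls from $o$ to all other centers; the paper's decomposition, as written, only forces $d'$ to be a metric on each $d$-ball and does not by itself exclude, say, a $d'$ whose off-diagonal values all lie in $[1,2]$, which is a metric but not boundedly finite. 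For the measurability of $\rho$, the two arguments diverge at the same delicate spot, namely the data-dependent $d'$-ball of radius $1/\epsilon$ about $o$. The paper truncates to the $d$-ball $\oball{M}{o}$, shows the set $A_{r,M}$ of $\xi$ for which $\rho(\oball{M}{o})$ and $\xi_0$ are $r$-similar is closed, and writes $\rho^{-1}(B_\epsilon(\xi_0))=\bigcup_r\bigcap_M A_{r,M}$; you condition on the cardinality of the $d'$-ball and express each one-sided $r$-embeddability event as a ``sum over distinct tuples of points of $D$ is $\ge1$'' event. Both routes are sound; yours dispenses with closedness arguments at the cost of relying on the point-sum lemma, which you should state and justify once at the outset (for $k=2$ it is built into the Borel structure of $\dstarr$, and the general case follows by iterating, viewing the inner sum as a measurable function on $\mathcal D'_{**}$ and so on).
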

\begin{proof}[Proof of Lemma~\ref{lem:metricChange-measurability}]
	The proof of the first claim is similar to that of Theorem~\ref{thm:polish}: For $n\in\mathbb N$ and $\epsilon>0$, let $\mathcal N_{n,\epsilon}$ be the set of $[(D,d),o;d']\in\mathcal D'_*$ such that the restriction of $d'$ to $\oball{n}{o}$ is a metric and $\forall u\neq v\in \oball{n}{o}: d'(u,v)\geq \epsilon$. It is not hard to see that $\mathcal N_{n,\epsilon}$ is closed in $\mathcal D'_*$. Also, one has $\mathcal N = \cap_n \cup_m \mathcal N_{n,1/m}$, which implies that $\mathcal N$ is Borel subset of $\mathcal D'_*$.
	
	For the second claim, it is enough to prove that the inverse image of any open ball in $\mathcal N$ under $\rho$ is measurable. Let $\xi_0:=[(D_0,d_0),o_0;d'_0]\in N$ and consider the open ball $U:=B_{\epsilon}{(\xi_0)}:=\{\xi\in \mathcal N: \kappa(\xi,\xi_0)<\epsilon \}$ in $\mathcal N$. 
	Let $r>1/\epsilon$ be an arbitrary rational number and $M\in\mathbb N$. Let $A_{r,M}$ be the set of $[(D,d),o;d']\in\mathcal N$ such that $\rho(\oball{M}{o})$ and $\xi_0$ are $r$-similar (Definition~\ref{def:r-embedding}), where $\oball{M}{o}$ is equipped with the \rooot{} and metrics induced from $[(D,d),o;d']$. It is not hard to show that $A_{r,M}$ is a closed subset of $\mathcal N$. Also, one has $\rho^{-1}(U) = \cup_{r>1/\epsilon} \cap_M A_{r,M}$. This implies that $\rho^{-1}(U)$ is a Borel subset of $\mathcal N$ and the claim is proved.
\end{proof}
	
	\begin{lemma}
		\label{lem:canopy-point-stationary}
		In the setting of Subsection~\ref{subsec:canopyGeneralized}, $\Phi-\bs o$ is a point-stationary point process.
	\end{lemma}
	\begin{proof}
		Let $g=g(\varphi,x,y)$ be a (measurable) function that assigns a real number to every discrete
		subset $\varphi\subseteq\mathbb R^2$ and each $x,y\in\varphi$ which is invariant under (joint) translations of
		$\varphi,x$ and $y$. The claim is that
		\begin{equation}
			\label{eq:lem:canopy:1}
			\omid{\sum_{x\in \Phi} g(\Phi,\bs o, x)} = \omid{\sum_{x\in \Phi} g(\Phi,x, \bs o)}.
		\end{equation}
		By additivity, it is enough to assume that $g(\Phi,x,y)$ is zero except when $x\in \Phi_i$ 
		and $y\in \Phi_j$ for some fixed $i$ and $j$. So \eqref{eq:lem:canopy:1} is equivalent to
		\begin{equation}
			\label{eq:lem:canopy:2}
			p_i \omid{\sum_{x\in \Phi_j} g(\Phi,\bs o_i, x)} = p_j \omid{\sum_{x\in \Phi_i} g(\Phi,x, \bs o_j)}.
		\end{equation}
		Define $h:\mathbb Z^2\to\mathbb R$ by $h(k,l):= \omid{\sum \sum g(\Phi,x,y)}$,
		where the sum is over all pairs of points $x,y\in\Phi$ such that 
		\[
		x\in [k, k+1)\times\{i\}, \quad y\in [l, l+1)\times\{j\}.
		\]
		It can be seen that \eqref{eq:lem:canopy:2} can be written as
		\begin{equation}
			\label{eq:lem:canopy:3}
			\sum_{l\in\mathbb Z}h(0,l) = \sum_{k\in\mathbb Z}h(k,0).
		\end{equation}
		Note that the coefficients $p_i$ and $p_j$ disappear in this formula
		since one has $\myprob{\bs o_i\in [0,1)\times \{i\}}=p_i$.
		Now, the invariance of $g$ under translations and the stationarity of $\Phi$ under horizontal
		translations imply that $h(0,k)=h(-k,0)$. This proves~\eqref{eq:lem:canopy:3}.
		So \eqref{eq:lem:canopy:1} is also proved and hence, $\Phi-\bs o$ is a point-stationary point process.
		\del{Therefore, by Example~\ref{ex:point-stationary}, $[\Phi,\bs o]$ is a unimodular discrete space.}	
	\end{proof}

	\begin{lemma}
		\label{lem:logbound}
		Let ${(X_n)_{n=1}^{\infty}}\geq 0$ be a monotone sequence of random variables. Then almost surely, $\growthu{X_n} \leq  \growthu{\omid{X_n}}$.
		\del{\begin{eqnarray}
				\label{eq:lem:logbound:1}
				\growthu{X_n} &\leq & \growthu{\omid{X_n}}.
				\label{eq:lem:logbound:2}
				\growthl{X_n} &\leq & \growthl{\omid{X_n}}.
			\end{eqnarray}
		}
		\del{Moreover,
			if $\sum_n var(X_n)/\omid{X_n}^2 < \infty$, then 
			\[
			\growthl{X_n} = \growthl{\omid{X_n}}.
			\]}
	\end{lemma}
	{One can also deduce that $\growthl{X_n} \leq \growthl{\omid{X_n}}$, but this is skipped since it is not needed here.}
	\begin{proof}
		The claim will be proved assuming $0\leq X_1\leq X_2\leq\cdots$.
		The non-increasing case can be proved with minor changes.
		Let $\alpha$ and $\beta$ be arbitrary such that
		$\growthu{\omid{X_n}} <\beta<\alpha$.
		So there is a constant $c$ such that  $\omid{X_n}\leq cn^{\beta}$ for all $n\geq 1$.
		Let $M:=\max\{n:X_n>n^{\alpha}\}$, with the convention $\max\emptyset:=0$.
		Below, it will be shown that $M<\infty$ a.s. Assuming this, it follows
		that $\growthu{X_n}\leq \alpha$ a.s.
		By considering this for all $\alpha$ and $\beta$, the claim is implied.
		
		Now, it is proved that $M<\infty$ a.s. With an abuse of notation, the constant $c$ below
		is updated in each step without changing the symbol.
		\begin{eqnarray*}
			\myprob{M\geq n}& =& \myprob{\exists {k\geq n}: X_k> k^{\alpha}}
			\leq  \sum_{j=0}^{\infty} \myprob{\exists k:  {n2^j\leq k\leq n2^{j+1}}, X_k> k^{\alpha}}\\	
			&\leq & \sum_{j=0}^{\infty} \myprob{X_{n2^{j+1}}> (n2^j)^{\alpha}}	
			\leq  \sum_{j=0}^{\infty} \frac{\omid{X_{n2^{j+1}}}}{(n2^j)^{\alpha}} 
			\leq  \sum_{j=0}^{\infty} \frac{c(n2^{j+1})^{\beta}}{(n2^j)^{\alpha}}\\
			&\leq & \sum_{j=0}^{\infty} c(n2^j)^{{\beta}-\alpha}
			\leq  cn^{{\beta}-\alpha}.
		\end{eqnarray*}
		The RHS is arbitrarily small for large $n$. This implies that $M<\infty$ a.s. and the claim is proved.
		\del{To prove~\eqref{eq:lem:logbound:2}, \mar{{Later: double check this.}} assume $\growthl{\omid{X_n}} <\beta$. So
			there is a constant $c$ and a sequence $n_1<n_2<\cdots$ such that  $\omid{X_{n_i}}\leq cn_i^{\beta}$
			for all $i$. Define a sequence $Y_1,Y_2,\ldots$ by $Y_j=X_{n_i}$, where $i=i(j)$ is such that
			$n_i\leq j < n_{i+1}$. 
			Now, \eqref{eq:lem:logbound:1} gives 
			$
			\limsup_j {\log Y_j}/{\log j}\leq  \limsup_j {\log \omid{Y_j}}/{\log j},  
			$
			a.s.
			Note that for $n_i\leq j<n_{i+1}$, one has $\log \omid{Y_j}/ \log j \leq \log \omid{X_{n_i}}/\log n_i$.
			So the above inequality implies 
			$
			\limsup_j {\log Y_j}/{\log j}\leq  \limsup_i {\log \omid{X_{n_i}}}/{\log n_i} \leq \beta,
			$ a.s.
			where the last inequality holds by the choice of the subsequence $(n_i)_i$.
			On the other hand, since $Y_{n_i}=X_{n_i}$ for all $i$ and $Y_j$ is constant on $j\in [n_i,n_{i+1})$, one has
			\[
			\limsup_j \frac{\log Y_j}{\log j} {=} \limsup_i \frac{\log X_{n_i}}{\log n_i} \geq \liminf_n \frac{\log X_n}{\log n}.
			\]
			The above two inequalities show that $\liminf_n \log X_n / \log n \leq \beta$ a.s., which implies the claim.
			\del{
				For the third claim, assume similarly that $\omid{X_n}\geq c'n^{\beta'}$.
				Similar to above, it is enough to show that $M'<\infty$ a.s.,
				where $M':=\max\{n:X_n<n^{\alpha'} \}$ and $\alpha'<\beta'$ is arbitrary.
				\begin{eqnarray*}
					\myprob{M'\geq n}& =& \myprob{\exists {k\geq n}: X_k< k^{\alpha'}}\\
					&\leq & \sum_{j=0}^{\infty} \myprob{\exists k:  {n2^j\leq k\leq n2^{j+1}}, X_k< k^{\alpha'}}\\	
					&\leq & \sum_{j=0}^{\infty} \myprob{X_{n2^{j}}< (n2^{j+1})^{\alpha'}}\\	
					&\leq & \sum_{j=0}^{\infty} \myprob{\norm{X_{n2^j}-\omid{X_{n2^j}}}> \frac 1 2 \omid{X_{n2^j}}}\\	
					&\leq & \sum_{j=0}^{\infty} \frac {4\ \mathrm{var}(X_{n2^j})}{\omid{X_{n2^j}}^2},
				\end{eqnarray*}
				where the third inequality holds for large $n$ and fixed $\alpha$ and the last inequality
				is by Chebyshev's inequality. The assumptions imply that the last term tends to zero as $n$ tends to infinity.
				So the claim is proved.}
		}
	\end{proof}

	\begin{lemma}
		\label{lem:BaumKatz1}
		Let $X,X_1,X_2,\ldots$ be a non-negative i.i.d. sequence and $t>0$ be such that
		$\myprob{X>r}\geq cr^{-t}$ for large enough $r$. Let $S_n:=X_1+\cdots+X_n$. 
		Then there exists $C<\infty$ such that almost surely,
		\[
		\exists n: \forall k\geq n: S^{-1}(k) \leq Ck^{t} \log \log k. 
		\]
	\end{lemma}
	
	\begin{proof}
		First, one has
		\begin{eqnarray}
			\nonumber
			\myprob{ S^{-1}(n)\geq  m} &=& \myprob{S_{m}\leq n} \leq \myprob{\forall i\leq m: X_i\leq n} = \myprob{X\leq n}^m\\
			\label{eq:S_m<n} &\leq& (1-cn^{-t})^m \leq e^{-cmn^{-t}}.
		\end{eqnarray}
		Let $C:=2^{t+1}/c$ and $\psi(x):=Cx^t \log \log x$, 
		Therefore, for large $n$, one has 
		\begin{eqnarray*}
			\myprob{\exists k\geq n: S^{-1}(k) > \psi(k)} &=& \myprob{\max_{k\geq n} \frac{S^{-1}(k)}{\psi(k)}>1}\\
			&\hspace{-9cm}\leq & \hspace{-4.5cm} \sum_{j=0}^{\infty} \myprob{\max_{n2^j\leq k < n2^{j+1}} \frac{S^{-1}(k)}{\psi(k)}>1}
			\leq  \sum_{j=0}^{\infty} \myprob{S^{-1}(n2^{j+1}) > \psi(n2^j)}\\
			&\hspace{-9cm}\leq &\hspace{-4.5cm} \sum_{j=0}^{\infty} e^{-c\psi(n2^j)(n2^{j+1})^{-t} }
			\leq  \sum_{j=0}^{\infty} e^{-2\log \log (n2^j)}
			= \sum_{j=0}^{\infty} \frac 1{(j \log 2 + \log n)^{2}}.
		\end{eqnarray*}
		It is clear that the sum in the last term is convergent. Therefore,
		dominated convergence implies that the right hand side tends to zero as $n\rightarrow 0$. This proves the claim.
	\end{proof}
	
	\begin{lemma}
		\label{lem:regtree-weight}
		Let $\alpha<\infty$ and $(T,o)$ be a deterministic rooted tree such that
		$\mathrm{deg}(o)\geq 2$ and $\mathrm{deg}(v)\geq 3$ for all $v\neq o$.
		Let $d'$ be a metric on $T$ which is generated by \del{a function on the edges }{edge lengths} such that $d'(\cdot)\geq 1$.
		Let $w(u):=C\sum_{v \sim u} \bs d'(u,v)^{\alpha}$. Then $C=C(\alpha)$ can be chosen such that 
		for all $r\geq 0$, one has $w(N'_r(o))\geq r^{\alpha}$, {where $N'_r$ denotes the ball of radius $r$ under the metric $\bs d'$.}
		
	\end{lemma}
	
	\begin{proof}
		{Let $C$ be a constant such that $\forall x\in [0,1]: Cx^{\alpha} + (1-x)^{\alpha}\geq \frac 12$.
			It is easy to see that such $C$ exists.}
		For $r\geq 0$, let $f(r)$ be the infimum value of $w(N'_r(o))$ for all trees
		with the stated conditions. So one should prove $f(r)\geq r^{\alpha}$.
		The claim is true for $r=0$. Also, if $0<r<1$, one has $N'_r(o)=\{o\}$ and the claim is trivial. The proof uses
		induction on $\lfloor r\rfloor$. Assume that $r\geq 1$ and for all $s<\lfloor r\rfloor$, one has $f(s)\geq s^{\alpha}$.
		For $y\sim o$, let $T_y$ be the connected component containing $y$ when the edge $(o,y)$ is removed.
		It can be seen that $[T_y,y]$ satisfies the conditions of the lemma. Therefore,  
		\begin{eqnarray*}
			w(N'_r(o)) &=&  w(o) + \sum_{y: y\sim o}  w(N'_{r- d'(o,y)}(T_y, y))
			\geq 	 w(o) + \sum_{y: y\sim o} f(r- d'(o,y))\\
			&\geq & \sum_{y: y\sim o} \left[C  d'(o,y)^{\alpha} + (r- d'(o,y))^{\alpha}\right]\\
			&\geq & \mathrm{deg}(o)\cdot \min_{0\leq x \leq r}\{Cx^{\alpha} +  (r-x)^{\alpha}\}
			\geq  \mathrm{deg}(o) r^{\alpha}/2
			\geq  r^{\alpha},
		\end{eqnarray*}
		where  the third line is by the definition of $w(o)$ and the induction hypothesis, the fifth line is due to the definition of $C$,
		and the last line is by the assumption $\mathrm{deg}(o)\geq 2$.
		Hence $f(r)\geq r^{\alpha}$, which proves the induction claim.
	\end{proof}
	
	\begin{lemma}
		\label{lem:minimalCut}
		An equivariant cut-set is equivariantly minimal if and only if it is almost surely minimal.
	\end{lemma}
	\begin{proof}
		Let $\Pi$ be an equivariant cut-set. If $\Pi$ is almost surely minimal,
		then it is also equivariantly minimal by definition. Conversely, assume $\Pi$ is equivariantly minimal but not almost surely minimal.
		Call an edge $e'$ \textit{above} an edge $e$ if $e'$ separates $e$ from the end. Call an edge $e\in \Pi$ \textit{bad} if there is
		an edge of $\Pi$ above $e$. Let $\Pi'$ be the set of bad edges 
		of $\Pi$. Let $\Pi''$ be the set of lowest edges in $\Pi'$; i.e., the edges $e\in\Pi'$
		such that there is no other edge of $\Pi'$ below $e$. It can be seen that the assumption implies that $\Pi''$
		is nonempty with positive probability. Now, it can be seen that $\Pi\setminus\Pi''$ is an equivariant cut-set, which contradicts the minimality of $\Pi$.
	\end{proof}

\end{appendices}

\section*{{Acknowledgements}}
%
\formir{Supported in part by a grant of the Simons Foundation (\#197982) to The University of Texas at Austin
and by the ERC NEMO grant, under the European Union's Horizon 2020 research and innovation programme,
grant agreement number 788851 to INRIA.
The second author thanks the Research and Technology Vice-presidency of Sharif University of Technology for its support. This research was done while the third author was affiliatd with Tarbiat Modares University and was in part supported by a grant from IPM (No.98490118). He is currently affiliated with INRIA Paris.
}

\bibliography{bib} 
\bibliographystyle{plain}
\end{document}